\newtheorem{thm}{Theorem}[section]
\newtheorem{prop}[thm]{Proposition}
\newtheorem{cor}[thm]{Corollary}
\newtheorem{lem}[thm]{Lemma}
\newtheorem{conj}[thm]{Conjecture}
{\theorembodyfont{\normalfont}
\newtheorem{rem}[thm]{Remark}
\newtheorem{rems}[thm]{Remarks}
\newtheorem{dfn}[thm]{Definition}

}
\newenvironment{proof}[1][\proofname]{\par
  \normalfont
  \topsep6\p@\@plus6\p@ \trivlist
  %\item[\hskip\labelsep{\bfseries #1}\@addpunct{\bfseries.}]\ignorespaces
  \item[\hskip\labelsep{\textit{\mdseries #1}\@addpunct{\mdseries.}}]\ignorespaces
}{%
  \QED \endtrivlist
}
\newcommand{\proofname}{\normalfont{\textit{Proof.}}}
\def\BOXSYMBOL{\RIfM@\bgroup\else$\bgroup\aftergroup$\fi
  \vcenter{\hrule\hbox{\vrule height.85em\kern.6em\vrule}\hrule}\egroup}
\newcommand{\BOX}{%
  \ifmmode\else\leavevmode\unskip\penalty9999\hbox{}\nobreak\hfill\fi
  \quad\hbox{\BOXSYMBOL}}
\newcommand\QED{\BOX}
\numberwithin{equation}{section}
\newtheorem{thm-dfn}[thm]{Theorem-Definition}
\newtheorem{thmalph}{Theorem}[section]
\newtheorem{coralph}[thmalph]{Corollary}
\def\Ker{\mathop{\mathrm{Ker}}\nolimits}
\def\Hom{\mathop{\mathrm{Hom}}\nolimits}
\def\End{\mathop{\mathrm{End}}\nolimits}
\def\supp{\mathop{\mathrm{supp}}\nolimits}
\def\ind{\mathop{\mathrm{ind}}\nolimits}
\def\tr{\mathop{\mathrm{tr}}\nolimits}
\def\id{\mathop{\mathrm{id}}\nolimits}
\newcommand{\bb}[1]{\mathbb{#1}}
\def\tr{\mathop{\mathrm{tr}}\nolimits}
\newcommand{\C}{\mathbb{C}}
\newcommand{\R}{\mathbb{R}}
\newcommand{\Z}{\mathbb{Z}}
\newcommand{\N}{\mathbb{N}}
\newcommand{\Q}{\mathbb{Q}}
\newcommand{\ep}{\varepsilon}
\newcommand{\dif}{\mathrm{d}}
\newcommand{\map}[3]{{#1}\colon{#2}\to{#3}}
\newcommand{\brackets}[1]{ \left[ {#1} \right]}
\newcommand{\braces}[1]{ \left\{ {#1} \right\}}
\newcommand{\parens}[1]{ \left( {#1} \right)}
\newcommand{\angles}[1]{ \left \langle {#1} \right \rangle}
\newcommand{\norm}[1]{ \left \Vert {#1} \right \Vert}
\newcommand{\gm}{\gamma}
\newcommand{\gminv}{\gamma^{-1}}
\newcommand{\EG}{\mathcal{E}G}
\newcommand{\Gaa}{\widetilde{G_{\alpha}}}
\newcommand{\Ga}{\raisebox{-0.3ex}{$\widetilde{G_{\alpha}}$}}
\newcommand{\Galp}{G_{\alpha}}
\newcommand{\E}{\mathcal{E}}
\newcommand{\Max}{\mathrm{Max}}
\newcommand{\red}{\mathrm{red}}
\newcommand{\qqand}{\qquad \text{and}\qquad}
\newcommand{\qqfor}{\qquad \text{for}\quad}
\newcommand{\hattensor}{\widehat{\otimes}}
\newcommand{\turnV}[1]{\bigwedge\nolimits^{\! #1}}
\newcommand{\pardif}[3]{\frac{\partial^{#1}{#3}}{\partial{#2}}\>}
\newcommand{\ordif}[3]{\frac{\dif^{#1}{#3}}{\dif{#2}}\>}
\newcommand{\Sym}{\mathrm{S
%{\scriptstyle \!Y\!M}
ymb
}}
\newcommand{\Pseudo}{\Psi
{\scriptstyle \mathrm{\!D\!O}}
}
\newcommand{\PPseudo}{\mathrm{P}\mathchar`-\Psi
{\scriptstyle \mathrm{\!D\!O}}
}
\newcommand{\VVVVVV}{\bb{V}^0,\bb{V}^1}
\newcommand{\settingsGXa}
{Let $G$ be a second countable locally compact Hausdorff group.
Let $X$ be a $G$-compact 
proper complete $G$-Riemannian manifold. }
\newcommand{\dirsum}[1]{\bigoplus #1}
\newcommand{\hatsum}{\;\widehat{\oplus}\;}
\newcommand{\matrixtwo}[4]{
\left( \!\! \begin{array}{cc} #1 & #2 \\ #3 & #4 \end{array}\!\! \right)}
\title{On the Strong Novikov Conjecture of Locally Compact Groups
\\for Low Degree Cohomology Classes }
\author{Yoshiyasu Fukumoto \footnote
{
Department of Mathematics, Faculty of Science, Kyoto University
\newline \, \, \,
Sakyo-ku, Kyoto, 606-8502, JAPAN
\newline \, \, \,
fukumoto@math.kyoto-u.ac.jp}
}
\date{Kyoto University}
\begin{document}
\maketitle
\begin{abstract}
The main result of this paper is non-vanishing of the image of the index map 
from the $G$-equivariant $K$-homology of 
a proper $G$-compact $G$-manifold $X$ 
to the $K$-theory of the $C^{*}$-algebra of the group $G$. 
Under the assumption that the Kronecker pairing of a $K$-homology class 
with a low-dimensional cohomology class is non-zero, 
we prove that the image of this class under the index map is non-zero. 
Neither discreteness of the locally compact group $G$ nor freeness of the action 
of $G$ on $X$ are required. 
The case of free actions of discrete groups was considered earlier 
by B. Hanke and T. Schick.
\end{abstract}
\textit{Mathematics Subject Classification (2010).}
19K35, 19K56, 46L80, 57R20, 58A12
\newline
\textit{Keywords.} 
Strong Novikov conjecture, 
Gromov-Lawson-Rosenburg conjecture,
almost flat bundles
\tableofcontents
\section{Introduction}
The main result of this paper is the
non-vanishing of the image of the index map,
assuming that the non-vanishing of the
pairing with low dimensional cohomology.
In this paper, actions of a locally compact group $G$
on $X$ will be assumed proper and co-compact,
but neither discreteness of $G$ nor the freeness of  the action
will be needed.
First of all, let us recall the Strong Novikov Conjecture
and a result in the case of discrete groups.
\begin{conj}[Strong Novikov Conjecture]
Let $\Gamma$ be a discrete group.
The Baum-Connes assembly map
$\map {\mu} {K_{0}(B\Gamma)} {K_{0}(C^{*}_{\Max}(\Gamma))}$
is injective after tensoring $\Q$.
\end{conj}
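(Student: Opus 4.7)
The plan is to establish rational injectivity of the assembly map $\mu$ by constructing, for every nonzero class $x \in K_{0}(B\Gamma) \otimes \Q$, an explicit $C^{*}$-algebraic invariant that detects $\mu(x)$. My strategy combines Kasparov's Dirac--dual Dirac machine with the theory of almost flat Hilbert module bundles of arbitrary rank, so as to reach all rational $K$-homology classes rather than only those paired with low-degree cohomology.

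First I would reduce via the rational Chern character isomorphism $K_{0}(B\Gamma) \otimes \Q \cong \bigoplus_{n} H_{2n}(B\Gamma; \Q)$, so that it suffices to show $\mu(x) \neq 0$ whenever $\angles{x, \eta} \neq 0$ for some cohomology class $\eta \in H^{*}(B\Gamma; \Q)$. By the Baum--Douglas geometric model of $K$-homology, I may represent $x$ by a twisted Dirac-type operator $D$ on a proper cocompact spin${}^{c}$ $\Gamma$-manifold $M$ with a classifying map $\map{f}{M}{B\Gamma}$. The core of the proof is then to realize $\eta$ as a nonzero rational multiple of the Chern character of an almost flat bundle $V_{\eta}$ of finitely generated projective modules over an auxiliary $C^{*}$-algebra $\mca{A}_{\eta}$. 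In low degree this is carried out with flat or almost flat unitary bundles and underlies the present paper; in higher degree the construction requires fibres in matrix algebras over a carefully chosen coefficient algebra, assembled over a good cover of $B\Gamma$ with transition data whose curvature can be driven arbitrarily close to zero.

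Given such $V_{\eta}$, the proof concludes by forming the twisted Mishchenko--Fomenko index $\ind(D \otimes f^{*}V_{\eta}) \in K_{0}(C^{*}_{\Max}(\Gamma) \otimes \mca{A}_{\eta})$ and using naturality of assembly together with an almost flat index theorem to identify this with the external Kasparov product $\mu(x) \otimes [V_{\eta}]$. A canonical trace on $\mca{A}_{\eta}$ then recovers the nonzero Kronecker pairing $\angles{x, \eta}$, forcing $\mu(x) \neq 0$ as required. The main obstacle is the universal construction of $V_{\eta}$: realizing every rational cohomology class of an arbitrary $B\Gamma$ by an almost flat Hilbert module bundle is not currently known without additional geometric hypotheses on $\Gamma$, since controlling the full Chern--Weil data forces simultaneous smallness of all curvature polynomials across patches of unbounded topological complexity. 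A secondary technical difficulty is ensuring that the descent and external product are formulated so as to land in the \emph{maximal} group $C^{*}$-algebra and to accommodate non-nuclear $\mca{A}_{\eta}$, which requires careful bookkeeping of exactness and continuity.
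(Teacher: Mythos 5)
The statement you are addressing is labelled a \emph{conjecture} in the paper, and the paper does not prove it; it only establishes the special case where the detecting cohomology class is generated in degree at most $2$ (Theorem A, following Hanke--Schick and Mathai). Your proposal is therefore not a proof but a research programme, and you have in fact named its fatal gap yourself: the universal realization of an arbitrary rational cohomology class $\eta \in H^{*}(B\Gamma;\Q)$ by an almost flat Hilbert module bundle $V_{\eta}$ is an open problem, not a lemma you may invoke. Everything downstream of that step (the Mishchenko--Fomenko index, naturality of descent, the trace pairing) is standard machinery, but it only fires once $V_{\eta}$ exists.

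To see concretely why the construction stops at degree $2$: for a class in $H^{2}$ one takes a line bundle $L$ with connection $\dif + i\eta$ and scales the connection to $\dif + it\eta$, so the curvature $it\omega$ tends to zero while the relevant index pairing remains a nonconstant polynomial in $t$; this is exactly the family $L_{t}$ and the central extensions $G_{\alpha^{t}}$ used in the paper. For a class in $H^{2k}$ with $k \geq 2$, the component of the Chern character that detects it is a degree-$k$ polynomial in the curvature, so if the curvature of a candidate bundle is driven uniformly to zero the pairing dies unless the rank (or the $K$-theory of the coefficient algebra) is allowed to grow in a way that compensates --- and producing such families over an arbitrary $B\Gamma$, which need not be a finite complex or a manifold, is precisely what is not known. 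Unless you supply that construction, your argument proves only the low-degree statement the paper already proves, not the conjecture.
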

\begin{thm}
{\upshape{\cite[Theorem 1.2]{Ha-Sc08}}}
Let $\Gamma$ be a discrete group.
If a $K$-homology class $h\in K_{0}(B\Gamma)$ satisfies
the following condition;
there exists a cohomology class $c\in H^{*}(B\Gamma;\Q)$
generated by classes of degree at most $2$
such that 
$\angles{c, \mathrm{ch}(h)} \neq 0$.
Then $\mu(h) \neq 0$.
\end{thm}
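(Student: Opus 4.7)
The plan is to detect the nonvanishing of $\mu(h)$ by twisting a Dirac-type operator with an approximately flat Hilbert $C^*_{\Max}(\Gamma)$-module bundle and reading off its Mishchenko--Fomenko index. First, I would realize $h$ as a geometric cycle: choose a closed even-dimensional spin$^{c}$ manifold $M$, a continuous map $\map{f}{M}{B\Gamma}$, and the Dirac class $[D_{M}]\in K_{0}(M)$ whose image in $K_{0}(B\Gamma)$ is $h$. By the Atiyah--Singer identification of the Chern character pairing,
\[
\angles{c,\mathrm{ch}(h)} \;=\; \int_{M} f^{*}c \cup \mathrm{Td}(M) \;\neq\; 0.
\]
The task is to lift this scalar integral to an element of $K_{0}(C^{*}_{\Max}(\Gamma))$ that equals $\mu(h)$ and whose nonvanishing is certified by the integral.

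The crucial ingredient is a sequence of almost flat Hermitian bundles $(E_{n},\nabla_{n})\to B\Gamma$ with $\norm{\mathrm{curv}(\nabla_{n})}\to 0$ whose Chern characters converge to $c$. The degree hypothesis is essential here: a class in $H^{1}(B\Gamma;\Z)$ comes from a homomorphism $\Gamma\to U(1)$ and yields a genuinely flat line bundle, while a class in $H^{2}(B\Gamma;\Z)$ is the first Chern class of a line bundle $L$ whose curvature can be rescaled to arbitrarily small norm without altering $c_{1}(L)$. Taking tensor products and direct sums then produces almost flat $E_{n}$ whose Chern characters limit onto any polynomial in classes of degree $\le 2$; no such construction is available in higher degrees in general.

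Next, each $(E_{n},\nabla_{n})$ is assembled into a Hilbert $C^{*}_{\Max}(\Gamma)$-module bundle $\mca{V}_{n}\to M$ by pulling $E_{n}$ back via $f$, lifting to the universal cover $\widetilde{M}$, and using the group ring structure to manufacture a $C^{*}_{\Max}(\Gamma)$-valued connection that is genuinely flat, with the failure of classical flatness absorbed into the coefficient algebra. Twisting $D_{M}$ by $\mca{V}_{n}$ gives a Mishchenko--Fomenko index $\ind(D_{\mca{V}_{n}})\in K_{0}(C^{*}_{\Max}(\Gamma))$ which, by naturality of the assembly map, lies in the image of $\mu$ and is controlled by $\mu(h)$. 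Applying a tracial state to this index and invoking the Mishchenko--Fomenko index theorem returns $\int_{M} f^{*}\mathrm{ch}(E_{n})\cup\mathrm{Td}(M)$ up to a term that vanishes as $n\to\infty$; this limit is the nonzero integral above, so $\ind(D_{\mca{V}_{n}})\neq 0$ for $n\gg 0$ and therefore $\mu(h)\neq 0$.

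The main obstacle is the third step: converting the analytic almost flatness of $\nabla_{n}$ into genuine $C^{*}$-algebraic flatness of the Hilbert module connection, and quantifying the error well enough to justify the limiting trace computation. Parallel transport estimates in $C^{*}_{\Max}(\Gamma)$, norm continuity of the Mishchenko--Fomenko index under small curvature perturbations, and identifying the traced index with a scalar Atiyah--Singer integral are the delicate points; once these estimates are in hand, the almost-flat approximation of $c$ forces the pairing to be nonzero in the limit and the theorem follows.
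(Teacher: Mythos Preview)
This theorem is cited from \cite{Ha-Sc08}, not proved in the paper; the relevant comparison is with the paper's proof of its generalization, Theorem~A (Sections~5--7). Your overall strategy --- almost flat bundles, twisted Dirac indices, passage to $C^{*}$-algebraic coefficients --- is correct in spirit, but two of your steps would not go through as written.

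First, the claim that for a line bundle realizing a class in $H^{2}$ the ``curvature can be rescaled to arbitrarily small norm without altering $c_{1}(L)$'' is false: rescaling the curvature form changes its de~Rham class and hence $c_{1}$. The paper's construction (Section~5) instead works on $X$ where the pullback satisfies $c_{1}(L)=0$, so the bundle is topologically trivial and one may take connections $\dif+it\eta$ with curvature $it\omega$; the price is that $L_{t}$ is equivariant only for a $U(1)$-central extension $G_{\alpha^{t}}$ of $G$, and an infinite product $\Gaa$ of such extensions must be introduced to treat all $t=1/k$ simultaneously. Second, your description of the $C^{*}$-module bundle (``a $C^{*}_{\Max}(\Gamma)$-valued connection that is genuinely flat, with the failure of classical flatness absorbed into the coefficient algebra'') and the concluding limit-of-traces argument do not match the actual mechanism. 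Both \cite{Ha-Sc08} and this paper form the infinite product bundle $\prod_{k}L_{1/k}$ over the coefficient algebra $\prod\C$, pass to the quotient $W$ over $Q=(\prod\C)/(\oplus\C)$, and use that $W$ is genuinely flat (since the curvatures tend to zero, Proposition~\ref{FlatnessOfW}) together with its holonomy representation to show $\mu_{\Gaa}([W]\hattensor[A])=0$ whenever $\mu_{G}([A])=0$ (Proposition~\ref{MuW0}). The conclusion then comes from the $K$-theory six-term sequence for $0\to\oplus\C\to\prod\C\to Q\to 0$: the product index lifts to the direct-sum side, forcing all but finitely many component indices to vanish; since these equal the values of a polynomial in $t$ at $t=1/n$, the polynomial vanishes identically, contradicting the hypothesis at $t=1$. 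There is no ``Chern characters converge to $c$'' step and no limiting trace computation --- the endgame is algebraic, via this polynomial-vanishing trick.
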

Next, the statement of Baum-Connes conjecture
for the proper actions is the following;
\begin{conj}
{\upshape{\cite[(3.15)]{Ba-Co-Hi76}}}
Let $G$ be a locally compact Hausdorff second countable group
and let $\EG$ be the universal example for proper actions of $G$
defined in definition \ref{UnivExample}.
Then
$$
\map{\mu} {\mathcal{R}K_{0}^{G}(\EG)} K_{0}(C^{*}(G))
$$
is an isomorphism.
\end{conj}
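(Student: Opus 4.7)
The statement is the Baum--Connes conjecture for proper actions of a locally compact second countable group, which remains open in this generality. Accordingly, my proof proposal can only sketch the strategy that has succeeded in large classes of examples, namely the Dirac--dual Dirac method of Kasparov; the full conjecture should be expected to remain conjectural after the outline.

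The plan is to construct a ``$\gamma$-element'' $\gamma \in KK^{G}(\C,\C)$ via a Dirac--dual Dirac factorization. First, I would identify a proper $G$-$C^{*}$-algebra $A$---typically $C_0$-sections of a Clifford bundle over a geometric model for $\EG$, or over a Euclidean/Hilbert space on which $G$ acts properly and isometrically---and construct a Dirac element $d \in KK^{G}(A,\C)$ from a family of Dirac-type operators adapted to this geometry. Next, I would produce a dual Dirac element $\eta \in KK^{G}(\C,A)$ going in the opposite direction, typically via a Bott-like construction using the exponential/heat kernel transverse to orbits, and set $\gamma := \eta \otimes_{A} d$.

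By naturality of the Kasparov descent $j^{G}$ and the compatibility of the assembly map with Kasparov products, if one can show $\gamma = 1_{\C}$ in $KK^{G}(\C,\C)$ then $\mu$ is an isomorphism; one direction of this identity, $d \otimes_{\C} \eta = 1_{A}$, is usually the easier one, coming from a rotation homotopy. Weaker forms of the identity---for instance, that the restriction of $\gamma$ to each compact subgroup is trivial---already suffice for split injectivity, which is the part actually relevant to the strong Novikov direction pursued in the present paper.

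The hard part will be producing $\eta$ and verifying $\gamma = 1$ without further geometric hypotheses on $G$. For a-T-menable groups the construction goes through via Higson--Kasparov using an affine isometric action on a Hilbert space; for hyperbolic groups it is Lafforgue's Banach $KK$-theory that supplies the dual Dirac element; but for general locally compact $G$ no such construction is known, and the groupoid analogue admits counterexamples, so the method cannot be pushed through by purely formal means. This limitation is precisely why the main theorem of the present paper targets only the weaker non-vanishing of the image of the index map, rather than the full isomorphism asserted by the Baum--Connes conjecture.
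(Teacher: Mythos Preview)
Your assessment is correct: the statement is the Baum--Connes conjecture, which the paper records as a \emph{conjecture} and does not attempt to prove. There is no proof in the paper to compare against; the paper only remarks immediately afterward that the conjecture is known for certain classes (hyperbolic groups, Lie groups with finitely many components) and then explicitly moves on to a weaker goal---rational injectivity under low-degree hypotheses---precisely because the full isomorphism is out of reach in general.

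Your sketch of the Dirac--dual Dirac strategy is an accurate summary of the standard approach in the cases where the conjecture is known, and your closing paragraph correctly identifies both the obstruction (no construction of $\eta$ or verification of $\gamma=1$ for arbitrary $G$) and the relation to the paper's actual aims. There is nothing to correct here; you have recognized that the item is a conjecture rather than a provable statement, and your outline is consistent with the literature the paper cites.
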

This conjecture is true for such groups as, 
for instance, hyperbolic groups and 
Lie groups with finitely many connected components.

In this paper, few such assumptions on groups will be required.
For example, we allow the semi-direct product of a compact group
with a countable discrete group.
Instead, we will set some assumptions on the domain
of the Baum Connes map like \cite{Ma03} and \cite{Ha-Sc08},
and we focus on the rational injectivity of this map.

Now let us fix our settings.
Let $X$ be a complete Riemannian manifold with $H_1(X;\Z) = 0$
and let $G$ be a second countable locally compact Hausdorff
unimodular group
which act on $X$ properly and isometrically.
Assume that $X$ is $G$-compact.

Let $\map{\mu_{G}}{K\!K^{G}(C_0(X), \C)}{K_0(C^{*}(G))}$
be the index map,
and $E = L^{1} \oplus \ldots \oplus L^{N}$ be a finite direct sum of
smooth Hermitian $G$-line bundles
each of which has vanishing first Chern class; $c_1(L^{k}) = 0
\in H^{2}(X;\R)$.
%induced by line bundles
%over the universal example of $\EG$ for proper actions
%of $G$ through $\map{f}{X}{\EG}$.
\begin{thmalph}\label{MainThm}
Let $A$ be a properly supported $G$-invariant elliptic operator
of order $0$.
If $[A]\in K\!K^{G}(C_0(X), \C)$ satisfies that
the Kronecker pairing
$\angles{[E], [A]}_{G}$ is not equal to $0 \in \R$
for some $E$,
then $\mu_{G}([A]) \neq 0 \in K_{0}(C^{*}(G))$.
\end{thmalph}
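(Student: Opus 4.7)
The plan is to adapt the almost-flat-bundle strategy of Hanke--Schick from the discrete-free setting to the locally-compact proper-action setting. The hypothesis $c_{1}(L^{k})=0\in H^{2}(X;\R)$ is precisely the condition guaranteeing that each $L^{k}$ carries $G$-invariant Hermitian connections with arbitrarily small curvature. I will use these to assemble, for each $\ep>0$, a Mishchenko-type Hilbert $C^{*}(G)$-module bundle $\mca V_{\ep}$ over $X$; the Kasparov product of $[\mca V_{\ep}]$ with $[A]$ lands in $K_{0}(C^{*}(G))$, and a trace on $C^{*}(G)$ evaluated on this class will recover the Kronecker pairing $\angles{[E],[A]}_{G}$ up to $O(\ep)$. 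Since the hypothesis says this pairing is non-zero, taking $\ep$ small enough forces $\mu_{G}([A])\neq 0$.

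The first technical step is the construction of the $G$-invariant almost-flat connections. Fix any $G$-invariant Hermitian connection $\nabla^{k}_{0}$ on $L^{k}$. Because $c_{1}(L^{k})$ vanishes in de Rham cohomology, its curvature satisfies $\Omega^{k}_{0}=d\alpha^{k}$ for some global $1$-form $\alpha^{k}$, and $\nabla^{k}_{0}-i\alpha^{k}$ is flat but not $G$-invariant in general. Exploiting $G$-compactness of the proper action, I pick a cutoff function $c\in C_{c}(X)$ with $\int_{G}c(g^{-1}x)\,dg\equiv 1$ and use unimodularity of $G$ to average $\alpha^{k}$ to a $G$-invariant $1$-form $\tilde\alpha^{k}$. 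Rescaling produces $G$-invariant connections $\nabla^{k}_{\ep}$ on $L^{k}$ whose curvatures have operator norm $<\ep$; summing over $k$ yields a $G$-invariant Hermitian connection on $E=L^{1}\oplus\ldots\oplus L^{N}$ with curvature $O(\ep)$.

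The second step is the index-theoretic identification. Tensoring $E$ with the Mishchenko line bundle $X\times C^{*}(G)$ and carrying the almost-flat connection along produces a $G$-equivariant Hilbert $C^{*}(G)$-module bundle $\mca V_{\ep}$ on $X$, equivalently a class $[\mca V_{\ep}]\in K\!K^{G}(\C,C_{0}(X)\otimes C^{*}(G))$. Kasparov multiplication with $[A]$ yields a class in $K_{0}(C^{*}(G))$ that, by naturality, factors through $\mu_{G}([A])$. A $G$-equivariant local Chern--Weil / index computation on a compact fundamental domain (possible because $A$ is properly supported and $X/G$ is compact) shows that the canonical trace $\tau$ on $C^{*}(G)$ evaluated on this class equals $\angles{[E],[A]}_{G}+O(\ep)$. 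Choosing $\ep$ small gives $\tau(\mu_{G}([A]))\neq 0$, hence $\mu_{G}([A])\neq 0$.

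The main obstacle is the control in the averaging step: derivatives of the cutoff function $c$ produce extra curvature terms after averaging, so one must argue that these can be uniformly suppressed on a fundamental domain while simultaneously preserving the representative of the equivariant Chern character that pairs non-trivially with $[A]$. This is where properness and $G$-compactness are essential, since they allow one to replace global integration over $G$ by a bounded-geometry computation localized on the support of $c$. A secondary difficulty, but more or less standard, is verifying the local index formula in this non-compact $G$-equivariant setting with sufficient precision to extract the $O(\ep)$ error estimate.
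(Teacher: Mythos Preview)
Your proposal has a genuine gap in the very first step, and this gap is precisely the reason the paper's argument is considerably more elaborate than the discrete-free Hanke--Schick argument you are trying to port.

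You claim that because $c_{1}(L^{k})=0$ in $H^{2}(X;\R)$, the $G$-line bundle $L^{k}$ admits $G$-invariant Hermitian connections with arbitrarily small curvature. This is false in general. The space of $G$-invariant connections on $L^{k}$ is an affine space over $\Omega^{1}(X)^{G}$, and the curvature of any two such connections differ by an element of $d\Omega^{1}(X)^{G}$. So the curvatures all represent the \emph{same} class in $\Omega^{2}(X)^{G}/d\Omega^{1}(X)^{G}$, and if that class is nonzero there is a uniform lower bound on the curvature norm. Concretely: in the classical case $X=\widetilde M$, $G=\pi_{1}(M)$, $L=\pi^{*}f^{*}L_{0}$ with $c_{1}(L_{0})\neq 0$, any $G$-invariant connection on $L$ descends to a connection on $f^{*}L_{0}$ over the compact manifold $M$, and its curvature represents the nonzero class $f^{*}c_{1}(L_{0})$; it cannot be made small. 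Your averaging argument fails exactly here: the terms involving $dc$ that you flag as ``the main obstacle'' are not error terms to be suppressed --- they carry the entire obstruction class, and no choice of cutoff removes them.

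The paper circumvents this by \emph{not} working with $G$-bundles at all for the almost-flat family. From $L$ it extracts a $U(1)$-valued $2$-cocycle $\alpha$ on $G$, forms the central extensions $G_{\alpha^{t}}$ for $t\in[0,1]$, and constructs topologically trivial line bundles $L_{t}=(X\times\C,\ d+it\eta)$ with curvature $it\omega$. The point is that $L_{t}$ is a $G_{\alpha^{t}}$-bundle, not a $G$-bundle; the ``twist'' by $\alpha^{t}$ is exactly what absorbs the obstruction you cannot average away. All the $L_{1/k}$ are then assembled into a Hilbert $\prod\C$-module bundle over $X$, equivariant for a single group $\widetilde{G_{\alpha}}$ (a central $U(1)^{\N}$-extension of $G$), and the quotient $W$ by $\bigoplus\C$ is genuinely flat. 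Only at this flat stage does one get the factorisation through $\mu_{\widetilde{G_{\alpha}}}([A])$ via the holonomy representation --- your ``by naturality, factors through $\mu_{G}([A])$'' for an almost-flat bundle is unsupported and is in fact the heart of the matter. The contradiction is then extracted from the six-term sequence for $0\to\bigoplus\C\to\prod\C\to Q\to 0$ together with the $L^{2}$-index formula, which forces a polynomial in $t$ to vanish at infinitely many $t=1/n$ and hence identically.
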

The Kronecker pairing
$\angles{[E], [A]}_{G}$ will be defined later
in the definition \ref{DfnKronecker}.
Instead of taking the paring on the level of cohomology
$H^{*}(X)$ and homology $H_{*}(X)$
as in \cite{Ma03} and \cite{Ha-Sc08},
we will consider it on the level of equivariant $K$-theory.
In the case of free actions of discrete groups,
they coincide.

\begin{rems}
\begin{enumerate}
\item
In this paper $C^{*}(G)$ is either reduced or maximal 
$C^{*}$-algebra.
So even if $G$ is discrete, 
theorem \ref{MainThm}
is not covered by {\cite[Theorem 1.2]{Ha-Sc08}}
since in \cite{Ha-Sc08}, only the maximal group $C^{*}$-algebras
are considered to use the universal property of them.
\item
Tha assumption that $c_1(L)=0 \in H^{2}(X;\R)$ implies that
such line bundle $L$ over $X$ is topologically trivial, however,
remark that it does not follow that $L$ is trivial as a $G$-line bundle. 
%%?? \begin{exa}

One of the classical examples is as follows;
let us consider the case of free action of a discrete group,
namely, let $M$ be a closed Riemannian manifold, 
$G$ be its fundamental group $\pi_1(M)$,
and $X$ be its universal cover $\widetilde{M}$.
The projection will be denoted by $\map {\pi} {X} {M}$.
Moreover, let $\map {f} {M} {BG}$ be the classifying map,
and consider a line bundle $f^{*}L_0$
obtained by the pull back by $f$ of any Hermitian line bundle $L_0$ over $BG$.
Then, let $L$ be its lift
 $\pi^{*}f^{*}L_0$.
Since $f$ is lifted to
$\map {\widetilde{f}} {X} {EG}$ and 
$EG$ is contractible, we obtain $c_1(L) = 0$.

Similar argument also works in the case of proper actions.
Let $G$ be a locally compact group acting properly on
a complete Riemannian manifold $X$.
Let $\EG$ be the universal example of proper action of $G$,
$\map {f} {X} {\EG}$ be a $G$ equivariant map,
and $L_0$ be any Hermitian $G$-line bundle over $\EG$.
Then, the pull back $L:= f^{*}L_0$ is a Hermitian $G$-line bundle
over $X$, whose first Chern class vanishes since
$\EG$ is contractible as a topological space.
\end{enumerate}
\end{rems}
\begin{rem}
%%%%
In {\cite{Ha-Sc08}}, the problem on $B\Gamma$
is reduced to the case of manifolds,
using the following lemma;
\begin{lem}
{\upshape{\cite[Theorem 6.2]{Ba-Hi-Sc07}}}
For any $h\in K_{0}(B\Gamma)$,
there exists a geometric $K$-cycle $(M,E,\phi)$
realizing $h$.
Precisely there exists $(M,E,\phi)$, where 
$M$ is a spin$^c$-manifold, $E$
is a smooth Hermitian vector bundle,
and $\map{\phi}{M}{B\Gamma}$ is a continuous map,
such that 
$$
\phi_{*}\parens{[E]\hattensor [M]} = h.
$$
\end{lem}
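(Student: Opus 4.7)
The plan is to identify the statement as the surjectivity half of the Baum--Higson--Schick theorem that the natural comparison homomorphism from Baum--Douglas geometric $K$-homology to analytic $K$-homology is an isomorphism.

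First I would define the natural transformation
$\map{\mu}{K^{\mathrm{geo}}_{0}(X)}{K_{0}(X)}$
on CW complexes by sending a geometric cycle $(M, E, \phi)$ to
$\phi_{*}\bigl([E]\hattensor[M]\bigr)$,
where $[E]\hattensor[M] \in K_{0}(M)$ is the class of the Dirac operator on the closed $\mathrm{spin}^{c}$-manifold $M$ twisted by the Hermitian bundle $E$. Well-definedness on equivalence classes requires checking that (i) disjoint union becomes addition, (ii) bordism of cycles yields the same class (cobordism invariance of the index), and (iii) vector bundle modification corresponds to a Kasparov product with the Bott class on a sphere bundle, which is absorbed on the analytic side. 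It then suffices to prove $\mu$ is surjective for $X = B\Gamma$.

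Next I would verify that both geometric and analytic $K$-homology are generalized homology theories on the category of CW pairs. For analytic $K$-homology this follows from Kasparov's $KK$-theory. For the geometric side one must introduce cycles with boundary to obtain a relative theory, then establish homotopy invariance (via product cylinders), the long exact sequence of a pair, excision, and the wedge axiom. Once both are homology theories, and since each sends a point to $\Z$ generated by $(\mathrm{pt}, \C, \mathrm{id})$ on the geometric side and by the class of $[\C]$ analytically, the natural transformation $\mu$ is an isomorphism on every finite CW complex by the standard induction on cells.

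For the classifying space, any $h \in K_{0}(B\Gamma)$ is the image under inclusion of some class $h' \in K_{0}(Y)$ for a finite subcomplex $\map{\iota}{Y}{B\Gamma}$, since analytic $K$-homology commutes with filtered colimits of subspaces. On $Y$ the isomorphism above provides a geometric representative $(M, E, \psi)$ of $h'$, and post-composing yields $\phi := \iota \circ \psi$ with $\phi_{*}\bigl([E]\hattensor[M]\bigr) = h$.

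The main obstacle is promoting geometric $K$-homology to a relative theory on CW pairs satisfying excision: one needs a supple enough notion of boundary for geometric cycles, and the verification that cobordism, disjoint union, and vector bundle modification together generate exactly the relations forced by analytic $K$-homology. This comparison of equivalence relations, rather than the formal homological algebra, is the technical core of the argument.
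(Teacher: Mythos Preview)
The paper does not prove this lemma; it merely cites it as \cite[Theorem 6.2]{Ba-Hi-Sc07} inside a remark explaining how Hanke and Schick reduce the problem on $B\Gamma$ to the manifold case. There is therefore no ``paper's own proof'' to compare against.

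Your sketch is a reasonable outline of the argument actually given by Baum, Higson and Schick: one defines the comparison map $\mu$ from geometric to analytic $K$-homology, checks well-definedness under the three Baum--Douglas relations (disjoint union, bordism, vector bundle modification), verifies that both sides are generalized homology theories agreeing on a point, and then uses a cell-by-cell induction plus a colimit argument for infinite complexes like $B\Gamma$. The point you flag as the main obstacle---promoting the geometric theory to a relative theory with excision and identifying the equivalence relation precisely---is indeed where the work in \cite{Ba-Hi-Sc07} lies, so your assessment of the difficulty is accurate. Since the present paper only invokes the result and does not reproduce any of this, your proposal goes well beyond what the paper itself contains.
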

Unfortunately we do not have the corresponding result
for $G$-equivariant $K$-homology of proper $G$-spaces,
so the general case of proper $G$-topological space
including $\EG$
could not be reduced
to the case of $G$-manifolds.
\end{rem}
%%%
%% Remark on the corollary
%%%
As a corollary of Theorem \ref{MainThm},
 we obtain the following Gromov-Lawson-Rosenberg conjecture
for low dimensional cohomology classes. 
\begin{coralph}\label{MainCor}
Let $X$ and $G$ be as above and additionally we assume 
$X$ is a spin manifold and the scalar curvature of $X$ is positive.
Then
for any Hermitian $G$-vectorbundle $E$ satisfying the conditions above,
the following higher $G$-$\widehat{\mathcal{A}}$-genus vanishes.
$$
\widehat{\mathcal{A}}_{G}(X;E)
:=
\int_{X} c(x) \widehat{\mathcal{A}}(TX)\wedge \mathrm{ch}(E)
=0,
$$
where $c\in C_c(X)$ denotes an arbitrary cut-off function.
\end{coralph}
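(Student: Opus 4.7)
The plan is to derive the corollary from Theorem \ref{MainThm} by taking $A$ to be a $K\!K$-theoretic representative of the Dirac operator on $X$, and then combining the Lichnerowicz formula with the contrapositive of Theorem \ref{MainThm}.

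Since $X$ is $G$-equivariantly spin, let $D$ denote its spin Dirac operator and choose a bounded functional-calculus representative $A$ of $D$, e.g.\ $A := D(1+D^{2})^{-1/2}$, approximated if necessary by a properly supported $G$-invariant order-zero elliptic operator, so that $[A] = [D] \in K\!K^{G}(C_{0}(X), \C)$ is the Dirac $K$-homology class. The Lichnerowicz formula reads $D^{2} = \nabla^{*}\nabla + \mathrm{scal}/4$; by $G$-compactness of $X$ and $G$-invariance of the metric, the positive function $\mathrm{scal}$ has a uniform positive lower bound, so $D^{2}$ is bounded below by a positive constant on $L^{2}$. A standard Kasparov-theoretic argument, in which the bounded transform becomes a unitary modulo compacts in the appropriate sense, then yields $\mu_{G}([A]) = 0 \in K_{0}(C^{*}(G))$. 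By the contrapositive of Theorem \ref{MainThm}, we obtain $\angles{[E], [A]}_{G} = 0$ for every admissible $E = L^{1} \oplus \cdots \oplus L^{N}$ of the stated form.

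It remains to identify this Kronecker pairing with the higher $\widehat{\mathcal{A}}$-integrand. By the definition of $\angles{\cdot,\cdot}_{G}$ (Definition \ref{DfnKronecker}), together with an equivariant local index theorem for the Dirac operator twisted by $E$ on the proper cocompact $G$-manifold $X$, one expects
$$
\angles{[E], [A]}_{G} \;=\; \int_{X} c(x)\,\widehat{\mathcal{A}}(TX) \wedge \mathrm{ch}(E),
$$
for any cut-off function $c \in C_{c}(X)$. Combining this identity with the vanishing of the pairing established above gives $\widehat{\mathcal{A}}_{G}(X; E) = 0$, which is the claim.

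The hard part is the last step: the cohomological interpretation of the Kronecker pairing as the classical higher $\widehat{\mathcal{A}}$-integral, valid in the proper, cocompact, non-free, non-discrete setting. This requires a version of the equivariant Atiyah--Singer formula adapted to these hypotheses and is precisely where the discrete/free-action framework of \cite{Ha-Sc08} must be replaced. The vanishing $\mu_{G}([A])=0$ under positive scalar curvature is comparatively routine given standard equivariant Kasparov techniques, so the corollary reduces essentially to this clean cohomological identification together with a single application of Theorem \ref{MainThm}.
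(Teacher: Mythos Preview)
Your proof is correct and follows essentially the same route as the paper: Lichnerowicz plus uniform positivity of the scalar curvature (via $G$-compactness/bounded geometry) gives invertibility of $\mathcal{D}(1+\mathcal{D}^{2})^{-1/2}$, hence $\mu_{G}([A])=0$, and then the contrapositive of Theorem~\ref{MainThm} yields $\angles{[E],[A]}_{G}=0$. The one point on which you hedge unnecessarily is the cohomological identification $\angles{[E],[A]}_{G}=\int_{X} c(x)\,\widehat{\mathcal{A}}(TX)\wedge \mathrm{ch}(E)$: in the paper this is not a gap but exactly Theorem~\ref{HWindex} (Wang's $L^{2}$-index formula \cite{Wa14}) specialized to the spin Dirac operator twisted by $E$, which is how Definition~\ref{HigherAgenus} is set up, so you can simply invoke it rather than flagging it as the ``hard part.''
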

The higher $G$-$\widehat{\mathcal{A}}$-genus is defined later
in Definition \ref{HigherAgenus} using 
the $L^{2}$-index formula; Theorem \ref{HWindex}
{\cite[Proposition 6.11]{Wa14}}.
In the case of a free action a descrete group $\Gamma$,
our higher $G$-$\widehat{\mathcal{A}}$-genus of $X$ coincides with
the higher $\widehat{\mathcal{A}}$-genus of $X/\Gamma$
by $\Gamma$-index theorem.
This gives a generalization of \cite[Corollary 1]{Ma99}
to non-discrete groups.

Remark that the following Novikov conjecture
for low dimensional cohomology classes
does not follow unless the acting group $G$ is discrete
and the action on $X$ is free.
This is because we do not know, unfortunately,
the homotopy equivalence of the $(C^{*}(G))$-index of
the signature operator,
$\mu_{G}([\dif^{X} + \delta^{X}]) \in K_{0}(C^{*}(G))$
in general cases.
\begin{conj}
\settingsGXa
Let $X'$ be another complete Riemannian $G$-compact 
proper $G$-manifold and assume that 
there exists a $G$-equivariant orientation
preserving smooth homotopy equivalence
$\map{h}{X'}{X}$.
Let $\map{f}{X}{\EG}$ be a $G$-equivariant map.
Then for any smooth Hermitian $G$-vector bundle
$E$ over $X$
given by $L=f^{*}E_0$ for some $G$-vector bundle $E_0$
over $\EG$,
the higher signature for $E_0$ is homotopy invariant, that is,
$$
\int_{X} c(x) \mathcal{L}(X)\wedge \mathrm{ch}(E)
=
\int_{X'} c'(x') \mathcal{L}(X')\wedge \mathrm{ch}(h^{*}E),
$$
where $c\in C_c(X)$ denotes an arbitrary cut-off function
and $\mathcal{L}(X)$ denotes the $L$-class of $X$.
\end{conj}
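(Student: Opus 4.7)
The plan is to reduce the conjecture to the homotopy invariance of the $C^{*}(G)$-index of the signature operator, and then invoke Theorem \ref{MainThm}. First, by the $L^{2}$-index formula (Theorem \ref{HWindex}) applied to the signature operators $D_{X}:=\dif^{X}+\delta^{X}$ and $D_{X'}:=\dif^{X'}+\delta^{X'}$, each side of the asserted identity is a Kronecker pairing with the signature class:
$$
\int_{X} c(x)\,\mathcal{L}(X)\wedge \mathrm{ch}(E)
= \angles{[E],[D_{X}]}_{G},
\qquad
\int_{X'} c'(x')\,\mathcal{L}(X')\wedge \mathrm{ch}(h^{*}E)
= \angles{[h^{*}E],[D_{X'}]}_{G}.
$$
Since $h\colon X' \to X$ is $G$-equivariant, naturality of the Kronecker pairing under the induced map $h_{*}$ on equivariant $K$-homology rewrites the second expression as $\angles{[E],\,h_{*}[D_{X'}]}_{G}$. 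So the conjecture is equivalent to the vanishing $\angles{[E],\,[D_{X}]-h_{*}[D_{X'}]}_{G}=0$.

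Next, I would apply the contrapositive of Theorem \ref{MainThm} to the class $[A]:=[D_{X}]-h_{*}[D_{X'}] \in K\!K^{G}(C_{0}(X),\C)$. Because $\EG$ is contractible, any $G$-vector bundle pulled back from $\EG$ has vanishing ordinary first Chern class on each line-bundle summand after a choice of Hermitian splitting, so $E$ falls within the class of bundles allowed by Theorem \ref{MainThm}. Consequently, if one can prove the homotopy invariance
$$
\mu_{G}([D_{X}]) \;=\; \mu_{G}\bigl(h_{*}[D_{X'}]\bigr) \in K_{0}(C^{*}(G)),
$$
then $\mu_{G}([A])=0$, and Theorem \ref{MainThm} immediately forces $\angles{[E],[A]}_{G}=0$, which is exactly what the conjecture asserts.

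The hard step is the displayed equality above, i.e.\ the homotopy invariance of the $C^{*}(G)$-index of the signature operator on a proper cocompact $G$-manifold. In the case of free actions of discrete groups, this is the theorem of Hilsum--Skandalis, whose proof produces an explicit bivariant $KK$-equivalence out of $h$ via the symmetric signature class on the mapping cylinder, refining earlier work of Mi\v{s}\v{c}enko and Kasparov. The main obstacle I expect --- and precisely the gap the author highlights in the paragraph preceding the conjecture --- is extending that construction to the $K\!K^{G}$-level for proper actions of non-discrete locally compact $G$. One would need an equivariant algebraic-Poincar\'e-complex argument producing the identity $h_{*}[D_{X'}]=[D_{X}]$ in $K\!K^{G}(C_{0}(X),\C)$, or at least after the index map $\mu_{G}$. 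No such treatment is currently available in the required generality, which is why the statement is only posed as a conjecture rather than as a theorem.
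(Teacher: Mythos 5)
This statement is posed in the paper as a \emph{conjecture}, not a theorem: no proof is given, and the paragraph immediately preceding it states explicitly that the assertion ``does not follow unless the acting group $G$ is discrete and the action on $X$ is free,'' precisely because the homotopy invariance of $\mu_{G}([\dif^{X}+\delta^{X}])\in K_{0}(C^{*}(G))$ is not known for general proper cocompact actions of locally compact groups. Your proposal reproduces exactly this reduction --- rewrite both sides as Kronecker pairings via Theorem \ref{HWindex}, reduce to $\angles{[E],[D_{X}]-h_{*}[D_{X'}]}_{G}=0$, and try to conclude from Theorem \ref{MainThm} once $\mu_{G}([D_{X}])=\mu_{G}(h_{*}[D_{X'}])$ is known --- and you correctly identify that this last equality is the missing ingredient. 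So the proposal is not a proof; it is an accurate account of why the statement remains conjectural, and in that sense it agrees with the paper's own framing.

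Two further points deserve care even granting the Hilsum--Skandalis-type input. First, Theorem \ref{MainThm} is stated for classes $[A]$ represented by properly supported $G$-invariant elliptic operators \emph{on $X$}, and for $E$ a finite direct sum of $G$-line bundles with vanishing real first Chern class; the class $h_{*}[D_{X'}]$ is a priori only an abstract element of $K\!K^{G}(C_{0}(X),\C)$ (its cycle lives on $L^{2}$-forms over $X'$ with $C_{0}(X)$ acting through $h$), and the paper notes the absence of a geometric-cycle realization theorem for proper $G$-spaces, so representing $[D_{X}]-h_{*}[D_{X'}]$ by an admissible operator is itself a nontrivial step. Second, a general Hermitian $G$-vector bundle pulled back from $\EG$ need not split as a direct sum of $G$-line bundles, so your remark that ``$E$ falls within the class of bundles allowed by Theorem \ref{MainThm} after a choice of Hermitian splitting'' does not hold without further hypotheses; contractibility of $\EG$ kills the Chern classes but does not produce a line-bundle decomposition.
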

%%%%%%%%
%% END of Intro
%%%%%%%%
\section{Preliminaries}
%%%%
\subsection{Proper Actions}
\begin{dfn}
Let $G$ be a second countable locally compact Hausdorff
group.
Let $X$ be a complete Riemannian manifold.
\begin{enumerate}
\item
$X$ is called a $G$-Riemannian manifold
if $G$ acts on $X$ isometrically.
\item
The action of $G$ on $X$ is said to be proper
or $X$ is called a proper $G$-space
if the following continuous map is proper;
$$
G\times X \to X \times X, \quad
(\gm , x)\mapsto (\gm x, x).
$$
\item
The action of $G$ on $X$ is said to be co-compact
or
$X$ is called $G$-compact space if the quotient space
$X/G$ is compact.
\end{enumerate}
\end{dfn}
%% cut-off functions
\begin{dfn}
A compactly supported smooth function
$c \in C^{\infty}_{c}(X)$ is called a cut-off function
if
$c \geq 0$ is non-negative everwhere and it satisfies
$$
\int_{G} c(\gminv x) \mathrm{d}\gm =1
$$
for any $x\in X$ .
\end{dfn}
\begin{lem}
Any $G$-compact manifold
with proper action admits a cut-off function.
\end{lem}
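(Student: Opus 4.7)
The plan is to produce a cut-off function by averaging a compactly supported bump whose $G$-translates cover $X$, then dividing out the averaged positive function to normalize the integral.

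First, I would construct a non-negative $\psi \in C^{\infty}_c(X)$ whose $G$-saturation is all of $X$, i.e., $\bigcup_{\gamma \in G} \gamma \cdot \supp(\psi) = X$. Since $X/G$ is compact, cover it by finitely many open sets $U_1,\ldots,U_n$, each of which is the image under the quotient map $X \to X/G$ of a relatively compact open set $V_i \subset X$. Choose smooth bumps $\psi_i \geq 0$ with $\supp(\psi_i)\subset V_i$ and $\psi_i>0$ on a set projecting onto $U_i$, and set $\psi := \sum_{i} \psi_i$. Then $\psi \in C^{\infty}_c(X)$ is non-negative, and for every $x\in X$ there exists $\gamma \in G$ with $\psi(\gminv x)>0$.

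Next, define
$$
\Psi(x) \;:=\; \int_{G} \psi(\gminv x)\,\dif\gm.
$$
For fixed $x$, the integrand is supported on $\{\gm \in G : \gminv x \in \supp \psi\}$, which by properness of the $G$-action is compact (apply properness to $\supp(\psi) \times \{x\}$ and invert); hence $\Psi(x)$ is well-defined, and differentiating under the integral sign (using that $\psi$ is smooth and that the compact set of relevant $\gm$ varies continuously with $x$) shows $\Psi \in C^{\infty}(X)$. Using left-invariance of Haar measure and unimodularity of $G$ as needed, the substitution $\gm \mapsto g\gm$ gives $\Psi(gx) = \Psi(x)$ for all $g \in G$, so $\Psi$ is $G$-invariant. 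By the covering property of $\supp(\psi)$, we have $\Psi(x)>0$ for every $x \in X$.

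Finally, set $c(x) := \psi(x)/\Psi(x)$. This is smooth, non-negative, and compactly supported (same support as $\psi$), and by $G$-invariance of $\Psi$,
$$
\int_G c(\gminv x)\,\dif\gm \;=\; \frac{1}{\Psi(x)}\int_G \psi(\gminv x)\,\dif\gm \;=\; \frac{\Psi(x)}{\Psi(x)} \;=\; 1.
$$
The main technical point I expect to have to handle carefully is the construction of $\psi$ in the first step, since it requires a mild slice-type argument to exhibit relatively compact open sets in $X$ whose images cover $X/G$; everything after that is routine calculus under the integral, properness, and left-invariance of the Haar measure.
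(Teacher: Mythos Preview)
Your proof is correct and follows essentially the same approach as the paper: pick a non-negative compactly supported smooth function whose support meets every orbit, form its $G$-average, and divide to normalize. The paper's version is terser (it asserts the existence of such a function directly from $G$-compactness and skips the verification of $G$-invariance of the averaged function), but the argument is the same.
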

\begin{proof}
Let $c' \in C^{\infty}_{c}(X)$ be an arbitrary
non-negative function whose support intersects
every orbit, namely, $\supp (c') \cap G(\{ x \}) \neq \emptyset$.
There actually exists such a function because
$X$ is $G$-compact.
Then consider the function $c$ by normalizing $c'$ on each orbit;
$$
c(x) := \frac{c'(x)}{\int_{G}c'(\gminv x) \mathrm{d}\gm}.
$$
This $c$ provides a cut-off function.
\\
Note that since the action of $G$ is proper,
$\braces{ \gm \in G \: \big| \: \gminv x \in \supp(c')}
\subset G$ is compact and hence the value
$\int_{G}c'(\gminv x) \mathrm{d}\gm$ is always finite 
for any $x\in X$.
\end{proof}
\begin{dfn}
The action of $G$ on $X$
induces actions on $TX$ and $T^{*}\!X$ given by
$$
\begin{array}{cccc}
&\gm \colon T_{x}X &\to &T_{\gm x}\qquad \quad
\\
&\qquad v &\mapsto &\gm(v):=\gm_{*}v
\end{array}
\qqand
\begin{array}{ccccc}
&\gm \colon T^{*}_{x}X &\to &T^{*}_{\gm x}\qquad \qquad \quad
\\
&\qquad \xi &\mapsto &\gm(\xi):=(\gminv)^{*}\xi.
\end{array}
$$
The action on $\mathfrak{X}(X)$ and $\Omega^{*}(X)$ is given by
$$
\gm[V]%(x)
:= 
%\gm(V(\gminv x)) =
\gm_{*}V%(x)
\qqand
\gm[\omega]%(x)
:= 
%\gm(\omega(\gminv x)) =
% This is just a reminder for me.
({\gminv})^{*} \omega% (x)
$$
for $\gm\in G$, $V\in\mathfrak{X}(X)$
and $\omega \in \Omega^{*}(X)$. Obviously,
$\gm[\omega \wedge \eta] = \gm[\omega] \wedge \gm[\eta]$
and $\dif (\gm [\omega]) = \gm[\dif \omega]$.
\end{dfn}
\begin{lem}
\label{IntegralGInvariantForms}
Let $\omega \in \Omega^{n}(X)$ and 
$\eta \in \Omega^{n-1}(X)$ be $G$-invariant differential forms,
where $n = \dim(X)$
and $c \in C_c^{\infty}(X)$ be a cut-off function. Then,
\begin{enumerate}
\item
$\int_{X} c(x) \omega$ does not depend on the choice of 
a cut-off function.
\item
$\int_{X} c(x) \dif \eta = 0$.
\end{enumerate}
\end{lem}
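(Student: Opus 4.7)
The plan is to reduce both parts to the defining identity $\int_G c(\gminv x)\,\dif\gm = 1$ of a cut-off function, combined with the change-of-variable formula for the diffeomorphism $\phi_{\gm}\colon x \mapsto \gm x$ and the $G$-invariance relations $\phi_{\gm}^*\omega = \omega$ and $\phi_{\gm}^*\eta = \eta$ (each a restatement of $\gm[\omega]=\omega$, $\gm[\eta]=\eta$). Throughout I rely on unimodularity of $G$, so that $\int_G f(\gm)\,\dif\gm = \int_G f(\gminv)\,\dif\gm$ for any integrable $f$.

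For part (1), given two cut-off functions $c_1$ and $c_2$, I would start from $\int_X c_1(x)\,\omega$ and insert the constant $1 = \int_G c_2(\gminv x)\,\dif\gm$. Swapping the order of integration, then performing the substitution $x \mapsto \gm x$ in the inner $X$-integral and using $\phi_{\gm}^*\omega = \omega$, turns the expression into $\int_G \int_X c_1(\gm x)\, c_2(x)\,\omega\,\dif\gm$. Swapping once more and invoking the cut-off identity $\int_G c_1(\gm x)\,\dif\gm = \int_G c_1(\gminv x)\,\dif\gm = 1$ (where unimodularity is used in the first equality) yields $\int_X c_2(x)\,\omega$.

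For part (2), I would first apply Stokes's theorem to the compactly supported $(n-1)$-form $c\eta$, which gives $0 = \int_X \dif(c\eta) = \int_X \dif c \wedge \eta + \int_X c\cdot \dif\eta$, so that it suffices to prove $\int_X \dif c \wedge \eta = 0$. I would establish this by the same unfolding trick as in (1): insert another cut-off $c'$ via $1 = \int_G c'(\gminv x)\,\dif\gm$, swap the order, change variables, and use $\phi_{\gm}^*\eta = \eta$ to rewrite the integral as $\int_X c'(y)\bigl(\int_G \phi_{\gm}^*(\dif c)|_y\,\dif\gm\bigr)\wedge\eta$. Since $\phi_{\gm}^*\dif c = \dif(c\circ\phi_{\gm})$, pulling $\dif_y$ past $\int_G$ converts the inner integral into $\dif_y\bigl(\int_G c(\gm y)\,\dif\gm\bigr) = \dif_y(1) = 0$, where $\int_G c(\gm y)\,\dif\gm = 1$ again by unimodularity and the cut-off property.

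The chief technical concerns are justifying the several interchanges (Fubini and passing $\dif_y$ past $\int_G$). These are routine consequences of properness of the action, which confines the integrand in $\gm$ to a compact set whenever $x$ lies in the support of a cut-off, together with smoothness and compact support of the cut-offs; so I do not anticipate a serious obstacle. The key conceptual observation is simply that the cut-off identity lets one transfer $G$-averaging from the form side to the function side.
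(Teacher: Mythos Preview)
Your proof is correct and follows the same route as the paper's: both rest on the change-of-variables formula for the diffeomorphism $x\mapsto\gm x$, the $G$-invariance of the forms, the cut-off identity $\int_G c(\gminv x)\,\dif\gm=1$, and an interchange of the $X$- and $G$-integrals. The paper's execution differs only cosmetically---it first rewrites the quantity of interest as a $\gm$-independent integral and then integrates that identity over $G$, rather than inserting the cut-off identity up front, and in part~(2) it does not introduce an auxiliary second cut-off $c'$---but the underlying ideas are identical.
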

\begin{proof}
Firstly, note that for any $\gm \in G$, $\map{\gm}{X}{X}$ is
a diffeomorphism, so 
$$
\int_{X} \zeta = \int_{X} \gm[\zeta]
$$
holds for any compactly supported
differential form $\zeta \in \Omega_c^{n}(X)$.

Let $c' \in C_c^{\infty}(X)$ be another cut-off function and set
$a := \int_{X} c(x) \omega - \int_{X} c'(x) \omega$. Then,
\begin{eqnarray*}
a
&=&
\int_{X} \gm[c(x) \omega] - \int_{X} \gm[c'(x) \omega]
\\
&=&
\int_{X} c(\gminv x) \gm[\omega] - \int_{X} c'(\gminv x) \gm[\omega]
\\&=&
\int_{X} c(\gminv x) \omega - \int_{X} c'(\gminv x) \omega
\\&=&
\int_{X} \parens{c(\gminv x)- c'(\gminv x)}
\omega \:,
\\
\text{hence,}\quad
\int_{G} a \:\dif \gm
&=&
\int_{X} \parens{\int_{G} c(\gminv x)\:\dif \gm
-
\int_{G} c'(\gminv x)\:\dif \gm}
\omega
\\&=&
\int_{X} \parens{1-1}\omega = 0.
\end{eqnarray*}
Therefore $a=0$.
As for the second part, set $b:= \int_{X} c(x) \dif \eta$. Then,
\begin{eqnarray*}
&&
b=
-\int_{X} \dif c(x) \wedge \eta
=
-\int_{X} \gm\brackets{\dif c(x)\wedge \eta}
=
-\int_{X} \dif c(\gminv x)\wedge \gm[\eta]
=
-\int_{X} \dif c(\gminv x)\wedge \eta \:,
\\
\text{hence,}
&&
- \int_{G} b \:\dif \gm
=
\int_{X}\dif \parens{
\int_{G} c(\gminv x) \:\dif \gm} \wedge \eta
=
\int_{X}\dif (1) \wedge \eta = 0.
\end{eqnarray*}
Therefore $b=0$.
\end{proof}
\begin{rem}
\label{IntegralOnFundamentalDomain}
In the case when $G = \Gamma$ is a discrete group acting
on $X$ freely, if $\omega \in \Omega^{n}(X)$ is
a $\Gamma$-invariant form on $X$, then
the value $\int_{X} c(x) \omega$ is equal to $\int_{F} c(x) \omega$
for an arbitrary fundamental domain $F \subset X$.
Therefore if $\omega ' \in \Omega(X/\Gamma)$
is a differential form covered by 
the $\Gamma$-invariant form $\omega$,
we have $\int_{X} c(x) \omega = \int_{X/\Gamma} \omega '$.
\end{rem}
%% Slice theorem
\begin{prop}[Slice theorem]\label{SliceThm}
Let $G$ be a second countable locally compact Hausdorff
group and act
properly and isometrically on $X$.
Then for any neighborhood $O$ of any point $x\in X$
there exists a compact subgroup $K \subset G$
including the stabilizer at $x$,
$K \supset G_{x} := \braces{\gm \in G | \gm x =x}$
and
there exists a $K$-slice $\{ x \} \subset S \subset O$.
\end{prop}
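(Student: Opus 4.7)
The plan is to take $K$ to be the stabilizer $G_x$ itself and to build the $K$-slice as the image under the exponential map at $x$ of a small $K$-invariant neighborhood of the origin in the normal space to the orbit $G\cdot x$. Properness of the $G$-action implies that $G_x = \braces{\gm \in G \:\big|\: \gm x = x}$ is compact, so $K := G_x$ automatically satisfies $K \supset G_x$. Since $G$ acts isometrically, $K$ acts by orthogonal transformations on $T_x X$ via the isotropy representation, preserving the tangent subspace $T_x(G\cdot x)$ to the orbit; its orthogonal complement $V \subset T_x X$ is therefore $K$-invariant, and the exponential map $\exp_x \colon T_x X \to X$ is $K$-equivariant by $K$-invariance of the metric.

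For $\ep > 0$ sufficiently small, $\exp_x$ restricts to a $K$-equivariant diffeomorphism from the open $\ep$-ball $B_\ep \subset V$ onto its image in $X$. Choosing $\ep$ small enough that this image lies inside $O$, I would define $S := S_\ep := \exp_x(B_\ep)$, which by construction is a $K$-invariant embedded submanifold through $x$ contained in $O$. It remains to show that for $\ep$ chosen sufficiently small, $S$ is in fact a $K$-slice, meaning that the natural map $G \times_K S \to X$, $[\gm, s] \mapsto \gm s$, is a $G$-equivariant diffeomorphism onto an open $G$-invariant neighborhood of $G \cdot x$. The local diffeomorphism property near $[e,x]$ combines the diffeomorphism $\exp_x \colon B_\ep \to S$ in the transversal directions with the action of $G$ in the directions tangential to the orbit.

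The delicate point, and the main obstacle, is the injectivity of this map, which requires a uniform choice of $\ep$. Supposing for contradiction that for every $\ep_n = 1/n$ there exist $\gm_n \in G \setminus K$ and $y_n, z_n \in S_{\ep_n}$ with $\gm_n y_n = z_n$, the fact that $y_n, z_n \to x$ together with properness of the action forces the $\gm_n$ to lie eventually in a compact subset of $G$. Passing to a subsequence, $\gm_n \to \gm$ with $\gm x = x$, hence $\gm \in K$, and a short further argument using the local injectivity of $\exp_x$ on $V$ together with the $K$-equivariance of the exponential map produces $\gm_n \in K$ for large $n$, contradicting $\gm_n \notin K$. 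This injectivity step is essentially the equivariant tubular neighborhood theorem for proper actions of locally compact groups, and it is the only place where the full strength of the properness hypothesis is used.
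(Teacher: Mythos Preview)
Your proposal is correct and follows essentially the same route as the paper: both take $K = G_x$, use the orthogonal complement of $T_x(G\cdot x)$ in $T_xX$, and define the slice as the exponential image of a small ball in this normal space. The only cosmetic difference is the packaging of the conclusion---the paper exhibits the slicing map $\psi\colon G(S)\to G/K$ directly as the composition $G(S)\xrightarrow{\exp^{-1}} N^{\delta}(G\{x\})\to G\{x\}\cong G/K$, whereas you verify the equivalent condition that $G\times_K S\to X$ is a $G$-equivariant diffeomorphism onto an open tube. Your injectivity sketch via properness and a convergent subsequence is in fact more detailed than what the paper writes down.

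One point worth making explicit: like the paper, your argument only handles the case where $G$ is a Lie group, since you rely on $G\cdot x$ being a submanifold with a well-defined tangent space and on $G\times_K S$ carrying a smooth structure. The paper states this restriction openly and defers the general locally compact case (where $K$ may need to be strictly larger than $G_x$) to the references of Abels and Antonyan; you should add the same caveat.
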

Here $S \subset X$ is called $K$-slice if the followings 
are satisfied;
\begin{itemize}
\item
$S$ is $K$-invariant; $K(S) = S$,
\item
the tubular subset $G(S) \subset X$ is open,
\item
there exists a $G$-equivariant map
$\map{\psi}{G(S)}{G/K}$ satisfying $\psi^{-1}([e]) = S$,
called a slice map.
\end{itemize}
We will give a proof later only in the case when $G$ is a Lie group,
in which case $K$ can be taken as $G_{x}$ itself.
The proof in the general case is found
in \cite[Theorem 3.3.]{Ab78} and \cite[Theorem 3.6.]{An05}.
\begin{cor}\label{CorSliceThm}
We additionally assume that $X$ is $G$-compact.
Then for any open covering
$X = \bigcup_{x \in X} O_{x}$,
there exists a sub-family of
finitely many open subsets $\{O_{x_i}, \ldots O_{x_N}\}$
such that
$$\bigcup_{\gm \in G} \bigcup_{i=1}^{N} \gm(O_{x_i})=X.$$

In particular, $X$ is of bounded geometry,
namely, the injective radius is bounded below and
the norm of Riemannian curvature is bounded.
\end{cor}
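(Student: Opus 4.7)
The plan is to use compactness of the quotient $X/G$, combined with the slice theorem for the bounded-geometry part.

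For the finite-sub-family assertion, I would observe that the quotient map $\pi \colon X \to X/G$ is open: for any open $U \subset X$, the preimage $\pi^{-1}(\pi(U))$ equals the $G$-saturation $\bigcup_{\gm \in G} \gm(U)$, which is open as a union of open sets. Hence $\{\pi(O_x)\}_{x \in X}$ is an open cover of the compact quotient $X/G$, from which we extract a finite subcover corresponding to points $x_1, \ldots, x_N$. Taking $\pi$-preimages then gives $X = \bigcup_{i=1}^N \pi^{-1}(\pi(O_{x_i})) = \bigcup_{i=1}^N \bigcup_{\gm \in G} \gm(O_{x_i})$, as required.

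For bounded geometry, at each point $x \in X$ I would first choose a pre-compact open neighborhood $O_x$ on which the local geometry is controlled (say, a small geodesic ball), and apply Proposition \ref{SliceThm} to obtain a slice $S_x \subset O_x$ whose closure $\overline{S_x}$ is compact. The tubular sets $G(S_x)$ are open and $G$-invariant, and they cover $X$ since each $x$ lies in $S_x \subset G(S_x)$. Because they descend to an open cover of the compact space $X/G$, finitely many $G(S_{x_1}), \ldots, G(S_{x_N})$ already cover $X$, so every point of $X$ lies in some $\gm \cdot \overline{S_{x_i}}$. Since $G$ acts isometrically, the injectivity radius and the pointwise norm of the Riemann curvature are $G$-invariant functions on $X$; their bounds on each of the finitely many compact sets $\overline{S_{x_i}}$ (strictly positive from below in the injectivity case, finite from above in the curvature case) therefore propagate to uniform bounds on all of $X$.

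The main obstacle is very mild. The one point requiring care is ensuring that the slices in the second part can be chosen with compact closure and controlled local geometry; this is exactly the content of the ``for any neighborhood $O$ of $x$'' clause in the slice theorem, applied to a pre-compact geodesic ball. Everything else is a direct compactness argument on $X/G$ combined with $G$-invariance of geometric quantities under the isometric action.
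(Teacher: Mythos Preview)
Your argument is correct. For the first assertion you take a more elementary route than the paper: you use only that the quotient map $\pi\colon X\to X/G$ is open (because $\pi^{-1}(\pi(U))=\bigcup_{\gm}\gm(U)$), which lets you pass directly to a finite subcover of $X/G$. The paper instead invokes the slice theorem already at this stage, choosing for each $x$ a $K_x$-slice $S_x\subset O_x$, so that the $G$-invariant tubes $G(S_x)$ descend to an open cover of $X/G$; a finite subcover then gives $X=\bigcup_i G(S_{x_i})\subset\bigcup_i G(O_{x_i})$. Your approach avoids this machinery and is cleaner; the paper's route has the minor advantage that it produces genuinely $G$-invariant open sets $G(S_{x_i})$ along the way, which it does not actually need here. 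For the bounded-geometry part the two arguments are essentially identical: cover $X$ by $G$-translates of finitely many relatively compact sets and use that the injectivity radius and curvature norm are $G$-invariant under the isometric action.
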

\begin{proof}
Due to the slice theorem we have 
compact subgroups $K_{x}$ including the stabilizers $G_{x}$
and $K_{x}$-slice $\{ x \} \subset S_{x} \subset O_{x}$.
Since $X/G$ is compact, and $\braces{[G(S_{x})]}_{x\in X}$
is an open covering of $X/G$,
one can take 
finite sub-cover $X/G = \bigcup_{i=1}^{N} [G(S_{x_i})]$
and hence obtain a finite $G$-invariant open cover
$X = \bigcup_{i=1}^{N} G(S_{x_i})$.
On the other hand, obviously $G(O_{x_i}) \supset G(S_{x_i})$ so
it follows that
$X = \bigcup_{\gm \in G} \bigcup_{i=1}^{N} \gm(O_{x_i})$.

In order to prove the bounded geometry of $X$,
take an arbitrary 
relatively compact neighborhood
$\{ x \} \subset O_{x} \subset X$ for each $x\in X$
and find $\{O_{x_i}, \ldots O_{x_N}\}$ as above.
Noting that each of $\gm(O_{x_i})$ is isometric
to $O_{x_i}$ for every $\gm \in G$,
the lower bound of injective radius and the norm of
Riemannian curvature on whole $X$ is determined only on
$O_{x_1} \cup \ldots \cup O_{x_N}$,
which is relatively compact.
Therefore they are bounded.
\end{proof}
\begin{proof}[Proof of the Proposition \ref{SliceThm}]
Let $G$ be a Lie group. Set $K = G_{x}$.
Due to the properness of the action,
$G_{x}$ is a compact sub-group.
Besides every orbit set $G\{x\} \subset X$ is
a closed subset of $X$ since the orbit map
${G}\to{X}; \; \gm \mapsto \gm x$ is proper.
At the same time, 
$G/G_{x} \to X; \quad [\gm]\mapsto \gm x$ is
an injective immersion.
In particular, the orbit set $G\{x\} \subset X$ is a sub-manifold 
in $X$ differomorphic to $G/G_{x}$.

Let $N (G\{x\})$ denote the normal bundle of  $G\{x\} \subset X$,
namely $N_{y} (G\{x\})$ is
 the orthogonal complement of $T_{y}(G\{x\})$
in $T_{y}X$.
%Observe that $G_{x}$ acts on $G \times N_{x}(G\{x\})$
%and we may consider the quotient space $G/G_{x}$
%as a zero-section of $G\times_{G_{x}}N_{x}(G\{x\})$.
Since the action of $G$ on $X$ is isometrically,
the action on $N (G\{x\})$ preserves the metric of the fibers.

Let $N_{x}^{r}G\{x\} := \braces{v\in N (G\{x\})
\mid \norm{v} <r}$
for positive $r\in \R$.
Choose a small $\delta > 0$ so that the
following restricted exponential map
$$\map{\exp_{x}} {N_{x}^{4\delta}(G\{x\})} {X}$$
is diffeomorphism onto its image
and the image does not intersect with $G\{x\}$ at any other points 
than $x\in G\{x\}$. Then
$$
\exp\colon N^{\delta}(G\{x\})
\to X
;\quad 
(y,v)\mapsto \exp_{y}(v) 
$$
is a diffeomorphism onto its image.

%Additionally we have to find a sufficiently small $G_{x}$-invariant 
%open subset $F \subset G\{x\} \cap O \subset X$.
%Note that $G_{x}$ is %compact and
%acting on $G/G_{x}\simeq G\{x\}$ isometrically and fixing $x\in X$,
%so $\mathrm{dist}(x,y) = \mathrm{dist}(\gm x, \gm y)
%= \mathrm{dist}(x, \gm y)$.
%Hence for any ball $B(r)$ in $X$
%centered at $x$ whose radius $r$, 
%$B(r) \cap G\{x\}$ is $G_{x}$-invariant.
%Then let $F:= B(r) \cap G\{x\}$ for $r>0$ such that
%$B(4r) \cap G\{x\}$ is contained in $G\{x\} \cap O$.

Now let $S:= \exp_{x} (N^{\delta}_{x}G\{ x \})$,
with $\delta$ replaced by a smaller value
so that $S$ is contained in $O$ if needed.
This $S$ is obviously $G_{x}$-invariant.
Moreover the tubular subset $G(S)$ is equal to the image of
the exponential map 
$$
G(S)
= \bigcup_{y\in G\{ x \}}\exp_{y} (N^{\delta}_{y}G\{ x \})
= \exp(N^{\delta}(G\{x\}))
$$
which is diffeomorphic to a disk bundle $N^{\delta}(G\{x\})$
and hence open.
One can take a slicing map $\map{\psi}{G(S)}{G/K}$ as follows;
$$
\begin{array}{cccccccc}
\psi \colon G(S)=&\exp(N^{\delta}(G\{x\}))
&\xrightarrow[\cong]{\exp^{-1}}
&N^{\delta}(G\{x\})
&\xrightarrow{\text{proj.}}
&G\{x\} \cong
&G/G_{x}
\\
&\exp_{y}(v) 
&\longmapsto
&(y,v)
&\longmapsto
&y.&&
\end{array}
$$
The inverse image $\psi^{-1}$ is actually equal to
$\exp(N^{\delta}_{x}(G\{x\})) = S$.
\end{proof}
%% END of Slice theorem
%% END of Slice theorem
\subsection{Universal Example for Proper Actions}

We refer to \cite[Section 1.]{Ba-Co-Hi76}.
\begin{dfn}[Universal example for proper actions]
\label{UnivExample}
Let $G$ be a second countable locally compact Hausdorff group.
A universal example for proper actions of $G$,
denoted by $\EG$,
is a proper $G$-space satisfying the following property;
If $X$ is any proper $G$space,
then there exists a $G$-equivariant map
$\map{f}{X}{\EG}$,
and any such maps are $G$-homotopic,
that is, homotopic through $G$-equivariant maps.
\end{dfn}
\begin{lem}\text{}
\begin{enumerate}
\item
There exists a universal example for proper actions of $G$,
and it is unique up to $G$-homotopy.
\item
$\EG$ is contractive as a topological space.
\end{enumerate}
\end{lem}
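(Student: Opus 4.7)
The plan is to handle the three claims (existence, uniqueness, contractibility) separately, reducing the latter two to purely formal consequences of the universal property while simply citing the existence construction.

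For existence in part (1), I would invoke the explicit model of $\EG$ for a second countable locally compact Hausdorff group constructed in \cite{Ba-Co-Hi76}; since the reference is already cited for the definition, this amounts to a pointer rather than a new construction. For uniqueness, the standard categorical argument applies: if both $Y$ and $Y'$ satisfy the universal property, then the property of $Y$ applied to the proper $G$-space $Y'$ yields a $G$-equivariant $\map{f}{Y'}{Y}$, and symmetrically a $G$-equivariant $\map{g}{Y}{Y'}$; both $g \circ f$ and $\id_{Y'}$ are $G$-maps from the proper $G$-space $Y'$ to itself, so by the uniqueness-up-to-$G$-homotopy clause applied with $Y'$ as both source and target, $g \circ f \simeq_{G} \id_{Y'}$, and similarly $f \circ g \simeq_{G} \id_{Y}$, giving a $G$-homotopy equivalence.

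For the topological contractibility in part (2), the key move is to apply the universal property to $\EG \times \EG$ equipped with the diagonal $G$-action. First I would verify that this action is still proper; this is immediate from properness on each factor, since the preimage of a compact set in $(\EG \times \EG)^{2}$ under the orbit map $(\gm, x, y) \mapsto (\gm x, \gm y, x, y)$ injects into the product of the two corresponding preimages from the properness maps for the individual factors. The two projections $\map{p_{1}, p_{2}}{\EG \times \EG}{\EG}$ are then $G$-equivariant, so by the universal property they are $G$-homotopic, yielding a continuous $\map{H}{\EG \times \EG \times [0,1]}{\EG}$ with $H(x,y,0) = x$ and $H(x,y,1) = y$. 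Fixing any point $x_{0} \in \EG$ and setting $h(y,t) := H(x_{0}, y, t)$ gives a continuous contraction of $\EG$ onto $\{x_{0}\}$: $h(\cdot, 0) \equiv x_{0}$ and $h(\cdot, 1) = \id_{\EG}$.

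The only genuinely non-formal ingredient is the existence construction, which is cited rather than reproved. The one subtle point in the contractibility argument is that the resulting $h$ need not itself be $G$-equivariant — the constant map to $x_{0}$ typically is not, since $x_{0}$ is generally not $G$-fixed — but only topological, not equivariant, contractibility is claimed in the statement, so this is no obstruction.
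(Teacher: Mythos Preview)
Your argument is correct. The uniqueness portion matches the paper's verbatim, but your treatment of contractibility differs genuinely from the paper's. The paper appeals to the explicit model: it sets $W = \bigsqcup_{H} G/H$ over all compact subgroups and takes $\EG = W * W * \cdots$, then observes that an infinite join is contractible as a topological space. Your route instead extracts contractibility directly from the universal property via the diagonal action on $\EG \times \EG$, using that the two projections must be $G$-homotopic. Your argument is model-independent and works for any space satisfying the universal property, at the cost of the small verification that the diagonal action is proper; the paper's argument is more concrete but ties contractibility to one particular construction and implicitly assumes the reader knows the contractibility of infinite joins. Both are short; yours is arguably cleaner since it explains \emph{why} any model must be contractible rather than checking it for one model.
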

{\textit{Outline of Proof.}}
The construction of $\EG$ is as follows;
let 
$$
W := \bigsqcup_{\substack{H \subset G\\
\text{compact subgroup}}} G/H
$$ be the disjoint union of the homogeneous
spaces $G/H$, over all compact subgroups of $G$.
Then take the join with itself infinitely many times; 
$$
\EG = W* W* \ldots.
$$
From this construction it follows that
$\EG$ is contractive.

Moreover if we have two, $\EG$ and $(\EG)'$,
then we have $G$-maps
$\map{f}{(\EG)'}{\EG}$ and $\map{f'}{\EG}{(\EG)'}$,
each of which is unique up to $G$-homotopy.
Moreover $f\circ f'$ and $f'\circ f \id_{(\EG)'}$
are $G$-homotopic to $\id_{\EG}$ and $\id_{(\EG)'}$
respectively.
%%\section
%%{$K$-Theoretical Preparation for Description of the Index Maps}
%%%%%%
%% Equivariant $K\!K$-theory
%%%%%%
%%%%%%%%%%%%%%%%
%% Kasparov Product
%%%%%%%%%%%%%%%%
%%%%%%
%% End of group $K\!K$-theory
%%%%%%
\section{Preparation for Description of the Index Maps}
\subsection{Hermitian $G$-Vector Bundles,
Connections, and Characteristic Classes}
\begin{dfn}[Hermitian $G$-vector bundles, $G$-invariant connections]
\text{}\\
Let $X$ be  a proper $G$-Riemannian manifold.
\begin{enumerate}
\item
A smooth $G$-vector bundle $E$
is a smooth vector bundle over $X$
on which $G$ smoothly acts and
the action is compatibility  with the 
the action on the base space $X$,
and the action induced on fibers are linear isomorphisms,
that is, $$\pi (\gm \sigma) = \gm \pi (\sigma)$$
holds for any $\sigma \in E$ and $\gm \in G$,
where $\map{\pi}{E}{X}$ denotes the projection,
and
$\map{\gm}{E_{x}}{E_{\gm x}}$ is a linear isomorphism.
If $E = E^{0} \hat{\otimes} E^{1}$ is graded,
the action of $G$ is assumed to preserve the gradings.
\item
Let $C^{\infty}(E)$ denote the space consisting of all
smooth sections $\map{s}{X}{E}$. 
$G$ acts on $C^{\infty}(E)$ by the formula;
$$
\gm[s](x) = \gm(s(\gminv x)).
$$
\item
A $G$-vector bundle is called Hermitian $G$-vector bundle
if 
$E$ is equipped with a $G$-invariant smooth Hermitian metric
$h(\cdot, \cdot) = \angles{\cdot, \cdot}_{E_x}$,
that is,
$\angles{\gm \sigma_1, \gm \sigma_2}_{E_{\gm x}}
=
\angles{\sigma_1, \sigma_2}_{E_{x}}$
holds for any $\sigma_1, \sigma_2 \in E_{x}$  
and $\gm \in G$.
\item
A connection $\nabla$ on a Hermitian $G$-vector bundle
is called a $G$-invariant connection if 
$\nabla$ is compatible with the Hermitian metric and
commutes with the action of $G$, that is,
$$
\dif \angles{s(x), t(x)}
=
\angles{{\nabla} s(x), t(x)}
+
\angles{s(x), {\nabla} t(x)}
\qqand
\gm[\nabla s] = \nabla (\gm[s])
$$
hold for any $s, t \in C^{\infty}(E)$  
and $\gm \in G$.
\end{enumerate}
\end{dfn}
\begin{dfn}
A Hermitian $G$-vector bundle $E$ over $X$ defines the
following element $[E]$ in $K\!K$-theory;
$$
[E] = \parens{C_{0}(E), 0} \in K\!K^{G}(C_0(X), C_0(X))
$$
with the action of $C_0(X)$ on $C_0(E)$
being the point-wise multiplication.

Here, $C_0(X)$ denotes the $C^{*}$-algebra
consisting of all $\C$-valued continuous functions
on $X$ vanishing at infinity equipped with the sup-norm,
and $C_0(E)$ denotes the Hilbert $C_0(X)$ module
consisting of all continuous sections of $E$
vanishing at infinity
equipped with the scalar product given by the
point wise Hermitian metric.
\end{dfn}
\begin{lem}\label{ExistenceOfGConnection}
\settingsGXa
\begin{enumerate}
\item
Any smooth $G$-vector bundle $E$ over $X$ admits
a smooth $G$-invariant Hermitian metric.
\item
Any Hermitian $G$-vector bundle $E$ over $X$ admits
a $G$-invariant connection.
\end{enumerate}
\end{lem}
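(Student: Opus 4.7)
The plan is to construct both the invariant Hermitian metric and the invariant connection by averaging a non-equivariant counterpart against a cut-off function, using properness of the action to make the integral well-defined and the defining identity $\int_G c(\gminv x)\dif\gamma = 1$ together with invariance of the Haar measure to obtain $G$-invariance.

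For part (1), I would first produce, by a standard partition-of-unity argument on the paracompact manifold $X$, \emph{some} smooth Hermitian metric $h_0$ on $E$. Fix a cut-off function $c\in C_c^{\infty}(X)$ and define, for $x\in X$ and $\sigma_1,\sigma_2\in E_x$,
$$
h_x(\sigma_1,\sigma_2):=\int_G c(\gminv x)\, h_0\!\parens{\gminv \sigma_1,\gminv \sigma_2}\dif\gamma.
$$
Properness of the action implies that $\braces{\gamma\in G: \gminv x\in\supp c}$ is compact for every $x$, and varies uniformly compactly as $x$ ranges over a relatively compact neighborhood, so the integrand has compact support in $\gamma$ and the result is smooth in $x$ by dominated convergence. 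Positive-definiteness is inherited from $h_0$ since $c\geq 0$ and the weights integrate to $1$. For $G$-invariance, replace $x$ by $gx$ and each $\sigma_i$ by $g\sigma_i$, then perform the change of variables $\gamma\mapsto g\gamma$, which is permitted by the left-invariance of the Haar measure and recovers the original integrand.

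For part (2), the same averaging scheme applies to connections. Start with any Hermitian connection $\nabla^0$ on $E$, again obtained by partition of unity. For each $\gamma\in G$ the pulled-back connection
$$
(\gm\cdot\nabla^0)(s):=\gm\brackets{\nabla^0\parens{\gminv[s]}}
$$
is again Hermitian, so $\alpha_\gamma:=\gm\cdot\nabla^0-\nabla^0$ is a $1$-form on $X$ valued in the bundle $\mathfrak{u}(E)$ of skew-Hermitian endomorphisms, since Hermitian connections form an affine space modelled on $\Omega^1(X;\mathfrak{u}(E))$. Averaging
$$
A:=\int_G c(\gminv x)\,\alpha_\gamma \dif\gamma
$$
yields a smooth section of $T^*\!X\otimes\mathfrak{u}(E)$ by the same compact-support argument, and $\nabla:=\nabla^0+A$ is then a Hermitian connection that is $G$-invariant by the same change of variables $\gamma\mapsto g\gamma$ used in part (1).

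The main obstacle, and really the only non-routine point, is verifying $G$-invariance of the averaged object. The construction is rigged so that this reduces to a single left-translation of the Haar integral: the normalization of the cut-off guarantees that the averaged weights still sum to $1$ after the translation, and the choice to integrate the $\gamma$-translates of the reference object (rather than the object itself) arranges that the $g$-twist is absorbed by the substitution $\gamma\mapsto g\gamma$. The remaining verifications — well-definedness of the $\gamma$-integral, smoothness of the result, and positivity — are routine consequences of the properness of the action.
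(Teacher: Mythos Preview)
Your proposal is correct and matches the paper's approach: both parts are proved by averaging an arbitrary metric (resp.\ connection) against a cut-off function and then verifying $G$-invariance via the substitution $\gamma\mapsto g\gamma$ in the Haar integral. The only cosmetic difference is that in part~(2) the paper averages the connections directly and then checks the Leibniz rule and metric compatibility by hand, whereas you average the difference $1$-forms $\alpha_\gamma=\gamma\cdot\nabla^0-\nabla^0\in\Omega^1(X;\mathfrak{u}(E))$ and add the result to $\nabla^0$; since $\int_G c(\gminv x)\,\dif\gamma=1$, the two constructions yield the same connection, and your formulation makes the Leibniz rule and Hermitian compatibility automatic.
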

\begin{proof}
Choose any cut-off function $c \in C_c(X)$.
The only we have to do is to take the average using $c$.
For the first part,
fix any smooth Hermitian metric $h$ on $E$
which is not necessarily $G$-invariant.
Then set 
$$
\tilde{h}_{E_{x}}(\sigma_1, \sigma_2)
:=
\int_{G}c(g^{-1}x) {h}_{E_{g^{-1}x}}
\parens{g^{-1}\sigma_1, g^{-1}\sigma_2}
\:\dif g
\qqfor \sigma_1,\sigma_2 \in E_{x}.
$$
Note that for any fixed $x\in X$,
$\supp \{c(\gminv x)\} \subset G$
is compact due to the properness of the action
so that the integral above makes sense.
$\tilde{h}$ is a desired metric.
In fact for any fixed $\gm \in G$,
\begin{eqnarray*}
\tilde{h}_{E_{\gm x}}(\gm \sigma_1, \gm \sigma_2)
&=&
\int_{G}c(g^{-1}\gm x) {h}_{E_{g^{-1}\gm x}}
\parens{g^{-1}\gm \sigma_1, g^{-1}\gm \sigma_2}
\:\dif g
\\
&=&
\int_{G}c(g^{-1}\gm x) {h}_{E_{g^{-1}\gm x}}
\parens{g^{-1}\gm \sigma_1, g^{-1}\gm \sigma_2}
\:\dif (\gminv g )
\\
&=&
\tilde{h}_{E_{x}}(\sigma_1, \sigma_2).
\end{eqnarray*}
It is follows from $c \geq 0$ that $\tilde{h}$
is positive definite.

As for the second part,
let $E$ be a Hermitian $G$-vector bundle over $X$
and fix a connection compatible with the Hermitian metric
$\map{\nabla}{C^{\infty}(E)}{C^{\infty}(T^{*}\!X \otimes E)}$
which is not necessarily commutes with the action of $G$.
Then set
$$
\widetilde{\nabla}s(x)
:=
\int_{G}
c(g^{-1}x) g \nabla g^{-1}s(x) \:\dif g
\qqfor
s \in C^{\infty}(E).
$$
\end{proof}
Note that the action of $G$ on $E$ and $X$
induces the action on $E\otimes T^{*}\!X$.
$\widetilde{\nabla}$ is a desired connection.
In fact for any fixed $\gm \in G$,
\begin{eqnarray*}
\gm[\widetilde{\nabla}s](x)
&=&
\gm (\widetilde{\nabla}s(\gminv x))
\\
&=&
\int_{G}
c(g^{-1}\gminv x) \gm g\nabla g^{-1}s(\gminv x) \:\dif g
\\
&=&
\int_{G}
c(\{\gm g\}^{-1} x) \{\gm g\}
\nabla \{\gm g\}^{-1} \gm s(\gminv x)
\:\dif \{\gm g\}
\\
\widetilde{\nabla}(\gm[s])(x)
&=&
\int_{G}
c(g^{-1}x) g\nabla g^{-1}\gm s(\gminv x) \:\dif g,
\\
\text{therefore}\qquad
\gm[\widetilde{\nabla}s](x)
&=&
\widetilde{\nabla}(\gm[s])(x).
\end{eqnarray*}
The compatibility with the Hermitian metric follows from;
\begin{eqnarray*}
\angles{\widetilde{\nabla} s_1(x), s_2(x)}
+
\angles{s_1(x), \widetilde{\nabla} s_2(x)}
&=&
\int_{G}c(g^{-1})
\angles{g \nabla g^{-1}s_1(x) , s_2(x)}
+
\angles{s_1(x), g \nabla g^{-1}s_2(x) }
\:\dif g
\\
&=&
\int_{G}c(g^{-1})
\angles{\nabla g^{-1}(s_1(x) , g^{-1} s_2(x)}
+
\angles{s_1(x), g^{-1} \nabla g^{-1}(s_2(x) }
\:\dif g
\\
&=&
\int_{G}c(g^{-1})
\dif \angles{g^{-1}s_1(x), g^{-1} s_2(x)}
\:\dif g
\\
&=&
\int_{G}c(g^{-1})
\dif \angles{s_1(x), s_2(x)}
\:\dif g
\\&=& \dif \angles{s_1(x), s_2(x)}.
\end{eqnarray*}
The Lipnitz rule follows from;
\begin{eqnarray*}
\widetilde{\nabla}(fs)(x)
&=&
\int_{G}
c(g^{-1}x) g \nabla f g^{-1}s(x) \:\dif g
\\
&=&
\dif f\otimes
\int_{G}c(g^{-1}x) g g^{-1}s(x) \:\dif g
+
f(x) \int_{G}
c(g^{-1}x) g \nabla g^{-1}s(x) \:\dif g
\\
&=&
\dif f \otimes s(x)
+
f(x) \widetilde{\nabla}s(x).
\end{eqnarray*}

\begin{rem}
Let $E$ be a smooth Hermitian $G$-vector bundle,
then $G$ acts on $\End(E)$
by
$$\gm[T] = \gm T \gminv \qqfor T\in \End(E), \gm \in G.$$
Then, 
$\gm(T(\sigma)) = \gm[T](\gm\sigma)$ and
$\gm[S\circ \! T] = \gm[S]\circ\! \gm[T]$ 
hold for any $S,T\in \End(E)_{x}$ and $\sigma \in E_{x}$.
\end{rem}
\begin{dfn}[Chern classes and Chern character]
Let $E$ be a smooth Hermitian $G$-vector bundle
over $X$.
Choose a $G$-invariant connection
$\nabla$ on $E$
and let $\Theta^{\nabla} \in C^{\infty}
(\turnV{2}(T^{*}\!X) \otimes \mathfrak{u}(E))$
denote the curvature $2$-form.
Then define the Chern classes $c_k(E,\nabla)
\in \Omega^{\! 2k}(X)$
and the Chern character $\mathrm{ch}(E,\nabla)
\in \Omega^{*}(X)$
by the formula;
$$
\sum_{k} c_k(E,\nabla) \lambda^{k}
=
\mathrm{det}\parens{
\frac{i\lambda \Theta^{\nabla}}{2\pi}-\id_{E}
}
\qqand
\mathrm{ch}(E,\nabla)
=
\tr \parens{\exp \parens{
\frac {i \Theta ^{\nabla}} {2\pi} } }.
$$
We may omit $\nabla$ when it is clear from the
conetxt.
As usual, $c_{k}(E,\nabla)$ and $\mathrm{ch}(E,\nabla)$
are closed form.
\end{dfn}
\begin{rem}
The following two lemmas are
valid also for any other characteristic classes
constructed using the Chern Weil theory.
\end{rem}
\begin{lem}
Let $\nabla$ be a $G$-invariant connection on $E$.
Then $\Theta^{\nabla}$,
$c_k(E,\nabla)$ and $\mathrm{ch}(E,\nabla)$ are
$G$-invariant differential form.
\end{lem}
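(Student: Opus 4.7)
The plan is to reduce everything to showing that the curvature form $\Theta^{\nabla}$ is $G$-invariant as an $\End(E)$-valued $2$-form; once this is in hand, $G$-invariance of the Chern classes and the Chern character follows by elementary invariant theory of the trace and determinant.

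First I would write the curvature intrinsically as
$$
\Theta^{\nabla}(V,W)\,s = \nabla_V \nabla_W s - \nabla_W \nabla_V s - \nabla_{[V,W]} s
$$
for vector fields $V,W$ and sections $s$ of $E$. The hypothesis $\gm[\nabla s] = \nabla (\gm[s])$ says that $\nabla$ intertwines the $G$-actions on $C^{\infty}(E)$ and on $C^{\infty}(T^{*}\!X\otimes E)$; unpacking this in the direction $V$ gives the relation $\nabla_{\gm_{*}V}(\gm[s]) = \gm[\nabla_{V}s]$. Combined with the standard fact that a diffeomorphism satisfies $\gm_{*}[V,W] = [\gm_{*}V, \gm_{*}W]$, substituting into the formula above yields
$$
\Theta^{\nabla}(\gm_{*}V,\gm_{*}W)(\gm[s]) = \gm\bigl(\Theta^{\nabla}(V,W)\,s\bigr),
$$
which is precisely the statement that $\gm[\Theta^{\nabla}] = \Theta^{\nabla}$ under the combined $G$-action on $\Omega^{2}(X, \End(E))$ (pullback on forms together with conjugation on the $\End(E)$-coefficient).

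Second, I would pass from $\Theta^{\nabla}$ to the characteristic forms. Because the $G$-action on $\End(E)_{x}$ is by conjugation $T \mapsto \gm T \gm^{-1}$, and the trace and determinant are invariant under conjugation, applying either $\tr\exp(\cdot)$ or $\det(\lambda\cdot -\id)$ to $\Theta^{\nabla}$ kills the conjugation factor, leaving only the pullback of forms. Thus $\gm[\mathrm{ch}(E,\nabla)] = \mathrm{ch}(E,\nabla)$ and $\gm[c_{k}(E,\nabla)] = c_{k}(E,\nabla)$ as scalar-valued forms on $X$. The same argument manifestly works for any characteristic class produced by Chern--Weil theory, which is the content of the remark preceding the lemma.

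The only genuinely substantive step is the first one (invariance of $\Theta^{\nabla}$); everything else is a formal consequence of the conjugation-invariance of symmetric functions of the curvature. No analytic input is required beyond the definition of a $G$-invariant connection.
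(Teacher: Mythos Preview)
Your proof is correct and follows essentially the same two-step structure as the paper: first establish $G$-invariance of $\Theta^{\nabla}$, then deduce invariance of $c_k$ and $\mathrm{ch}$ from conjugation-invariance of trace and determinant. The only cosmetic difference is that the paper verifies the first step by writing $\Theta^{\nabla}\wedge s = \dif^{\nabla}\dif^{\nabla}s$ and observing that the exterior covariant derivative $\dif^{\nabla}$ commutes with $G$, whereas you unpack the same fact via the explicit formula $\Theta^{\nabla}(V,W)s = \nabla_V\nabla_W s - \nabla_W\nabla_V s - \nabla_{[V,W]}s$; both computations are standard and equivalent.
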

\begin{proof}
Note that $\Theta^{\nabla}\! \wedge s = \dif^{\nabla}\dif^{\nabla}s$,
where $\map{\dif^{\nabla}}
{C^{\infty}(\turnV{k}T^{*}\!X \otimes E)} 
{C^{\infty}(\turnV{k+1}T^{*}\!X \otimes E)}$ denotes the
exterior covariant derivative defined  by
$$
\dif^{\nabla}(\omega \otimes s)= 
\dif \omega \otimes s 
+ (-1)^{k}\omega \wedge \nabla s
\qqfor
\omega \in \Omega^{k}(X), s \in C^{\infty}(E),
$$
which commutes with the action of $G$.
Hence, $\Theta^{\nabla}$ is $G$-invariant
and we can conclude that $c_k(E,\nabla)$ and
$\mathrm{ch}(E,\nabla)$ are $G$-invariant
since they are expressed by polynomials in
$\tr(\Theta^{\nabla})$, $\tr((\Theta^{\nabla})^2)$,
$\tr((\Theta^{\nabla})^3), \cdots$.
\end{proof}
\begin{lem}
\label{GInvariantChernWeil}
Let $E$ be a smooth Hermitian $G$-vector bundle
over $X$.
Let $\nabla^{0}$ and $\nabla^{1}$ be $G$-invariant
connections on $E$.
Then there exist $G$-invariant
differential form of odd degree $\theta_{k} \in \Omega^{2k-1}(X)$
for each $k$
and $\theta \in \Omega^{\text{odd}}(X)$
such that
$$
\dif \theta_{k} = c_{k}(E,{\nabla^{1}}) - c_{k}(E,{\nabla^{0}})
\qqand
\dif \theta = \mathrm{ch}(E,{\nabla^{1}}) - \mathrm{ch}(E,{\nabla^{0}}).
$$
\end{lem}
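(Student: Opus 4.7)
The plan is to adapt the classical Chern--Simons transgression argument to the $G$-equivariant setting. Since Chern--Weil theory produces the same forms in both cases, the only thing that requires care is verifying that the transgression forms are $G$-invariant, which will be automatic because all ingredients of the construction are.

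First I would set $\alpha := \nabla^{1} - \nabla^{0}$, which is an element of $C^{\infty}(T^{*}\!X \otimes \End(E))$ because the difference of two connections on the same bundle is tensorial. Since both $\nabla^{0}$ and $\nabla^{1}$ commute with the $G$-action, so does $\alpha$; that is, $\gm[\alpha] = \alpha$ for every $\gm \in G$. Consider the affine path of connections
\[
\nabla^{t} := \nabla^{0} + t\alpha, \qquad t\in [0,1],
\]
each of which is a Hermitian $G$-invariant connection on $E$ by the same reasoning. Let $\Theta^{t} := \Theta^{\nabla^{t}}$ be the corresponding curvature; each $\Theta^{t}$ is a $G$-invariant $\End(E)$-valued $2$-form (by the previous lemma), and a direct computation gives $\tfrac{d}{dt}\Theta^{t} = \dif^{\nabla^{t}}\alpha$, together with the Bianchi identity $\dif^{\nabla^{t}}\Theta^{t} = 0$.

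Next I would use the standard transgression identity. For any invariant polynomial $P$ on $\mathfrak{u}(E)$ (in particular the ones producing $c_{k}$ and $\mathrm{ch}$), one has
\[
\frac{d}{dt} P(\Theta^{t}) = k\, P(\dif^{\nabla^{t}}\alpha, \Theta^{t},\ldots,\Theta^{t})
= \dif \bigl( k\, P(\alpha, \Theta^{t},\ldots,\Theta^{t})\bigr),
\]
the last equality being a routine consequence of the Bianchi identity and the $\mathrm{ad}$-invariance of $P$. Integrating in $t$ from $0$ to $1$ and applying the fundamental theorem of calculus then yields $G$-invariant forms
\[
\theta_{k} := k \int_{0}^{1} \widetilde{c}_{k}(\alpha, \Theta^{t},\ldots,\Theta^{t})\,\dif t,
\qquad
\theta := \int_{0}^{1} \tr\!\Bigl(\tfrac{i}{2\pi}\alpha \, \exp(\tfrac{i \Theta^{t}}{2\pi})\Bigr)\,\dif t,
\]
where $\widetilde{c}_{k}$ is the polarization of $c_{k}$. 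By construction $\dif \theta_{k} = c_{k}(E,\nabla^{1}) - c_{k}(E,\nabla^{0})$ and $\dif \theta = \mathrm{ch}(E,\nabla^{1})-\mathrm{ch}(E,\nabla^{0})$, and the forms land in odd degrees because $P(\alpha,\Theta^{t},\ldots,\Theta^{t})$ has one $1$-form factor and $k-1$ $2$-form factors.

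Finally, $G$-invariance of the transgression forms is the only point that differs from the classical statement. Since $\alpha$ is $G$-invariant and each $\Theta^{t}$ is $G$-invariant, the integrand $P(\alpha,\Theta^{t},\ldots,\Theta^{t})$ is a $G$-invariant differential form on $X$ for every $t$; the $G$-action commutes with the $t$-integration (it does not move $X$ with $t$), so $\theta_{k}$ and $\theta$ are $G$-invariant as required. I do not foresee a real obstacle: the only conceptual point is to notice that passing to the $G$-invariant setting changes nothing in the transgression argument, so the essentially classical computation just needs to be written out carefully with $G$-invariance tracked at each line.
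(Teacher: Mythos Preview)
Your proposal is correct and follows essentially the same approach as the paper: both use the affine path $\nabla^{t}=\nabla^{0}+t(\nabla^{1}-\nabla^{0})$ of $G$-invariant connections and the classical Chern--Simons transgression, observing that $G$-invariance of the transgression form is automatic since the connection-difference $1$-form and all curvatures $\Theta^{t}$ are $G$-invariant. The only organizational difference is that the paper first transgresses the power traces $\tr(\Theta^{k})$ explicitly and then bootstraps to arbitrary polynomials in these traces (hence to $c_{k}$ and $\mathrm{ch}$) via a product argument, whereas you invoke the polarization of a general invariant polynomial directly; your route is slightly more streamlined but the content is the same.
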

In particular $\int_{X}c(x) \mathrm{ch}(E,{\nabla})$
and $\int_{X}c(x) P(c_1(E,{\nabla}), c_2(E,{\nabla}),\ldots)$,
where $P$ is an arbitrary multi-variable polynomial,
are independent of
the choice of $G$-invariant connection $\nabla$
(and a cut-off function $c$ )
due to the
lemma \ref{IntegralGInvariantForms}.
\begin{proof}
Since $(\nabla^{1}- \nabla^{0})(fs) = f(\nabla^{1}- \nabla^{0})s$
for $f\in C^{\infty}(X)$ and $s\in C^{\infty}(E)$,
there exists $\eta \in C^{\infty}(\turnV{1}T^{*}\!X\otimes \End(E))$
such that $$(\nabla^{1}- \nabla^{0})s = \eta \wedge s.$$
Note that $\eta$ is $G$-invariant since $\nabla^{0}$ and $\nabla^{1}$
are $G$-invariant connections.
Consider a smooth family of connections for $t \in [0,1]$
$$ \nabla^{t} := \nabla^{0} + t\eta. $$
We need a local expression of the curvature 2-forms.
Fix a local trivialization $E|_{U} \to U \times {\C^{N}}$
and express 
$\nabla^{0} = \dif + \omega$.
Then we have 
$$
\nabla^{t} = \dif + \omega + t\eta
\qqand
\Theta(t) :=
\Theta^{\nabla^{t}} = d\omega(t)
+ \omega(t) \wedge \omega(t),
$$
where $\omega(t) = \omega + t\eta$.
Note that also $\Theta(t)$ are $G$-invariant
since $\nabla^{t}$ are commute with the action of $G$.

We will omit the symbol $\wedge$
for the rest of the proof
if there is no chance of leading to misunderstandings.
Due to the Bianchi identity for $\nabla^{t}$, that is,
$
\dif \Theta(t)
= \Theta(t) \omega(t)
- \omega(t) \Theta(t)
$, we have
\begin{eqnarray}
\dif \Theta(t)^{m}
&=&
\sum_{j=1}^{m} \Theta(t)^{m-j}
\Big(\dif\Theta(t)\Big)
\Theta(t)^{j-1}
\nonumber\\
&=&
\sum_{j=1}^{m} \Theta(t)^{m-j}
\Big(\Theta(t) \omega(t)
- \omega(t) \Theta(t)\Big)
\Theta(t)^{j-1}
\nonumber\\
&=&
\sum_{j=1}^{m} \Theta(t)^{m-j+1}
\omega(t) \Theta(t)^{j-1}
-
\sum_{j=1}^{m} \Theta(t)^{m-j}
\omega(t) \Theta(t)^{j}
\nonumber\\
&=&
\Theta(t)^{m} \omega(t)
- \omega(t) \Theta(t)^{m}.
\label{0020}
\end{eqnarray}
In particular $\tr (\Theta(t)^{m})$ are closed forms
due to the symmetry of the trace;
\begin{eqnarray}
\dif \tr (\Theta(t)^{m}) = \tr (\dif \Theta(t)^{m})
= \tr \braces{ \Theta(t)^{m} \omega(t)
- \omega(t) \Theta(t)^{m} }
=0.
\label{0025}
\end{eqnarray}
%%%%%%%%%%%%
Due to (\ref{0020}), the symmetry of the trace and the fact
that $\omega$ and $\eta \Theta(t)^{m}$
are of odd degree,
\begin{eqnarray}
&&
\tr\braces{
\big(
\eta \omega(t)
+ \omega(t) \eta
\big)
\Theta(t)^{m}
}
\nonumber\\
&=&
\tr\braces{
\eta
\omega(t)
\Theta(t)^{m}
-
\eta 
\Theta(t)^{m}
\omega(t)
}
\nonumber\\
&=&
\tr\braces{
\eta \wedge \big(
\omega(t) 
\Theta(t)^{m}
{-}
\Theta(t)^{m}
\omega(t)
\big)
}
\nonumber\\
&=&
-\tr\braces{
\eta \wedge \dif\Theta(t)^{m}
}.
\label{0030}
\end{eqnarray}
%%%%%%%%%%%%
Using (\ref{0030})
and again the symmetry of the trace,
\begin{eqnarray} 
\ordif{}{t}{}\parens {\tr (\Theta(t)^{k}) }
=
\tr\parens{\ordif{}{t}{}\Theta(t)^{k}}
&=&
\tr\braces{
\sum_{j=1}^{k}\Theta(t)^{k-j}
\parens{\ordif{}{t}{} \Theta(t)} \Theta(t)^{j-1}
}
\nonumber
\\
&=&
\sum_{j=1}^{k}\tr\braces{
\parens{\ordif{}{t}{} \Theta(t)}
\Theta(t)^{k-1}
}
\nonumber
\\
&=&
k\tr\braces{
\big(
\dif \eta
+ \eta \omega(t)
+ \omega(t) \eta
\big)\:
\Theta(t)^{k-1}
}
\nonumber
\\
&=&
k \parens{\tr\braces{
\dif \eta
\wedge
\Theta(t)^{k-1}
}
-
\tr\braces{
\eta \wedge \dif\Theta(t)^{k-1}
}}
\nonumber
\\
&=&
k\tr \braces{\dif \parens{ \eta \wedge \Theta(t)^{k-1} } }
\nonumber
\\
&=&
\dif\parens{ k \tr \parens{\eta \wedge \Theta(t)^{k-1}
}}.
\nonumber
\\
\text{Then we obtain } \qquad
\tr (\Theta(1)^{k}) - \tr (\Theta(0)^{k})
&=&
\dif\parens{
k \tr \parens{\eta \int_{0}^{1}\Theta(t)^{k-1} \: \dif t
}}
\nonumber
\\
&=& \dif \zeta_{k},
\label{0040}
\end{eqnarray}
where $
\zeta_{k} = k \tr \parens{\eta \int_{0}^{1}\Theta(t)^{k-1} \dif t}
$ , which are $G$ invariant
since $\eta$ and $\Theta(t)$
are globally defined $G$-invariant differential forms.

Next, for $\lfloor\frac{\dim(X)}{2}\rfloor$-variable polynomials $P$,
let us introduce $G$-invariant differential forms
$$
P\angles{t}:=
P\parens{\tr(\Theta(t)),\: \tr(\Theta(t)^2),\:
\tr(\Theta(t)^3), \ldots\: }.
$$
And we claim that every $P$ satisfies the following two conditions; 
\begin{itemize}
\item[(A)]
$P\angles{t}$ is a closed form; $\dif P \angles{t} = 0$.
\item[(B)]
There exists a $G$-invariant differential form
$\zeta_{P} \in \Omega^{*}(X)$
such that 
$
\dif \zeta_{P} = P\angles{1} - P\angles{0}
$.
\end{itemize}
If this is true, we have finished the proof of the
lemma \ref{GInvariantChernWeil}
since $c_k(E,\nabla)$ and $\mathrm{ch}(E,\nabla)$
are expressed by polynomials in 
$\tr(\Theta^{\nabla}), \tr((\Theta^{\nabla})^2),
\tr((\Theta^{\nabla})^3), \cdots$.

Let us verify the claim.
If two polynomials $P$ and $Q$ satisfy these conditions,
obviously so does $aP+bQ$.
We will verify that so does the product $PQ$.
In fact, we have
$$\dif (P\angles{t}Q\angles{t}) =
\dif P\angles{t} Q\angles{t}
+ P\angles{t}
 \dif Q\angles{t} =0, $$
so $PQ$ satisfies the condition (A).
Besides, we have
\begin{eqnarray*}
P\angles{1}Q\angles{1} - P\angles{0}Q\angles{0}
&=& 
(P\angles{1}- P\angles{0})Q\angles{1} 
+
P\angles{0}(Q\angles{1} - Q\angles{0})
\\
&=&
\dif \zeta_{P} Q\angles{1} 
+
P\angles{0}\dif \zeta_{Q}
\\
&=&
\dif \parens{
\zeta_{P} Q\angles{1} 
+
P\angles{0} \zeta_{Q}
}
\\
&=&
\dif \zeta_{PQ}.
\end{eqnarray*}
Since all of the $\zeta_{P}, Q\angles{1}, P\angles{0}$ and $\zeta_{Q}$
are $G$-invariant, so does 
$\zeta_{PQ} := \zeta_{P} Q\angles{1} 
+
P\angles{0} \zeta_{Q}$, which implies that
$PQ$ satisfies the condition (B).

Moreover, it follows from (\ref{0025}) and (\ref{0040}) that
all of the monomial $P_{k}$ of the form
$$
P_{k} \parens{\tr(\Theta(t)),\: \tr(\Theta(t)^2),\:
\tr(\Theta(t)^3), \ldots\: }
= \tr(\Theta(t)^{k}),
$$
which generate all the polynomials,
satisfy the two conditions above.
Therefore we conclude that every polynomial $P$ satisfies 
(A) and (B).
\end{proof}
%%%%%%%%%%%%
%% End of $G$-Vector Bundles
%%%%%%%%%%%
%%%%%%%%%%%%
%% Elliptic
%%%%%%%%%%%
\subsection{Properly Supported $G$-Invariant Elliptic Operators}
Let $X$ be  a proper $G$-Riemannian manifold,
and $\bb{V}^{0}, \bb{V}^{1}$ be Hermitian $G$-vector bundle.
(Later on, we will consider a
$\Z/2\Z$-graded bundle $\bb{V} = 
\bb{V}^{0} \widehat{\oplus} \bb{V}^{1}$.)
The goal of this subsection is to define a $K$-homology classes
determined by properly supported $G$-invariant elliptic operators.
We refer to \cite[Section 3]{Ka} and \cite{Ho71}.

Let $\map{\pi}{T^{*}\!X}{X}$ be the projection.
For a short while, we forget the action of $G$.
\begin{dfn}\text{}
\begin{description}
\item[Pseudo-differential operators.]
For $m\in \R$,
a pseudo-differential operator
$\map{A_0}{C_c^{\infty}(\bb{V}^0)}{C^{\infty}(\bb{V}^1)}$
of order $m$ is an operator of the form
\begin{eqnarray}
\label{ConstructPseudo}
A_0 s(x) =
\frac{1}{(2\pi)^{n}}
\int_{X \times T^{*}_{x}X}
e^{i\Phi(y,(x,\xi))} \mathfrak{a}(y, (x,\xi)) s(y)
\:\dif y \:\dif \xi
\qqfor
s\in C_c^{\infty}(\bb{V}_0)
\end{eqnarray}
with smooth function $\map{\mathfrak{a}}{X\times T^{*}\!X}
{\Hom(\pi^{*}\bb{V}^0, \pi^{*}\bb{V}^1)}$ satisfying the condition
that for any 
compact subset $K \subset X \times X$
and multi indices
$a=(a_1,\ldots, a_n),
a'=(a'_1,\ldots, a'_n) b=(b_1,\ldots, b_n)$, where $n=\dim X$,
there exists a constant
$C_{a,a',b,K}$ is a constant depending on $K$, $a$, $a'$ and $b$
such that
$$
\norm{
\pardif{|a'|}{y^{a'}}{}\pardif{|a|}{x^a}{}\pardif{|b|}{\xi^b}{}
\mathfrak{a}(y,(x,\xi))
}
\leq
C_{a,a',b,K}\sqrt{1+\norm{\xi}^2}^{\>m-|b|}
\qquad\text{ for any }
(x,y) \in K,\; \xi \in T_{x}X.
$$
$\mathfrak{a}$ is assumed to vanish outside a neighborhood
$U$ of the diagonal of $X \times X$
so that a small neighborhood of 
the image of the zero-section 
of $TX$ is diffeomorphic to $U$ by
$(x,v) \mapsto (x,\exp_{x}(v)) $
and that the phase function $\Phi$ is
defined on this neighborhood $U$. 

Here $\Phi$, called the phase function,
is a $\C$-valued function given by
$\Phi(y, (x,\xi)) = \angles{\xi, \exp_{x}^{-1}(y)}_{T^{*}_{x}X}$.
For instance if $X=\R$, $\Phi(y, (x,\xi)) = 
\angles{\xi, y-x}$.

The space consisting of all pseudo-differential operators
$\map{A_0}{C_c^{\infty}(\bb{V}^0)}{C^{\infty}(\bb{V}^1)}$
will be denoted by $\Pseudo^{m}(\VVVVVV)$.
%%%%
%%Schwartz Kernel
\item[Schwartz kernels.]
The Schwartz kernel or the distributional kernel for
$A_0 \in \Pseudo^{m}(\VVVVVV)$
is a distributional section
$\map{K_{A_0}} {X\times X}{ \bb{V}^{1} \boxtimes (\bb{V}^{0})^{*}}$
such that
$$
A_0 s(x) = \int_{X}
K_{A_0}(x,y) s(y) \:\dif y
\qqfor
s\in C_c^{\infty}(\bb{V}_0).
$$
Formally, $K_{A_0}$ is given by
the formula;
\begin{eqnarray*}
K_{A_0} (x,y) =
\frac{1}{(2\pi)^{n}}
\int_{T^{*}_{x}X}
e^{i\Phi(y,(x,\xi))} \mathfrak{a}(y, (x,\xi))
\:\dif \xi.
\end{eqnarray*}
in the distributional sense and precisely, $K_{A_0}$
is determined by
\begin{eqnarray}
\label{ConstructSchwartz}
K_{A_0}(w) =\frac{1}{(2\pi)^{n}}
\int_{X \times T^{*}\!X}
e^{i\Phi(y,(x,\xi))} \mathfrak{a}(y, (x,\xi)) w(x,y)
\:\dif y \:\dif x \:\dif \xi
\end{eqnarray}
holds for any $w\in C_c^{\infty}( (\bb{V}^{1})^{*} \boxtimes \bb{V}^0)$.
The simplest example for Schwartz kernel is the Dirac-delta
distributional function for the identity operator
$K_{\id}(x,y) = \delta_{x,y}$.
%%%%
%%Properly supported
\item[Properly supported pseudo-differential operators.]
$A_0 \in \Pseudo^{m}(\VVVVVV)$ is said to be properly supported
or has proper support if
both of the maps $p_1$ and $\map{p_2}{\supp(K_{A_0})}{X}$
are proper, where $p_i$ denotes the projection onto the $i$-th
factor,
in other words, both of the subsets
$$
(\supp(K_{A_0}))\cap (K\times X) \qqand
(\supp(K_{A_0}))\cap (X\times K) \subset X \times X
$$
are compact for any compact subset $K \subset X$.
The sub-space of $\Pseudo^{m}(\VVVVVV)$
consisting of all properly supported pseudo-differential operators
will be denoted by $\PPseudo^{m}(\VVVVVV)$.
%%%%
%%Symbol functions
\item[Symbol functions.]
For $m\in \R$,
we define $\Sym^{m}(\VVVVVV)$ as the space
consisting of all sections 
$\map{\sigma}{T^{*}\!X}{\Hom(\pi^{*}\bb{V}^0, \pi^{*}\bb{V}^1)}$
satisfying that for any compact subset
$K \subset X$ and multi indices
$a=(a_1,\ldots, a_n), b=(b_1,\ldots, b_n)$, where $n=\dim X$,
there exists a constant
$C_{a,b,K}$ is a constant depending on $K$, $a$ and $b$
such that
\begin{eqnarray}
\norm{
\pardif{|a|}{x^a}{}\pardif{|b|}{\xi^b}{}
\sigma(x,\xi)
}
\leq
C_{a,b,K}\sqrt{1+\norm{\xi}^2}^{\>m-|b|}
\qquad\text{holds for any }
x\in K,\; \xi \in T_{x}X
\label{ConditionOfSymbol}.
\end{eqnarray}
An element $\sigma \in \mathrm{Sym}^{m}(\VVVVVV)$
is called a symbol function of order $m$
and
$\sigma \in [\Sym^{m}(\VVVVVV)]:= \Sym^{m}(\VVVVVV)
\big/ \Sym^{m-1}(\VVVVVV)$
is called a principal symbol.
For example, every polynomial function in $\xi$ of order $m$
is an element of $\Sym^{m}(\VVVVVV)$.
%%%%
%%Principal Symbol
\item[Principal symbol of pseudo-differential operators.]
Let 
$A_0 \in \PPseudo^{m}(\VVVVVV)$ 
be a properly supported pseudo-differential operator 
of order $m$.
%In order to define the principal symbol of $A_0$,
%let us firstly consider the case of $X=\R^{n}$.
Take a sufficiently small Euclidean neighborhood $W$
and local co-ordinate $\map{\psi}{W}{\R^{n}}$.
Then the symbol $\sigma_{A_0}$ of $A_0\in \PPseudo(\VVVVVV)$
is defined by the formula
\begin{eqnarray}
\sigma_{A_0}(x,\xi) = e^{-i\angles{\psi(x),\xi}}A_0 e^{i\angles{\psi(x),\xi}}
\qqfor
x\in W,\; \xi \in \R^{n}.
\label{DfnPrincipalSymbol}
\end{eqnarray}
$\sigma_{A_0}$ belongs to $\Sym^{m}(\VVVVVV)$, and
$\sigma_{A_0}$ regarded as an element of $[\Sym^{m}(\VVVVVV)]$
is called the principal symbol of $A_0$.
We may abbreviate principal symbol as just symbol.
\end{description}
\end{dfn}
\begin{rem}\text{}
\begin{enumerate}
\item
Let us consider an operator on $\R^{n}$.
If $\map{\varphi} {\R^{n}} {\R^{n}}$ is a diffeomorphism,
then the principal symbol of the operator
$(\varphi^{*})^{-1}A_0\varphi^{*}$ is 
$$
\sigma_{(\varphi^{*})^{-1}A_0\varphi^{*}}(x,\xi)
=
\sigma_{A_0}(\varphi(x), \varphi^{*}(\xi))
\mod \Sym^{m-1}(\VVVVVV).
$$
This implies that the principal symbol
$\sigma_{A_0} \in [\Sym^{m}(\VVVVVV)]$ defined by
(\ref{DfnPrincipalSymbol})
does not depend on the choice of local co-ordinates. 
\item
If $A_0$ is given by
(\ref{ConstructPseudo}), its principal symbol
is calculated from $\mathfrak{a}$ by
$$
\sigma_{A_0}(x,\xi) = \mathfrak{a} (x,(x,\xi)) \quad
\in [\Sym^{m}(\VVVVVV)].
$$
Conversely a symbol function $\sigma\in \Sym^{m}
(\VVVVVV)$ has an amplitude\\
$\map{\mathfrak{a}}{X\times T^{*}\!X}
{\Hom(\pi^{*}\bb{V}^0, \pi^{*}\bb{V}^1)}$
defined by
\begin{eqnarray}
\label{ConstructAmplitude}
\mathfrak{a}(y,(x,\xi))
:=
\chi(x,y)
%\sigma(y, \xi_{y})
\sigma(x, \xi),
\end{eqnarray}
where $\map{\chi}{X\times X}{[0,1]}$ is a smooth function
satisfying that $\chi(x,x) = 1$ and $\chi(x,y) = 0$
if $\mathrm{dist}(x,y)>r$, where $r$ is the injective radius of $X$,
which is bounded below.
%and $\xi_{y}\in T^{*}_{y}X$ is the parallel transport of
%$\xi \in T^{*}_{x}X$ along the
%minimal geodesic from $x$ to $y$.
We can construct a pseudo differential operator
by (\ref{ConstructPseudo}) whose principal symbol is 
equal to the given $\sigma \in [\Sym^{m}(\VVVVVV)]$.
%%%%
%%%%
\item
If $A_0 \in \Pseudo^{m_0}(\VVVVVV)$ and
$A_1 \in \Pseudo^{m_1}(\bb{V}^{1}, \bb{V}^{2})$, then we have
\begin{eqnarray*}
\sigma_{A_1 A_0} = \sigma_{A_1}\sigma_{A_0}
&&\mod \Sym^{m_0+m_1 -1}(\bb{V}^{0},\bb{V}^{2})
\\
\qqand
\sigma_{A_0^{*}} = (\sigma_{A_0})^{*} &&\mod \Sym^{m_0 -1}
(\bb{V}^{0},\bb{V}^{1}).
\end{eqnarray*}
\item
Let us start with a symbol $\sigma \in [\Sym^{m}(\VVVVVV)]$.
When we construct an amplitude (\ref{ConstructAmplitude}),
if we replace $r$ by the minimum among the injective radius
and $1$ so that $\supp(\chi)$ is in bounded distance 
from the diagonal in $X\ \times X$,
any pseudo-differential operator constructed by the formula
(\ref{ConstructAmplitude}) and (\ref{ConstructPseudo})
from a given symbol is always properly supported.
This is because
if $w$ in (\ref{ConstructSchwartz}) has a support
outside the $r$-neighborhood of
the diagonal in $X  \times X$,
$\mathfrak{a}(y, (x,\xi)) w(x,y) = 0$ for any $(x,y) \in X\times X$
therefore
$K_{A_0}$ has a support contained  in the $r$-neighborhood of
the diagonal in $X  \times X$.
So both $(\supp(K_{A_0}))\cap (K\times X)$ and
$(\supp(K_{A_0}))\cap (X\times K) $ are 
contained in a $2r$-neighborhood of $K \times K$ which is
bounded.
\item
For any properly supported pseudo-differential operator
$A_0 \in \Pseudo^{m}(\VVVVVV)$,
the image of $C_c(\bb{V}^{0})$ is contained in $C_c(\bb{V}^{1})$.
For these reasons
we basically assume that $A_0$ is properly supported.
\end{enumerate}
\end{rem}
\begin{dfn}[Ellipticity]
An operator $A_0 \in \PPseudo^{m}(\VVVVVV)$ is called Elliptic if 
the principal symbol $\sigma_{A}(x,\xi)$ is invertible at
infinity in $\xi$, more precisely,
there exists a symbol $\tau \in [\Sym^{-m}(\bb{V}^{1}, \bb{V}^{0})]$
such that
\begin{eqnarray}
\lim_{\norm{\xi}\to \infty}
\norm{\tau (x,\xi)\sigma_{A_0}(x,\xi) - \id_{\bb{V}^{0}}}
=
\lim_{\norm{\xi}\to \infty}
\norm{\sigma_{A_0}(x,\xi)\tau(x,\xi) - \id_{\bb{V}^{1}}}
=0
\label{EllipticityInv}
\end{eqnarray}
uniformly in $x\in K$ for any compact subsets $K \subset X$.
\end{dfn}
\begin{rem}\text{}
\begin{enumerate}
%%%%
\item
Without loss of generality
we may consider the case of elliptic operators of 
order $0$
%by considering
%$\parens{1+\sigma_{A_0}\sigma_{A_0}^{*}}^{-1/4}
%\sigma_{A}
%\parens{1+\sigma_{A_0}^{*}\sigma_{A_0}}^{-1/4}$.
and furthermore the condition (\ref{EllipticityInv})
may be replaced by the condition that
the principal symbol is unitary at infinity, that is,
\begin{eqnarray}
\lim_{\norm{\xi}\to \infty}
\norm{\sigma_{A_0}^{*}(x,\xi)\sigma_{A_0}(x,\xi) - \id_{\bb{V}^{0}}}
=
\lim_{\norm{\xi}\to \infty}
\norm{\sigma_{A_0}(x,\xi)\sigma_{A_0}^{*}(x,\xi) - \id_{\bb{V}^{1}}}
=0
\label{EllipticityUnitary}
\end{eqnarray}
uniformly in $x\in K$ for any compact subsets $K \subset X$.
This is because
$\sigma_{A_0} \in [\Sym^{0}(\VVVVVV)]$ can be modified to
$$
\sigma' := \parens{\sigma_{A_0}\sigma_{A_0}^{*}}^{-1/2}\sigma_{A_0}
$$
at infinity 
so that $\sigma'$ satisfies the condition 
(\ref{EllipticityUnitary})
through a homotopy by
a family of symbols
$\{
\parens{\sigma_{A_0}\sigma_{A_0}^{*}}^{-t/2}\sigma_{A_0}
\}_{t\in [0,1]}$
which are invertible, at infinity in $\xi$.
%%%%
\item
In the case of elliptic operators $A$ on
$\bb{V} = \bb{V}^{0} \widehat{\oplus} \bb{V}^{1}$
of the form
$\displaystyle{
A=\matrixtwo
{0} {A_0^{*}}
{A_0} {0}
}$,
the requirement (\ref{EllipticityUnitary})
will be replaced by
$\displaystyle{
\lim_{\norm{\xi}\to \infty}
\norm{\sigma_{A}(x,\xi)^{2} - \id_{\bb{V}}}
=0}$.
\end{enumerate}
\end{rem}
Let us recall the action of $G$.
\begin{dfn}[$G$-invariant operators]
$G$ acts on $\PPseudo^{m}(\VVVVVV)$
by the formula;
$$
\gm[A_0] = \gm A_0 \gminv.
$$
Let $\PPseudo_{G}^{m}(\VVVVVV)$ denote the subspace of
$\PPseudo^{m}(\VVVVVV)$
consisting of all $G$-invariant operators.
\end{dfn}
\begin{rem}
It directly follows that the action of $G$ on the operators 
are compatible with the action on the sections, that is,
$\gm[A_0 s] = \gm[A_0]\>\gm[s]$ for $s\in C_c^{\infty}(\bb{V}^{0})$.

Note that the action of $G$ on $\bb{V}^0 \widehat{\oplus} \bb{V}^1$
induces the action on the bundle 
$\Hom(\pi^{*}\bb{V}^0, \pi^{*}\bb{V}^1)$ over $T^{*}\!X$.
Any $G$-invariant operator $A_0 \in \PPseudo_{G}^{m}(\VVVVVV)$
has the $G$-invariant principal symbol.
Conversely we have the following;
\end{rem} 
\begin{lem}
If $\map{\sigma}{T^{*}\!X}{\Hom(\pi^{*}\bb{V}^0, \pi^{*}\bb{V}^1)}$
is a $G$-invariant principal symbol of oder $m$,
then there exists a $G$-invariant operator
$A \in \PPseudo_{G}^{m}(\VVVVVV)$
whose principal symbol is $\sigma$.
\end{lem}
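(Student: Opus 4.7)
The plan is to adapt the averaging argument of Lemma \ref{ExistenceOfGConnection}, which was used there for metrics and connections. First I would construct any (possibly non-$G$-invariant) properly supported pseudo-differential operator $A_0 \in \PPseudo^{m}(\VVVVVV)$ with principal symbol $\sigma$, by forming the amplitude $\mathfrak{a}(y,(x,\xi)) := \chi(x,y)\,\sigma(x,\xi)$ as in (\ref{ConstructAmplitude}) with a smooth bump $\chi$ supported within the injective radius (which is bounded below thanks to Corollary \ref{CorSliceThm}) and then assembling $A_0$ via (\ref{ConstructPseudo}). Next I would average $A_0$ against a cut-off function to restore $G$-invariance.

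Concretely, fix a cut-off function $c \in C_{c}^{\infty}(X)$ and set
$$
\tilde{A}s(x) \;:=\; \int_{G} c(g^{-1} x)\, g A_0 g^{-1} s(x)\, \dif g \qquad \text{for } s \in C_{c}^{\infty}(\bb{V}^{0}).
$$
By properness of the $G$-action, the set $\{g \in G \mid g^{-1} x \in \supp c\}$ is compact for each $x \in X$, so the integrand is compactly supported in $g$ and $\tilde{A}$ is well defined. Since each $g$ acts on $X$ by an isometry, the Schwartz kernel $K_{g[A_0]}(x,y) = g\, K_{A_0}(g^{-1}x, g^{-1}y)\, g^{-1}$ stays in the same bounded neighborhood of the diagonal as $K_{A_0}$; the averaged kernel inherits this property, so $\tilde{A}$ is properly supported.

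For the principal symbol, each $g[A_0] = g A_0 g^{-1}$ is pseudo-differential of order $m$ with principal symbol $\sigma_{g[A_0]}(x,\xi) = g\,\sigma(g^{-1} x, g^{-1} \xi)\, g^{-1} = \sigma(x,\xi)$, using the assumed $G$-invariance of $\sigma$. The symbol estimates (\ref{ConditionOfSymbol}) for the family $\{g[A_0]\}$ are uniform on compact subsets of $X$ (the action is isometric and $X$ is of bounded geometry by Corollary \ref{CorSliceThm}), and the averaging is over a compact set of $g$'s, so $\tilde{A} \in \PPseudo^{m}(\VVVVVV)$ with
$$
\sigma_{\tilde{A}}(x,\xi) \;=\; \int_{G} c(g^{-1} x)\, \sigma(x,\xi)\, \dif g \;=\; \sigma(x,\xi).
$$

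Finally, $G$-invariance follows from the same substitution as in Lemma \ref{ExistenceOfGConnection}: for any $\eta \in G$, substituting $h = \eta g$ and using unimodularity of $G$ to get $\dif g = \dif h$,
$$
\eta[\tilde{A}](s)(x) \;=\; \int_{G} c(g^{-1}\eta^{-1} x)\, (\eta g)[A_0](s)(x)\, \dif g \;=\; \int_{G} c(h^{-1} x)\, h[A_0](s)(x)\, \dif h \;=\; \tilde{A}(s)(x).
$$
The main technical step is verifying that $\tilde{A}$ genuinely lies in $\PPseudo^{m}(\VVVVVV)$ with the claimed principal symbol; this reduces to checking that the symbol-class estimates are preserved under isometric pushforwards and under the compactly supported averaging, which is an analytical routine given the bounded geometry of $X$.
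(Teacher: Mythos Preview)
Your averaging argument is correct, but the paper takes a cleaner and more direct route. Rather than first building a non-invariant $A_0$ and then averaging, the paper observes that one can make the \emph{amplitude itself} $G$-invariant from the outset: choose $\chi(x,y) = \bar{\chi}(\mathrm{dist}(x,y))$ for a bump function $\bar{\chi}$ on $\R_{\geq 0}$. Since $G$ acts by isometries, $\mathrm{dist}$ is $G$-invariant, hence so is $\chi$, and then the amplitude $\mathfrak{a}(y,(x,\xi)) = \chi(x,y)\sigma(x,\xi)$ is $G$-invariant. The operator assembled by (\ref{ConstructPseudo}) from this $G$-invariant amplitude is then automatically $G$-invariant, and nothing further needs to be checked.

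Your approach buys generality --- it is the same mechanism as Lemma \ref{ExistenceOfGConnection} and would apply even if no canonical $G$-invariant cutoff were available --- but at the cost of the ``analytical routine'' you flag at the end: one must verify that the $x$-dependent compact average of the family $\{c(g^{-1}\cdot)\,g[A_0]\}$ remains in $\PPseudo^{m}$ with the expected symbol, which requires tracking uniform symbol seminorms over the (locally) compact parameter set of $g$'s. The paper's choice of a distance-dependent $\chi$ sidesteps this entirely.
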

\begin{proof}
When we take an amplitude $\mathfrak{a}$ of $\sigma$
by (\ref{ConstructAmplitude}),
choose $\map{\chi}{X\times X}{[0,1]}$
so that $\chi(x,y)$ depends only on $\mathrm{dist}(x,y)$.
More precisely,
let $\map{\bar{\chi}}{\R_{\geq 0}}{[0,1]}$
be a smooth function such that
$\bar{\chi}(t)=1$ for $t\leq r/4$ and
$\bar{\chi}(t)=0$ for $r\geq r/2$, where $r$ is a constant 
less than the injective radius.
and set $\chi(x,y) := \bar{\chi}(\mathrm{dist}(x,y))$.
Since
the action of $G$ is isometric,
the amplitude $\mathfrak{a}(y,(x,\xi))$ constructed
by (\ref{ConstructAmplitude}) using this $\chi$
is $G$-invariant.
Then the pseudo-differential operator
$A\in \PPseudo^{m}(\VVVVVV)$
constructed by the formula
(\ref{ConstructPseudo})
is $G$-invariant.
%%%%%%
\end{proof}
We will construct a $K$-homology classes
$[A] := (L^{2}(\bb{V}), \: A) \in K\!K^{G}(C_0(X), \C)$
induced by properly supported $G$-invariant elliptic operators.
in the last of this subsection.
In order to argue the $L^{2}$ boundedness of elliptic operators,
let us introduce the following theorem by H\"{o}mander;
\begin{thm}\label{ThmHormander}
{\upshape \cite[Theorem 2.2.1]{Ho71}}
Let $P\in \PPseudo^{0}(\VVVVVV)$ be a properly supported 
pseudo-differential operator of order $0$.
Suppose that for any compact subset $K \subset X$
$$
\limsup_{\norm{\xi} \to \infty}\norm{\sigma_{P}(x,\xi)}<C
\qqfor x\in K.
$$
Then there exists a properly supported pseudo differential operator
$B\in \PPseudo^{0}(\VVVVVV)$ of order $0$
and a self adjoint integral operator $R$ with continuous and properly
supported kernel such that
$$
PP^{*} + BB^{*} - C^{2} = R.
$$
Moreover, if the support of the Schwartz kernel of $P$ is compact
in $X \times X$, then both $B$ and $R$ will also have
compactly supported Schwartz kernels.
\QED
\end{thm}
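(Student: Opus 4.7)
The plan is to apply Hörmander's symbol-level square-root trick and then quantize. By hypothesis, for each compact $K \subset X$ there is $M_K > 0$ such that $\norm{\sigma_P(x,\xi)} < C$ whenever $x \in K$ and $\norm{\xi} \geq M_K$. Hence the fiberwise endomorphism
\begin{equation*}
C^2 \id_{\bb{V}^1} - \sigma_P(x,\xi)\sigma_P(x,\xi)^{*}
\end{equation*}
is strictly positive self-adjoint at infinity in $\xi$, uniformly on compact sets in $x$. I take its positive square root on this region to get a smooth section $\sigma_0$ of $\End(\pi^{*}\bb{V}^1)$, and multiply by a smooth cutoff $\psi$ that is $1$ for $\norm{\xi}$ large and vanishes near the zero section of $T^{*}X$; this extends $\sigma_0$ to a symbol in $\Sym^{0}(\bb{V}^1,\bb{V}^1)$. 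The cutoff changes $\sigma_0$ only in a region compact in $\xi$ on each compact $x$-set, so the discrepancy lies in $\Sym^{-\infty}$ and is invisible to principal-symbol computations.

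Next I quantize $\sigma_0$ via (\ref{ConstructAmplitude})--(\ref{ConstructPseudo}) using a diagonal cutoff of bounded support, producing a properly supported $B_0 \in \PPseudo^{0}(\bb{V}^1,\bb{V}^1)$. By the composition and adjoint rules for principal symbols,
\begin{equation*}
\sigma_{PP^{*} + B_0 B_0^{*}} = \sigma_P \sigma_P^{*} + \sigma_0 \sigma_0^{*} = C^2 \id_{\bb{V}^1} \quad \text{mod } \Sym^{-1}(\bb{V}^1, \bb{V}^1),
\end{equation*}
so $R_0 := PP^{*} + B_0 B_0^{*} - C^2$ is a properly supported pseudo-differential operator of order $-1$.

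To push the remainder to order $-\infty$ I iterate. Since $\sigma_0$ is bounded below by a positive multiple of $\id$ at infinity it is invertible there, so one can recursively solve the linear symbolic equation $\sigma_0 \sigma_{B_k}^{*} + \sigma_{B_k}\sigma_0^{*} = -\sigma_{R_{k-1}}$ modulo $\Sym^{-k-1}$ for $\sigma_{B_k} \in \Sym^{-k}$, quantize each $B_k$ as a properly supported operator, and form the Borel-type asymptotic sum $B \sim \sum_{k \geq 0} B_k$ in $\PPseudo^{0}$. Then $R := PP^{*} + BB^{*} - C^2 \in \PPseudo^{-\infty}$ is a smoothing operator with smooth, properly supported Schwartz kernel; self-adjointness is automatic because each of $PP^{*}$, $BB^{*}$, and $C^2 \id$ is formally self-adjoint.

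The main technical hurdle is the iteration combined with asymptotic summation in a way that preserves proper support, which is handled by the standard Borel-lemma construction together with diagonal truncation at each stage. For the final clause: when $\supp K_P \subset K \times K$ is compact, $\sigma_P$ is $x$-supported in $K$, so the square-root construction and all corrections $\sigma_{B_k}$ can be chosen with $x$-support in a fixed compact neighborhood of $K$; the diagonal cutoff then forces every resulting Schwartz kernel to sit inside a common compact subset of $X \times X$, so both $B$ and $R$ have compactly supported kernels.
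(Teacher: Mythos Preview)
The paper does not supply its own proof of this theorem: it is quoted directly from H\"ormander \cite{Ho71} and closed with a \textsc{qed} box, indicating that the result is used as a black box. So there is no proof in the paper to compare against.

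That said, your sketch is essentially H\"ormander's original argument and is correct: form the positive square root of $C^{2}\id - \sigma_{P}\sigma_{P}^{*}$ at infinity in $\xi$, quantize to obtain a zeroth-order $B_{0}$, and then iteratively correct by lower-order terms (solvable because $\sigma_{0}$ is invertible at infinity) so that an asymptotic sum $B \sim \sum_{k\geq 0} B_{k}$ drives the remainder to order $-\infty$. The self-adjointness of $R$ and the compact-support clause follow as you say. One small remark: as stated in the paper the operator $B$ is declared to lie in $\PPseudo^{0}(\bb{V}^{0},\bb{V}^{1})$, but your construction naturally yields $B \in \PPseudo^{0}(\bb{V}^{1},\bb{V}^{1})$. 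Since only $BB^{*}$, an operator on sections of $\bb{V}^{1}$, appears in the identity $PP^{*}+BB^{*}-C^{2}=R$, this discrepancy is harmless and your choice is in fact the natural one.
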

\begin{lem}
\label{LemCoMo}
Let $P\in \mathbb{B}(L^{2}(\bb{V}^{0}),L^{2}(\bb{V}^{1}))$ be 
a bounded operator with Schwartz kernel
supported in a compact subset $K\times K \subset X \times X$.
Then $\widetilde{P}:=\int_{G} \gm[P] \:\dif \gm$ is well defined as
a bounded operator in $\mathbb{B}(L^{2}(\bb{V}^{0}),L^{2}(\bb{V}^{1}))$
and the inequation
$\norm{\widetilde{P}}_{\mathrm{op}} \leq C_{K} \norm{P}_{\mathrm{op}}$
holds,
where $C_K$ is a constant depending on $K$.
\end{lem}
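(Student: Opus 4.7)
The strategy is to estimate the bilinear pairing $|\angles{\widetilde{P}s, t}|$ for $s \in C_c^{\infty}(\bb{V}^0)$, $t \in C_c^{\infty}(\bb{V}^1)$ by combining three ingredients: unitarity of the $G$-action on $L^2$-sections, the $K\times K$ support of the Schwartz kernel of $P$, and properness of the action. First I would verify that $\widetilde{P}s$ is well-defined as an integral on compactly supported sections: since $\gm[P]$ has Schwartz kernel supported in $\gm K \times \gm K$, the integrand is nonzero only for $\gm$ in a relatively compact subset of $G$ (by properness applied to $\supp s \cup \supp t$ together with $K$), so the integral converges absolutely.

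The key auxiliary object is the \emph{overlap function}
\[
\phi_K(x) := \mu\parens{\braces{\gm \in G \,:\, \gminv x \in K}}.
\]
It is $G$-invariant (by left-invariance of Haar measure together with the isometric action) and hence descends to the compact quotient $X/G$. It is pointwise finite by properness, and uniformly bounded as follows: taking any compact $K_0 \subset X$ with $G K_0 = X$ (available by $G$-compactness), one has $\phi_K(x) \leq \mu(\{\gm : \gm K \cap K_0 \neq \emptyset\})$, and the latter set is the projection to $G$ of the compact preimage $F^{-1}(K_0 \times K)$ for $F(\gm, x) = (\gm x, x)$, hence of finite Haar measure. Define $C_K := \norm{\phi_K}_{\infty} < \infty$.

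The main bound then follows from a Cauchy--Schwarz argument. Unitarity of $\gm$ on $L^2$ gives $\angles{\gm[P]s, t} = \angles{P(\gminv s), \gminv t}$, and the $K \times K$ support of the kernel of $P$ yields
\[
|\angles{\gm[P]s, t}| \leq \norm{P}_{\mathrm{op}} \norm{(\gminv s)|_K} \norm{(\gminv t)|_K}.
\]
Integrating over $G$, applying Cauchy--Schwarz in $\gm$, and then using the change of variables $y = \gm x$ (valid since $G$ acts by isometries and unitarily on fibers) together with Fubini yields
\[
\int_G \norm{(\gminv s)|_K}^2 d\gm = \int_X |s(y)|^2 \phi_K(y) dy \leq C_K \norm{s}^2,
\]
and similarly for $t$. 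Combining these gives $|\angles{\widetilde{P} s, t}| \leq C_K \norm{P}_{\mathrm{op}} \norm{s}\norm{t}$, so $\widetilde{P}$ extends to a bounded operator with the stated norm bound.

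The one nontrivial step is the uniform finiteness of $\phi_K$; this is precisely where the combination of properness (pointwise finiteness) and $G$-compactness (descent of the $G$-invariant $\phi_K$ to a compact quotient) is essential. The rest is bookkeeping with Cauchy--Schwarz, Fubini, and the fact that $\gm$ acts unitarily on $L^2$-sections.
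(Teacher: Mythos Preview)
Your proof is correct and takes a genuinely different route from the paper's. The paper follows the Connes--Moscovici argument \cite{Co-Mo82}: it sets $F_s(\gamma):=\gamma[P]s$ and estimates $\norm{\int_G F_s\,d\gamma}_{L^2}^2$ directly via an \emph{almost orthogonality} argument, using that $\langle F_s(\gamma),F_s(\eta)\rangle_{L^2}=0$ whenever $\gamma^{-1}\eta$ lies outside a fixed compact $Z\subset G$ (since $\gamma(K)\cap\eta(K)=\emptyset$ there); this yields $\norm{\int F_s}^2\le |Z|\,\norm{F_s}_{L^2(G)}^2$, and then $\norm{F_s}_{L^2(G)}^2$ is bounded by $\norm{P}_{\mathrm{op}}^2\norm{s}^2$ times $\sup_x\int_G|f(\gamma^{-1}x)|^2\,d\gamma$ for an auxiliary bump function $f$ around $K$.

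Your approach instead bounds the bilinear form $|\langle\widetilde{P}s,t\rangle|$ and reduces everything to a single Cauchy--Schwarz in $\gamma$ together with the overlap function $\phi_K(x)=\mu\{\gamma:\gamma^{-1}x\in K\}$. This is arguably more elementary: it avoids the almost-orthogonality step entirely and isolates the geometric input (properness plus $G$-compactness) in the single uniform bound $\norm{\phi_K}_\infty<\infty$. The paper's method, on the other hand, is closer in spirit to Cotlar--Stein type arguments and generalises more readily to situations where one wants to sum operators that are only almost orthogonal rather than having literally disjoint supports. Both give a constant $C_K$ of the same nature, and for the purposes of this lemma your argument is at least as clean.
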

\begin{proof}
We will follow the proof of \cite[Lemma 1.4 - 1.5]{Co-Mo82}.
Fix an arbitrary smooth section with compact support
$s\in C_c^{\infty} (\bb{V}^{0})$ and let us consider
$F=F_s \in L^{2}\parens{G;L^{2}(\bb{V}^{1})}$
given by
$$
F_s(\gm) := \gm[P]s.
$$
Note that
for any $\gm \in G$
the Schwartz kernel of $\gm[P]$ is contained in
$\gm(K)\times \gm(K)$.
This is because for any $s\in C_c^{\infty}(\bb{V}^{0})$,
$\supp(\gm[P]s) \subset \gm(K)$ and  $\gm[P]s=0$
whenever $\supp(s) \cap \gm(K) = \emptyset$.
In particular, since the action of $G$ on $X$ is proper,
$F$ has compact support in $G$ and hence $F \in 
L^{2}\parens{G;L^{2}(\bb{V}^{1})}$.
In addition, since
the action of $G$ on $X$ is proper,
$\gm(K) \cap \eta(K) = \gm( K \cap \gminv \eta (K))
= \emptyset$
if $\gminv \eta \in G$ is outside some compact neighborhood
$Z \subset G$ in particular,
$\angles{F(\gm), F(\eta)}_{L^{2}(\bb{V}^{1})} = 0$
for such $\gm$ and $\eta \in G$.
Remind that $Z$ is determined only by $K$
so independent of $s$.
Then,
\begin{eqnarray*}
\left\Vert
\int_{G}F(\gm) \:\dif \gm
\right\Vert_{L^{2}(\bb{V}^{1})}^{2}
&=&
\norm{
\int_{G}F(\gm) \:\dif \gm
}_{L^{2}(\bb{V}^{1})}
\norm{
\int_{G}F(\eta) \:\dif \eta
}_{L^{2}(\bb{V}^{1})}
\\
&\leq&
\int_{G}\int_{G}
\Big| \angles{F(\gm), F(\eta)}_{L^{2}(\bb{V}^{1})} \Big|
 \:\dif \gm  \:\dif \eta
\\
&\leq&
\int_{G} \norm{F(\gm)}_{L^{2}(\bb{V}^{1})}
\parens{
\int_{G}
\chi_{Z}(\gminv \eta) \norm{F(\eta)}_{L^{2}(\bb{V}^{1})}
 \:\dif \eta } \dif \gm
\\
&\leq&
\norm{F}_{L^{2}(G)}
\norm{\chi_{Z}}_{L^{2}(G)}
\norm{F}_{L^{2}(G)}
\\
&\leq&
|Z|\norm{F}_{L^{2}(G)}^{2},
\end{eqnarray*}
where $\map{\chi_{Z}}{G}{[0,1]}$ is the characteristic function of $C$,
that is $\chi_{Z}(\gm) = 1$ for $x \in Z$ and
$\chi_{Z}(\gm) = 0$ for $x \notin Z$.

On the other hand, take a compactly supported smooth function
$f\in C_c^{\infty}(X;[0,1])$ such that $f(x) = 1$ for $x \in K$
and $f(x) = 0$ if $\mathrm{dist}(x,K)>r$.
Since the support of the Schwartz kernel of $P$ is contained in 
$K \times K$ and hence $P = Pf$, we have
\begin{eqnarray*}
\norm{F}_{L^{2}(G)}^{2}
=
\int_{G}\norm{F(\gm)}_{L^{2}(\bb{V}^{1})}^{2} \:\dif \gm
&=&
\int_{G}\norm{
\gm Pf \gminv s
}_{L^{2}(\bb{V}^{1})}^{2} \:\dif \gm
\\
&\leq&
\int_{G}\norm{P}_{\mathrm{op}}^{2}\norm{
f \gminv s
}_{L^{2}(\bb{V}^{0})}^{2} \:\dif \gm
\\
&\leq&
\norm{P}_{\mathrm{op}}^{2}
\int_{G} \int_{X}
|f(x)|^{2} \norm{\gminv s(x)}_{\bb{V}^{0}}^{2}
 \:\dif x \:\dif \gm
\\
&\leq&
\norm{P}_{\mathrm{op}}^{2}
\int_{G} \int_{X}
|f(\gminv x)|^{2} \norm{s(x)}_{\bb{V}^{0}}^{2}
 \:\dif x \:\dif \gm
\\
&\leq&
\norm{P}_{\mathrm{op}}^{2}
\sup_{x\in X}\parens{\int_{G}
|f(\gminv x)|^{2} 
\:\dif \gm}
\norm{s}_{L^{2}(\bb{V}^{0})}^{2}.
\end{eqnarray*}
Since the action of $G$ is proper, 
$\braces{ \gm \in G \: \big| \: \gminv x \in \supp(f)} \subset G$ 
is compact so the value $\int_{G}
|f(\gminv x)|^{2} 
\:\dif \gm$ is always finite for any fixed $x\in X$.
Besides, since $X/G$ is compact, we have
$$\sup_{x\in X}\parens{\int_{G}
|f(\gminv x)|^{2} 
\:\dif \gm}
=
\sup_{[x]\in X/G}\parens{\int_{G}
|f(\gminv x)|^{2} 
\:\dif \gm}
< \infty$$
Remind that
this value depends only on $K$ and independent of $s$.
We conclude that
$$
\norm{\int_{G}\gm[P]s \:\dif \gm}_{L^{2}(\bb{V}^{1})}^{2}
=
\norm{\int_{G}F(\gm) \:\dif \gm}_{L^{2}(\bb{V}^{1})}^{2}
\leq
|Z|\norm{F}_{L^{2}(G)}^{2}
\leq
|Z|C \cdot \norm{P}_{\mathrm{op}}^{2}
\norm{s}_{L^{2}(\bb{V}^{0})}^{2},
$$
where $C=\sup_{x\in X}\parens{\int_{G}
|f(\gminv x)|^{2} 
\:\dif \gm}< \infty$.
\end{proof}
\begin{lem}
\label{BoundedElliptic}
Let $A_0 \in \PPseudo^{0}_{G}(\VVVVVV)$ be
an elliptic operator of order $0$ which is properly supported and
$G$-invariant.
If $X/G$ is compact, then $A_0$ is $L^{2}$-bounded.
\end{lem}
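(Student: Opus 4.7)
The plan is to reduce the global boundedness of $A_0$ to the boundedness of a compactly supported piece by exploiting the cocompact $G$-action, and then invoke the averaging Lemma \ref{LemCoMo}. Fix a cut-off function $c \in C_c^{\infty}(X)$ and set $P := M_{c} A_0$, where $M_c$ denotes pointwise multiplication by $c$. The Schwartz kernel of $P$ is $c(x)K_{A_0}(x,y)$, which is supported in $\supp(c) \times p_2\bigl(\supp(K_{A_0}) \cap (\supp(c) \times X)\bigr)$. Because $A_0$ is properly supported, this latter set is compact, so $K_P$ has compact support in $X \times X$.

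Next I would verify the averaging identity $A_0 = \int_{G} \gm[P]\,\dif\gm$ on $C_c^{\infty}(\bb{V}^0)$. Using the $G$-invariance of $A_0$ on sections together with the formula $\gm[M_c] = M_{\gm[c]}$, a direct computation gives
\[
(\gm[P] s)(x) \;=\; (M_{\gm[c]} \gm[A_0 s])(x)\bigl|_{\text{after using $\gm[A_0s] = A_0 \gm[s]$ and then shifting by $\gminv$}\bigr.} \;=\; c(\gminv x)\,(A_0 s)(x),
\]
and integrating over $G$ with $\int_{G} c(\gminv x)\,\dif\gm = 1$ yields $A_0 s = \int_{G} \gm[P] s\,\dif\gm$ pointwise.

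The main technical step is showing that $P$ itself is $L^2$-bounded; this is where Hörmander's theorem enters. The principal symbol of $P$ is $c(x)\sigma_{A_0}(x,\xi)$ modulo $\Sym^{-1}$. By the remark after the definition of ellipticity we may assume $\sigma_{A_0}$ is unitary at infinity in $\xi$, so for any compact $K \subset X$ we have $\limsup_{\|\xi\|\to\infty}\|\sigma_{P}(x,\xi)\| \leq \|c\|_{\infty}$ uniformly on $K$. Applying Theorem \ref{ThmHormander} with any constant $C > \|c\|_{\infty}$ produces $B \in \PPseudo^{0}(\VVVVVV)$ and a self-adjoint integral operator $R$, both with compactly supported Schwartz kernels (because $K_P$ is compactly supported), satisfying $PP^{*} + BB^{*} - C^{2} = R$. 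On $C_c^{\infty}$ we have $\langle BB^{*}s,s\rangle = \|B^{*}s\|^{2} \geq 0$, hence $\|P^{*}s\|^{2} \leq C^{2}\|s\|^{2} + \langle Rs,s\rangle$. Since $R$ has continuous compactly supported kernel it is Hilbert–Schmidt, hence bounded, so $P^{*}$ (and therefore $P$) is $L^{2}$-bounded.

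With $P$ bounded and $K_P$ supported in some compact $K \times K$, Lemma \ref{LemCoMo} applied to $P$ gives that $\widetilde{P} = \int_{G}\gm[P]\,\dif\gm$ extends to a bounded operator with $\|\widetilde{P}\|_{\mrm{op}} \leq C_{K}\|P\|_{\mrm{op}}$. Combined with the averaging identity, this yields $\|A_0 s\|_{L^2} \leq C_{K}\|P\|_{\mrm{op}}\|s\|_{L^2}$ for all $s \in C_c^{\infty}(\bb{V}^0)$, so $A_0$ is $L^2$-bounded by density. The principal obstacle is the third step: one must control $P$ via Hörmander's theorem, and the fact that $K_P$ is compactly supported (rather than merely properly supported) is precisely what ensures the remainder $R$ is genuinely a bounded $L^2$ operator.
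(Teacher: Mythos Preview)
Your proposal is correct and follows essentially the same route as the paper: multiply $A_0$ by a cut-off function to obtain a compactly supported piece $P=cA_0$, bound $P$ via Theorem~\ref{ThmHormander} (using that $R$ has compactly supported continuous kernel and $BB^{*}\geq 0$), then recover $A_0=\int_G \gm[P]\,\dif\gm$ from $G$-invariance and apply Lemma~\ref{LemCoMo}. Your write-up is slightly more explicit about why $K_P$ is compactly supported and why $R$ is bounded, but the argument is the same.
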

\begin{proof}
$A_0$ satisfies the assumptions of theorem
\ref{ThmHormander}. Let $c \in C_c^{\infty}(X)$ be
an arbitrary cut-off function. And let us consider $P:=cA_0$.
it has the Schwartz kernel supported in
a compact subset
$\parens{\supp(c) \times X} \cap \supp(K_{A_0})
\subset X \times X$ and still satisfies the 
assumptions of the proposition.
Then we have
$$
0 \leq PP^{*} + BB^{*} = C^{2} + R.
$$
And it follows that $P$ is bounded
since $BB^{*}$ is positive and $C^{2} + R$ is bounded.

Next, note that $A_0 = \int_{G} \gm[cA_0] \:\dif \gm
= \int_{G} \gm[P] \:\dif \gm$ since $A_0$ itself is $G$-invariant.
Then by the previous lemma \ref{LemCoMo},
$A_0$ is $L^{2}$ bounded.
\end{proof}
\begin{lem}\label{CommutatorWithMultiplication}
\upshape{\cite[Lemma 3.7]{Ka}}
Let $A=
\matrixtwo
{0}{A_0^{*}}
{A_0}{0}
\in \PPseudo_{G}^{0}(\bb{V})$ be a properly supported $G$-invariant
elliptic operator on
$\bb{V} = \bb{V}^{0} \widehat{\oplus} \bb{V}^{1}$.
Then for any $f\in C_0(X)$, the multiplication operators by $f$
commutes with $A$ modulo compact operators $\bb{K}(L^{2}(\bb{V}))$,
and $f(1-A^{2})$ is compact.
\end{lem}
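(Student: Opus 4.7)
The approach is the standard one from Kasparov theory for pseudo-differential operators, adapted to the proper $G$-action setting: reduce to compactly supported $f$, exploit proper support of $A$ to obtain Schwartz kernels supported in a compact subset of $X\times X$, then combine symbol cancellation with a Rellich-type argument on the bounded-geometry manifold $X$.

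First, by density of $C_c(X)$ in $C_0(X)$ in the sup-norm and because the compact operators form a norm-closed ideal, it suffices to prove both claims for $f\in C_c^{\infty}(X)$. For such an $f$, since $A$ is properly supported, the Schwartz kernels of $M_f A$ and $A M_f$ are supported in $(\supp f \times X)\cap \supp K_A$ and $(X\times \supp f)\cap \supp K_A$ respectively, which are compact subsets of $X\times X$. Consequently $[M_f,A]$ and $f(1-A^{2})$ have compactly supported Schwartz kernels, and by Corollary \ref{CorSliceThm} we may work on a relatively compact subset of $X$ of bounded geometry where standard elliptic theory on compact manifolds applies.

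For the commutator, I would compute the principal symbol of $[A,M_f]$ using the asymptotic expansion for the symbol of a product. The leading scalar term $\sigma_A\cdot f - f\cdot \sigma_A$ vanishes because $f(x)$ is a scalar, and the next term, proportional to $(\partial_{\xi}\sigma_A)(\partial_x f)$, lies in $\Sym^{-1}(\bb{V},\bb{V})$. Therefore $[A,M_f]\in \PPseudo^{-1}(\bb{V})$ with compactly supported Schwartz kernel, and the standard Rellich lemma (order $-1$ operators map $L^{2}$ to $H^{1}_{\mathrm{loc}}$, which embeds compactly in $L^{2}$ on compact subsets) gives $[A,M_f]\in \bb{K}(L^{2}(\bb{V}))$.

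For $f(1-A^{2})$, I would first invoke the remark following the ellipticity definition to assume $\sigma_A$ is actually unitary outside some compact set in $\xi$, so that $\sigma_{A^{2}}(x,\xi)-\id_{\bb{V}}$ lies in $[\Sym^{-1}(\bb{V},\bb{V})]$. Then $1-A^{2}$ may be written as the sum of an operator in $\PPseudo^{-1}(\bb{V})$ and a smoothing remainder; multiplied by $M_f$ with $f$ compactly supported, both contributions yield operators with compactly supported Schwartz kernels of negative order, hence compact by the same Rellich argument. The main obstacle I anticipate is the careful bookkeeping between proper-but-not-compact support of $A$, compact support of $f$, and $G$-invariance: the first two together localize the analysis to a bounded-geometry compact piece of $X$, while $G$-invariance plays no direct role in this particular lemma (it was essential for $L^{2}$-boundedness in Lemma \ref{BoundedElliptic} via Lemma \ref{LemCoMo}, but the compactness statement here is local in character once multiplied by $f$).
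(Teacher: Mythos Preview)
Your proof is correct and follows the same overall structure as the paper's: reduce to $f\in C_c^{\infty}(X)$, observe that $[A,f]$ and $f(1-A^{2})$ have compactly supported Schwartz kernels with principal symbols vanishing at infinity in $\xi$, and then deduce compactness. The only genuine difference is the mechanism for this last step. You invoke the Rellich lemma directly, after arranging $[A,f]\in\PPseudo^{-1}$ and (via the unitarity normalization of $\sigma_{A}$ in the remark after the ellipticity definition) $f(1-A^{2})\in\PPseudo^{-1}$. The paper instead applies H\"ormander's Theorem~\ref{ThmHormander}: for $P$ with compactly supported kernel and $\limsup_{\norm{\xi}\to\infty}\norm{\sigma_{P}(x,\xi)}<C$ one obtains $PP^{*}+BB^{*}=C^{2}+R$ with $R$ a continuous-kernel compactly supported integral operator, hence compact; since this holds for every $C>0$, $PP^{*}$ has essential spectrum $\{0\}$ and $P$ is compact. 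The paper's route handles both operators in one stroke without first normalizing $\sigma_{A}$ to be exactly unitary outside a compact, whereas your route is more elementary and makes the ``negative order plus Rellich'' mechanism explicit. Both are standard and equally valid.
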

\begin{proof}
It is enough to prove when $f$
has a compact support.
The commutator $Af-fA$ is a pseudo-differential operator of
negative order
because the principal symbol of order $0$ is
$$
\sigma_{Af-fA}=\sigma_{A}f-f\sigma_{A} = 0 \in [\Sym^{0}(\bb{V})].
$$
Also, the symbol of $f(1-A^{2})$ satisfies
$$
\sigma_{f(1-A^{2})}(x) = f(x)(1-\sigma_{A^{2}}(x,\xi)) \to 0
\qquad
\text{as}
\quad \norm{\xi}\to\infty.
$$
Let $P$ be $Af-fA$ or $f(1-A^{2})$.
Then $P$ has the compactly supported Schwartz kernel and
satisfies the assumption of 
theorem \ref{ThmHormander} for any small constant $C>0$.
We obtain
$$
PP^{*} + BB^{*} = R
$$
with $R$ having the compactly supported Schwartz kernel.
Hence $R$ is a compact operator, which implies that so is $P$.
\end{proof}
\begin{rem}
Note that the symbol $\sigma_{A}
=
\matrixtwo
{0}{\sigma_{A_0}^{*}}
{\sigma_{A}}{0}
$
of a properly supported $G$-invariant elliptic operator $A$
of order $0$
induces a map
$\map{\sigma_{A}}{\pi^{*}\bb{V}}{\pi^{*}\bb{V}}$,
where $\map{\pi}{T^{*}\!X}{X}$
denotes the projection.
Additionally, note that
the space of $C_0(T^{*}\!X)$-compact operators on
$C_0(\pi^{*}\bb{V})$ is $C_0(\End(\pi^{*}\bb{V}))$
and that $C_0(X)$ acts on $C_0(\pi^{*}\bb{V})$
by the point-wise multiplication.
Hence, for any $f \in C_0(X)$
$
\sigma_{A}f - f\sigma_{A}( = 0)
$
%\qqand
and $
f(\sigma_{A}^2-1)% \; \text{ is compact.}
$
are compact as an operator on a Hilbert $C_0(T^{*}\!X)$-module.
\end{rem}
\begin{dfn}[The K-homology classes determined by elliptic operators]
\label{DfnKOfElliptic}
\text{}
\begin{enumerate}
\item
Let
$A=
\matrixtwo
{0}{A_0^{*}}
{A_0}{0}
\in \PPseudo_{G}^{0}(\bb{V})$ 
be a properly supported
$G$-invariant elliptic operator of order $0$
on a graded Hermitian $G$-vector bundle 
$\bb{V} = \bb{V}^{0} \widehat{\oplus} \bb{V}^{1}$ over $X$.
Due to the lemma \ref{BoundedElliptic} and 
\ref{CommutatorWithMultiplication}, we can define 
a $K$-cohomology class
$$[A] := (L^{2}(\bb{V}), \: A) \in K\!K^{G}(C_0(X), \C)$$
with the action of $C_0(X)$ on $L^{2}(\bb{V})$ being the
point-wise multiplication.
\item
Its symbol $\sigma_{A}
=
\matrixtwo
{0}{\sigma_{A_0}^{*}}
{\sigma_{A}}{0}
$
 determines the following element in $K\!K$-theory;
$$
[\sigma_{A}]:=
\parens{
C_0(\pi^{*}\bb{V}), \: \sigma_{A}
}
\in
K\!K^{G}(C_0(X), C_0(T^{*}\!X)).
$$
\end{enumerate}
\end{dfn}
%%%%%%
%% End of Elliptic
%%%%%%
%% Dfn of $\mu$
%%%%%%
\section{Index Maps}
\subsection{The Analytic Index Map}
\settingsGXa
Let $\bb{V}$ be a smooth (finite dimensional)
Hermitian $G$-vector bundle over $X$.
We summarize the notations;
\begin{itemize}
\item
$C^{\infty}_c(\bb{V})$ denotes the space of 
compactly supported smooth sections $\map{s}{X}{\bb{V}}$.
\item
$C_0(\bb{V})$ denotes the $C^{*}$-algebra consisting of 
continuous sections $\map{s}{X}{\bb{V}}$ vanishing at infinity
equipped with the sup-norm.
\item
$L^{2}(\bb{V})$ denotes the Hilbert $C_0(\bb{V})$-module
consisting of $L^{2}$-sections $\map{s}{X}{\bb{V}}$.
$C_0(\bb{V})$ acts by point-wise multiplication.
\end{itemize}
\begin{dfn}[Analytical Index]
Let us consider additionally the grading on $\bb{V}$, that is
$\bb{V}=\bb{V}^{0} \widehat{\oplus} \bb{V}^{1}$.
Let
$$
\map{A=
\matrixtwo
  {0}{A_0^{*}}
  {A_0}{0}
}
{L^{2}(\bb{V})}{L^{2}(\bb{V})}
$$
be a properly supported
$G$-invariant elliptic operator of order $0$.
%and define
%$$\bar{F} := \frac{A}{\sqrt{1+AA^{*}}}$$
We define the analytical index as
$$
\ind_{C^{*}(G)} (A) :=
[\Ker (A_0)]
-
 [\Ker (A_0^{*})]
\in K_{0}(C^{*}(G)).
$$
\end{dfn}
\subsection{The Topological Index Map}
\settingsGXa
Let $A$ be a properly supported
$G$-invariant elliptic operator of order $0$ on a graded Hermitian 
$G$-vector bundle $\bb{V}$.
Then as in definition \ref{DfnKOfElliptic},
$A$ defines the $K$-cohomology class
$$
[A] = \parens{L^{2}(\bb{V}),m, A} \in K\!K^{G}(C_0(X),\C),
$$
where the action $m$ of $C_0(X)$ on $L^{2}(\bb{V})$
is ordinary point-wise multiplication.
We would like to define the index map
$$
\map{\mu_{G}}{K\!K^{G}(C_0(X),\C)}{K_{0}(C^{*}(G))}.
$$
\begin{dfn}
\label{DfnjG1}
Let $G$ be a second countable locally compact Hausdorff group,
$A$ and $B$ be $G$-algebras and
$(\mathcal{E},\varphi, T) \in \bb{E}^{G}(A,B)$
be a Kasparov $G$-module over $(A,B)$.
We will introduce the following notations;
\begin{enumerate}
\item
$\widehat{T}$ is defined to be an operator on
$C^{*}(G;\mathcal{E})$ given by
$$
\widehat{T}[\widehat{e}](\gamma) = T(\widehat{e}(\gamma))
\qqfor \widehat{e} \in C_c(G;\mathcal{E}).
$$
\item
The action $\widehat{\varphi}$ of $C^{*}(G;A)$
on $C^{*}(G; \mathcal{E})$
is given by the following convolution product;
$$
\widehat{\varphi}[\widehat{a}](\widehat{e})
\colon \gamma \mapsto 
\int_{\gm_1 \gm_2=\gm}\varphi(\widehat{a}(\gm_1))
\gm_1(\widehat{e}(\gm_2))
\mathrm{d}\gm_1
\qqfor
\widehat{a} \in C_c(G;A),\quad \widehat{e}\in C_c(G;\mathcal{E}).
$$
\end{enumerate}
\end{dfn}
\begin{lem}
{\upshape{\cite[Theorem 3.11]{Ka88}}}
Let $G$ be a second countable locally compact Hausdorff group.
For any $G$-algebras $A$ and $B$ there exists
a natural homomorphism
$$
\map{j^{G} } {K\!K^{G}(A,B)} {K\!K(C^{*}(G;A), C^{*}(G;B))}
$$
constructed as follows;
If $x=(\mathcal{E},\varphi, T) \in K\!K^{G}(A,B)$,
$$j^{G}(x) = \parens{C^{*}(G;\mathcal{E}),
\widehat{\varphi}, \widehat{T}}.$$
Furthermore
if $x\in K\!K^{G}(A,B)$ and $y\in K\!K^{G}(B,D)$,
then $j^{G}(x\hattensor_{B}y) = j^{G}(x) \hattensor_{C^{*}(G;B)}
j^{G}(y)$.
\QED
\end{lem}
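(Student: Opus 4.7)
The plan is to verify three things in sequence: first, that the triple $\parens{C^{*}(G;\mca{E}), \widehat{\varphi}, \widehat{T}}$ from Definition \ref{DfnjG1} actually defines a Kasparov $\parens{C^{*}(G;A), C^{*}(G;B)}$-module; second, that the construction descends to $K\!K$-classes; third, that it is compatible with Kasparov products.

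First I would set up $C^{*}(G;\mca{E})$ as a Hilbert $C^{*}(G;B)$-module. Starting from $C_c(G;\mca{E})$ I define a right convolution action of $C_c(G;B)$ and a $C_c(G;B)$-valued inner product by
$$
\angles{\widehat{e}_1, \widehat{e}_2}(\gm)
=
\int_{G} \gminv_1 \brackets{\angles{\widehat{e}_1(\gm_1), \widehat{e}_2(\gm_1 \gm)}_{B}} \:\dif \gm_1,
$$
then complete in the associated norm. Next I check that $\widehat{\varphi}$ extends to a $*$-homomorphism $C^{*}(G;A) \to \mca{L}_{C^{*}(G;B)}\parens{C^{*}(G;\mca{E})}$ and that $\widehat{T}$ is adjointable with $\widehat{T}^{*} = \widehat{T^{*}}$. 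These steps are essentially bookkeeping and follow by standard calculations with convolution products.

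Next come the Kasparov compactness conditions. One must show that for every $\widehat{a} \in C^{*}(G;A)$ the operators $\widehat{\varphi}(\widehat{a})(\widehat{T}^{2} - 1)$, $\brackets{\widehat{\varphi}(\widehat{a}), \widehat{T}}$, and $\widehat{\varphi}(\widehat{a})(\widehat{T} - \widehat{T}^{*})$ are $C^{*}(G;B)$-compact on $C^{*}(G;\mca{E})$. The key lemma is that a rank-one $B$-compact operator of the form $e_1\angles{e_2, \cdot}_{B}$ on $\mca{E}$, smeared against two compactly supported functions on $G$, gives a rank-one operator on $C^{*}(G;\mca{E})$; taking closures in the operator norm promotes the hypothesis that $\varphi(a)(T^{2}-1)$ and the other two commutators are $B$-compact on $\mca{E}$ to the desired $C^{*}(G;B)$-compactness statements. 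It suffices by density to test on $\widehat{a} \in C_c(G;A)$. Having obtained $j^{G}(x) \in \bb{E}\parens{C^{*}(G;A), C^{*}(G;B)}$, descent to $K\!K$-classes is routine: the assignment is additive on direct sums, sends degenerate modules to degenerate ones, and sends operator homotopies to operator homotopies because $\widehat{T_t}$ depends continuously on $T_t$ and the $C^{*}(G;\cdot)$ construction respects $G$-equivariant homotopies of $G$-algebras.

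I expect the main obstacle to be compatibility with the Kasparov product. Given $x \in K\!K^{G}(A,B)$ represented by $(\mca{E}, \varphi, T)$ and $y \in K\!K^{G}(B,D)$ represented by $(\mca{F}, \psi, S)$, the product $x \hattensor_{B} y$ is represented by a triple $(\mca{E} \hattensor_{B} \mca{F}, \varphi \hattensor 1, T')$, where $T'$ is an $S$-connection satisfying a positivity condition modulo $D$-compacts. To verify $j^{G}(x \hattensor_{B} y) = j^{G}(x) \hattensor_{C^{*}(G;B)} j^{G}(y)$ via the uniqueness theorem for the Kasparov product, I would need to show that $\widehat{T'}$ is a $\widehat{S}$-connection on the internal tensor product $C^{*}(G;\mca{E}) \hattensor_{C^{*}(G;B)} C^{*}(G;\mca{F})$ and that the positivity condition survives the descent to the convolution algebras. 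The technical crux is tracking how Kasparov's technical theorem and the connection calculus interact with convolution; once this is established, uniqueness of the Kasparov product up to operator homotopy yields the desired equality.
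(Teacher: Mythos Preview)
The paper does not supply its own proof of this lemma: the statement is quoted from \cite[Theorem 3.11]{Ka88} and closed immediately with a \QED, indicating that the result is imported wholesale from Kasparov's original paper rather than re-proved here. So there is no in-paper argument against which to compare your proposal.

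That said, your outline is a faithful sketch of how Kasparov's original proof proceeds: construct the descent module, verify the Kasparov-module axioms by reducing compactness on $C^{*}(G;\mca{E})$ to compactness on $\mca{E}$ via approximation by finite-rank operators smeared over $C_c(G)$, check homotopy invariance, and then invoke uniqueness of the Kasparov product to obtain multiplicativity. The one place where your sketch is thin is the last step: showing that $\widehat{T'}$ is an $\widehat{S}$-connection and that positivity modulo compacts survives descent requires a nontrivial identification $C^{*}(G;\mca{E}\hattensor_{B}\mca{F}) \cong C^{*}(G;\mca{E})\hattensor_{C^{*}(G;B)} C^{*}(G;\mca{F})$ as Hilbert $C^{*}(G;D)$-modules, and the connection condition must be checked against this isomorphism. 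In Kasparov's paper this is handled by appealing to the full machinery of the technical theorem; if you intend to write out the argument yourself you should expect that step to absorb most of the work.
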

\begin{lem}
\label{CutoffKHomologyClass}
\settingsGXa
Using a cut-off function $c\in C_c(X)$,
one can define an idempotent $p \in C_c(G;C_0(X))$
by the formula;
$$\check{c}(\gm)(x) = \sqrt{c(x) c(\gminv x)}.$$
In particular it defines an element of K-homology
denoted by $[c]\in K_0(C^{*}(G;C_0(X)))$.
Moreover the element of $K$-homology
$[c]\in K_0(C^{*}(G;C_0(X)))$ does not depend on
the choice of cut-off functions.
\end{lem}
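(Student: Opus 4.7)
The plan is to verify directly that $\check{c}$ is a self-adjoint idempotent in the convolution $*$-algebra $C_c(G; C_0(X)) \subset C^{*}(G; C_0(X))$, hence a projection defining a class in $K_0(C^{*}(G; C_0(X)))$, and then to prove independence of the choice of cut-off function by exhibiting a norm-continuous homotopy of projections joining $\check{c}_0$ and $\check{c}_1$ for any two cut-off functions.

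First I would observe that $\check{c}$ actually lies in $C_c(G; C_0(X))$: the section $\check{c}(\gm)$ is supported in $\supp(c) \cap \gm \supp(c)$, hence in $C_c(X)$, and by properness of the action the set of $\gm \in G$ for which this intersection is non-empty is precompact. Next, the identity $\check{c} \ast \check{c} = \check{c}$ would be checked by expanding the convolution product from Definition \ref{DfnjG1} with the action $(\gm f)(x) = f(\gminv x)$ on $C_0(X)$; after a short manipulation the two square roots combine so that the factor $\sqrt{c(x) c(\gm^{-1} x)}$ can be extracted from the integrand, and what remains is $\int_G c(\gm_1^{-1} x) \, \dif \gm_1 = 1$ by the defining property of a cut-off function. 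Self-adjointness uses unimodularity of $G$ to reduce the involution formula $\check{c}^{*}(\gm)(x) = \overline{\check{c}(\gm^{-1})(\gminv x)}$ to $\sqrt{c(\gminv x) c(x)}$, which coincides with $\check{c}(\gm)(x)$ by the symmetry of the definition.

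For the independence statement, given two cut-off functions $c_0$ and $c_1$, I would form the straight-line family $c_t := (1-t)c_0 + t c_1$ for $t\in[0,1]$. Non-negativity and the normalization $\int_G c_t(\gminv x)\, \dif \gm = (1-t) + t = 1$ are both preserved, so each $c_t$ is again a cut-off function, and $\check{c}_t(\gm)(x) = \sqrt{c_t(x) c_t(\gminv x)}$ is the corresponding projection in $C^{*}(G; C_0(X))$. The standard fact that a norm-continuous path of projections in a $C^{*}$-algebra yields the same $K_0$-class then gives $[\check{c}_0] = [\check{c}_1]$. I expect this norm-continuity to be the only nontrivial point: it follows from the observation that all the $\check{c}_t$ are supported in a common compact subset of $G$ (depending only on $\supp(c_0) \cup \supp(c_1)$ and on properness of the action) and a common compact subset of $X$, and that $c_t(x) c_t(\gminv x)$ ranges in a bounded interval on which the square root is uniformly continuous. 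Together these imply continuity of $t \mapsto \check{c}_t$ in the $L^{1}(G; C_0(X))$-norm, which dominates the universal (and reduced) $C^{*}$-norm, completing the argument.
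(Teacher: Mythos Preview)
Your proposal is correct and follows essentially the same approach as the paper: direct verification of $\check{c}\ast\check{c}=\check{c}$ via the convolution formula and the cut-off normalization, and the convex interpolation $c_t=(1-t)c_0+tc_1$ between cut-off functions to produce a homotopy of idempotents. You supply some details the paper omits (the support check, self-adjointness via unimodularity, and norm-continuity of $t\mapsto\check{c}_t$), but the architecture is identical.
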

\begin{proof}
It is verified by the direct calculation that
$\check{c}$ is an idempotent;
\begin{eqnarray*}
(\check{c}*\check{c})(\gm)(x)
&=&
\int_{\gm_1\gm_2 =\gm}
\check{c}(\gm_1)(x)\cdot \gm_1[p(\gm_2)](x) \mathrm{d}\gm_1
\\
&=&
\int_{\gm_1\gm_2 =\gm}
\sqrt{c(x) c(\gm_1 ^{-1}x)} 
\sqrt{c(\gm_1 ^{-1}x) c(\gm_2 ^{-1}(\gm_1 ^{-1}x))}
\mathrm{d}\gm_1
\\
&=&
\sqrt{c(x) c(\gminv x)}
\int_{\gm_1\gm_2 =\gm} c(\gm_1 ^{-1}x) \mathrm{d}\gm_1
\\
&=&
\check{c}(\gm)(x).
\end{eqnarray*}
As for the independence of the choice of cut-off function,
it is followed from the fact that the subset in $C_c(X)$
of all cut-off functions on $X$ is convex.
Precisely,
if we have two cut-off functions $c_0$ and $c_1$,
$c_t := (1-t)c_0 + tc_1$ provides a family of cut-off
functions for $t\in[0,1]$.
$\check{c_t}$ provides a homotopy by idempotents connecting
$\check{c_0}$ and $\check{c_1}$.
\end{proof}

\begin{dfn}[Topological Index]\label{BCmap}
Define
$$
\map{\mu_{G}}{K\!K^{G}(C_0(X), \C)}{K_{0}(C^*(G))}
$$
as the composition of
\begin{itemize}
\item
$\map{j^{G}}{K\!K^{G}(C_0(X), \C)}
{K\!K(C^*(G;C_0(X)) , C^*(G))}$
with
\item
$\map{[c]\hattensor}{K\!K(C^*(G;C_0(X)) , C^*(G))}
{K\!K(\C , C^*(G))} \simeq K_{0}(C^*(G))$.
\end{itemize}
In other words,
\begin{eqnarray*}
\mu_{G}(\mathchar`-) :=
[c]\hattensor_{C^*(G;C_0(X))} j^{G}(\mathchar`-)
\quad \in K_{0}(C^*(G) ).
\end{eqnarray*}
\end{dfn}

\begin{prop}
{\upshape{\cite[Theorem 5.6.]{Ka}}}
$$\ind_{C^{*}(G)} (A) = \mu_{G}([A]).$$
\end{prop}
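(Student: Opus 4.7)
The strategy is to unpack both sides of the claimed equality and exhibit the same Kasparov module (up to operator homotopy) representing both classes in $K_0(C^{*}(G))$. Note that because $A$ is elliptic, properly supported, $G$-invariant, and $X/G$ is compact, the kernels $\Ker(A_0)$ and $\Ker(A_0^{*})$ are finitely generated projective $C^{*}(G)$-modules, so the left-hand side is well defined; the right-hand side is the Kasparov product $[c]\hattensor_{C^{*}(G;C_0(X))} j^{G}([A])$ by definition \ref{BCmap}.

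First, I would write out $j^{G}([A])$ explicitly using Definition \ref{DfnjG1}: it is the Kasparov module
$$
j^{G}([A]) \;=\; \bigl(C^{*}(G;L^{2}(\bb{V})),\; \widehat{m},\; \widehat{A}\bigr) \;\in\; K\!K\bigl(C^{*}(G;C_{0}(X)),\, C^{*}(G)\bigr),
$$
where $\widehat{m}$ is the convolution action of $C^{*}(G;C_{0}(X))$ on $C^{*}(G;L^{2}(\bb{V}))$ coming from the pointwise multiplication action on $L^{2}(\bb{V})$. Then the Kasparov product with the class $[c]$ represented by the idempotent $\check{c}\in C_{c}(G;C_{0}(X))$ is represented by the Hilbert $C^{*}(G)$-module $\widehat{m}(\check{c})\cdot C^{*}(G;L^{2}(\bb{V}))$ equipped with the compressed operator $\widehat{m}(\check{c})\,\widehat{A}\,\widehat{m}(\check{c})$.

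The technical heart is to identify this compressed module and operator with $(L^{2}(\bb{V}),A)$ viewed as a $C^{*}(G)$-module pair. The natural candidate is the averaging map $\Phi\colon C_{c}^{\infty}(\bb{V})\to \widehat{m}(\check{c})\cdot C^{*}(G;L^{2}(\bb{V}))$ defined by
$$
\Phi(s)(\gm)(x) \;=\; \sqrt{c(x)\,c(\gminv x)}\,\gm[s](x),
$$
and I would check, using the cut-off identity $\int_{G}c(\gminv x)\,\dif\gm=1$ and Fubini, that $\Phi$ extends to a unitary isomorphism of Hilbert $C^{*}(G)$-modules intertwining $\widehat{m}(\check{c})\,\widehat{A}\,\widehat{m}(\check{c})$ with an operator on $L^{2}(\bb{V})$ that differs from $A$ only by a compact perturbation; here $G$-invariance of $A$ plus the properly supported, elliptic nature combined with Lemma \ref{CommutatorWithMultiplication} controls the commutators with the multiplication operators induced by $\check{c}$, so the compression is operator-homotopic to $A$ itself through Fredholm $C^{*}(G)$-operators.

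Finally, once the Kasparov product is identified with $(L^{2}(\bb{V}),A)$, its $K\!K$-class in $K\!K(\C,C^{*}(G))\cong K_{0}(C^{*}(G))$ is by definition $[\Ker(A_{0})]-[\Ker(A_{0}^{*})]=\ind_{C^{*}(G)}(A)$. The main obstacle is the middle step: verifying that $\Phi$ is a well-defined unitary between Hilbert $C^{*}(G)$-modules (which requires the cut-off property and a careful Fubini argument for the $C^{*}(G)$-valued inner products) and that the compressed operator is Fredholm-equivalent to $A$. This is exactly where the properness of the action, $G$-compactness of $X$, and the pseudo-differential estimates of Lemma \ref{LemCoMo} and Lemma \ref{BoundedElliptic} all enter simultaneously.
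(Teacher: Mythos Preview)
The paper does not give its own proof of this proposition: it is quoted directly from \cite[Theorem~5.6]{Ka} with no argument supplied. What the paper does contain (also cited from the same source, in Subsection~\ref{SubsectionExpressionOfMu}) is the concrete representative $\mu_{G}([A])=\bigl(\mathcal{E}^{G},\overline{A^{G}}\bigr)$, with $\mathcal{E}^{G}$ the completion of $C_{c}(\bb{V})$ under the $C^{*}(G)$-valued inner product~\eqref{ScalarProdOnCalE}, realised inside $C^{*}(G;L^{2}(\bb{V}))$ via the maps $i^{G}$, $q^{G}$, and with $\overline{A^{G}}=\int_{G}\gm[\sqrt{c}\,A\,\sqrt{c}]\,\dif\gm$. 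Your strategy is exactly the one behind that description: compress $j^{G}([A])$ by the idempotent $\check{c}$ and identify the resulting module with $\mathcal{E}^{G}$. So on the level of approach there is nothing to compare; you are reconstructing the cited argument.

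Two points deserve care. First, your averaging map $\Phi(s)(\gm)(x)=\sqrt{c(x)c(\gminv x)}\,\gm[s](x)$ is not the map the paper (following Kasparov) uses; the correct isometry is $i^{G}[s](\gm)(x)=\sqrt{c(x)}\,\gm[s](x)$, and one checks directly that $\widehat{m}(\check{c})\circ i^{G}=i^{G}$, so $i^{G}$ already lands in the range of the projection. Your $\Phi$ carries an extra factor $\sqrt{c(\gminv x)}$ that spoils the inner-product computation. Second, your opening assertion that $\Ker(A_{0})$ and $\Ker(A_{0}^{*})$ are automatically finitely generated projective $C^{*}(G)$-modules is not justified and is in general false without a perturbation; the honest route is to \emph{define} $\ind_{C^{*}(G)}(A)$ as the class of the Kasparov module $(\mathcal{E}^{G},\overline{A^{G}})$ and then identify it with $\mu_{G}([A])$, rather than to pass through the naive kernel/cokernel picture. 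The paper's Definition of the analytical index is itself informal on this point, which is presumably why the equality is outsourced to \cite{Ka}.
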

%%%%%%%%
%% Reduction to the Dirac operators
%%%%%%%%
\newcommand{\SAA}{\sigma_{A}}
\newcommand{\SQAA}{\sqrt{1-\sigma_{A}^{2}}}
\subsection{Reduction to the Dirac Operators}
\label{SubsectionReductionOnDirac}
{\upshape{\cite[Remark 4.4.]{Ka}}}
One can reduce the general case of index theorem
for elliptic operators to the case of Dirac operators.
In this subsection, we will see that
any $K$-homology class
$[A]\in K\!K^{G}(C_0(X),\C)$, where $A$ is a
properly supported $G$-equivariant elliptic operator 
of order $0$ can be realized as
a Dirac operator $\mathcal{D}$ on a larger manifold
$Y$ obtained by a fiber bundle over $X$.
To be more specific, we will obtain
$[A] = (\pi^{\sharp})^{*}[\mathcal{D}] $ with the map
$\map{(\pi^{\sharp})^{*}}{K\!K^{G}(C_0(Y),\C)}{K\!K^{G}(C_0(X),\C)}$
induced by $\map{\pi^{\sharp}}{C_0(X)}{C_0(Y)}$.
If this has been done, it is sufficient to consider the case when
$A$ is a Dirac operator on a Clifford bundle $\bb{V}$ over $Y$
and
$[A] = \parens{L^{2}(\bb{V}) ,\
\frac{A}{\sqrt{1+A^2}}
%\parens{1+AA^{*}}^{-1/4}A\parens{1+A^{*}A}^{-1/4}
} \in K\!K^{G}(C_0(Y), \C)$.

\begin{dfn}
Let $BX \subset T^{*}X$ be the unit closed disk bundle
in the cotangent bundle, whose boundary
is the sphere bundle $\partial BX = SX$.
Then construct $$\Sigma {X} = BX \cup_{SX} BX$$
gluing two copies of $BX$ together along $SX$.
The zero in the second $BX$ will be denoted by $\infty$.

In order to avoid the confusion, let us distinguish the notations 
of projections $\map{\pi_{T}}{T^{*}X}{X}$ and
$\map{\pi_{\Sigma}}{\Sigma {X}}{X}$.
\end{dfn}
The action of $G$ on $X$
induces the proper co-compact action on $\Sigma {X}$
because the action is isometric and each fiber of $\pi_{\Sigma}$
is compact.
This action commutes with the projection $\pi_{\Sigma}$.
Let $\map{\pi_{\Sigma}^{\sharp}}{C_0(X)}{C_0(\Sigma {X})}$
be the homomorphism induced by the projection $\pi_{\Sigma}$.

In addition, note that $\Sigma {X}$ is of a structure of 
smooth almost complex manifold.
\begin{dfn}
Let $\mathcal{D}_{\Sigma {X}}$
be the Dolbeault operator on $\Sigma {X}$.
In the natural way, it determines the following $K$-homology class;
$$
[\mathcal{D}_{\Sigma {X}}]
:=
\parens{L^{2}\parens{\turnV{\!0,*}T^{*}(\Sigma {X})}
,\
\frac{\mathcal{D}_{\Sigma {X}}}
{\sqrt{1+\mathcal{D}_{\Sigma {X}}^2}}
}
 \in K\!K^{G}(\Sigma {X}, \C).
$$
\end{dfn}

Now let $A={\scriptsize{
\matrixtwo
{0}{A_0^{*}}
{A_0}{0}
 }}
\in \PPseudo_{G}^{0}(\bb{V})$ be
a properly supported elliptic operator of order $0$
on a graded Hermitian $G$-vector bundle
$\bb{V}=\bb{V}^{0} \widehat{\oplus} \bb{V}^{1}$.
Without loss of generality
we may assume its symbol $\sigma_{A}$ satisfies 
\begin{eqnarray}
\sigma_{A}^{2}(x,\xi) = 1\quad \text{for }\norm{\xi}\geq 1
\qqand
\norm{\sigma_{A}^{2}} \leq 1 \quad \text{everywhere.}
\label{AssumptionOnsigma}
\end{eqnarray}
In particular $\sigma_{A}|_{SX}
\colon
\parens{\pi_{T}^{*}\bb{V}}_{SX} \to \parens{\pi_{T}^{*}\bb{V}}_{SX}$
is invertible
then it provides a gluing map
 from $\parens{\pi_{T}^{*}\bb{V}}_{BX}$
to $\parens{\pi_{T}^{*}\bb{V}}_{BX}$.
%where $\bb{V}=\bb{V}^{0} \widehat{\oplus} \bb{V}^{1}$.
% is
%the graded $G$-vector bundle over $X$ on which $A$ acts.
From now on, we will abbreviate
$\sigma_{A}
=
\matrixtwo
{0}{\sigma_{A_0}^{*}}
{\sigma_{A_0}}{0}
$
to
$\sigma
=
\matrixtwo
{0}{\! \sigma_{0}^{*}}
{\sigma_{0}}{\!0}
$
in order to avoid the complication.
\begin{dfn}
\text{}
\begin{itemize}
\item
Let us construct a new bundle $\bb{V}(\sigma)$
over $\Sigma {X}$ by gluing
$\parens{\pi_{T}^{*}\bb{V}^{0}}_{BX}$
and $\parens{\pi_{T}^{*}\bb{V}^{1}}_{BX}$ together;
\begin{eqnarray*}
\bb{V}(\sigma) := (\pi_{T}^{*}\bb{V})_{BX}
\cup_{\sigma|_{SX}}
(\pi_{T}^{*}\bb{V})_{BX}.
\end{eqnarray*}
Remark that the gluing map $\sigma$
reverses the gradings, $\bb{V}(\sigma)$ has the grading
\begin{eqnarray}
\begin{array}{ccccc}
\bb{V}(\sigma) &=
& \bb{V}(\sigma)^{0}
&\widehat{\oplus}& \bb{V}(\sigma)^{1}
\\
&:=&
(\pi_{T}^{*}\bb{V}^{0})_{BX}
\cup_{\sigma_{0}|_{SX}}
(\pi_{T}^{*}\bb{V}^{1})_{BX}
&\widehat{\oplus}
&
(\pi_{T}^{*}\bb{V}^{1})_{BX}
\cup_{\sigma_{0}^{*}|_{SX}}
(\pi_{T}^{*}\bb{V}^{0})_{BX}.
\end{array}
\label{DfnBundleOverSigma}
\end{eqnarray}
%%%%%%%%%%%%%%%%%%%%%%%%%%%%%%%%%
\item
In addition consider another bundle just given by the pull-back
$\pi_{\Sigma}^{*}(\bb{V}^{1})$
over $\Sigma {X}$.
Remark that 
$\pi_{\Sigma}^{*}(\bb{V}^{1}) \cong  \bb{V}(\sigma)^{0}$
on the second $BX \subset \Sigma {X}$.

These $G$-vector bundles $ \bb{V}(\sigma)^{0}$
and $\pi_{\Sigma}^{*}(\bb{V}^{1})$
determine elements in $K\!K$-theory;
\begin{eqnarray*}
[\bb{V}(\sigma)^{0}]
&=&
\parens{C_0(\bb{V}(\sigma)^{0}) , 0} 
\\
\qqand
[\pi_{\Sigma}^{*}(\bb{V}^{1})]
&=&
\parens{C_0(\pi_{\Sigma}^{*}(\bb{V}^{1})) , 0} 
\in K\!K^{G} (%X;
C_0(\Sigma {X}), C_0(\Sigma {X})).
\end{eqnarray*}
%%%%%%%%%%%%%%%%%%%%%%%%%%%%%%%%%
\item
Define an element of $K\!K$-group denoted by
$[[\sigma]]\in %\mathcal{R}
K\!K^{G} (%X;
C_0(X), C_0(\Sigma {X}))$ as
\begin{eqnarray*}
[[\sigma]] &:=& %[\bb{V}(\sigma)]
(\pi_{\Sigma}^{\sharp})^{*}\brackets{
\bb{V}(\sigma)^{0}
\;\widehat{\oplus}\;
\pi_{\Sigma}^{*} (\bb{V}^{1})
}
\\
&=&
\parens{C_0(\bb{V}(\sigma)^{0}
\;\widehat{\oplus}\;
\pi_{\Sigma}^{*} (\bb{V}^{1})) , 0}
\in %\mathcal{R}
K\!K^{G} (%X;
C_0(X), C_0(\Sigma {X})).
\end{eqnarray*}
Remark that we can define such class
because each fiber of $\Sigma X$ is compact,
in other words, $\pi_{\Sigma}$ is a proper map.
We cannot define a $K\!K$-theory class in
$K\!K^{G} (C_0(X), C_0(T^{*}\! X))$ of the same form.
\end{itemize}
\end{dfn}
Our main claim in this subsection is the following;
\begin{prop}
\label{PropTwistedDiracOnSigma}
$$
[[\sigma]]\hattensor_{C_0(\Sigma {X})}
[\mathcal{D}_{\Sigma {X}}]
=[A]
\in K\!K^{G}(C_0(X), \C).
$$
In particular, let $\mathcal{D}_{\sigma}$ be a Dirac operator
obtained by the twisting the canonical Dolbeault operator
$\mathcal{D}_{\Sigma {X}}$ by the $\Z / 2\Z$-graded bundle
$\bb{V}(\sigma)^{0}
\;\widehat{\oplus}\;
\pi_{\Sigma}^{*} (\bb{V}^{1})$.
Then it is a realization of initial $A$ through
$\map{\pi_{*}}{K\!K^{G}(C_0(\Sigma {X}), \C)}{K\!K^{G}(C_0(X), \C)}$,
in other words,
$$
(\pi_{\Sigma}^{\sharp})^{*} ([\mathcal{D}_{\sigma}])
=
[A]
\in K\!K^{G}(C_0(X), \C).
$$
\end{prop}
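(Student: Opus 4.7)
The plan is to prove the identity in $KK^G(C_0(X),\C)$ by first rewriting the left-hand side as the pullback of a twisted Dolbeault class on $\Sigma X$, and then deforming that class back onto $X$ so that it becomes a representative of $[A]$. The two claims in the proposition are linked in an obvious way: since $[[\sigma]]=(C_0(\bb{V}(\sigma)^0 \hatsum \pi_{\Sigma}^{*}\bb{V}^{1}),0)$ is given by a bundle with zero operator, the standard connection construction for the Kasparov product (against a Dirac-type element) immediately gives
$$[[\sigma]]\hattensor_{C_0(\Sigma X)}[\mathcal{D}_{\Sigma X}] \;=\; (\pi_{\Sigma}^{\sharp})^{*}[\mathcal{D}_{\sigma}]\quad\in\quad K\!K^{G}(C_0(X),\C),$$
reducing the proposition to the single identity $(\pi_{\Sigma}^{\sharp})^{*}[\mathcal{D}_{\sigma}]=[A]$.

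Next I would exploit the decomposition $\Sigma X=BX\cup_{SX}BX$ into the ``finite half'' $BX_{0}$ (containing the original $X$) and the ``infinity half'' $BX_{\infty}$. By the very definition (\ref{DfnBundleOverSigma}), the gluing recipe identifies
$$(\bb{V}(\sigma)^{0}\hatsum \pi_{\Sigma}^{*}\bb{V}^{1})\big|_{BX_{\infty}}\;\cong\; F\,\hatsum\, F \qquad\text{with}\qquad F=\pi_{T}^{*}\bb{V}^{1},$$
where the swap isomorphism is implemented by $\sigma_{0}|_{SX}$; this graded bundle has a canonical odd skew-adjoint involution and so contributes a degenerate Kasparov module to $[\mathcal{D}_{\sigma}]$, which can be split off up to operator homotopy. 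Thus, modulo a degenerate piece, $(\pi_{\Sigma}^{\sharp})^{*}[\mathcal{D}_{\sigma}]$ is represented by $\mathcal{D}_{\sigma}$ acting on sections supported in $BX_{0}$, where the twisting bundle is canonically $\pi_{T}^{*}\bb{V}$ with its natural grading.

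Finally, on $BX_{0}$ the almost complex structure on $\Sigma X$ is arranged so that, under the canonical identification of the Dolbeault spinor bundle with $\turnV{0,*}T^{*}\Sigma X$, fibrewise Clifford multiplication by a cotangent direction $\xi\in T_{x}^{*}X$ is given precisely by the matrix $\SAA(x,\xi)=\bigl(\begin{smallmatrix}0 & \sigma_{0}^{*}\\ \sigma_{0} & 0\end{smallmatrix}\bigr)$ (this is the whole point of the clutching construction and the choice of almost complex structure on $\Sigma X$). Consequently, the principal symbol of $\mathcal{D}_{\sigma}$ restricted to the fibre directions over $X$ equals the principal symbol of $A$. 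To conclude, I would interpolate linearly between $\mathcal{D}_{\sigma}$ and a product-type operator whose tangential part is $A$ and whose normal part is Clifford multiplication along the fibre of $\pi_{\Sigma}$, while radially shrinking the fibre disk to its zero section; this gives a continuous family of Kasparov cycles connecting $(\pi_{\Sigma}^{\sharp})^{*}[\mathcal{D}_{\sigma}]$ to $[A]=(L^{2}(\bb{V}),A)$.

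The main obstacle will be controlling the homotopy rigorously. The Dolbeault operator on $\Sigma X$ is elliptic but only $G$-compactly supported in a fibrewise sense, and both the ``degenerate splitting'' over $BX_{\infty}$ and the ``radial retraction'' over $BX_{0}$ must be carried out in such a way that the compactness conditions $[\varphi,T]\in\bb{K}$ and $\varphi(T^{2}-1)\in\bb{K}$ of a Kasparov $G$-module remain satisfied uniformly along the homotopy; here one uses the properness of the $G$-action on $\Sigma X$ (each fibre of $\pi_{\Sigma}$ being compact) together with Lemma \ref{CommutatorWithMultiplication} applied fibrewise, and Theorem \ref{ThmHormander} to absorb lower-order errors into compact operators.
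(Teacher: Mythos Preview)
Your approach diverges substantially from the paper's, and it contains a genuine error.

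The paper does not attempt any direct homotopy between $\mathcal{D}_{\sigma}$ and $A$. Instead it invokes Kasparov's symbol theorem on $T^{*}\!X$, namely Theorem~\ref{ThmSymbolIndex}:
\[
[A]=[\sigma]\hattensor_{C_0(T^{*}\!X)}[\mathcal{D}_{T^{*}\!X}],
\]
and then passes from $T^{*}\!X$ to $\Sigma X$ by pure $K\!K$-functoriality. Regarding $T^{*}\!X$ as an open subset of $\Sigma X$ via the inclusion $\iota$, two short lemmas give $\iota_{*}[\sigma]=[[\sigma]]$ and $\iota^{*}[\mathcal{D}_{\Sigma X}]=[\mathcal{D}_{T^{*}\!X}]$; associativity of the Kasparov product then yields the proposition in four lines. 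The second assertion about $\mathcal{D}_{\sigma}$ is deduced from the first, not the other way around.

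Your route tries to bypass Theorem~\ref{ThmSymbolIndex} and build the identification geometrically, but the key claim in your third paragraph is wrong: the almost complex structure on $\Sigma X$, and hence the Clifford multiplication on $\turnV{0,*}T^{*}\Sigma X$, is canonical and has nothing to do with $A$. It is \emph{not} true that ``fibrewise Clifford multiplication by $\xi$ is given by $\sigma_{A}(x,\xi)$''. The operator $A$ enters only through the clutching of the twisting bundle $\bb{V}(\sigma)$; the principal symbol of $\mathcal{D}_{\sigma}$ in the fibre directions is exterior multiplication, not $\sigma_{A}$. Consequently your proposed interpolation between $\mathcal{D}_{\sigma}$ and a ``product-type operator whose tangential part is $A$'' has no starting point: there is no evident operator-level identification of symbols to deform. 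What your argument would actually need is precisely the content of Theorem~\ref{ThmSymbolIndex} (the Kasparov product of the symbol class with the fibrewise Dolbeault class recovers $[A]$), so you are not avoiding it but attempting to reprove it by hand. The ``degenerate splitting'' over $BX_{\infty}$ is also problematic as stated, since $\mathcal{D}_{\sigma}$ does not respect the decomposition into sections supported on the two halves; but this is secondary to the symbol confusion above.
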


To verify this, we need theorem
\ref{ThmSymbolIndex} by G. Kasparov stated below.
\\

Firstly, let us introduce the $K$-cohomology class of $T^{*}\! X$
determined by the Dolbeault operator.
Since the manifold $T^{*}\!X$ has the canonical structure of an
almost complex manifold in particular,
$\mathrm{Spin}^{c}$-structure.
\begin{dfn}
Let $\mathcal{D}_{T^{*}\!X}$ be the canonical Dirac operator on 
$T^{*}\!X$ called the Dolbeault operator.
\end{dfn}
It will be regarded as an essentially self-adjoint operator
acting on $L^{2}\parens{\turnV{\!0,*}T^{*}(T^{*}\!X)}$.
\begin{lem}
Put $\mathcal{T}:= {\frac
{\mathcal{D}_{T^{*}\!X}}
{\sqrt{1+\mathcal{D}_{T^{*}\!X}^{2}}}
}$ and 
$\mathcal{H} := L^{2}\parens{\turnV{\!0,*}T^{*}(T^{*}\!X)} $.
Consider the action of $C_0(T^{*}\!X)$
on $\mathcal{H}$ by point-wise multiplication.
Then $(1-\mathcal{T}^{2})f$ and
$f\mathcal{T}-\mathcal{T}f$ are compact operators
on $\mathcal{H}$ for any $f \in C_0(T^{*}\!X)$.

In particular we can define the following $K$-homology class 
determined by Dolbeault operator on $T^{*}\!X$;
$$
[\mathcal{D}_{T^{*}\!X}]
:=
\parens{\mathcal{H} ,\ \mathcal{T}}
=
\parens{
L^{2}\parens{\turnV{\!0,*}T^{*}(T^{*}\!X)},\ 
%{\displaystyle
{\frac
{\mathcal{D}_{T^{*}\!X}}
{\sqrt{1+\mathcal{D}_{T^{*}\!X}^{2}}}
}}
%}
\in K\!K^{G}(C_0(T^{*}\!X), \C).
$$
\end{lem}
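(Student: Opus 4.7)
The plan is to verify the two Kasparov module conditions for $(\mathcal{H}, \mathcal{T})$: for every $f \in C_0(T^*\!X)$, both $f(1-\mathcal{T}^2)$ and $[\mathcal{T}, f]$ lie in $\mathbb{K}(\mathcal{H})$. Because both assignments $f \mapsto f(1-\mathcal{T}^2)$ and $f \mapsto [\mathcal{T}, f]$ are norm-continuous in $f$ (using that $\mathcal{T}$ is a bounded operator with $\norm{\mathcal{T}} \leq 1$), and because $\mathbb{K}(\mathcal{H})$ is norm-closed, it suffices to prove compactness only for $f \in C_c^{\infty}(T^*\!X)$.

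For the first condition, functional calculus applied to $\chi(t) = t/\sqrt{1+t^2}$ yields $1-\mathcal{T}^2 = (1+\mathcal{D}_{T^*\!X}^2)^{-1}$, so the task reduces to showing that $f(1+\mathcal{D}_{T^*\!X}^2)^{-1}$ is compact for compactly supported $f$. Since $\mathcal{D}_{T^*\!X}$ is an elliptic first-order operator on the (complete) Riemannian manifold $T^*\!X$, elliptic regularity gives that $(1+\mathcal{D}_{T^*\!X}^2)^{-1}$ maps $L^{2}$ continuously into $H^{2}_{\mathrm{loc}}$; composing with multiplication by the compactly supported $f$ lands in a fixed $H^{2}(K)$ for some compact $K \supset \supp(f)$, and the Rellich--Kondrachov embedding $H^{2}(K) \hookrightarrow L^{2}(K)$ is compact. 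This is exactly the local-compactness argument already used in Lemma \ref{CommutatorWithMultiplication}.

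For the commutator condition, the approach is to use the integral representation
\[
\mathcal{T} = \frac{2}{\pi}\int_{0}^{\infty} \mathcal{D}_{T^*\!X}\parens{\lambda^{2}+1+\mathcal{D}_{T^*\!X}^{2}}^{-1}\,\dif \lambda,
\]
and expand $[\mathcal{T}, f]$ using $[R_{\lambda}, f] = -R_{\lambda}[\mathcal{D}_{T^*\!X}^{2},f]R_{\lambda}$ with $R_{\lambda} = (\lambda^{2}+1+\mathcal{D}_{T^*\!X}^{2})^{-1}$, together with $[\mathcal{D}_{T^*\!X}^{2}, f] = \mathcal{D}_{T^*\!X}[\mathcal{D}_{T^*\!X}, f] + [\mathcal{D}_{T^*\!X}, f]\mathcal{D}_{T^*\!X}$. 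Since the principal symbol of $\mathcal{D}_{T^*\!X}$ is Clifford multiplication, the commutator $[\mathcal{D}_{T^*\!X}, f]$ is a bounded zero-order multiplication operator by (the Clifford action of) $\dif f$, supported in $\supp(f)$. Combined with the compactness result from the previous step, each factor $R_{\lambda}$ or $\mathcal{D}_{T^*\!X} R_{\lambda}$ appearing next to something of compact support becomes compact.

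The main obstacle is to pair these compactness facts with operator-norm estimates in $\lambda$ that ensure the Bochner integral converges in $\mathbb{K}(\mathcal{H})$. Spectral calculus yields $\norm{R_{\lambda}} \leq (\lambda^{2}+1)^{-1}$ and $\norm{\mathcal{D}_{T^*\!X} R_{\lambda}} \leq (\lambda^{2}+1)^{-1/2}$; coupled with the boundedness of $[\mathcal{D}_{T^*\!X}, f]$ these give an $O(\lambda^{-2})$ bound on the integrand, making the integral absolutely convergent in the compact operators. Finally, $G$-equivariance of the triple is automatic: since $G$ acts on $X$ by isometries, the induced action on $T^*\!X$ preserves the canonical almost complex (and hence $\mathrm{Spin}^{c}$) structure, so $\mathcal{D}_{T^*\!X}$ commutes with the $G$-action and the bounded transform $\mathcal{T}$ inherits this commutation.
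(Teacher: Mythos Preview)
Your proof is correct and follows essentially the same approach as the paper: reduce to $f\in C_c^\infty$, use $1-\mathcal{T}^2=(1+\mathcal{D}^2)^{-1}$ plus Rellich for the first condition, and for the commutator use the integral representation $\mathcal{T}=\tfrac{2}{\pi}\int_0^\infty \mathcal{D}(1+\lambda^2+\mathcal{D}^2)^{-1}\,\dif\lambda$, expand the commutator with the resolvent, and combine the spectral norm bounds $\norm{R_\lambda}\leq(1+\lambda^2)^{-1}$, $\norm{\mathcal{D}R_\lambda}\leq(1+\lambda^2)^{-1/2}$ with the fact that $[\mathcal{D},f]$ is multiplication by the compactly supported Clifford action of $\dif f$ to get norm-convergence in $\mathbb{K}(\mathcal{H})$. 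The only cosmetic differences are that the paper writes out the algebraic expansion in one block (factoring $R_\lambda$ on both sides and splitting into the $(1+\lambda^2)(f\mathcal{D}-\mathcal{D}f)$ and $\mathcal{D}(f\mathcal{D}-\mathcal{D}f)\mathcal{D}$ pieces) rather than via the Leibniz identities you quote, and that you should not forget the extra term $[\mathcal{D},f]R_\lambda$ coming from $[\mathcal{D}R_\lambda,f]=[\mathcal{D},f]R_\lambda+\mathcal{D}[R_\lambda,f]$, which is the easiest one to bound.
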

\begin{proof}
For a while, we will abbreviate $\mathcal{D}_{T^{*}\!X}$ as
$\mathcal{D}$.
It is sufficient to consider the case when $f \in C^{\infty}_c(T^{*}\!X)$.
Noting that
$1-\mathcal{T}^{2} = \parens{ 1+\mathcal{D}^{2} }^{-1}$ and
using the Rellich lemma,
$(1-\mathcal{T}^{2})f \in \mathbb{K}(\mathcal{H})$.
As for $f\mathcal{T}-\mathcal{T}f$, we use the formula;
\begin{eqnarray*}
\mathcal{T}
=
\frac{2}{\pi}\int_{0}^{+\infty}
\frac { \mathcal{D} } 
{ 1+\lambda^{2}+\mathcal{D}^{2} } \: \dif \lambda.
\end{eqnarray*}
This integral converges in the strong topology, not in
the operator norm. Then we have
\begin{eqnarray*}
f\mathcal{T}-\mathcal{T}f
&=&
\frac{2}{\pi}\int_{0}^{+\infty}\parens{
f \frac { \mathcal{D} } 
{ 1+\lambda^{2}+\mathcal{D}^{2} } 
-
\frac { \mathcal{D} } 
{ 1+\lambda^{2}+\mathcal{D} } f
}\: \dif \lambda 
\\
&=&
\frac{2}{\pi}\int_{0}^{+\infty}
\frac {1}{ 1+\lambda^{2}+\mathcal{D}^{2} }
\brackets{
(1+\lambda^{2}+\mathcal{D}^{2}) f \mathcal{D}
-
\mathcal{D} f (1+\lambda^{2}+\mathcal{D}^{2})
}
\frac {1}{ 1+\lambda^{2}+\mathcal{D}^{2} }
\: \dif \lambda 
\\
&=&
\frac{2}{\pi}\int_{0}^{+\infty}
\frac {1}{ 1+\lambda^{2}+\mathcal{D}^{2} }
\brackets{
(1+\lambda^{2})
\parens{ f \mathcal{D} - \mathcal{D} f }
-
\mathcal{D}
\parens{ f \mathcal{D} - \mathcal{D} f }
\mathcal{D}
}
\frac {1}{ 1+\lambda^{2}+\mathcal{D} }
\: \dif \lambda 
\\
&=&
\frac{2}{\pi}\int_{0}^{+\infty}
\frac { \sqrt{ 1+\lambda^{2} } } { 1+\lambda^{2}+\mathcal{D}^{2} }
\parens{ f \mathcal{D} - \mathcal{D} f }
\frac { \sqrt{ 1+\lambda^{2} } } { 1+\lambda^{2}+\mathcal{D}^{2} }
\: \dif \lambda
\\&&-
\frac{2}{\pi}\int_{0}^{+\infty}
\frac { \mathcal{D} } { 1+\lambda^{2}+\mathcal{D}^{2} }
\parens{ f \mathcal{D} - \mathcal{D} f }
\frac { \mathcal{D} } { 1+\lambda^{2}+\mathcal{D}^{2} }
\: \dif \lambda.
\end{eqnarray*}
Note that $\parens{ f \mathcal{D} - \mathcal{D} f }$
is a multiplication operator by a compactly supported function
$\dif f$ due to the Leibniz rule.
Therefore 
$$
\parens{ f \mathcal{D} - \mathcal{D} f }
\frac {1} { 1+\lambda^{2}+\mathcal{D}^{2} }
\qqand
\parens{ f \mathcal{D} - \mathcal{D} f }
\frac { \mathcal{D} } { 1+\lambda^{2}+\mathcal{D}^{2} }
$$
are compact operators by the Rellich lemma.
Moreover, since we have
$$
\norm{
\frac { \sqrt{ 1+\lambda^{2} } } { 1+\lambda^{2}+\mathcal{D}^{2} }
}
\leq \frac{1} {\sqrt{ 1+\lambda^{2} } }
\qqand
\norm{\frac { \mathcal{D} } { 1+\lambda^{2}+\mathcal{D}^{2} }}
\leq \frac{1} {\sqrt{ 1+\lambda^{2} } },
$$
the last two integrals converge in the operator norm topology
and are compact operators.
\end{proof}
%%%
%edit
%%%
\begin{thm}
\label{ThmSymbolIndex}
{\upshape {\cite[Theorem 4.2]{Ka}}} 
$$
[A] = [\sigma]\hattensor_{C_0(T^{*}\!X)}[\mathcal{D}_{T^{*}\!X}]
\in K\!K^{G}(C_0(X), \C).
$$
Recall that $[\sigma] \in K\!K^{G}(C_0(X), C_0(T^{*}\!X))$
is the $K\!K$-theory class given by
$[\sigma]=
\parens{
C_0(\pi^{*}\bb{V}), \: \sigma
}
$ as in definition
\ref{DfnKOfElliptic}.
\QED
\end{thm}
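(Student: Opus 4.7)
The plan is to represent the right-hand side by an explicit Kasparov cycle, and then deform it through $K\!K^{G}(C_0(X),\C)$ to the cycle defining $[A]$. The internal tensor product of Hilbert modules $C_0(\pi_T^{*}\bb{V})$ and $L^{2}(\turnV{0,*}T^{*}(T^{*}\!X))$ over $C_0(T^{*}\!X)$ is naturally
$$\mathcal{E} := L^{2}\bigl(\pi_T^{*}\bb{V} \otimes \turnV{0,*}T^{*}(T^{*}\!X)\bigr),$$
equipped with the $C_0(X)$-action obtained by pulling back along $\pi_T$ and multiplying pointwise. The twisted Dolbeault operator $\mathcal{D}_{T^{*}\!X,\bb{V}}$ on $\mathcal{E}$ is essentially self-adjoint and $G$-invariant, and its normalization $\mathcal{T}_{\bb{V}} := \mathcal{D}_{T^{*}\!X,\bb{V}}\bigl(1+\mathcal{D}_{T^{*}\!X,\bb{V}}^{2}\bigr)^{-1/2}$, together with the pointwise action of the symbol $\sigma$ extended as an endomorphism of $\mathcal{E}$, provides the two basic ingredients.

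Next I would build a candidate operator for the product. By Kasparov's connection theorem, one may take
$$F_{\mathrm{prod}} := \sigma \cos\theta + \mathcal{T}_{\bb{V}} \sin\theta,$$
where $\theta = \theta(\norm{\xi})$ is a smooth monotone function on $T^{*}\!X$ with $\theta \equiv 0$ near the zero section and $\theta \equiv \pi/2$ at infinity. Since we may assume $\sigma^{2} = 1$ for $\norm{\xi} \geq 1$ (as in (\ref{AssumptionOnsigma})), $F_{\mathrm{prod}}$ is a compact perturbation of $\sigma$ outside a compact set in the fiber direction and of $\mathcal{T}_{\bb{V}}$ on any compact in $T^{*}\!X$. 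The verification that $(\mathcal{E}, F_{\mathrm{prod}})$ is a Kasparov $G$-module, i.e.\ that $f(1-F_{\mathrm{prod}}^{2})$ and $[F_{\mathrm{prod}},f]$ are compact for every $f\in C_0(X)$, combines the ellipticity of $\mathcal{D}_{T^{*}\!X}$ (via Rellich, as in the lemma preceding Theorem \ref{ThmSymbolIndex}) with the property $\sigma^{2} \to 1$ as $\norm{\xi} \to \infty$; Kasparov's positivity-of-connection condition follows from the way $\sigma$ and $\mathcal{T}_{\bb{V}}$ are interpolated by $\theta$.

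The final step is to deform $(\mathcal{E}, F_{\mathrm{prod}})$ through operator homotopies of $G$-equivariant Kasparov cycles to the cycle representing $[A]$. The natural deformation is a Bott-type rescaling $\xi \mapsto t\xi$ on the fibers of $T^{*}\!X$: as $t \to \infty$, the fiberwise Dolbeault operator concentrates near the zero section and, by fiberwise Bott periodicity on each $T_{x}^{*}X$, realizes the identity of the Thom isomorphism, while the remaining transverse part collapses to $A\bigl(1+A^{2}\bigr)^{-1/2}$ on $L^{2}(\bb{V})$ via the zero-section embedding. Equivalently, one can invoke Connes' tangent groupoid, whose evaluation at $t=0$ gives the symbol cycle and at $t=1$ gives $[A]$.

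The main obstacle will be carrying out this interpolation rigorously in the $G$-equivariant, properly supported setting: one must verify Kasparov's connection and positivity conditions explicitly, and ensure that at every stage of the deformation the operator remains $G$-invariant with the compactness conditions intact. Here Lemma \ref{BoundedElliptic} and Lemma \ref{CommutatorWithMultiplication}, which guarantee $L^{2}$-boundedness of properly supported $G$-invariant elliptic operators of order $0$ and compactness of $[A,f]$ and $f(1-A^{2})$, are indispensable, since they supply the analytic control needed along the whole path of Kasparov cycles.
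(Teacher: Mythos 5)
The first thing to say is that the paper does not prove Theorem \ref{ThmSymbolIndex}: it is quoted from \cite[Theorem 4.2]{Ka} and the statement is immediately followed by the end-of-proof box, i.e.\ the result is an external input (essentially the equivariant, $K$-homological form of the Atiyah--Singer theorem). So your proposal is not an alternative to an argument in the paper; it is an attempt to reprove Kasparov's theorem, and it has to be judged on its own.

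As a sketch of that proof, your outline follows the standard Connes--Skandalis/Kasparov strategy, but it has one concrete error and one essential omission. (i) The interpolation is oriented the wrong way. With $\theta\equiv 0$ near the zero section and $\theta\equiv\pi/2$ at fiber infinity, $F_{\mathrm{prod}}$ equals $\sigma$ near the zero section and $\mathcal{T}_{\bb{V}}$ at infinity. Then for $f\in C_0(X)$ the operator $f(1-F_{\mathrm{prod}}^{2})$ is, over the region $\norm{\xi}\leq 1$, multiplication by the continuous section $f(1-\sigma^{2})$, which is never a compact operator on an $L^{2}$-space unless it vanishes; and over the region $\norm{\xi}\geq R$ it is $f(1+\mathcal{D}_{\bb{V}}^{2})^{-1}$ localized to a fiberwise unbounded set, where Rellich does not apply either. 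The Fredholm condition forces the opposite arrangement: the Dirac part must carry the zero-section region (where $\sigma$ fails to be invertible and where $\pi_{T}^{-1}(\supp f)\cap\{\norm{\xi}\leq R\}$ is compact, so Rellich gives compactness), and the symbol part must carry fiber infinity (where $\sigma^{2}=1$ kills $1-F_{\mathrm{prod}}^{2}$ exactly). Your own prose one sentence later describes this correct arrangement, so this is a sign slip, but as written the cycle is not a Kasparov module. (ii) More seriously, the entire content of the theorem is the step you compress into ``fiberwise Bott periodicity'' and ``the tangent groupoid'': the operator homotopy from the product cycle on $L^{2}(\pi_{T}^{*}\bb{V}\otimes\turnV{0,*}T^{*}(T^{*}\!X))$ down to $(L^{2}(\bb{V}),A)$. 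That rescaling argument is where all the analysis lives --- controlling the cross terms $\{\mathcal{T}_{\bb{V}},\sigma\}$, verifying the connection and positivity conditions along the whole path, and identifying the $t\to\infty$ limit with $A$ rather than merely with something having the same index --- and none of it is supplied. Lemmas \ref{BoundedElliptic} and \ref{CommutatorWithMultiplication} guarantee that $[A]$ is a cycle; they do not produce this homotopy. So the proposal is a reasonable road map with one genuine error, not a proof.
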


To obtain proposition \ref{PropTwistedDiracOnSigma},
we should rephrase theorem \ref{ThmSymbolIndex}
in terms of $\Sigma X$ instead of $T^{*}\! X$.
Let us regard $T^{*}\!X$ as a subset of $\Sigma X$ by
\begin{eqnarray}
T^{*}\!X
&=&
BX\cup_{SX}(T^{*}\!X \setminus \mathrm{int}(BX))
\nonumber
\\
&\cong&
BX\cup_{SX}(BX \setminus \{ 0\text{-section} \})
=
\Sigma {X} \setminus \{ (x, \infty_x) \}
\hookrightarrow
\Sigma {X}.
\label{RestrictionofSigma}
\end{eqnarray}
Noting that the identification
$T^{*}\!X \setminus \mathrm{int}(BX)
\to
BX \setminus \{ 0\text{-section} \}
;\quad
(x,\xi) \mapsto \parens{x,\frac{\xi}{\norm{\xi}^2} }$
is $G$-equivariant since the action of $G$ does not change the
length of co-tangent vectors,
we will consider
the following $G$-equivariant inclusion map;
\begin{eqnarray*}
%%%%%%%%%%%%%%% \iota (f) = (f|_{BX}) \cup_{SX} (f|_{BX})
 \iota \colon
C_0(T^{*}\!X)
\to
C_0(\Sigma {X}).
\end{eqnarray*}
\begin{lem}
\label{LemSymbolClass}
Consider
$\iota_{*} \colon
K\!K(C_0(X), C_0(T^{*}\!X))
\to
K\!K(C_0(X), C_0(\Sigma {X}))$.
Then,
$
\iota_{*}( [\sigma] ) = [[\sigma]]
$.
\end{lem}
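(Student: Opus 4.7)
The plan is to bridge $\iota_*([\sigma])$ and $[[\sigma]]$ by constructing a common intermediate Kasparov module on $\Sigma X$ and then chaining together a linear homotopy of operators with a degenerate-removal argument.

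First, I exploit the assumption (\ref{AssumptionOnsigma}) that $\sigma_0\sigma_0^*=\sigma_0^*\sigma_0=1$ on $\{\|\xi\|\geq 1\}\subset T^*X$ to construct a $G$-equivariant graded bundle isomorphism
$$
\varphi\colon \pi_T^*\bb{V}
\xrightarrow{\ \cong\ }
(\bb{V}(\sigma)^0 \hatsum \pi_\Sigma^*\bb{V}^1)\big|_{T^*X},
$$
defined as the identity on $BX\subset T^*X$ (corresponding to the first copy of $BX$ in $\Sigma X$) and as the grading-preserving map $\sigma_0\oplus\id$ on the complement $T^*X\setminus\mathrm{int}(BX)$, identified with the second copy minus its zero-section $\{\infty\text{-section}\}$ via the map in (\ref{RestrictionofSigma}). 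Continuity across $SX$ is exactly the clutching condition (\ref{DfnBundleOverSigma}) defining $\bb{V}(\sigma)^0$. A direct matrix calculation then shows that $\varphi\sigma\varphi^{-1}$ equals $\sigma$ on the first $BX$ and equals the flip $\matrixtwo{0}{1}{1}{0}$ on the second $BX$ away from $\infty$. Extending this formula across the $\infty$-section by the same flip (where the bundle reads $\bb{V}^1\hatsum \bb{V}^1$) produces a bounded $G$-invariant operator $S$ on $E := C_0(\bb{V}(\sigma)^0\hatsum \pi_\Sigma^*\bb{V}^1)$.

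The pair $(E,S)$ is a Kasparov module in $K\!K^G(C_0(X),C_0(\Sigma X))$: the commutator $[S,\pi_\Sigma^\sharp(f)]$ vanishes and $S$ is graded self-adjoint, while $(S^2-1)\pi_\Sigma^\sharp(f)$ is compactly supported in $\Sigma X$ for every $f\in C_0(X)$, since $S^2-1$ is supported in the first $BX$ and $f$ has compact support in the base. The linear family $(E,tS)_{t\in[0,1]}$ is a valid Kasparov homotopy by the same estimates, connecting $(E,S)$ at $t=1$ with $[[\sigma]] = (E,0)$ at $t=0$.

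It remains to show $(E,S)=\iota_*([\sigma])$ in $K\!K$, which is the main obstacle. Via $\varphi$, the Hilbert $C_0(\Sigma X)$-submodule $J := C_0((\bb{V}(\sigma)^0\hatsum \pi_\Sigma^*\bb{V}^1)|_{T^*X})\subset E$ of sections vanishing at the $\infty$-section is Kasparov-isomorphic to $\iota_*([\sigma])$, with $S|_J$ pulling back to $\sigma$. The quotient $(E/J, S|_{E/J})\cong (C_0(\bb{V}^1\hatsum \bb{V}^1),\text{flip})$ is degenerate, since the flip is graded self-adjoint, commutes with scalar multiplication by $f\in C_0(X)$, and squares to the identity. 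The subtlety is that the short exact sequence $0\to J\to E\to E/J\to 0$ of Hilbert $C_0(\Sigma X)$-modules need not split, so one cannot simply decompose $(E,S)$ and drop the degenerate summand. The remedy is either an explicit homotopy built from a smooth family of cut-off functions on $\Sigma X$ supported near the $\infty$-section which compresses $(E,S)$ onto a representative living on $J$, or an appeal to Kasparov's excision for the ideal $C_0(T^*X)\subset C_0(\Sigma X)$; either way the degenerate quotient is absorbed in $K\!K$, yielding $(E,S)=\iota_*([\sigma])$ and hence $\iota_*([\sigma])=[[\sigma]]$.
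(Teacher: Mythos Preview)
Your argument is essentially the same as the paper's: the same bundle isomorphism $\varphi$ (written there as $\matrixtwo{1}{0}{0}{1}\cup_{SX}\matrixtwo{\sigma_0}{0}{0}{1}$), the same resulting operator $\sigma\cup_{SX}\matrixtwo{0}{1}{1}{0}$, the same linear operator homotopy down to $0$, and the same passage from the submodule $J$ of sections vanishing at the $\infty$-section to the full module $E$. For that last step the paper simply writes down the explicit $IC_0(\Sigma X)$-module $\{\,f\in IC_0(\bb{V}(\sigma)^0\hatsum\pi_\Sigma^*\bb{V}^1)\mid f(1)(x,\infty)=0\,\}$, which is exactly the ``compression'' homotopy you gesture at, so no appeal to excision or to splitting the short exact sequence is needed.
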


\begin{proof}
Remark that
\begin{eqnarray*}
\bb{V}(\sigma)^{0}
&=&
(\pi_{T}^{*}\bb{V}^{0})_{BX}
\cup_{\sigma_{0}|_{SX}}
(\pi_{T}^{*}\bb{V}^{1})_{BX}
\\
\pi_{\Sigma}^{*}\bb{V}^{1}
&=&
(\pi_{T}^{*}\bb{V}^{1})_{BX}
\cup_{\id|_{SX}}
(\pi_{T}^{*}\bb{V}^{1})_{BX}.
\end{eqnarray*}
Let $ \bb{V}_{T}(\sigma)^{0} $ denote the restriction
of $ \bb{V}(\sigma)^{0} $
on $ T^{*}\! X \subset \Sigma {X} $ using (\ref{RestrictionofSigma}).
Firstly we have
\begin{eqnarray*}
[\sigma]
&=&
\parens{
C_0\parens{
\pi_{T}^{*}\bb{V}
},\
\matrixtwo
{0}{\sigma_{0}^{*}}
{\sigma_{0}}{0}
}
\\
&=&
\parens{
C_0\parens{
\bb{V}_{T}(\sigma)^{0}
\hatsum
\pi_{T}^{*}\bb{V}^{1}
},\
\matrixtwo
{0}{\sigma_{0}^{*}}
{\sigma_{0}}{0}
\cup_{SX}
\matrixtwo{0}{1}{1}{0}
}
\in K\!K^{G} (C_0(X), C_0(T^{*}\!X)).
\end{eqnarray*}
This is due to the unitary isomorphism 
$
\map
{
\matrixtwo
{1}{0}{0}{1}
\cup_{SX}
\matrixtwo
{\sigma_0}{0}{0}{1}
}
{\pi_{T}^{*}\bb{V}}
{\bb{V}_{T}(\sigma)^{0} \hatsum \pi_{T}^{*}\bb{V}^{1}}
$.

Then, $\iota_{*} \colon
K\!K(C_0(X), C_0(T^{*}\!X))
\to
K\!K(C_0(X), C_0(\Sigma {X}))$
maps $[\sigma]$ to the class
\begin{eqnarray*}
\parens{
Z \>,\;
\matrixtwo
{0}{\sigma_{0}^{*}}
{\sigma_{0}}{0}
\cup_{SX}
\matrixtwo
{0}{1}
{1}{0}
}
\in K\!K(C_0(X), C_0(\Sigma {X})),
\end{eqnarray*}
where
$$
Z:=
\braces{ s \in C_0\parens{
\bb{V}(\sigma)^{0}
\hatsum
\pi_{\Sigma}^{*}\bb{V}^{1}
}
\> \Big| \;
s(x, \infty)=0
\text{
for any $x\in X$.
}}.
$$
This $K$-theory class is equal to the class
$$
\parens{
C_0\parens{
\bb{V}(\sigma)^{0}
\hatsum
\pi_{\Sigma}^{*}\bb{V}^{1}
},
\matrixtwo
{0}{\sigma_{0}^{*}}
{\sigma_{0}}{0}
\cup_{SX}
\matrixtwo
{0}{1}
{1}{0}
}
\in K\!K(C_0(X), C_0(\Sigma {X}))
$$
by the homotopy given by the following $IC_0(\Sigma X)$-module;
$$
\braces{
f \in IC_0\parens{
\bb{V}(\sigma)^{0}
\hatsum
\pi_{\Sigma}^{*}\bb{V}^{1}
}
\> \Big| \;
f (1) (x,\infty)=0
\text{
for any $x\in X$.
}}.
$$
Finally, it is equal to
$$
[[\sigma]] = 
\parens{C_0(\bb{V}(\sigma)^{0}
\hatsum
\pi_{\Sigma}^{*} (\bb{V}^{1})) ,\ 0}
\in %\mathcal{R}
K\!K^{G} (%X;
C_0(X),\ C_0(\Sigma {X})).
$$
by the operator homotopy
$
\matrixtwo
{0}{t\sigma_{0}^{*}}
{t\sigma_{0}}{0}
\cup_{SX}
\matrixtwo
{0}{t}
{t}{0}
$.
\end{proof}
\begin{lem}
\label{LemDolbeaultClass}
Consider
$\iota ^{*}
\colon
K\!K^{G}(C_0(\Sigma {X}) , \C)
\to
K\!K^{G}(C_0(T^{*}\! X) , \C)
$.
Then,
$\iota ^{*}([\mathcal{D}_{\Sigma {X}}] )
=[\mathcal{D}_{T^{*}\!X}]$.
\end{lem}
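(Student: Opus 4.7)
The plan is to exhibit both Kasparov cycles on a common Hilbert $C_0(T^*X)$-module and then connect their operators by an operator homotopy. Recall from (\ref{RestrictionofSigma}) that $T^{*}X$ embeds as an open $G$-subset of $\Sigma X$ whose complement is the $\infty$-section, a submanifold of codimension $n=\dim X$ and therefore of Lebesgue measure zero. Transporting the Hermitian metric and almost complex structure of $\Sigma X$ along this $G$-equivariant embedding $\phi$, the Hilbert space $L^{2}(\Lambda^{0,*}T^{*}\Sigma X)$ is canonically identified with $L^{2}$-sections over $T^{*}X$ equipped with this pulled-back geometry. The $C_{0}(T^{*}X)$-action on $L^{2}(\Lambda^{0,*}T^{*}\Sigma X)$ via $\iota$ (extension by zero across the $\infty$-section) coincides with ordinary point-wise multiplication on this identified space, so $\iota^{*}[\mathcal{D}_{\Sigma X}]$ and $[\mathcal{D}_{T^{*}X}]$ are represented by cycles over the same Hilbert $C_{0}(T^{*}X)$-module up to the choice of Hermitian metric and almost complex structure on $T^{*}X\setminus BX$.

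It then suffices to show that the two Dolbeault operators---the pulled-back one $\phi^{*}\mathcal{D}_{\Sigma X}$ and the native one $\mathcal{D}_{T^{*}X}$---define the same $K\!K$-class. For this I would interpolate: choose a smooth family $\{(g_{t}, J_{t})\}_{t\in[0,1]}$ of $G$-invariant Hermitian metrics and compatible almost complex structures on $T^{*}X$ that coincides with the native structure inside $BX$ for every $t$, equals $(g_{0}, J_{0})$ at $t=0$, and equals the pulled-back structure at $t=1$. Each pair yields an essentially self-adjoint Dolbeault operator $\mathcal{D}_{t}$ with bounded transform $F_{t}=\mathcal{D}_{t}/\sqrt{1+\mathcal{D}_{t}^{2}}$. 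Provided $t\mapsto F_{t}$ has the required continuity and the compactness conditions $[F_{t},f]\in\mathbb{K}$ and $f(F_{t}^{2}-1)\in\mathbb{K}$ hold uniformly for $f\in C_{0}(T^{*}X)$, the family gives an operator homotopy realizing the desired equality of $K\!K$-classes.

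The main obstacle is essential self-adjointness of $\mathcal{D}_{t}$ for $t$ near $1$, since the pulled-back metric $g_{1}$ makes $T^{*}X$ incomplete (the $\infty$-section of $\Sigma X$ sits at finite distance). I would handle this by exploiting that the $\infty$-section has codimension $n\geq 2$ in $\Sigma X$: in this range smooth compactly supported sections on $T^{*}X$ are dense in the form domain of $\mathcal{D}_{\Sigma X}$ restricted to $T^{*}X$, so the closure of $\mathcal{D}_{t}$ from $C^{\infty}_{c}(T^{*}X)$ remains self-adjoint and its bounded transform does not see the removed submanifold. In borderline cases one can insert an auxiliary homotopy through conformally modified metrics that restore completeness outside a large compact set. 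Since all $(g_{t}, J_{t})$ agree on $BX$ and functions in $C_{0}(T^{*}X)$ vanish at infinity of $T^{*}X$, the compactness criteria of Lemma~\ref{CommutatorWithMultiplication} carry through uniformly in $t$ by the same local elliptic arguments applied to each $\mathcal{D}_{t}$, which is a $G$-invariant order-one elliptic operator throughout the family.
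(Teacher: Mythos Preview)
Your strategy is genuinely different from the paper's. The paper does \emph{not} interpolate geometries; instead it writes down a single explicit $G$-equivariant unitary
\[
\varphi = \sqrt{h}\cdot\bigoplus_{k} f_{k}\,\iota^{*}
\;:\;
L^{2}\bigl(\textstyle\bigwedge^{0,*}T^{*}(\Sigma X)\bigr)
\longrightarrow
L^{2}\bigl(\textstyle\bigwedge^{0,*}T^{*}(T^{*}X)\bigr),
\]
where $h$ is the Radon--Nikodym derivative of the two volume forms and the $f_{k}$ are scalar functions making $f_{k}\cdot\iota^{*}$ a fibre-wise isometry on $(0,k)$-forms. It then asserts that $\varphi$ intertwines the two Dolbeault operators (and hence their bounded transforms), so the two cycles are literally unitarily equivalent as $C_{0}(T^{*}X)$-modules with operator. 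No homotopy enters.

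Your route via a family $(g_{t},J_{t})$ is in principle workable, but as written it has real gaps.
First, the $L^{2}$-spaces themselves depend on $t$ through both the pointwise Hermitian metric on $\bigwedge^{0,*}$ and the volume form, so the operators $F_{t}$ do not act on a common Hilbert space; you must first conjugate by unitaries of exactly the type the paper constructs, and then verify norm-continuity of the conjugated family---which you have not done.
Second, your self-adjointness argument is too loose. The complement of $T^{*}X$ in $\Sigma X$ has codimension $n=\dim X$, so your ``codimension $\geq 2$'' argument already fails for $n=1$, and even for $n\geq 2$ the assertion that $C_{c}^{\infty}(T^{*}X)$ is a core for the incomplete-metric operator, while true, needs a genuine Sobolev capacity argument rather than a one-line claim.
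Third, your appeal to Lemma~\ref{CommutatorWithMultiplication} is misplaced: that lemma treats properly supported order-$0$ operators on a $G$-compact manifold, whereas here you need compactness of $f(1-F_{t}^{2})$ and $[F_{t},f]$ for a first-order operator on $T^{*}X$, which is not $G$-compact; the relevant input is Rellich-type compactness as in the lemma preceding Theorem~\ref{ThmSymbolIndex}, not Lemma~\ref{CommutatorWithMultiplication}.

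In short, the paper's unitary-equivalence argument is shorter and sidesteps all of the incompleteness and continuity issues you are forced to confront; your homotopy route could be made rigorous, but it requires substantially more analytic work than you have indicated.
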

\begin{proof}
$
\iota ^{*}([\mathcal{D}_{\Sigma {X}}] )
$
is represented by the class
$$
\parens{L^{2}\parens{\turnV{\!0,*}T^{*}(\Sigma {X})}
,\
\frac{\mathcal{D}_{\Sigma {X}}}
{\sqrt{1+\mathcal{D}_{\Sigma {X}}^2}}
}
\in K\!K^{G}(C_0(T^{*}\! X) , \C)
$$
with the point-wise multiplication action by $C_0(T^{*}\! X)$
via $\iota$.

For a while, let $\iota$ denote also the inclusion of the spaces
$\iota \colon T^{*}\! {X} \hookrightarrow \Sigma {X}$.
For each $k=0,1,\ldots,$ we can find a smooth function
$\map {f_{k}} {T^{*}\! {X}} {\R_{>0}}$
so that the following maps are isometric on each fiber; 
$$
\begin{array}{ccc}
\turnV{\!0,k}T^{*}(\Sigma {X})
&\to&
\turnV{\!0,k}T^{*}\!(T^{*}\! {X})
\\
\omega
&\mapsto&
f_{k}{\cdot}\iota^{*}\omega.
\end{array}
$$
Assembling these maps,
we obtain the fiber-wise isometry
$$
\widehat{\varphi} := \bigoplus_{k} f_{k}{\cdot}\iota^{*}
\colon
\turnV{\!0,*}T^{*}(\Sigma {X})
\to
\turnV{\!0,*}T^{*}\!(T^{*}\! {X}).
$$
Next, let $h$ denote the Radon-Nikodym derivative
$\frac{\mathrm{vol}_{\Sigma X}}{\mathrm{vol}_{T^{*}\! X}}$
so that $\iota$ induces the isomorphism between measure spaces;
$$
\parens{ \Sigma X ,\ \mathrm{vol}_{\Sigma X} }
\cong
\parens{ T^{*}\! X ,\ h {\cdot} \mathrm{vol}_{T^{*}\! X} }.
$$
%%edit
Using $\widehat{\varphi}$ and $h$,
$L^{2}\parens{\turnV{\!0,*}T^{*}(\Sigma {X})}$
can be identified with 
$L^{2}\parens{\turnV{\!0,*}T^{*}\!(T^{*}\! {X})}$
by
$$
\begin{array}{ccc}
L^{2}\parens{\turnV{\!0,*}T^{*}(\Sigma {X})}
&\to&
L^{2}\parens{\turnV{\!0,*}T^{*}\!(T^{*}\! {X})}
\\
s
&\mapsto&
\sqrt{h} {\cdot} \widehat{\varphi}(s).
\end{array}
$$
This isomorphism will be denoted by 
$\varphi := \sqrt{h} {\cdot} \widehat{\varphi}$.
Under this unitary equivalence, we have
$$
\varphi \circ D_{T^{*}\! X} \circ \varphi^{-1}
=D_{\Sigma {X}}.
$$
Besides, since the functional calculus commutes with the
unitary conjugation,
we obtain
$$
\varphi \circ
\frac{\mathcal{D}_{T^{*}\! X}}
{\sqrt{1+\mathcal{D}_{T^{*}\! X}^2}}
\circ \varphi^{-1}
=
\frac{\mathcal{D}_{\Sigma {X}}}
{\sqrt{1+\mathcal{D}_{\Sigma {X}}^2}}.
$$
Note that this unitary equivalent map
is compatible with the action of
$C_0 (T^{*}\! X)$.
Therefore, as elements in $K\!K(C_0(T^{*}\! X) , \C)$,
\begin{eqnarray*}
\iota^{*} ([\mathcal{D}_{\Sigma {X}}])
&=&
\parens{L^{2}\parens{\turnV{\!0,*}T^{*}(\Sigma {X})}
,\
\frac{\mathcal{D}_{\Sigma {X}}}
{\sqrt{1+\mathcal{D}_{\Sigma {X}}^2}}
}
\\
&=&
\parens{L^{2}\parens{\turnV{\!0,*}T^{*}(\Sigma {X})}
,\
\varphi \circ
\frac{\mathcal{D}_{T^{*}\! X}}
{\sqrt{1+\mathcal{D}_{T^{*}\! X}^2}}
\circ \varphi^{-1}
}
\\
&=&
\parens{\varphi L^{2}\parens{\turnV{\!0,*}T^{*}(\Sigma {X})}
,\
\frac{\mathcal{D}_{T^{*}\! X}}
{\sqrt{1+\mathcal{D}_{T^{*}\! X}^2}}
}
\\
&=&
\parens{L^{2}\parens{\turnV{\!0,*}T^{*}(T^{*}\! X)}
,\
\frac{\mathcal{D}_{T^{*}\! X}}
{\sqrt{1+\mathcal{D}_{T^{*}\! X}^2}}
}
\\
&=&
[\mathcal{D}_{T^{*}\! X}]
\end{eqnarray*}
%%edit%%
\end{proof}

\begin{proof}[Proof of Proposition \ref{PropTwistedDiracOnSigma}.]
Due to theorem \ref{ThmSymbolIndex},
lemma \ref{LemSymbolClass} and
lemma \ref{LemDolbeaultClass}, we obtain
\begin{eqnarray*}
[A] &=& [\sigma]\hattensor_{C_0(TX)}[\mathcal{D}_{T^{*}\!X}]
\\
&=&
 [\sigma]\hattensor_{C_0(TX)} \iota^{*}([\mathcal{D}_{\Sigma {X}}])
\\
&=&
\iota_{*}( [\sigma] )\hattensor_{C_0(\Sigma{ X })}
[\mathcal{D}_{\Sigma {X}}]
\\
&=&
[[\sigma]]\:\hattensor_{C_0(\Sigma{ X })}
[\mathcal{D}_{\Sigma {X}}].
\end{eqnarray*}
It implies the second part as
\begin{eqnarray*}
[A]=[[\sigma]]\:\hattensor_{C_0(\Sigma {X})}
[\mathcal{D}_{\Sigma {X}}]
&=&
(\pi_{\Sigma}^{\sharp})^{*} \parens{[\bb{V}(\sigma)^{0}
\hatsum
\pi_{\Sigma}^{*}\bb{V}^{1}]}
\:\hattensor_{C_0(\Sigma {X})}
[\mathcal{D}_{\Sigma {X}}]
\\
&=&
(\pi_{\Sigma}^{\sharp})^{*} \parens{[\bb{V}(\sigma)^{0}
\hatsum
\pi_{\Sigma}^{*}\bb{V}^{1}]
\:\hattensor_{C_0(\Sigma {X})}
[\mathcal{D}_{\Sigma {X}}]}
\\
&=&
(\pi_{\Sigma}^{\sharp})^{*}([\mathcal{D}_{\sigma}]).
\end{eqnarray*}
\end{proof}
\begin{cor}
Let $L$ be a Hermitian $G$-line bundle over $X$.
Consider the induced bundle $\pi_{\Sigma}^{*}L$
over $\Sigma X$,
which determines the $K\!K$-theory element
$[\pi_{\Sigma}^{*}L]
=
\parens{C_0(\pi_{\Sigma}^{*}L) , 0}
\in 
K\!K^{G}(C_0(\Sigma X), C_0(\Sigma X))$. 
Then,
$$
\mu_{G}([L]\hattensor [A]) 
=
\mu_{G}\parens{
 [\pi_{\Sigma}^{*}L] \hattensor [\mathcal{D}_{\sigma}]
}.
$$
\end{cor}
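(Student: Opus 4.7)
The plan is to reduce the corollary to Proposition \ref{PropTwistedDiracOnSigma} together with a naturality property of the index map $\mu_{G}$ under the $G$-equivariant proper projection $\pi_{\Sigma}\colon \Sigma X \to X$. Throughout, I will let $[\pi_{\Sigma}^{\sharp}]\in K\!K^{G}(C_0(X),C_0(\Sigma X))$ denote the $K\!K$-class represented by the $*$-homomorphism $\pi_{\Sigma}^{\sharp}\colon C_0(X)\to C_0(\Sigma X)$, so that $(\pi_{\Sigma}^{\sharp})^{*}(\mathchar`-) = [\pi_{\Sigma}^{\sharp}]\hattensor(\mathchar`-)$.

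First I would establish the identity
$$
[L]\hattensor_{C_0(X)}[\pi_{\Sigma}^{\sharp}]
\;=\;
[\pi_{\Sigma}^{\sharp}]\hattensor_{C_0(\Sigma X)}[\pi_{\Sigma}^{*}L]
\qquad\text{in } K\!K^{G}(C_0(X),C_0(\Sigma X)).
$$
This is a direct Kasparov-module computation: both sides are represented by $(C_0(\pi_{\Sigma}^{*}L),0)$ with left $C_0(X)$-action through $\pi_{\Sigma}^{\sharp}$, because the section module of the pull-back bundle is precisely $C_0(L)\otimes_{C_0(X)}C_0(\Sigma X)$. Combining this with Proposition \ref{PropTwistedDiracOnSigma}, which asserts $[A]=(\pi_{\Sigma}^{\sharp})^{*}([\mathcal{D}_{\sigma}])$, gives
$$
[L]\hattensor [A]
=[L]\hattensor [\pi_{\Sigma}^{\sharp}]\hattensor [\mathcal{D}_{\sigma}]
=[\pi_{\Sigma}^{\sharp}]\hattensor[\pi_{\Sigma}^{*}L]\hattensor [\mathcal{D}_{\sigma}]
=(\pi_{\Sigma}^{\sharp})^{*}\!\bigl([\pi_{\Sigma}^{*}L]\hattensor [\mathcal{D}_{\sigma}]\bigr).
$$

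Second, I would verify the naturality statement $\mu_{G}\circ(\pi_{\Sigma}^{\sharp})^{*}=\mu_{G}$ on $K\!K^{G}(C_0(\Sigma X),\C)$. By the functoriality of $j^{G}$ and associativity of the Kasparov product, it suffices to show
$$
[c_X]\hattensor_{C^{*}(G;C_0(X))} j^{G}([\pi_{\Sigma}^{\sharp}])
\;=\;
[c_{\Sigma X}]
\qquad\text{in } K_0(C^{*}(G;C_0(\Sigma X))).
$$
The left-hand side is the image of the idempotent $\check{c}_X$ under the $*$-homomorphism $C^{*}(G;C_0(X))\to C^{*}(G;C_0(\Sigma X))$ induced by $\pi_{\Sigma}^{\sharp}$; explicitly this image is $\bigl(\check{\pi_{\Sigma}^{\sharp}c_X}\bigr)(\gamma)(y)=\sqrt{c_X(\pi_{\Sigma}(y))c_X(\gm^{-1}\pi_{\Sigma}(y))}$. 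Here the key observation is that $\pi_{\Sigma}^{\sharp}c_X$ is itself a cut-off function on $\Sigma X$: its support is $\pi_{\Sigma}^{-1}(\supp c_X)$, which is compact because the fibres of $\pi_{\Sigma}$ are compact, and the normalization $\int_{G}c_X(\gm^{-1}\pi_{\Sigma}(y))\dif\gm=1$ follows from $G$-equivariance of $\pi_{\Sigma}$. Lemma \ref{CutoffKHomologyClass} then gives $[\check{\pi_{\Sigma}^{\sharp}c_X}]=[c_{\Sigma X}]$ in $K_0(C^{*}(G;C_0(\Sigma X)))$.

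Combining the two steps yields
$$
\mu_{G}([L]\hattensor [A])
=\mu_{G}\bigl((\pi_{\Sigma}^{\sharp})^{*}([\pi_{\Sigma}^{*}L]\hattensor [\mathcal{D}_{\sigma}])\bigr)
=\mu_{G}\bigl([\pi_{\Sigma}^{*}L]\hattensor [\mathcal{D}_{\sigma}]\bigr),
$$
which is the assertion. The only mildly delicate point is the naturality of $\mu_{G}$ under $\pi_{\Sigma}^{\sharp}$; this is precisely where the compactness of the fibres of $\pi_{\Sigma}$ (and thus the properness of $\pi_{\Sigma}^{\sharp}$ at the level of cut-off functions) is essential. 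Everything else is formal Kasparov-product manipulation.
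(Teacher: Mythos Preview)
Your proof is correct and follows essentially the same approach as the paper's own argument: both use the identity $[L]\hattensor[\pi_{\Sigma}^{\sharp}]=[\pi_{\Sigma}^{\sharp}]\hattensor[\pi_{\Sigma}^{*}L]$ (which the paper writes as $(\pi^{\sharp})^{*}[\pi^{*}L]=(\pi^{\sharp})_{*}[L]$), the observation that $c\circ\pi_{\Sigma}$ is a cut-off function on $\Sigma X$ so that $[c]\hattensor j^{G}(\pi_{\Sigma}^{\sharp})=[c\circ\pi_{\Sigma}]$, and the functoriality of $j^{G}$. Your organization into two separate steps is slightly cleaner, but the content is the same.
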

\begin{proof}
As we will not consider the projection from $T^{*}\! X$,
$ \pi_{\Sigma} $ will be abbreviated to $\pi$.
The map between the algebras of continuous functions
induced by $\pi$ will be denoted by
$\map{ \pi^{\sharp} }{ C_0(X) }{ C_0(\Sigma X) }$
and it will also regarded as a $K\!K$-theory class
$\pi^{\sharp} \in K\!K^{G}(C_0(X) , C_0(\Sigma X))$.
Note that $j^{G}  (\pi^{\sharp}) \in 
 K\!K(C^{*}(G;C_0(X)) , C^{*}(G;C_0(\Sigma X)))$
represents a natural map $C^{*}(G;C_0(X)) \to C^{*}(G;C_0(\Sigma X))$
given by
$
f \mapsto \bar{f}(\gm)(x,\xi) := f(\gm)(\pi (x,\xi) ) = f(\gm)(x)
$.

%% edit %%
Let $c\in C_c(X)$ be an arbitrary cut-off function.
Then $\pi^{\sharp} c = c\circ \pi \in C_c(\Sigma X)$
is a cut-off function for the action of $G$ on $\Sigma X$.
They determine a $K$-homology classes
$[c]\in K\!K(\C, C^{*}(G;C_0(X)))$ and
$[c\circ \pi]\in K\!K(\C, C^{*}(G;C_0(\Sigma X)))$
as in lemma \ref{CutoffKHomologyClass},
and they are related by the formula
$[c] \hattensor j^{G}  (\pi^{\sharp}) = [c\circ \pi] $.

Using the proposition and noting that 
$[\pi^{*}L] $ is an element in
$K\!K^{G}(C_0(\Sigma X), C_0(\Sigma X))$
and
$ (\pi^{\sharp})^{*}[\pi^{*}L]
 =  (\pi^{\sharp})_{*} [L]
\in K\!K^{G}(C_0(X), C_0(\Sigma X)) $, we obtain
\begin{eqnarray*}
\mu_{G}([L]\hattensor [A]) 
&=&
\mu_{G}
\parens{
[L]\hattensor (\pi^{\sharp})^{*} [\mathcal{D}_{\sigma}]
}
\\
&=&
[c] \hattensor j^{G}\parens{
 (\pi^{\sharp})_{*} [L]\hattensor [\mathcal{D}_{\sigma}]
}
\\
&=&
[c] \hattensor j^{G}\parens{
  (\pi^{\sharp})^{*}[\pi^{*}L] \hattensor [\mathcal{D}_{\sigma}]
}
\\
&=&
[c] \hattensor j^{G}  (\pi^{\sharp}) \hattensor j^{G} \parens{
 [\pi^{*}L] \hattensor [\mathcal{D}_{\sigma}]
}
\\
&=&
[c\circ \pi] \hattensor j^{G}
\parens{
 [\pi^{*}L] \hattensor [\mathcal{D}_{\sigma}]
}
\\
&=&
\mu_{G}\parens{
 [\pi^{*}L] \hattensor [\mathcal{D}_{\sigma}]
}.
\end{eqnarray*}
\end{proof}
%%%%%%%%
%% End of Reduction to the Dirac operators
%%%%%%%%
%%%%%%%%%%%%
%% specific expression of the image of $\mu$
%%%%%%%%%%%%
\subsection{Expression of $\mu_{G}([A])$}
\label{SubsectionExpressionOfMu}
To see more specific expression of the image of $\mu$,
let us first introduce a Hilbert $C^{*}(G)$-module $\E^G$
by taking the completion of $C_c(\mathbb{V})$.
which will be used to represent $\mu_{G}([A])\in K\!K(\C, C^{*}(G))$.

First we define on $C_c(\bb{V})$ the structure of a pre-Hilbert module
over $C_c(G)$ using the action of ${G}$ on $C_c(\bb{V})$
given by $g[s](x) = g(s(g^{-1} x))$ for $g\in {G}$.
\begin{itemize}
\item
The action of $C_c({G})$ on $C_c(\bb{V})$ from the right
is given by
\begin{eqnarray}
\label{ActionOnCalE}
[s\cdot b] = \int_{{G}} {g}[s] \cdot b({g}^{-1}) \dif {g}
\in C_c(\bb{V})
\end{eqnarray}
for $s\in C_c(\bb{V})$ and $b\in C_c({G})$.
Let $\rho^{{G}}$ denote this right action.
\item
The scalar product valued in $C_c({G})$ is given by
\begin{eqnarray}
\label{ScalarProdOnCalE}
\angles{s_1,s_2}_{\E^{{G}}}({g}) = \angles{s_1, {g}[s_2]}_{L^{2}(\bb{V})}
\end{eqnarray}
for $s_i\in C_c(\bb{V})$.
\end{itemize}
\begin{dfn}
Define $\E^{{G}}$ as the completion of $C_c(\bb{V})$
under the norm $\sqrt{\norm{\angles{s,s}}_{C^{*}({G})}}$.
\end{dfn}

In order to realize $\E^{{G}}$ as
a submodule of $C^{*}({G},C_0(X))$
for convenience, we define the map
$\map{i^{{G}}}{C_c(\bb{V})}{C_c({G} ;L^{2}(\bb{V}))}$ and 
$\map{q^{{G}}}{C_c({G};L^{2}(\bb{V}))}{C_c(\bb{V})}$
by the formula;
\begin{eqnarray*}
i^{{G}}[s]({g})(x) &=& \sqrt{c(x)} {g}[s](x)
\\
q^{{G}}[\widehat{s}](x) &=& \int_{{G}}\sqrt{c({g}_1^{-1} x)}
\widehat{s}({g}_1^{-1})({g}_1^{-1} x) \dif {g}_1
\end{eqnarray*}
for $s\in C_c(\bb{V})$ and $\widehat{s} \in C_c({G};L^{2}(\bb{V}))$,
where $c\in C_c(X)$ denotes a cut-off function.

$i^{{G}}$ preserves the right ${G}$-action and scalar product,
here $C_c({G};L^{2}(\bb{V}))$ is equipped with the following
$C^{*}({G})$-valued inner product
$\angles{\widehat{s}_1, \widehat{s}_2}_{C^{*}({G};L^{2}(\bb{V}))}({g})
=
\int_{{g}_1 {g}_2 = {g}}
\angles{\widehat{s}_1({g}_1), \widehat{s}_2({g}_2)}_{L^{2}(\bb{V})}
\dif {g}_1$.

$q^{{G}}$ is the adjoint of $i^{{G}}$ that is,
$
\angles{i^{{G}}(s), \widehat{s}}_{C^{*}({G}; L^{2}(\bb{V}))}
 = 
\angles{s, q^{{G}}(\widehat{s})}_{\E^{{G}}}
$.
\begin{thm}
{\upshape{\cite[Theorem 5.6]{Ka}}}
$\mu_{{G}} ([A]) \in K\!K(\C, C^{*}({G}))$
is represented by $\parens{\E^{{G}}, \overline{A^{{G}}}}$,
where $\map{\overline{A^{{G}}} = q^{{G}} A^{G} i^{{G}}}
{\E^{{G}}}{\E^{{G}}}$
and $\map{A^{{G}}} {C^{*}({G} ;L^{2}(\bb{V}))}
 {C^{*}({G} ;L^{2}(\bb{V}))}$
is given by
$$A^{{G}}[\widehat{s}]({\gm}) = A[\widehat{s}({\gm})]$$
for $\widehat{s} \in C_c(G;L^{2}(\bb{V}))$
as in definition \ref{DfnjG1}.
This operator is equal to
$$\overline{A^{{G}}}=\int_{{G}}\gm[\sqrt{c}A\sqrt{c}]\:\dif \gm. $$
$\overline{A^{{G}}}$ defines
a bounded operator both on $L^{2}(\bb{V})$ and $\E^{{G}}$ 
{\upshape \cite[Proposition 5.1]{Ka}}.
\end{thm}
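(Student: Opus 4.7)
The plan is to unwind the definition of $\mu_{G}([A])$ from Definition \ref{BCmap} as the Kasparov product $[c]\,\widehat{\otimes}_{C^{*}(G;C_{0}(X))}\,j^{G}([A])$, and then identify the resulting cycle with $(\E^{G}, \overline{A^{G}})$ via the maps $i^{G}$ and $q^{G}$.

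First, I would apply $j^{G}$ to the cycle $[A] = (L^{2}(\bb{V}), m, A)$ using Definition \ref{DfnjG1}, obtaining the cycle $(C^{*}(G;L^{2}(\bb{V})), \widehat{m}, A^{G})$, where the action $\widehat{m}$ of $C^{*}(G;C_{0}(X))$ on $C^{*}(G;L^{2}(\bb{V}))$ is by convolution, and $A^{G}$ is the fibre-wise application of $A$. Next, taking the Kasparov product with the idempotent $\check{c}\in C^{*}(G;C_{0}(X))$ representing $[c]$ (Lemma \ref{CutoffKHomologyClass}) is given by the standard recipe for product with an idempotent: the resulting cycle in $K\!K(\C, C^{*}(G))$ is $\bigl(\widehat{m}(\check{c})\cdot C^{*}(G;L^{2}(\bb{V})), \; \widehat{m}(\check{c})\, A^{G}\, \widehat{m}(\check{c})\bigr)$, where the Hilbert $C^{*}(G)$-module structure is inherited from $C^{*}(G;L^{2}(\bb{V}))$.

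The central step is to show that $i^{G}$ extends to an isometric isomorphism of Hilbert $C^{*}(G)$-modules from $\E^{G}$ onto the submodule $\widehat{m}(\check{c})\cdot C^{*}(G;L^{2}(\bb{V}))$, with $q^{G}$ being the adjoint inverse. For the isometry property I would compute $\angles{i^{G}(s_{1}), i^{G}(s_{2})}_{C^{*}(G;L^{2}(\bb{V}))}$ directly from the formulas $i^{G}[s](g)(x) = \sqrt{c(x)}\, g[s](x)$ and the convolution inner product, and check using the cut-off identity $\int_{G} c(g^{-1}x)\,\dif g = 1$ that it matches $\angles{s_{1},s_{2}}_{\E^{G}}(g) = \angles{s_{1}, g[s_{2}]}_{L^{2}(\bb{V})}$ from \eqref{ScalarProdOnCalE}. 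Then I would verify that for $s\in C_{c}(\bb{V})$, $i^{G}(s)$ satisfies $\widehat{m}(\check{c})\,i^{G}(s) = i^{G}(s)$, so that $i^{G}$ lands in the range of the projection, and conversely that every element of that range is of the form $i^{G}(s)$, using $q^{G}\circ i^{G} = \id_{\E^{G}}$. The compatibility of the operator follows: the conjugated operator $i^{G}(\overline{A^{G}}(s)) = \widehat{m}(\check{c})A^{G}\widehat{m}(\check{c})\,i^{G}(s)$ yields $\overline{A^{G}} = q^{G}A^{G}i^{G}$ upon applying $q^{G}$.

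Finally, to obtain the last formula $\overline{A^{G}} = \int_{G}\gm[\sqrt{c}A\sqrt{c}]\,\dif \gm$, I would compute directly for $s\in C_{c}(\bb{V})$:
\begin{eqnarray*}
(q^{G}A^{G}i^{G})(s)(x)
&=& \int_{G}\sqrt{c(\gminv x)}\,A\bigl[\sqrt{c}\cdot \gminv[s]\bigr](\gminv x)\,\dif \gm \\
&=& \int_{G} \gm\bigl[\sqrt{c}\,A\,\sqrt{c}\,\gminv[s]\bigr](x)\,\dif \gm
\;=\; \int_{G} \gm[\sqrt{c}A\sqrt{c}](s)(x)\,\dif \gm,
\end{eqnarray*}
using $\gm[P\cdot t](x) = \gm[P][\gm[t]](x)$ and $\gm[\gminv[s]] = s$. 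The hardest part is the isometry/isomorphism verification in the middle step, especially tracking the convolution product, the action $\widehat{m}$, and the ${G}$-action $g[s](x) = g(s(g^{-1}x))$ simultaneously; once this book-keeping is in place, the remainder is formal and uses only that $\sqrt{c}$ is $L^{2}$-bounded and that $A$ is bounded (Lemma \ref{BoundedElliptic}), together with properness to make the integrals well defined.
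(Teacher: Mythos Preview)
The paper does not give its own proof of this theorem: it is quoted verbatim with citations to \cite[Theorem 5.6]{Ka} and \cite[Proposition 5.1]{Ka} and used as a black box, so there is nothing in the paper to compare your argument against. Your outline is a reasonable sketch of how the cited result is actually proved (compress $j^{G}([A])$ by the idempotent $\check{c}$, identify the compressed module with $\E^{G}$ via $i^{G}$, and read off the operator), and your direct computation of the integral formula $\overline{A^{G}}=\int_{G}\gm[\sqrt{c}A\sqrt{c}]\,\dif\gm$ from the definitions of $i^{G}$, $A^{G}$, $q^{G}$ is correct.
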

\subsection{Kronecker Pairing and its Formula}
Now we can define the Kronecker pairing
$\angles{[E], [A]}_{G} \in \R$
for $[A]\in K\!K^{G}(C_0(X), \R)$ and 
$[E]=\parens{C_0(E),0}\in K\!K^{G}(C_0(X), C_0(X))$
in Theorem \ref{MainThm}.
\begin{dfn}\label{DfnKronecker}
$$
\angles{[E], [A]}_{G} \in \C
:= 
\tr_{G}\circ \mu_{G}([E]\hattensor [A])
\in \R,
$$
where $\tr_{G}$ is the canonical trace on $C^{*}(G)$.
\end{dfn}
%% Kronecker pairing %% in the case of discrete groups
\begin{rem}
We will see that this definition is compatible with the case
of \cite{Ha-Sc08}, that is, the case when
 a discrete group $\Gamma$ acts on $X$
freely
and $X/\Gamma$ is compact.
In such case, one has
$$
K^{0}_{\Gamma}(C_0(X)) \simeq
K^{0}(C^{*}(\Gamma; C_0(X))) \simeq
K^{0}(C(X/\Gamma))
$$
and $C(X/\Gamma)$ is unital, one also has a map
$$
K^{0}(C(X/\Gamma)) \to K^{0}(\C) \simeq \Z
$$
induced by the inclusion of $\C$ into $C(X/\Gamma)$
as constant functions.
Let $A$ be a $\Gamma$-invariant Dirac type operator
on X, and $E$ be a $\Gamma$-Hermitian vector bundle over $X$.
Then they defines a Dirac type operator on $X/\Gamma$
denoted by $A'$ and a Hermitian vector bundle
$E'$ over $X/\Gamma$. Let $A'_{E'}$ denote the Dirac type operator
twisted by $E'$.
The composition $K^{0}_{\Gamma}(C_0(X)) \to \Z$
maps $[E]\hattensor[A]$ to the ordinary index
of Dirac type operator $A'_{E'}$ on $X/\Gamma$,
which will be denoted by $\ind(A'_{E'})$.
And it is equal to the usual $K$-theoretical pairing
$\angles{[E'],[A']}$ for $[E'] \in K^{0}(X/\Gamma)$
and $[A'] \in K_{0}(X/\Gamma)$
Therefore, it is equal to the pairing between ordinary
cohomology and homology
$\angles{\mathrm{ch}(E'), \mathrm{ch}(A')}$,
which appears in \cite{Ha-Sc08}.

On the other hand our definition of the Kronecker pairing
is defined by the composition of $\mu_{G}$ and $\tr_{G}$
$$
\tr_{G}\circ \mu_{G}\colon
K^{0}_{\Gamma}(C_0(X)) \to K_{0}(C^{*}(\Gamma)) \to \R.
$$
Due to Atiyah's $L^2$-index theorem \cite{At76},
they coincide with each other, namely,
$\angles{[E],[A]}_{\Gamma} = \ind(A_{E})$.
\end{rem}
%% Hang Wang's index formula

%edit%
We need a formula of Kronecker pairing
in the term of the integration of cohomology classes.
\begin{thm}\label{HWindex}
{\upshape{\cite[Proposition 6.11.]{Wa14}}}
Let $X$ be a complete Riemannian manifold, where 
a second countable locally compact Hausdorff
then the $L^2$-index
of $A$ is given by the formula
\begin{eqnarray}
\label{TraceMu}
\tr_{G}\ind_{G}(A) = \int_{TX} c(x) \mathrm{Td}(T_{\C}X)
\wedge \mathrm{ch}(\sigma_A),
\end{eqnarray}
where $\tr_{G}$ is the canonical trace on $C^{*}(G)$,
$\sigma_A$ denotes the symbol of $A$, and
unimodular group $G$ acts
properly, co-compactly and isometrically.
If $A$ is  a properly supported
$G$-invariant elliptic operator of order $0$,
$c\in C_0(X)$ is any cut-off function.
\end{thm}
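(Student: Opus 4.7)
The plan is to establish the formula by combining three ingredients: the reduction from a general $G$-invariant elliptic operator to a twisted Dirac operator carried out in Subsection \ref{SubsectionReductionOnDirac}, a $G$-equivariant McKean--Singer formula expressing $\tr_G \ind_G(A)$ as an integral over the diagonal of a supertrace of a heat operator, and the classical local asymptotic expansion of the heat kernel.

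First I would reduce to the case of a generalized Dirac operator. By Proposition \ref{PropTwistedDiracOnSigma}, the class $[A] \in K\!K^G(C_0(X),\C)$ is realized by a twisted Dolbeault operator $\mathcal{D}_\sigma$ on $\Sigma X$, whose symbol carries the same $K$-theoretic information as $\sigma_A$ under the Thom-type identification built into the definition of $\bb{V}(\sigma)$. Since the index map $\mu_G$ and the trace $\tr_G$ are compatible with this reduction (as the reduction is realized at the $K\!K$-level and $\mu_G$ is functorial under $(\pi_\Sigma^\sharp)^*$), it suffices to prove the formula for Dirac operators on a $G$-compact proper $G$-manifold; the right-hand side transforms correctly because $\mathrm{Td}(T_\C X)\wedge \mathrm{ch}(\sigma_A)$ is precisely the Thom-pushforward of $\mathrm{Td}$ times Chern character of the twisting bundle, by the usual argument identifying Chern character of the symbol class with pullback of Chern character from the base.

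Next, for a $G$-invariant Dirac-type operator $D$ acting on a $\Z/2\Z$-graded Clifford bundle, I would establish a $G$-equivariant McKean--Singer identity: for every $t>0$,
\begin{equation*}
\tr_G \ind_G(D) = \int_X c(x)\, \mathrm{tr}_s\bigl(k_t(x,x)\bigr)\, \mathrm{d}x,
\end{equation*}
where $k_t$ is the Schwartz kernel of $e^{-tD^2}$. The key point is that the $L^2$-trace $\tr_G$, as defined via the Hilbert $C^*(G)$-module $\mathcal{E}^G$ of Subsection \ref{SubsectionExpressionOfMu}, admits precisely this integral expression on $G$-invariant trace-class-type operators whose kernels decay sufficiently; this uses the bounded-geometry statement of Corollary \ref{CorSliceThm} together with Lemma \ref{IntegralGInvariantForms}(1) to see that the value is independent of $c$, plus a Duhamel-type argument to verify that the right-hand side is $t$-independent (the derivative in $t$ is the integral of a supercommutator, which is $G$-invariantly exact).

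Finally, I would pass to the limit $t\to 0^+$. Since the diagonal values $\mathrm{tr}_s\bigl(k_t(x,x)\bigr)$ are determined locally by $D$ in a uniform way on bounded geometry, the standard Getzler/short-time heat kernel expansion applies pointwise, giving
\begin{equation*}
\lim_{t\to 0^+}\mathrm{tr}_s\bigl(k_t(x,x)\bigr)\,\mathrm{d}x
=\Bigl(\mathrm{Td}(T_\C X)\wedge \mathrm{ch}(\sigma_D)\Bigr)(x)
\end{equation*}
in top degree, where the right side is the Atiyah--Singer local index density reformulated via the symbol (using the Thom isomorphism to move the integral over $TX$ back to an integral over the base). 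Uniformity on the $G$-compact quotient, furnished again by Corollary \ref{CorSliceThm}, lets us exchange limit and integral to conclude that $\tr_G\ind_G(A)=\int_X c(x)\,\mathrm{Td}(T_\C X)\wedge \mathrm{ch}(\sigma_A)$, which up to the standard Thom repackaging is the claimed identity on $TX$.

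The main obstacle will be the $G$-equivariant McKean--Singer step: one must verify that $e^{-tD^2}$ is trace-class in the von Neumann sense associated to the canonical trace on $C^*(G)$ and that $\tr_G$ really is given by integration of the pointwise supertrace against the cut-off; both require the bounded-geometry/proper-action package and a careful comparison of $\tr_G$ on $\mathcal{E}^G$ with the fibered trace. Once this is in place, the short-time asymptotics are standard since they are purely local and uniform on a fundamental domain.
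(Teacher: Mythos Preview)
The paper does not prove this theorem at all: it is quoted verbatim from \cite[Proposition 6.11]{Wa14} and used as a black box. There is therefore nothing in the present paper to compare your argument against.

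For what it is worth, your outline is broadly the right shape for how such $L^2$-index formulas are established (and is in the spirit of Wang's argument): reduce to a Dirac-type operator, prove a $G$-equivariant McKean--Singer formula expressing $\tr_G\ind_G$ as $\int_X c(x)\,\mathrm{tr}_s(k_t(x,x))\,\dif x$, and then invoke the local heat-kernel asymptotics uniformly on a fundamental domain. The genuinely nontrivial step you correctly flag is the equivariant McKean--Singer identity---showing that the canonical trace $\tr_G$ on $C^*(G)$ applied to $\ind_G(D)$ agrees with the cut-off integral of the pointwise supertrace of the heat kernel. That requires more than bounded geometry: one needs to pass to the von Neumann algebra of $G$-invariant operators, identify $\tr_G$ with the $G$-trace there, and check trace-class properties of $e^{-tD^2}$ in that setting. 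Your sketch acknowledges this but does not carry it out; filling it in is essentially the content of \cite{Wa14}, not something the present paper attempts.
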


Also we obtain a formula of the
Kronecker pairing $\angles{[E], [A]}_{G}$
using the cohomology.
Choose an arbitrary $G$-invariant connection $\nabla$ on $E$
and apply the theorem to the Dirac type operator $A_{E}$
twisted by $(E,\nabla)$ to obtain
\begin{eqnarray*}
\angles{[E], [A]}_{G}
&=&
\tr_{G} \mu_{G} ([E]\hattensor [A])
\quad =\quad 
\tr_{G} \ind_{G} (A_{E})
\\
&=&
%\tr_{\Galp} \mu_{\Galp} ([L_1]\hattensor [A])
%\\
%&=&
\int_{TX} c(x) \mathrm{Td}(T_{\C}X)
\wedge \mathrm{ch}(\sigma_{A})\wedge \mathrm{ch}(E,\nabla).
\end{eqnarray*}
This value is independent of the choice of the $G$-invariant
connection on $E$ according to
the lemma \ref{GInvariantChernWeil}.

Using this formula, we can define 
higher $G$-$\widehat{\mathcal{A}}$-genus in
corollary \ref{MainCor}.
\begin{dfn}\label{HigherAgenus}
Let $E$ be a smooth Hermitian $G$-vector bundle
over $X$. Then we define
$$
\widehat{\mathcal{A}}_{G}(X;E)
=
\int_{X} c(x) \widehat{\mathcal{A}}(TX)\wedge \mathrm{ch}(E)
\in \R.
$$
This is the spacial case of (\ref{TraceMu}) when
$A$ is the Dirac operator on the canonical spinor bundle over $X$
twisted by $E$.
\end{dfn}
In the case of a free action of a discrete group $\Gamma$ on $X$,
by remark \ref{IntegralOnFundamentalDomain},
$\widehat{\mathcal{A}}_{G}(X;E)$ is equal to 
$\int_{X/\Gamma} \widehat{\mathcal{A}}(T(X/\Gamma))\wedge \mathrm{ch}(E)$,
namely,
the higher $\widehat{\mathcal{A}}$-genus on $X/\Gamma$,
so this definition \ref{HigherAgenus} is one of generalizations of 
the higher $\widehat{\mathcal{A}}$-genus to the case of
proper actions of non-discrete groups.

%DETELED the remark %% \label{NonSimplyConn}
%%%%%%%%
%% End of Preliminaries 
%%%%%%%%
\section{Construction of Almost Flat Bundles and their Indices}
Observing that for $E$ of the form
$E=L^{1}\oplus \cdots \oplus L^{N}$,
where $L^{k}$ are $G$-line bundles,
$\mu_{G}([E]\hattensor [A]) =
\sum_{k} \mu_{G}([L^{k}]\hattensor [A])$
and $\mathrm{ch}(E)= \sum_{k} \mathrm{ch}(L^{k})$,
it is sufficient to consider the case that
$E$ is a single smooth $G$-line bundle $L$.
Our strategy to prove the Theorem \ref{MainThm}
is that firstly, construct a family of 
almost flat bundles $L_{t}$ over $X$ for $t\in [0,1]$
using $L$, and calculate the index of twisted Dirac operators 
$A_{L_t}$.
In order to do this,
we introduce a $U(1)$-cocycle $\alpha \in C^{2}(G;U(1))$
coming from the $G$-line bundle $L$
following \cite{Ma99} and \cite{Ma03}.
However we will consider the central extension using $\alpha$
while V. Mathai employed the $\alpha$-twisted actions and
$\alpha$-twisted group $C^{*}$-algebras.
\subsection{Central Extension}
We would like to construct line bundles $L_{t}$
over $X$ for $t\in [0,1]$
whose curvature have the norm decreasing to zero.
However $L_{t}$ are not $G$-line bundles
despite $L$ is $G$-line bundle.
We will introduce $U(1)$-cocycle $\alpha\in C^{2}(G;U(1))$
coming from
$C^{2}(G;\R) \xrightarrow{\exp (i\theta)}C^{2}(G;U(1))$
and central $U(1)$-extensions $G_{\alpha^{t}}$,
and construct each of $L_{t}$ as a $G_{\alpha^{t}}$-line bundle 
over $X$.
\begin{dfn}
\label{Dfnetapsi}
Chose an arbitrary $G$-invariant connection $\nabla$ on $L$
and let $i\omega \in \Omega(X;i\R)$
denote the curvature of $(L,\nabla)$.
Then its first Chern class is equal to $c_1(L;\nabla) =
\frac{-1}{2\pi}\omega 
\in \Omega(X;\R)$.
Fix a base point $x_0 \in X$.
For any $\gm\in G$,
we will construct a $\R$-valued smooth function
$\psi_{\gm}\in C^{\infty}(X)$ which will be 
used to define a $U(1)$-cocycle $\alpha\in C^{2}(G;U(1))$.
\begin{itemize}
\item
Since $[c_1(L)] = 0 \in H^{2}(X;\R)$,
in particular $[\omega] = 0 \in H^{2}_{\text{dR}}(X;\R)$,
there exists $\eta \in \Omega^{1}(X;\R)$
such that $\dif \eta = \omega$.
\item
Since $\omega$ is $G$-invariant,
we have $\dif (\gm^{*}\eta - \eta)
= \gm^{*}\omega - \omega =0$.
The assumption of $H_1(X;\Z) = 0$ implies that
there exists $\psi_{\gm}\in C^{\infty}(X;\R)$ satisfying 
$\dif \psi_{\gm} = \gm^{*}\eta - \eta$ for each $\gm \in G$.
We may assume for every $\gm \in G$, $\psi_{\gm}(x_0) = 0 $
for a fixed base point $x_0 \in X$.
\end{itemize}
\end{dfn}
\begin{lem}
$\psi_{e} (x) =0$ for every $x\in X$,
where $e$ denotes the identity element in $G$.

Moreover for any $\gm_1$ and $\gm_2 \in G$,
$$
\psi_{\gamma_1}(\gamma_2 x)
+\psi_{\gamma_2}(x)-\psi_{\gamma_1\gamma_2}(x)
$$
is independent of $x\in X$
\end{lem}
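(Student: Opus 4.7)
My approach is to establish both assertions by showing that the relevant smooth function has vanishing exterior derivative and then invoking connectedness of $X$ (which I take to be implicit in the standing hypothesis $H_{1}(X;\Z)=0$). This reduces the lemma to two short computations with pullbacks of $\eta$.

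For part (1), the defining equation $\dif \psi_{e} = e^{*}\eta - \eta = 0$ shows $\psi_{e}$ is locally constant; together with the normalization $\psi_{e}(x_{0})=0$ this forces $\psi_{e} \equiv 0$ on $X$.

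For part (2), I would set $F(x) := \psi_{\gamma_{1}}(\gamma_{2}x) + \psi_{\gamma_{2}}(x) - \psi_{\gamma_{1}\gamma_{2}}(x)$ and compute $\dif F$, using the chain rule on the first summand (so that it contributes $\gamma_{2}^{*}(\dif \psi_{\gamma_{1}}) = \gamma_{2}^{*}(\gamma_{1}^{*}\eta - \eta)$) and the identity $\dif \psi_{\gamma} = \gamma^{*}\eta - \eta$ on the remaining two. Using the composition rule $(\gamma_{1}\gamma_{2})^{*} = \gamma_{2}^{*}\gamma_{1}^{*}$, one obtains
$$
\dif F = \bigl((\gamma_{1}\gamma_{2})^{*}\eta - \gamma_{2}^{*}\eta\bigr) + \bigl(\gamma_{2}^{*}\eta - \eta\bigr) - \bigl((\gamma_{1}\gamma_{2})^{*}\eta - \eta\bigr) = 0,
$$
so $F$ is locally constant and, by connectedness of $X$, constant in $x$.

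The argument is essentially bookkeeping, so I do not anticipate a genuine obstacle. The only points that require care are the consistent use of the composition convention $(\gamma_{1}\gamma_{2})^{*} = \gamma_{2}^{*}\gamma_{1}^{*}$ (compatible with the left action of $G$ on $X$) and the tacit appeal to connectedness of $X$ when promoting $\dif F = 0$ to a global constancy statement. The conceptual content that the statement is being set up to deliver is that $(\gamma_{1},\gamma_{2}) \mapsto \psi_{\gamma_{1}}(\gamma_{2}x) + \psi_{\gamma_{2}}(x) - \psi_{\gamma_{1}\gamma_{2}}(x)$ descends to a well-defined $\R$-valued function on $G \times G$, so that $\exp(i\,\cdot\,)$ of it furnishes the $U(1)$-cocycle $\alpha \in C^{2}(G;U(1))$ used to build the central extensions $G_{\alpha^{t}}$ in the sequel.
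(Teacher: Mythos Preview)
Your proof is correct and follows essentially the same route as the paper: compute $\dif\psi_{e}=0$ and $\dif F=0$ via $\dif\psi_{\gamma}=\gamma^{*}\eta-\eta$ together with $(\gamma_{1}\gamma_{2})^{*}=\gamma_{2}^{*}\gamma_{1}^{*}$, then conclude constancy. Your explicit appeal to connectedness of $X$ is a minor clarification the paper leaves implicit.
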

\begin{proof}
Since $\dif \psi_{e} = 0$, it follows that
 $\psi_{e} (x) = \psi_{e} (x_0) =0$  for every $x\in X$.
In addition
\begin{eqnarray*}
\dif \parens{
\psi_{\gamma_1}(\gamma_2 x)
+\psi_{\gamma_2}(x)-\psi_{\gamma_1\gamma_2}(x)
}
&=&
\brackets{\gm_1 ^{*} \eta - \eta}(\gm_2 x) 
+ \brackets{\gm_2 ^{*} \eta - \eta}(x)
- \brackets{(\gm_1 \gm_2)^{*}\eta - \eta }(x)
\\
&=&
\brackets{\gm_2 ^{*} \gm_1 ^{*} \eta - \gm_2 ^{*}\eta }(x) 
+ \brackets{\gm_2 ^{*} \eta - \eta}(x)
- \brackets{\gm_2 ^{*} \gm_1 ^{*} \eta - \eta }(x)
\\
&=&
0.
\end{eqnarray*}
Then the second statement follows.
\end{proof}
\begin{lem}\label{DfnU(1)Cocycle}
Set
\begin{eqnarray*}
\sigma(\gamma_1,\gamma_2)
&:=&
\psi_{\gamma_1}(\gamma_2 x)
+\psi_{\gamma_2}(x)-\psi_{\gamma_1\gamma_2}(x)
\\
&=&
\psi_{\gamma_1}(\gamma_2 x_0).
\end{eqnarray*}
Then $\sigma$ defines a $\R$-valued $2$-cocycle $\sigma \in C^2(G;\R)$.
In particular $\alpha:= \exp[i\sigma]$
is a $U(1)$-valued $2$-cocycle $\alpha \in C^2(G;U(1))$.
%Due to the previous Remark the first expression
%does not depend on $x\in X$
\end{lem}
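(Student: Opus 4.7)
The plan is to verify directly the $2$-cocycle identity
\[
\sigma(\gamma_2,\gamma_3)-\sigma(\gamma_1\gamma_2,\gamma_3)+\sigma(\gamma_1,\gamma_2\gamma_3)-\sigma(\gamma_1,\gamma_2)=0
\]
by exploiting the two forms of $\sigma$ supplied in the statement. The previous lemma tells us that the combination
\[
\Psi(\gamma_1,\gamma_2;x):=\psi_{\gamma_1}(\gamma_2 x)+\psi_{\gamma_2}(x)-\psi_{\gamma_1\gamma_2}(x)
\]
is independent of the point $x\in X$, hence equals $\sigma(\gamma_1,\gamma_2)$. Simultaneously, evaluating $\Psi(\gamma_1,\gamma_2;x_0)$ at the base point and using $\psi_\gamma(x_0)=0$ reduces $\sigma(\gamma_1,\gamma_2)$ to the compact expression $\psi_{\gamma_1}(\gamma_2 x_0)$.

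The key step is to evaluate the identity $\Psi(\gamma_1,\gamma_2;x)=\sigma(\gamma_1,\gamma_2)$ not at $x=x_0$ but at the translated point $x=\gamma_3 x_0$. This yields
\[
\psi_{\gamma_1}(\gamma_2\gamma_3 x_0)+\psi_{\gamma_2}(\gamma_3 x_0)-\psi_{\gamma_1\gamma_2}(\gamma_3 x_0)=\sigma(\gamma_1,\gamma_2).
\]
Now I recognise each term on the left by the second expression of $\sigma$: the first equals $\sigma(\gamma_1,\gamma_2\gamma_3)$, the second equals $\sigma(\gamma_2,\gamma_3)$, and the third equals $\sigma(\gamma_1\gamma_2,\gamma_3)$. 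Rearranging gives exactly the cocycle condition.

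For the $U(1)$-valued part of the conclusion, it is enough to observe that $\exp(i\,\cdot\,)\colon \R\to U(1)$ is a continuous group homomorphism, so pushing forward a $\R$-valued cocycle along it produces a $U(1)$-valued cocycle; the cocycle equation for $\alpha=\exp(i\sigma)$ is obtained from the additive one for $\sigma$ by applying $\exp(i\,\cdot\,)$.

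I do not anticipate a genuine obstacle here: once one spots the trick of feeding the translated point $\gamma_3 x_0$ into the $x$-independent identity of the previous lemma, the verification is a one-line algebraic manipulation. The only subtlety worth a remark is regularity in the group variables (so that $\sigma$ sits in $C^2(G;\R)$ in the desired sense), which follows from the smooth dependence of $\psi_\gamma$ on $\gamma$; this should be either invoked implicitly or flagged as arranged by the construction of $\psi_\gamma$ in Definition \ref{Dfnetapsi}.
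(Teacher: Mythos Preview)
Your argument is essentially the same as the paper's: both evaluate the $x$-independent expression $\psi_{\gamma_1}(\gamma_2 x)+\psi_{\gamma_2}(x)-\psi_{\gamma_1\gamma_2}(x)$ at $x=\gamma_3 x_0$ and then read off the cocycle identity from the compact formula $\sigma(\gamma_1,\gamma_2)=\psi_{\gamma_1}(\gamma_2 x_0)$. The only thing you omit that the paper does check explicitly is the normalization $\sigma(\gamma,e)=\sigma(e,\gamma)=0$, which follows immediately from $\psi_e\equiv 0$ and $\psi_\gamma(x_0)=0$; you may want to add a sentence to that effect since the paper treats it as part of the cocycle condition (and it is needed for the central extension to have a genuine identity element).
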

\begin{proof}
We have to check
\begin{eqnarray*}
\sigma(\gm, e) = \sigma (e, \gm) &=& 0
\qqand \\
\sigma(\gm_1 ,\gm_2)+\sigma(\gm_1 \gm_2, \gm_3)
&=&
\sigma(\gm_1 ,\gm_2 \gm_3)+\sigma(\gm_2, \gm_3).
\end{eqnarray*}
The first equation follows directly from $\psi_e(x) = 0$ and
$\psi_{\gm}(x_0)=0$ for every $x\in X$ and $\gm \in G$.
The second equation follows from direct calculation.
Due to the previous lemma,
\begin{eqnarray*}
 \psi_{\gm_1}(\gm_2 \gm_3 x_0)
+
\psi_{\gm_2}(\gm_3 x_0)
-
\psi_{\gm_1 \gm_2}(\gm_3 x_0)
&=&
\psi_{\gm_1}(\gm_2 x_0)
+
\psi_{\gm_2}(x_0)
-
\psi_{\gm_1 \gm_2}(x_0)
\\
&=&
\psi_{\gm_1}(\gm_2 x_0).
\end{eqnarray*}
Then we obtain
\begin{eqnarray*}
\sigma(\gm_1 ,\gm_2)+\sigma(\gm_1 \gm_2, \gm_3)
&=&
\psi_{\gm_1}(\gm_2 x_0) + \psi_{\gm_1 \gm_2}(\gm_3 x_0)
\\
&=&
\psi_{\gm_1}(\gm_2 x_0)
+
\braces{
\psi_{\gm_1}(\gm_2 \gm_3 x_0)
+
\psi_{\gm_2}(\gm_3 x_0)
-
\psi_{\gm_1}(\gm_2 x_0)
}
\\
&=&
\psi_{\gm_1}(\gm_2 \gm_3 x_0) + \psi_{\gm_2}(\gm_3 x_0)
\\
&=&
\sigma(\gm_1, \gm_2 \gm_3) + \sigma(\gm_2, \gm_3).
\end{eqnarray*}
\end{proof}

%% dependence on $\eta$
\begin{lem}
If we choose another $G$-invariant
connection $\nabla'$ on $L$,
then
the resulting $\alpha' \in C^2(G;U(1))$
is cohomologous to $\alpha$.
In particular
at the $U(1)$-cohomology level,
$\alpha \in C^2(G;U(1))$ is independent of the choice of 
$G$-invariant connection $\nabla$ and $\eta$
used in the definition
of $\alpha$.
\end{lem}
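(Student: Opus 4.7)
The plan is to show that \emph{both} the choice of $G$-invariant connection $\nabla$ and the choice of primitive $\eta$ of $\omega$ only affect the cocycle $\sigma$ up to a coboundary in $C^{2}(G;\R)$, which exponentiates to a coboundary of $\alpha = \exp(i\sigma)$ in $C^{2}(G;U(1))$. Since each $\psi_{\gm}$ is uniquely determined by $\eta$ via $\dif \psi_{\gm} = \gm^{*}\eta - \eta$ together with the normalization $\psi_{\gm}(x_{0}) = 0$ (using connectedness of $X$), it suffices to analyse how $\sigma$ transforms under two independent perturbations: changing $\nabla$, and changing $\eta$ by a closed form.

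First I would treat the change of connection. If $\nabla^{0}$ and $\nabla^{1}$ are two $G$-invariant Hermitian connections on $L$, their difference takes values in $\Omega^{1}(X;\mf{u}(L))$; since $L$ is a line bundle, $\mf{u}(L) \cong X \times i\R$ canonically, so I can write $\nabla^{1} - \nabla^{0} = i\theta$ with $\theta \in \Omega^{1}(X;\R)$, and the $G$-invariance of both connections forces $\theta$ to be $G$-invariant. The curvatures then satisfy $\omega^{1} = \omega^{0} + \dif \theta$, so given a primitive $\eta^{0}$ of $\omega^{0}$, the natural choice $\eta^{1} := \eta^{0} + \theta$ is a primitive of $\omega^{1}$. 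Because $\theta$ is $G$-invariant, $\gm^{*}\eta^{1} - \eta^{1} = \gm^{*}\eta^{0} - \eta^{0}$ for every $\gm \in G$, so the normalized primitives $\psi_{\gm}$ are literally unchanged, and the cocycle $\sigma$ agrees on the nose.

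Next I would address the residual freedom in $\eta$: any other primitive of a given $\omega$ has the form $\tilde\eta = \eta + \zeta$ with $\dif \zeta = 0$. Since $H_{1}(X;\Z) = 0$, universal coefficients give $H^{1}(X;\R) = 0$, so $\zeta = \dif f$ for some $f \in C^{\infty}(X;\R)$, which I normalize by $f(x_{0}) = 0$. A direct calculation then shows that the normalized primitives transform as $\tilde\psi_{\gm}(x) = \psi_{\gm}(x) + f(\gm x) - f(x) - f(\gm x_{0})$, so the new cocycle satisfies
\begin{equation*}
\tilde\sigma(\gm_{1},\gm_{2}) - \sigma(\gm_{1},\gm_{2})
=
f(\gm_{1}\gm_{2} x_{0}) - f(\gm_{1} x_{0}) - f(\gm_{2} x_{0}).
\end{equation*}
Setting $\beta(\gm) := f(\gm x_{0})$, the right-hand side is exactly $-(\dif \beta)(\gm_{1},\gm_{2})$ for the inhomogeneous group cohomology differential with trivial coefficients, so $\tilde\sigma$ and $\sigma$ are cohomologous in $C^{2}(G;\R)$. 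Exponentiating, $\tilde\alpha$ and $\alpha$ differ by $\exp(-i\,\dif \beta)$, which is a genuine coboundary in $C^{2}(G;U(1))$.

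The main obstacle is this last step rather than the connection step: the connection change is essentially trivial once one commits to the adjustment $\eta^{1} = \eta^{0} + \theta$, but for the $\eta$-change one must carefully track the $x_{0}$-normalization through the shift of each $\psi_{\gm}$ by a non-constant function, and then recognize the resulting 2-cochain as the coboundary of $\beta(\gm) = f(\gm x_{0})$. Once this identification is in hand, concatenating the two steps proves the lemma.
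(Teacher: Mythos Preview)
Your proof is correct and reaches the same conclusion as the paper, but with a slightly different and arguably cleaner decomposition. The paper treats the change of connection and the change of primitive $\eta$ simultaneously: it invokes Lemma~\ref{GInvariantChernWeil} to obtain a $G$-invariant $1$-form $\theta$ with $\omega'-\omega=\dif\theta$, then introduces a single function $h$ with $\dif h=\eta'-\eta-\theta$ and computes directly that $\psi'_{\gm}-\psi_{\gm}=\gm^{*}h-h-\gm^{*}h(x_{0})$, yielding the coboundary via $\beta(\gm)=\exp[ih(\gm x_{0})]$. You instead factor the problem into two independent moves: first you observe that the connection difference on a line bundle is $i\theta$ with $\theta$ automatically $G$-invariant, so choosing $\eta^{1}=\eta^{0}+\theta$ leaves $\psi_{\gm}$ and hence $\sigma$ literally unchanged; then you handle the remaining ambiguity $\tilde\eta=\eta+\dif f$ separately and identify the resulting coboundary via $\beta(\gm)=f(\gm x_{0})$. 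Your route is more elementary in that it bypasses the Chern--Weil transgression lemma entirely, and it makes transparent which choice contributes what; the paper's single-step argument is more compact but leans on that earlier lemma to supply the $G$-invariant $\theta$.
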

\begin{proof}
If we let $i\omega' \in \Omega^{2}(X;i\R)$ denote the curvature
of $(L,\nabla')$.  
Then its first Chern class $c_1(L,\nabla')$ is equal to
$\frac{-1}{2\pi} \omega' \in \Omega^{2}(X;\R)$.
According to the lemma \ref{GInvariantChernWeil},
there exists a $G$-invariant $1$-form
$\theta \in \Omega^{1}(X)$
satisfying $\omega' - \omega =\dif \theta$.
As in definition \ref{Dfnetapsi},
since $[\omega'] \in H^{2}_{\mathrm{dR}}(X;\R)$,
there exists $\eta' \in \Omega^{1}(X)$
such that $\dif \eta' = \omega'$.
Since $\dif \eta' - \dif \eta = \omega' - \omega = \dif \theta$,
there exists $h \in C^{\infty}(X)$ which satisfies
$\dif h= \eta' - \eta - \theta$.
We may assume $h(x_0)=0$ for the fixed base point $x_0\in X$.
Similarly, define $\psi' \in C^{\infty}(X)$ for each $\gm \in G$
by $\dif \psi'_{\gm} = \gm^{*} \eta' -\eta'$ and $\psi'_{\gm}(x_0)=0$.
Now since $\gm ^{*} \theta = \theta$, we obtain
\begin{eqnarray*}
\dif(\psi'_{\gm} - \psi_{\gm})
&=&
\gm^{*}(\eta' - \eta) - (\eta' - \eta)
\\
&=&
\gm^{*}(\eta' - \eta - \theta) - (\eta' - \eta -\theta)
\\
&=&
\dif(\gm^{*}h-h)
\\
\text{and}\qquad
\psi'_{\gm}(x_0) = \psi_{\gm}(x_0) = h(x_0) &=& 0 \text{ ,}
\\
\text{which imply}\qquad \qquad \qquad \qquad
\psi'_{\gm} - \psi_{\gm}  &=& \gm^{*}h - h - \gm^{*}h(x_0)
\text{ .}
\end{eqnarray*}
Therefore the $U(1)$-cocycle defined using $\omega'$
is
\begin{eqnarray*}
\alpha'(\gamma_1,\gamma_2)
&=&
\exp \brackets{ 
i 
\psi'_{\gamma_1}(\gamma_2 x_0)}
\\
&=&
\exp \brackets{ 
i 
\psi_{\gamma_1}(\gamma_2 x_0)}
\exp \brackets{
i
\parens{\gm_1^{*}h(\gm_2 x_0) - h(\gm_2 x_0) - \gm_1^{*}h(x_0)}
}
\\
&=&
\alpha(\gm_1, \gm_2)
\delta \beta(\gm_1, \gm_2)^{-1},
\end{eqnarray*}
where $\beta \in C^1(G;U(1))$ is
a $U(1)$-cocycle defined by
$\beta(\gm) = \exp \brackets{ih(\gm x_0)}$
and recall that
the derivative $\map{\delta}{C^1(G;U(1))}{C^2(G;U(1))}$
is given by the formula
$$
(\delta \beta) (\gm_1, \gm_2)
=
\beta(\gm_1)\beta(\gm_2)\beta(\gm_1 \gm_2)^{-1}.
$$
To conclude, $\alpha' \in C^2(G;U(1))$ is cohomologous to $\alpha$. 
 \end{proof}
%%%%
\begin{dfn}
For each $t\in [0,1]$,
take a $U(1)$-cocycle $\alpha^{t} = \exp[it\sigma]$.
Let $G_{\alpha^t}$ be
the central $U(1)$-extension of $G$ determined by
the 2-cocycle $\alpha^t \in C^2(G;U(1))$.
In other words
$G_{\alpha^t}$ is $G\times U(1)$
as a topological measure space and 
equipped with the following product; 
$$
(\gamma_1, u_1)(\gamma_2, u_2) =
(\gamma_1 \gamma_2\>,\; \alpha (\gamma_1, \gamma_2)^t u_1u_2).
$$
Suppose that the Haar measure of $U(1)$ is normalized
so that
$\int_{U(1)}\dif u =1$.
\end{dfn}

Since there is a one to one correspondence 
between
the isomorphism classes of central $U(1)$-extension of $G$
and 
$H^{2}(G;U(1))$ when $G$ acts on $U(1)$ trivially,
$G_{\alpha^t}$ is determined uniquely up to isomorphism.
%%%%%%%%%%%%
%%%%%%%%%%%%
\subsection{Almost Flat $\Galp$-Line Bundles}
%%%%%%%%%%%%
%%%%%%%%%%%%
\begin{dfn}
For each $t\in [0,1]$ we will construct a
$G_{\alpha^t}$-line bundle $L_t$ over $X$
whose curvature is equal to $it\omega$.
\begin{itemize}
\item
$L_t$ is topologically equal to a trivial bundle $X \times \C$.
\item
The connection of $L_t$ is given by $\nabla^{t} = \dif +it\eta$.
\end{itemize}
\end{dfn}
Since $L_t$ is a line bundle, its curvature is given by
$\dif (it\eta) + (it\eta) \wedge (it\eta) = \dif (it\eta)
= it\omega$.
\begin{lem}\text{}
\begin{enumerate}
\item
$G_{\alpha^t}$ acts on $\L_t$ given by the formula;
$$(\gamma, u)(x,z)=(\gamma x\>,\; \exp[it\psi_{\gamma}(x)]uz)
\qqfor
(\gamma, u) \in G_{\alpha^t}, \;
x \in X,\;
z \in \C=(L_{t})_{x}.
$$
\item
$\nabla^{t}$ is a $G_{\alpha^t}$-invariant connection, that is,
the action of $G_{\alpha^t}$ and
the connection $\nabla^{t}$ on $L_{t}$ commute
with each other.
In particular
let $A_{L_t}$ denote the twisted Dirac operator acting on
the sections of $\mathbb{V} \hattensor L_t$.
Then $A_{L_t}$ is $G_{\alpha^t}$-equivariant.
\end{enumerate}
\end{lem}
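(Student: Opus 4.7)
The plan is to verify the two assertions by direct computation in the topological trivialization $L_t \cong X \times \mathbb{C}$, with all identities traced back to the defining relation $\mathrm{d}\psi_\gamma = \gamma^*\eta - \eta$ and the cocycle lemma on $\sigma$.

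For part (1), I would check the identity and associativity axioms separately. The identity axiom $(e,1)(x,z) = (x,z)$ is immediate from $\psi_e \equiv 0$. For associativity, expand both iterated actions: on one hand,
$$
(\gamma_1,u_1)\bigl[(\gamma_2,u_2)(x,z)\bigr]
= \bigl(\gamma_1\gamma_2 x,\; \exp[it(\psi_{\gamma_1}(\gamma_2 x)+\psi_{\gamma_2}(x))]\,u_1 u_2 z\bigr),
$$
while on the other,
$$
\bigl[(\gamma_1,u_1)(\gamma_2,u_2)\bigr](x,z)
= \bigl(\gamma_1\gamma_2 x,\; \exp[it\psi_{\gamma_1\gamma_2}(x)]\,\alpha(\gamma_1,\gamma_2)^t u_1 u_2 z\bigr).
$$
Equating the scalar factors reduces exactly to $\alpha(\gamma_1,\gamma_2)^t = \exp[it\sigma(\gamma_1,\gamma_2)]$ with the representative $\sigma(\gamma_1,\gamma_2) = \psi_{\gamma_1}(\gamma_2 x)+\psi_{\gamma_2}(x)-\psi_{\gamma_1\gamma_2}(x)$, which is valid at every $x\in X$ by the earlier lemma establishing $x$-independence of this expression and the definition $\alpha = \exp[i\sigma]$.

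For part (2), work with a section $s$ as a $\mathbb{C}$-valued function in the trivialization, so that $\nabla^t s = \mathrm{d} s + it\eta\, s$. The induced action on sections reads $(\gamma,u)[s](x) = \exp[it\psi_\gamma(\gamma^{-1}x)]\,u\,s(\gamma^{-1}x)$. The key step is to apply the Leibniz rule to compute $\mathrm{d}\bigl((\gamma,u)[s]\bigr)$; the differentiation of the scalar factor contributes $it\,(\gamma^{-1})^*(\mathrm{d}\psi_\gamma) = it\,(\gamma^{-1})^*(\gamma^*\eta - \eta)$, and this is precisely the correction needed to match $(\gamma,u)[\nabla^t s]$, whose $L_t$-twist and $T^*X$-pullback combine to give the $it(\gamma^{-1})^*\eta$ term times the pulled-back section. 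Carefully assembling both sides and using $(\gamma\cdot\gamma^{-1})^* = \mathrm{id}$ on the $\gamma^*\eta$ contribution shows $\nabla^t\bigl((\gamma,u)[s]\bigr) = (\gamma,u)[\nabla^t s]$.

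The main obstacle is the careful book-keeping of pullback conventions: the twist factor $\exp[it\psi_\gamma]$, the connection form $it\eta$, and the $G$-action on $T^*X$-valued sections must all be tied together by the single identity $\mathrm{d}\psi_\gamma = \gamma^*\eta - \eta$, which was introduced in Definition \ref{Dfnetapsi} for exactly this reason. Once part (2) is established, the $G_{\alpha^t}$-equivariance of the twisted Dirac operator $A_{L_t}$ follows formally: $A$ is $G$-equivariant on the Clifford bundle $\bb{V}$, the central $U(1)\subset G_{\alpha^t}$ acts trivially on $\bb{V}$ and by scalars on $L_t$ (hence commutes with the operator), and $\nabla^t$ is $G_{\alpha^t}$-invariant on $L_t$; the standard construction of the twisted Dirac operator from these compatible data then automatically produces a $G_{\alpha^t}$-equivariant operator.
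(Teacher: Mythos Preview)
Your proposal is correct and follows essentially the same route as the paper: part (1) is verified by expanding both iterated actions and invoking the cocycle identity $\sigma(\gamma_1,\gamma_2)=\psi_{\gamma_1}(\gamma_2 x)+\psi_{\gamma_2}(x)-\psi_{\gamma_1\gamma_2}(x)$, and part (2) is verified by differentiating the induced action on sections and reducing to $\mathrm{d}\psi_\gamma=\gamma^*\eta-\eta$. One small remark: your formula $(\gamma,u)[s](x)=\exp[it\psi_\gamma(\gamma^{-1}x)]\,u\,s(\gamma^{-1}x)$ is the correct one (it follows directly from the definition of the action on $L_t$), whereas the paper's intermediate display carries the inverse scalar factor $u^{-1}\exp[-it\psi_\gamma(\gamma^{-1}x)]$; this appears to be a sign slip in the paper, and your version makes the subsequent Leibniz-rule computation cleaner.
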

\begin{proof}
This actually gives the action of $G_{\alpha^t}$.
In fact,
\begin{eqnarray*}
(\gm_1, u_1)\braces{(\gm_2, u_2)(x,z)}
&=&
\parens{
\gm_1 \gm_2 x,
\exp \brackets{it\psi_{\gm_1}(\gm_2 x)}
\exp \brackets{it\psi_{\gm_2}(x)}
u_1 u_2 z
}
\\
&=&
\parens{
\gm_1 \gm_2 x,
\exp \brackets{it\psi_{\gm_1 \gm_2}(x)}
\alpha(\gm_1, \gm_2)^{t}
u_1 u_2 z
}
\\
&=&
\braces{(\gm_1, u_1)(\gm_2, u_2)}(x,z).
\end{eqnarray*}
Since $(\gamma, u )^{-1} =
\parens{\gamma^{-1}, \alpha(\gamma, \gamma^{-1})^{-t}u^{-1}}$
in $G_{\alpha^{t}} $,
remark that
the action of $G_{\alpha ^{t}}$ on $C^{\infty}(L_{t})$ is 
given by the formula;
\begin{eqnarray*}
(\gamma, u)[s](x)
&=&
u^{-1}\alpha(\gamma, \gamma^{-1})^{-t}
\exp\brackets{it\psi_{\gamma^{-1}}}\gamma[s](x)
\\
&=&
u^{-1}\exp\brackets{-it\psi_{\gamma}(\gamma^{-1}x)}\gamma[s](x).
\end{eqnarray*}
As for the second statement,
we will proof in the case of $t=1$.
The claim for general $t\in[0,1]$ follows
if we replace $\eta$ and $\psi$ by $t\eta$ and $t\psi$
respectively.
Let $s \in C^{\infty}(L)$ be an arbitrary smooth section of $L$.
%%%%%%
\newcommand{\EEEE}{\exp\brackets{-i\psi_{\gm}(\gminv x)}}
%%%%%%
To restate our claim,
$ (\gm,u)\brackets{\nabla s} = \nabla \parens{(\gm,u)[s]} $.

The left hand side is equal to
\begin{eqnarray}\label{EEEELHSa}
(\gm,u)\brackets{(\dif +i\eta)s}
=
u^{-1}\EEEE \gm\brackets{(\dif +i\eta)s}.
\end{eqnarray}

The right hand side is equal to 
\begin{eqnarray}
(\dif +i\eta)\parens{(\gm,u)[s]}
&=&
u^{-1}(\dif +i\eta) \parens{\EEEE \gm[s] }
%\end{eqnarray}
%Since
%\begin{eqnarray*}
%d \parens{\EEEE \cdot \gm[s] }
%=
%&&
%\EEEE \cdot d(\gm[s])
%\nonumber \\
%&+&
%d\parens{-i\psi_{\gm}(\gminv)}\EEEE \cdot \gm[s]
%\end{eqnarray*}
%We have
%\begin{eqnarray*}
%\text{(\ref{EEEERHSa})}=
\nonumber
\\
\label{EEEERHSa}
&=&
u^{-1}\EEEE  
\braces{
\dif (\gm[s])
-i\psi_{\gm}(\gminv)\cdot \gm[s]
+i\eta \cdot \gm[s]
}.
%\end{eqnarray*}
\end{eqnarray}
After deviding by $u^{-1}\EEEE$,
\begin{eqnarray}
\frac{\text{(\ref{EEEELHSa})}}{u^{-1}\EEEE}
&=&
\:\dif \gm[s] + i\parens{\gminv}^{*}\eta\cdot \gm[s]
\nonumber
\\
\label{EEEELHSb}
&=&
\:\dif \gm[s] + i\parens{\eta + \dif \psi_{\gminv}(x)}\cdot \gm[s],
\\
\label{EEEERHSb}
\frac{\text{(\ref{EEEERHSa})}}{u^{-1}\EEEE}
&=&
\dif (\gm[s])
-i\psi_{\gm}(\gminv)\cdot \gm[s]
+i\eta \cdot \gm[s].
\end{eqnarray}
Then we conclude that
\begin{eqnarray*}
\text{(\ref{EEEELHSb})}-\text{(\ref{EEEERHSb})}
=i\braces{\dif \psi_{\gminv}(x) + \dif \psi_{\gm}(\gminv x)}\gm[s]
=i\braces{\dif \psi_{e}(x)}\gm[s]
=0.
\end{eqnarray*}
 \end{proof}
\subsection{Indices of Almost Flat Bundles}
Now we can apply the theorem \ref{HWindex}
to each $A_{L_t}$ to obtain
\begin{prop}
\begin{eqnarray}\label{ApplyHW}
\tr_{G_{\alpha^{t}}} \mu_{G_{\alpha^{t}}} ([L_t]\hattensor[A])
=
\tr_{G_{\alpha^{t}}} \ind_{G_{\alpha^{t}}} (A_{L_t})
=
\int_{TX} c(x) \mathrm{Td}(T_{\C}X)
\wedge \mathrm{ch}(\sigma_{A}) \wedge \exp\parens{
\frac{-t}{2\pi}\omega}.
\end{eqnarray}
\end{prop}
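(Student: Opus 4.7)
The plan is to establish the two equalities separately by direct invocation of previously established results. For the first equality, I would apply the proposition $\ind_{C^{*}(G)}(A) = \mu_{G}([A])$ with $G$ replaced by the central extension $G_{\alpha^{t}}$ and $A$ replaced by the twisted operator $A_{L_{t}}$. The point to verify is that the Kasparov product $[L_{t}] \hattensor [A] \in K\!K^{G_{\alpha^{t}}}(C_{0}(X), \C)$ represents the class $[A_{L_{t}}]$ of the twisted operator. This is the standard ``bundle-twist as Kasparov product'' identification: since $[L_{t}] = (C_{0}(L_{t}), 0)$ has zero operator, its interior product with a representative $(L^{2}(\bb{V}), A)$ of $[A]$ yields $(L^{2}(\bb{V} \otimes L_{t}), A_{L_{t}})$ by the standard module-tensor-product construction, any two choices of twisted operator differing only by a compact perturbation.

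For the second equality, I would apply Wang's $L^{2}$-index theorem (Theorem \ref{HWindex}) to $A_{L_{t}}$, viewed as a $G_{\alpha^{t}}$-invariant properly supported elliptic operator of order $0$ on $X$. This requires verifying Wang's hypotheses for $G_{\alpha^{t}}$: that $G_{\alpha^{t}}$ is second countable locally compact Hausdorff and unimodular (which holds because $G$ is unimodular and $U(1)$ is compact, and a central extension of a unimodular group by a compact group is unimodular), and that $G_{\alpha^{t}}$ acts properly, isometrically and co-compactly on $X$ (the action factors through $G_{\alpha^{t}} \to G$ with the $U(1)$-factor acting trivially, which preserves both properness and co-compactness). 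Wang's theorem then yields
$$\tr_{G_{\alpha^{t}}} \ind_{G_{\alpha^{t}}}(A_{L_{t}}) = \int_{TX} c(x) \, \mathrm{Td}(T_{\C}X) \wedge \mathrm{ch}(\sigma_{A_{L_{t}}}).$$

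It remains to identify $\mathrm{ch}(\sigma_{A_{L_{t}}})$ with $\mathrm{ch}(\sigma_{A}) \wedge \exp(-t\omega/2\pi)$. The symbol of the twisted operator $A_{L_{t}}$ is the tensor product $\sigma_{A} \otimes \id_{L_{t}}$, so by multiplicativity of the Chern character its Chern character factorises as $\mathrm{ch}(\sigma_{A}) \wedge \mathrm{ch}(L_{t}, \nabla^{t})$. Using the normalization $\mathrm{ch}(E, \nabla) = \tr \exp(i \Theta^{\nabla}/2\pi)$ fixed earlier and the fact that $(L_{t}, \nabla^{t})$ has curvature $it\omega$, this evaluates to $\exp(-t\omega/2\pi)$, which assembles into the displayed formula. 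The main pedantic point I anticipate is checking that the cut-off function $c \in C_{c}(X)$ for $G$ also serves as a cut-off for $G_{\alpha^{t}}$; this is immediate once the Haar measure on $U(1)$ is normalized to have total mass $1$ (as stipulated in the definition of $G_{\alpha^{t}}$), since then $\int_{G_{\alpha^{t}}} c((\gm,u)^{-1}x) \, d(\gm, u) = \int_{G} c(\gm^{-1} x) \, d\gm \cdot \int_{U(1)} du = 1$.
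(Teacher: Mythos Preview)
Your proof is correct and follows the same approach as the paper, which simply states that the proposition is obtained by applying Theorem~\ref{HWindex} to each $A_{L_t}$ without further elaboration. You have spelled out the details the paper leaves implicit (the identification $[L_t]\hattensor[A]=[A_{L_t}]$, the verification that $G_{\alpha^t}$ satisfies Wang's hypotheses, the computation $\mathrm{ch}(L_t,\nabla^t)=\exp(-t\omega/2\pi)$, and the cut-off compatibility), all of which are correct.
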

Later on we will regard it as a polynomial in $t$.
%%%%%%%%
%%%%%%%%
%% Start of copy 01
%%%%%%%%
%%%%%%%%
\subsection{The Index Map $\mu_{\Galp}$}
We will compare two index maps $\mu_{G}$ and $\mu_{\Galp}$.
In particular, 
The main purpose of this subsection is
to verify the following proposition;
\begin{prop}
\label{IndexOfGroupExtension}
Let $A$ be a properly supported $G$-invariant
elliptic operator on $\bb{V}$ over $X$ of order $0$.
Consider $K$-homology class
$[A] \in K\!K^{G}(C_0(X),\C)$
and apply
$\map{\mu_{G}}{K\!K^{G}(C_0(X),\C)}{K_0(C^{*}(G))}$ to it.
One can regard also $[A]$ as a $K$-homology class
in $K\!K^{G_{\alpha}}(C_0(X),\C)$
and apply
$\map{\mu_{G_{\alpha}}}
{K\!K^{G_{\alpha}}(C_0(X),\C)}
{K_0(C^{*}(G_{\alpha}))}$ to it.
Then, 
$$
\mu_{\Galp}([A])=0 \in K_0(C^{*}(G_{\alpha}))
\qquad
\text{if and only if}
\qquad
\mu_{G}([A])=0 \in K_0(C^{*}(G)) .
$$
\end{prop}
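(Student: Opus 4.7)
The plan is to exploit the central extension $1 \to U(1) \to G_{\alpha} \to G \to 1$ and compare the two index maps via natural $*$-homomorphisms between $C^{*}(G)$ and $C^{*}(G_{\alpha})$. The surjection $\pi\colon G_{\alpha} \to G$ induces a surjective $*$-homomorphism $q\colon C^{*}(G_{\alpha}) \to C^{*}(G)$ by averaging over the central $U(1)$-fibers. Dually, the trivial $U(1)$-character $u \mapsto 1$ (which is a genuine idempotent in $C^{*}(U(1))$ once the Haar measure on $U(1)$ is normalized to $1$) gives a central projection in $M(C^{*}(G_{\alpha}))$, and hence an inclusion $i\colon C^{*}(G) \hookrightarrow C^{*}(G_{\alpha})$ satisfying $q \circ i = \mathrm{id}_{C^{*}(G)}$.

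First, I would establish the identity $\mu_{G}([A]) = q_{*}\parens{\mu_{G_{\alpha}}([A])}$ in $K_{0}(C^{*}(G))$, which gives one implication directly. Using the explicit representative from subsection \ref{SubsectionExpressionOfMu}, $\mu_{G_{\alpha}}([A]) = [(\E^{G_{\alpha}}, \overline{A^{G_{\alpha}}})]$, and the decisive observation is that because $\bb{V}$ is only a $G$-bundle, the central $U(1) \subset G_{\alpha}$ acts trivially on sections: $(\gm, u)[s] = \gm[s]$ for $s \in C_{c}(\bb{V})$. Consequently the $C^{*}(G_{\alpha})$-valued scalar product
$$\angles{s_{1}, s_{2}}_{\E^{G_{\alpha}}}(\gm, u) = \angles{s_{1}, \gm[s_{2}]}_{L^{2}(\bb{V})}$$
is independent of $u \in U(1)$. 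The analogous calculation holds for the right action (\ref{ActionOnCalE}) and for the operator $\overline{A^{G_{\alpha}}} = \int_{G_{\alpha}} (\gm,u)[\sqrt{c}A\sqrt{c}]\, \dif(\gm,u)$, which collapses to $\int_{G} \gm[\sqrt{c}A\sqrt{c}]\, \dif \gm = \overline{A^{G}}$ after integration against the normalized Haar measure on $U(1)$. This identifies $\E^{G_{\alpha}} \hattensor_{C^{*}(G_{\alpha}),q} C^{*}(G) \cong \E^{G}$ as Kasparov modules and yields $q_{*}\mu_{G_{\alpha}}([A]) = \mu_{G}([A])$.

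For the opposite implication, the stronger identity $\mu_{G_{\alpha}}([A]) = i_{*}\parens{\mu_{G}([A])}$ is obtained from the same constant-in-$u$ observations: the module $\E^{G_{\alpha}}$ is the extension of scalars of $\E^{G}$ along $i$, and one checks that $(\E^{G_{\alpha}}, \overline{A^{G_{\alpha}}}) \cong (\E^{G} \hattensor_{C^{*}(G),i} C^{*}(G_{\alpha}),\, \overline{A^{G}} \otimes 1)$ as Kasparov $(\C, C^{*}(G_{\alpha}))$-modules. Then $\mu_{G}([A]) = 0$ immediately forces $\mu_{G_{\alpha}}([A]) = i_{*}(0) = 0$. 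Combined with the previous step, this closes the equivalence; functoriality of $K_{0}$ and $q \circ i = \mathrm{id}$ provide a consistency check $q_{*}\circ i_{*} = \mathrm{id}$.

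The principal difficulty is the careful analytic identification of the Hilbert $C^{*}$-modules $\E^{G_{\alpha}}$ and $\E^{G}\hattensor_{C^{*}(G),i} C^{*}(G_{\alpha})$, and verifying that the bounded operators $\overline{A^{G_{\alpha}}}$ and $\overline{A^{G}} \otimes 1$ agree under this identification. One must track how the cocycle $\alpha$ and the normalized Haar measure on $U(1)$ enter each completion, but since every datum attached to $A$ is $U(1)$-trivial, the twist contributes only in the scalar-extension direction and does not modify the operator itself. Once this correspondence is in place, the equivalence $\mu_{G_{\alpha}}([A]) = 0 \Leftrightarrow \mu_{G}([A]) = 0$ is immediate.
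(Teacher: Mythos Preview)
Your proposal is correct and follows essentially the same approach as the paper. Both arguments introduce the pair of $*$-homomorphisms $\lambda\colon C^{*}(G)\to C^{*}(G_{\alpha})$ (your $i$) and $\theta\colon C^{*}(G_{\alpha})\to C^{*}(G)$ (your $q$) with $\theta\circ\lambda=\mathrm{id}$, and both hinge on the observation that the central $U(1)$ acts trivially on $\bb{V}$, so that $\angles{\cdot,\cdot}_{\E^{G_{\alpha}}}=\lambda\bigl(\angles{\cdot,\cdot}_{\E^{G}}\bigr)$, the right $C^{*}(G_{\alpha})$-action on $\E^{G_{\alpha}}$ factors through $\theta$, and $\overline{A^{G_{\alpha}}}=\overline{A^{G}}$. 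The paper proves $\mu_{G}([A])=\theta_{*}\mu_{G_{\alpha}}([A])$ exactly as you do, but for the direction $\mu_{G}([A])=0\Rightarrow\mu_{G_{\alpha}}([A])=0$ it transports a null-homotopy from $\bb{E}(\C,IC^{*}(G))$ to $\bb{E}(\C,IC^{*}(G_{\alpha}))$ rather than stating the identity $\mu_{G_{\alpha}}([A])=\lambda_{*}\mu_{G}([A])$; your formulation via $i_{*}$ is a cleaner packaging of the same identification $\E^{G_{\alpha}}\cong\E^{G}\hattensor_{\lambda}C^{*}(G_{\alpha})$, which holds because $\lambda(C^{*}(G))$ is the direct summand cut out by the central projection $p_{0}=\lambda\theta$.
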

%%%%%%
It is convenient to introduce the following natural homomorphisms.
\begin{dfn}
Let $B$ be a $G$-algebra, in particular $B = \C$ or $C_0(X)$.
Define
\begin{itemize}
\item
$\map{\lambda}{C^{*}(G; B)}{C^{*}(G_{\alpha}; B)}$
 given by
 $$\lambda[b](\gamma,u) := b(\gamma)
\qquad \text{for } b \in C_c(G; B) \text{, and}
$$
\item
$\map{\theta}{C^{*}(G_{\alpha}; B)}{C^{*}(G; B)}$
is given by
$$
\theta
\left[ \widetilde{b} \right] (\gamma) :=
\int_{U(1)} b(\gamma, u)\: \dif u
\qquad \text{for } \widetilde{b} \in C_c(G_{\alpha}; B).
$$
\end{itemize}
\end{dfn}
Both of the operators are bounded with respect to 
the $C^{*}$-norms $\norm{\cdot}_{C^{*}(G;B)}$ and 
$\norm{\cdot}_{C^{*}(\Galp;B)}$,
so they define the homomorphisms 
between ${C^{*}(G; B)}$ and ${C^{*}(G_{\alpha}; B)}$.

In fact, for any covariant representation $(\pi, \rho)$
for $(B,G)$ and for $b\in C_c(G)$,
$$
\sigma_{\pi, \tilde{\rho}}[\lambda[b]]
=
\int_{\Galp} \pi(\lambda[b](\gm,u)) \tilde{\rho}(\gm,u)
\:\dif \gm \dif u
=
\int_{G} \pi(b(\gm))\rho'(\gm)\:\dif \gm
=
\sigma_{\pi,\rho'}[b],
$$
where $\rho'(\gm)$ is a representation of $G$ given by
$\rho'(\gm):= \int_{U(1)} \tilde{\rho}(\gm,u) \:\dif u$
is a representation on $G$.

Moreover for any covariant representation $(\pi, \tilde{\rho})$ for
$(B,\Galp)$
and for $\tilde{b} \in C_c(\Galp)$,
$$
\sigma_{\pi, \rho}\brackets{\theta\brackets{\tilde{b} }}
=
\int_{G} \pi(\theta\brackets{\tilde{b} }(\gm)) \rho(\gm) \:\dif \gm
=
\int_{\Galp} \pi(\tilde{b}(\gm,u)) \bar{\rho}(\gm,u) \:\dif u \dif \gm
=
\sigma_{\pi,\bar{\rho}}\brackets{\tilde{b} },
$$
where $\bar{\rho}(\gm,u)$ is a representation of $\Galp$
given by $\bar{\rho}(\gm,u) := \rho(\gm)$.

\begin{rem}
It follows immediately that $\theta \circ \lambda = \mathrm{id}$.
Note that $\widetilde{b} \in C_c(G_{\alpha}; B)$
is constant in $(1,u)\in U(1) \subset \Galp$ if and only if
$\lambda \circ \theta \brackets{\widetilde{b}}
 = \widetilde{b}$ holds.
\end{rem}
%\ref{ActionOnCalE}
%\ref{ScalarProdOnCalE}

Now we will use the expression of $\mu_{G'}([A])$ in the
subsection \ref{SubsectionExpressionOfMu};
$\mu_{G'}([A])=\parens{\E^{G'}, \overline{A^{G'}}}$
for $G' = G$ or $\Galp$ and compare them.
\begin{lem}
If we regard $\mathcal{E}^{\Galp}$ as a module over
$C^{*}(G)$ through
$\map{\lambda}{C^{*}(G; B)}{C^{*}(G_{\alpha}; B)}$
equipped with the norm
$\theta \angles{\cdot, \cdot}_{\E^{\Galp} }$
valued in $C^{*}(G)$,
then it is isomorphic to $\E^{G}$ that is,
\begin{eqnarray}
\label{IsomBetweenCalE}
\parens{\E^{G}; \angles{\cdot, \cdot}_{\E^{G}}}
\simeq
\parens{\E^{\Galp};
\theta \angles{\cdot, \cdot}_{\E^{\Galp} }}
\qquad \text{as $C^{*}(G)$-modules.}
\end{eqnarray}
\end{lem}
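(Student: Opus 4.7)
The plan is to unravel the definitions on the dense subspace $C_c(\mathbb{V})$ and observe that the map implementing the isomorphism is simply the identity, after which one passes to the completions.

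First I would observe that since $\mathbb{V}$ is a $G$-bundle, the central extension $G_{\alpha}$ acts on $\mathbb{V}$ (and hence on $C_c(\mathbb{V})$) only through the quotient map $G_{\alpha} \to G$; that is, $(\gm,u)[s] = \gm[s]$ for every $s\in C_c(\mathbb{V})$. Consequently the $C_c(G_{\alpha})$-valued inner product is constant along the central $U(1)$-fiber:
\begin{eqnarray*}
\langle s_1, s_2\rangle_{\E^{\Galp}}(\gm,u)
= \langle s_1, (\gm,u)[s_2]\rangle_{L^{2}(\bb{V})}
= \langle s_1, \gm[s_2]\rangle_{L^{2}(\bb{V})}.
\end{eqnarray*}
Applying $\theta$ and using the normalization $\int_{U(1)} \dif u = 1$ gives
\begin{eqnarray*}
\theta\langle s_1, s_2\rangle_{\E^{\Galp}}(\gm)
= \int_{U(1)} \langle s_1, \gm[s_2]\rangle_{L^{2}(\bb{V})}\:\dif u
= \langle s_1, s_2\rangle_{\E^{G}}(\gm),
\end{eqnarray*}
so the two $C^{*}(G)$-valued inner products agree on $C_c(\bb{V})$.

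Next I would check that the right actions match under $\lambda$. For $b\in C_c(G)$ and $s\in C_c(\bb{V})$, the action via $\lambda(b)\in C_c(\Galp)$ is
\begin{eqnarray*}
\int_{\Galp} (\gm,u)[s]\cdot \lambda(b)((\gm,u)^{-1}) \:\dif\gm\:\dif u
=
\int_{G}\int_{U(1)} \gm[s]\cdot b(\gminv) \:\dif u \:\dif\gm
=
\int_{G}\gm[s]\cdot b(\gminv)\:\dif\gm,
\end{eqnarray*}
which is exactly $\rho^{G}(b)[s]$ by (\ref{ActionOnCalE}). Hence the identity map on $C_c(\bb{V})$ is a $C_c(G)$-module homomorphism preserving the $C^{*}(G)$-valued inner products.

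Finally I would extend by continuity: since both norms on $C_c(\bb{V})$ coincide (they are obtained by applying $\|\cdot\|_{C^{*}(G)}$ to the same inner product), the identity extends to an isometric isomorphism of Hilbert $C^{*}(G)$-modules
\begin{eqnarray*}
\parens{\E^{G},\langle\cdot,\cdot\rangle_{\E^{G}}}
\xrightarrow{\;\cong\;}
\parens{\E^{\Galp},\theta\langle\cdot,\cdot\rangle_{\E^{\Galp}}},
\end{eqnarray*}
which is (\ref{IsomBetweenCalE}). There is essentially no obstacle here; the only point requiring care is to double-check the conventions so that $\lambda$, $\theta$, and the right $C_c(G)$-action through $\rho^{G}$ all line up, which is where the normalization $\int_{U(1)}\dif u = 1$ and the trivial $U(1)$-dependence of $(\gm,u)[s]$ are used.
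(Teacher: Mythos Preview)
Your approach and computations are the same as the paper's, but one step is missing in the final paragraph. You show that on $C_c(\bb{V})$ the $C^{*}(G)$-valued inner products $\langle\cdot,\cdot\rangle_{\E^{G}}$ and $\theta\langle\cdot,\cdot\rangle_{\E^{\Galp}}$ agree and then say ``both norms coincide.'' However, the object $\E^{\Galp}$ appearing on the right of the claimed isomorphism is, by definition, the completion of $C_c(\bb{V})$ in the norm $\|s\|_{\E^{\Galp}}=\sqrt{\|\langle s,s\rangle_{\E^{\Galp}}\|_{C^{*}(\Galp)}}$, \emph{not} in the norm coming from $\theta\langle\cdot,\cdot\rangle_{\E^{\Galp}}$. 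Your argument so far only identifies $\E^{G}$ with the completion of $C_c(\bb{V})$ in the latter norm; to conclude that this completion is all of $\E^{\Galp}$ you must also check that $\|\cdot\|_{\E^{G}}$ and $\|\cdot\|_{\E^{\Galp}}$ are equivalent on $C_c(\bb{V})$.

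This is easy to supply: your constancy observation gives $\langle s,s\rangle_{\E^{\Galp}}=\lambda\bigl(\langle s,s\rangle_{\E^{G}}\bigr)$, and since both $\lambda$ and $\theta$ are bounded (with $\theta\lambda=\mathrm{id}$), the two norms are equivalent. The paper makes exactly this point explicitly at the end of its proof. With this line added, your argument is complete and coincides with the paper's.
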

\begin{proof}
Consider
the structure of Hilbert $C^{*}(\Galp)$-module
(\ref{ActionOnCalE}) and (\ref{ScalarProdOnCalE})
on $\mathcal{E}^{\Galp}$.
We will let the right action of $G'=G$ or $\Galp$
on $C_c(\bb{V})$
be denoted by $\underset{G'\!\!}{\cdot}$
when we have to distinguish them.
An important observation is that
the action of $U(1)$-component of $\Galp$ on $\bb{V}$
is trivial.
Therefore the action $\rho^{\Galp}$ of $C_c(\Galp)$
on $C_c(\bb{V})$
factors through the action of $C_c(G)$, that is, 
for $s\in C_c(\bb{V})$ and $\tilde{b}\in C_c(\Galp)$,
\begin{eqnarray*}
s {\underset{\Galp\!\!\!}{\cdot}} \tilde{b}
&=&
\int_{\Galp} {(\gm, u)}[s] \cdot
\tilde{b}({(\gm, u)}^{-1})
\dif {\gm} \dif u
\\
&=&
\int_{G} \gm[s] \cdot \parens{
\int_{U(1)} \tilde{b}(\gm, \alpha(\gm,\gminv)^{-1}u^{-1}) \dif u}
\dif {\gm}
\\
&=&
\int_{G} \gm[s] \cdot \theta\brackets{\tilde{b}}(\gm^{-1})
\dif {\gm}
\\
&=&
s{\underset{G}{\cdot}}
 \theta\brackets{\tilde{b}}.
\end{eqnarray*}
Replacing $\tilde{b}$ by $\lambda [b]$ for an arbitrary
$b\in C_c(G)$, we also obtain
$$
s {\underset{\Galp\!\!\!}{\cdot}} \lambda [b] =
s {\underset{G}{\cdot}} b.
$$
In addition
the scalar product
$\angles{\cdot, \cdot}_{\E^{\Galp}}$
 on $C_c(\bb{V})$ is constant in $(1,u)\in U(1) \subset \Galp$,
in the other words,
\begin{eqnarray*}
\angles{\cdot, \cdot}_{\E^{\Galp}}
=
\lambda
\parens{\angles{\cdot, \cdot}_{\E^{G}}
}
\qquad \text{and} \qquad\
\theta\parens{
\angles{\cdot, \cdot}_{\E^{\Galp}}
}
=
\angles{\cdot, \cdot}_{\E^{G}}
\qquad \text{on }C_c(\bb{V}). 
\end{eqnarray*}
This implies the two norms 
$\:\norm{s}_{\E^{G}}:=\sqrt{\norm{\angles{s, s}_{\E^{G}}}_{C^{*}(G)}}\:$
and 
$\:\norm{s}_{\E^{\Galp}}:=
\sqrt{\norm{\angles{s, s}_{\E^{\Galp}}}_{C^{*}(\Galp)}}\:$
on $C_c(\bb{V})$
are equivalent because both $\lambda$ and $\theta$ are bounded.
In particular $\E^{G} \simeq \E^{\Galp}$.
%Similarly we have
%$$
%\parens{\E^{G}; \rho^{G}\theta, \lambda (\cdot, \cdot)_{C^{*}(G)}}
%\simeq
%\parens{\E^{\Galp}; \rho^{\Galp},
%\theta (\cdot, \cdot)_{C^{*}(\Galp)}}
%\qquad \text{as $C^{*}(\Galp)$-modules.}
%$$
\end{proof}
\begin{proof}
[Proof of the Proposition \ref{IndexOfGroupExtension}]
Again, since the $U(1) \hookrightarrow \Galp$
acts trivially on $\bb{V}$,
\begin{eqnarray}
\overline{A^{\Galp}}
&=&
\int_{\Galp}(\gm, u)[\sqrt{c}A\sqrt{c}] \:\dif \gm \dif u
\nonumber
\\
&=&
\int_{\Galp}\gm[\sqrt{c}A\sqrt{c}] \:\dif \gm \dif u
\nonumber
\\
&=&
\int_{G}\gm[\sqrt{c}A\sqrt{c}] \:\dif \gm \int_{U(1)} \dif u
\qquad =
\overline{A^{G}}.
\label{CompatibiltyOperator}
\end{eqnarray}
Now assume
$ \mu_{G} = (\E^{G}, \overline{A^{G}}) = 0 \in K\!K(\C, C^{*}(G))$.
Then, by the definition,
it is homotopic to $(\{ 0\},0)$.
Precisely,
there exists a Kasparov module
$(\mathcal{E}, T) \in \bb{E}(\C, IC^{*}(G))$
satisfying 
$$
\mathcal{E} {\otimes}_{e_1} C^{*}(G) \cong \E^{G},\quad
T {\otimes}_{e_1} \mathrm{id} = \overline{A^{G}}
\qqand
\mathcal{E} {\otimes}_{e_0} C^{*}(G) = \{ 0 \},
$$
where $IC^{*}(G) = C([0,1];C^{*}(G))$ and
$\map{e_t} {IC^{*}(G)} {C^{*}(G)}$ denotes
the evaluation map at $t\in [0,1]$.

On the other hand, consider
$\mu_{\Galp} = (\E^{\Galp}, \overline{A^{\Galp}})
\cong (\E^{G}, \overline{A^{G}})$ 
as an element in $\bb{E}(\C, C^{*}(\Galp))$.
The Hilbert $IC^{*}(G)$-module $\mathcal{E}$
can be regarded as a Hilbert $IC^{*}(\Galp)$-module
through 
$$\map{1 \otimes \theta}{IC^{*}(G)}{IC^{*}(\Galp)}.$$
Therefore
$(\mathcal{E}, T)$,
which provides the homotopy between 
$(\E^{G}, \overline{A^{G}})$ and $(\{ 0\},0)$ in 
$\bb{E}(\C, C^{*}(G))$ also provides a homotopy between 
$(\E^{\Galp}, \overline{A^{\Galp}})$ and $(0,0)$ in 
$\bb{E}(\C, C^{*}(\Galp))$ in particular
$\mu_{\Galp} = (\E^{\Galp}, \overline{A^{\Galp}}) = 0
\in K\!K(\C, C^{*}(\Galp))
$.

As for the converse implication, which can be verified similarly,
however we can prove the more specific equality;
$$
\mu_{G}([A]). = \theta_{*}(\mu_{\Galp}([A])) 
$$
To see this, compare
\begin{eqnarray*}
\mu_{G} = \parens{\E^{G}, \overline{A^{G}}}
\qquad \text{and} \qquad
\theta_{*}(\mu_{\Galp}([A]))
=
\parens{
\E^{\Galp}{\otimes}_{\theta} C^{*}(G), \overline{A^{\Galp}} {\otimes} 1
}.
\end{eqnarray*}
It is sufficient to construct an isomorphism
$\varphi \colon
\E^{\Galp}\hattensor_{\theta} C^{*}(G)
\to \E^{G}$
as $C^{*}(G)$-modules
satisfying the equation;
$\varphi {\circ} \parens{ \overline{A^{\Galp}} {\otimes} 1}
= \overline{A^{G}} {\circ} \varphi$.
Set
$\varphi\colon s \otimes b \mapsto s \cdot b$.
Remark that
$s$ is regarded as an element of both of
$\E^{G}$ and $\E^{\Galp}$ through the isomorphism
(\ref{IsomBetweenCalE}) based on $\mathrm{id}_{C_c(\bb{V})}$.
Now 
we will have the invariance of the scalar products
under $\varphi$.
Recall that 
the scalar product on $\E^{\Galp} {\otimes}_{\theta} C^{*}(G)$
is given by
$$
\angles{s_1 {\otimes} b_1,\ s_2 {\otimes} b_2}
=
b_1^{*}*
\theta \parens{
\angles{s_1,\ s_2}_{\E^{\Galp}}
}*b_2
\in C^{*}(G),
$$
where $*$ denotes the usual convolution product on
$C^{*}(G)$. Noting that 
$\angles{\cdot, \cdot}_{\E^{\Galp}}
=
\lambda
\parens{\angles{\cdot, \cdot}_{\E^{G}}
}$,
we have
\begin{eqnarray*}
\angles{s_1 {\otimes} b_1,\ s_2 {\otimes} b_2}
&=&
b_1^{*}*
\theta \parens{
\angles{s_1,\ s_2}_{\E^{\Galp}}
}*b_2
\\
&=&
b_1^{*}*
\angles{s_1,\ s_2}_{\E^{G}}*
b_2
\\
&=&
\angles{s_1\cdot b_1,\ s_2\cdot b_2}_{\E^{G}}
\\
&=&
\angles{\varphi(s_1{\otimes} b_1),\
\varphi(s_2{\otimes} b_2)}_{\E^{G}}.
\end{eqnarray*}
This implies that $\varphi$ is continuous and injective.

Using an approximating unit $\{ e_m \}_{m\in \mathcal{M}}$
for $C^{*}(G)$,
the subset of all elements of the following form is dense
in $\E^{G}$.
\begin{eqnarray*}
\varphi(s{\otimes} e_m)
=
s\cdot e_m
=
q^{G}(i^{G}(s)*e_m)
\qquad {s \in C_c(\bb{V})}, \quad {m\in \mathcal{M}}.
\end{eqnarray*}
Therefore $\varphi$ is an isomorphism.
Compatibility with the operators is obvious since
$b$ does not involve $x\in X$ and (\ref{CompatibiltyOperator}).
\end{proof}
%%%%%%%%
% End of the copy of 01
%%%%%%%%

\section{Construction of the Infinite Product Bundle and its Index}
\begin{dfn}
Let us introduce a group
$$
\Ga := \braces{
((\gm_1,u_1),\ (\gm_2,u_2),\ (\gm_3,u_3),\ \ldots)
\in \prod_{k\in\N} G_{\alpha^{1/k}} \Big| \;
 \gm_1=\gm_2=\gm_3=\cdots 
}.
$$
We consider the product topology on 
$\displaystyle{ \prod_{k \in \N} G_{\alpha^{1/k}} }$.
 
In other words $\Ga$ is a central $U(1)^{\N}$ extension
of $G$ with respect to $\{ \alpha^{1/k} \}_{k}$.
$\Ga = G \times U(1)^{\N}$ as a topological measure space and 
equipped with the following product; 
$(\gm, \{u_k\})(\gm', \{u_k ' \}) =
\parens{ \gm \gm', \braces{ \alpha (\gm, \gm')^{1/k}u_k u_k '}}$.
We consider the product topology on $U(1)^{\N}$.
\end{dfn}
$\Ga$ acts on $X$ with the trivial $U(1)^{\N}$-action,
namely, via the natural homomorphism
$\Ga \to G$.
Notice that the fiber $U(1)^{\N}$ is compact
and hencce this action is proper.

$\Ga$ also acts on each $L_{1/n}$ via the natural homomorphism
$\Ga \to G_{\alpha^{1/n}}$ ignoring all the other components
than the $n$-th component $(\gm, u_n)$.

%% Remark on $\mu_{G_{\alpha^t}}$ and $\mu_{\Gaa}$
\begin{rem}
\label{RemMuGa}
Similar to Proposition \ref{IndexOfGroupExtension},
one may regard the $G$-equivariant $K$-homology class
$[A]\in K\!K^{G}(C_0(X),\C) $
also as an element of
$K\!K^{\Gaa}(C_0(X),\C)$.

Then it follows that
$$
\mu_{\Gaa}([A])
=0 \in K_{0}(C^{*}(\Ga))
\qquad \text{if and only if} \qquad
\mu_{G}([A])
=0 \in K_{0}(C^{*}(G)).
$$
The proof is the same after replacing $U(1)$ by $U(1)^{\N}$
because the action of the $U(1)^{\N}$-component is trivial. 

Similarly, if one starts with $G_{\alpha^{1/n}}$-equivariant
$K$-homology class
$\brackets{A_{1/n}} \in K\!K^{G_{\alpha^{1/n}}}(C_0(X),\C)$, 
it follows that
$$
\mu_{\Gaa}
\parens{\brackets{A_{1/n}}}
=0 \in K_{0}(C^{*}(\Ga))
\qquad \text{if and only if} \qquad
\mu_{G_{\alpha^{1/n}}}
\parens{\brackets{A_{1/n}}}
=0 \in K_{0}(C^{*}(G_{\alpha^{1/n}})).
$$

Also remind that
we can apply
Theorem \ref{HWindex}
{\upshape{\cite[Proposition 6.11.]{Wa14}}}
to $A_{L_{1/n}}$ regarded as $\Ga$-equivariant operator
to obtain the same value when we apply it 
to $A_{L_{1/n}}$ regarded as $G_{\alpha^{1/n}}$-equivariant operator;
$$
\tr_{\Gaa} \mu_{\Gaa}
\parens{\brackets{A_{1/n}}}
=
\int_{TX} c(x)  \mathrm{Td}(T_{\C}X)
\wedge \mathrm{ch}(\sigma_{A}) \wedge
\exp\parens{\frac{-1}{2\pi n} \omega}
=
\tr_{G_{\alpha^{1/n}}} \mu_{G_{\alpha^{1/n}}}
\parens{\brackets{A_{1/n}}}.$$
\end{rem}
%% cite Hanke - Schick
%% Introduce $\prod \C$ and $Q$
This section proceeds as follows;
\begin{itemize}
\item
In the subsection \ref{SectionAssembling},
we construct a product bundle $\prod L_{1/k}$
and a flat bundle which is a quotient of $\prod L_{1/k}$.
\item
In the subsection \ref{SectionIndexOfProductBundles},
we calculate the index
$\mu_{\Gaa}\parens{\brackets{\prod L_{1/k}}\hattensor[A]}$ and
$\mu_{\Gaa}([W]\hattensor[A])$ and complete the
preparations for the proof of Theorem \ref{MainThm}.
\end{itemize}
\subsection{Assembling Almost Flat Bundles}
\label{SectionAssembling}
\begin{dfn}
Let us introduce the following $C^*$-algebras;
\begin{itemize}
\item
$\prod \C$ denotes the $C^{*}$-algebra
consisting of all bounded sequences with values in $\C$
equipped with the sup norm $\norm{(z_1, z_2, \cdots)} = \sup_{k}|z_k|$.
\item
$\dirsum{ \C} \subset \prod \C$ denotes an ideal
consisting of all sequences vanishing
at infinity.
This an ordinary direct sum of $C^{*}$-algebras
$\dirsum{\C} = {\displaystyle{\varinjlim_{k}
}} 
\parens{\bigoplus^{k} \C} $ .
\item
$Q$ is the quotient algebra,
$
Q := \parens{\prod \C} \big/ \parens{\dirsum{ \C}}.
$
\end{itemize}
All of them have the trivial gradings
and the trivial $\Galp$-actions.
\end{dfn}

We will follow \cite[Section 3]{Ha12}
to construct 
``infinite product bundle $\prod L_{1/n}$''
over $X$ which has a structure of
finite generated projective $\prod\C$-module.
We will do this in more general conditions than 
our requirements.
Let $B$ denote a unital $C^{*}$-algebra.

%% definition of parallel transport
\newcommand{\path}{\mathcal{P}_1}
%%%%%%
Let us fix some notations about the parallel transport.
The path groupoid $\path(X)$ consists of the points
in $X$ as objects and 
as morphisms, the space of thin homotopy classes of
piece-wise smooth paths connecting given two points
$$
\path(X)[x,y] :=
\braces{\text{thin homotopy classes of }
\map{p}{[0,1]}{X} \mid p(0)=x,\ p(1)=y}.
$$
The composition of morphisms 
is given by concatenation and re-parameterization
of paths.
A homotopy $\map{h} {[0,1]\times[0,1]} {X}$ between
two paths $p_0$ and $p_1$ from $x$ and $y$ satisfying that
$$
\left\{
\begin{array}{cccc}
h(j,\cdot) &=& p_j & \qqfor j=0,1,
\\
h(s,0) &=& x &
\\
h(s,1) &=& y & \qquad \text{for any } s \in [0,1]
\end{array}
\right.
$$
is called a
thin homotopy if it factors through a finite tree $T$; 
$$
[0,1]\times[0,1] 
\xrightarrow{h_1}
T 
\xrightarrow{h_2}
X
$$
and
$ \map{h_1(j,\cdot)} {[0,1]} {T} $ are
piece-wise linear for $j=0,1$.
For instance, re-parametrization is a thin homotopy,
and any path $p \in \path(X)[x,x]$ such that $p(t) = p(1-t)$
is thin homotopic to
the constant path at $x$.
%Let $\ell(p)$ denote the length of $p$.

If a Hilbert $B$-module $\Ga$-bundle $E$ over $X$ is given,
The transport groupoid $\mathcal{T}(E)$ consists of the points
in $X$ as objects and as morphisms,
$\mathcal{T}(E)[x,y] :=
\mathrm{Iso}_{B}\parens{E_x, E_y}$.
%%%%%%%%
\newcommand{\area}{\mathrm{area}}
%%%%%%%%
\begin{dfn}
A parallel transport of $E$ is a continuous functor
$\Phi^{E} \colon \path(X) \to \mathcal{T}(E)$.
$\Phi^{E}$
is called $\ep$-close to the identity
%at scale $\ell$,
if for each $x\in X$ and
contractible loop $p\in \path(X)[x,x]$
%such that length $\ell(p)\leq \ell$
,
it follows that
$$\norm{\Phi^{E}_{p} - \mathrm{id}_{E_x}} <\ep {\cdot} \area(D)$$
for any
two dimensional disk $D\subset X$ spanning $p$.
$D$ may be degenerated partially or completely.
\end{dfn}
\begin{rem}
Let $E$ be a Hermitian vector bundle
equipped with a connection $\nabla$ 
which is compatible with the Hermitian metric.
Let $\Phi^{E}$ be the parallel transport with respect to $\nabla$
in the usual sense.
If
its curvature $\omega$ has uniformly bounded
operator norm $\norm{\omega} < C$,
then
for any loop $p\in \path(X)[x,x]$
and any two dimensional disk $D\subset X$ spanning $p$,
it follows that
$\norm{\Phi^{E}_{p} - \mathrm{id}_{E_x}} <\int_{D}\norm{\omega}
<C {\cdot} \area(D)$
so it is $C$-closed to identity.
\end{rem}
%%%%%end of dfn of parallel transport
\begin{prop}
\label{ConstructProductBundle}
Let $\{E^{k}\}$ be a sequence of Hilbert $B_{k}$-module
$\Ga$-bundles over $X$
with $B_{k}$ unital.
Assume that each parallel transport
$\Phi^{k}$ for $E^{k}$ is $\ep$-close to the identity
%at scale $\ell$
uniformly, that is, $\ep$
%and $\ell$
is independent of $k$. 

Then there exists a finitely generated Hilbert
$\prod_{k} B_{k}$-module
$\Ga$-bundle $V$ over $X$
with Lipschitz continuous transition functions in diagonal form
and so that the $k$-th component of this bundle
is isomorphic to the original $E^{k}$.

Moreover, 
if the parallel transport
$\Phi^{k}$ for each of $E^{k}$ comes from the 
$G$-invariant connection $\nabla^{k}$ on $E^{k}$,
$V$ is equipped with a continuous $G$-invariant
connection induced by $E^{k}$.
\end{prop}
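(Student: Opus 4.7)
\emph{Proof plan.}
The strategy is to construct $V$ by a gluing construction, using trivializations of each $E^k$ obtained by parallel transport from a base point in each chart, and to arrange that the componentwise product of the transition functions remains Lipschitz continuous in the sup norm on $\prod_k B_k$. The uniform almost-flatness hypothesis is exactly what will make this possible.

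First I would use the bounded geometry of $X$ (Corollary \ref{CorSliceThm}) to fix a locally finite open cover $\{U_i\}_{i\in I}$ of $X$ by geodesically convex sets of uniformly bounded diameter $\delta$ and uniformly bounded multiplicity, chosen so that the $G$-action permutes the index set $I$ modulo a finite quotient. Pick a base point $x_i \in U_i$, and trivialize each $E^k$ over $U_i$ via parallel transport along the short geodesic from $x \in U_i$ to $x_i$; this gives $\phi^k_i \colon E^k|_{U_i} \xrightarrow{\sim} U_i \times F_k$, where $F_k$ is a fixed finitely generated Hilbert $B_k$-module. The transition function
$$g^k_{ij}(x) \;=\; \Phi^k_{x \to x_i} \circ \Phi^k_{x_j \to x} \;\in\; \Aut_{B_k}(F_k)$$
is parallel transport around a triangle $x_j \to x \to x_i$, which bounds a disc of area $\leq C\delta^2$. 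The uniform $\ep$-closeness to the identity yields
$$\norm{g^k_{ij}(x) - \id} \leq \ep C \delta^2, \qquad \norm{g^k_{ij}(x) - g^k_{ij}(x')} \leq \ep C' \,\mathrm{dist}(x,x'),$$
with constants independent of $i,j,k$.

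These uniform bounds let me form the componentwise product $g_{ij}(x) := (g^k_{ij}(x))_{k} \in \prod_k \Aut_{B_k}(F_k) \subset \Aut_{\prod_k B_k}\bigl(\prod_k F_k\bigr)$, which is Lipschitz in $x$ with respect to the product sup norm and satisfies the cocycle identity diagonally. Gluing the charts $U_i \times \prod_k F_k$ along $g_{ij}$ produces $V$ as a finitely generated Hilbert $\prod_k B_k$-module bundle with transition functions in diagonal form, whose $k$-th component tautologically recovers $E^k$. The $\Ga$-action on $V$ is the diagonal action on the product of fibers; equivariance is possible because the $U(1)^{\N}$-part acts trivially on $X$ and because the cover and base points can be chosen $G$-equivariantly. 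If each $\Phi^k$ comes from a $G$-invariant connection $\nabla^k = \dif + A^k_i$ in the chart $\phi^k_i$, then $A_i := (A_i^k)_k$ has uniformly bounded entries and assembles to a continuous $G$-invariant connection $1$-form on $V$.

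The main obstacle is the uniform Lipschitz bound on the $g^k_{ij}$ that does not deteriorate with $k$: this is precisely what the uniform almost-flatness hypothesis provides, and without it the infinite product of transition functions would fail to be continuous in the sup norm, so no product bundle could be glued together at all. Once this uniformity is secured, the cocycle identity, the $\Ga$-equivariance, and the assembly of the product connection all reduce to routine diagonal verifications.
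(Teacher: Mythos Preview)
Your approach is correct and essentially the same as the paper's, which likewise trivializes each $E^k$ by radial parallel transport on a $G$-equivariant cover by geodesically convex balls (obtained via Corollary~\ref{CorSliceThm}) and uses the uniform $\ep$-closeness to bound the Lipschitz constants of the transition functions independently of $k$, then verifies the connection one-forms are uniformly bounded by the same triangle-area estimate. One small caveat: the bound $\norm{g^k_{ij}(x) - \id} \leq \ep C\delta^2$ is not literally correct unless the fiber identifications $E^k_{x_i} \cong F_k$ are chosen compatibly across charts (in general $g^k_{ij}(x)$ is close to a fixed unitary depending on these choices, not to $\id$), but this is harmless since only the Lipschitz estimate $\norm{g^k_{ij}(x) - g^k_{ij}(x')} \leq \ep C'\,\mathrm{dist}(x,x')$ is actually needed for the gluing, and that one you have stated correctly.
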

\begin{proof}
We will essentially follow the proof of  \cite[Proposition 3.12.]{Ha12}.
For each $x\in X$ take a open ball $U_{x} \subset X$
of radius $\ll 1$ whose center is $x$.
Assume that each $U_{x}$ is geodesically convex.
Due to the corollary \ref{CorSliceThm} of the slice theorem,
there exists a sub-family of
finitely many open subsets $\{U_{x_i}, \ldots U_{x_N}\}$
such that
$X= 
%\displaystyle{
\bigcup_{g\in \Ga} \bigcup_{i=1}^{N} g(U_{x_i})
%}
$.
%%%

Fix $k$.
In order to simplify the notation,
let $U_i := U_{x_i}$
and $\map{\Phi_{y;x}}{E^{k}_{y}}{E^{k}_{x}}$ denote
the parallel transport of $E^{k}$
along the minimal geodesic from $y$ to $x$
for $x$ and $y$
in the same neighborhood $g(U_{i})$. 
Trivialize $E^{k}$ via 
$\map{\Phi_{y;x_i}}{E^{k}_{y}}{E^{k}_{x_i}}$
on each $U_{i}$.
Similarly  trivialize $E^{k}$ on each $g(U_{i})$ for $g\in \Ga$
via $\map{\Phi_{gy; gx_i}} {E^{k}_{gy}} {E^{k}_{gx_i}}$.
Note that since parallel transport commute with the action of $\Ga$,
namely, 
$\Phi_{gy; gx_i} = g\Phi_{y; x_i}g^{-1}$.

These provide a local trivializations for $E^{k}$ whose 
transition functions have uniformly bounded Lipschitz constants.
More precisely
we have to fix unitary isomorphisms 
$\map{\phi_{gx_i}}{E^{k}_{gx_i}}{\mathcal{E}^{k}}$
between the fiber on $gx_i$ and
the typical fiber $\mathcal{E}^{k}$.
Our local trivialization is
$
\map{\phi_{gx_i} \Phi_{y;gx_i}}{E^{k}_{y}}{\mathcal{E}^{k}} 
$.
The transition function on  
$g(U_{i}) \cap h(U_{j}) \neq \emptyset$ is given by
$$
y\mapsto
\psi_{g(U_{i}),h(U_{j})}(y):=
\parens{\phi_{hx_j} \circ \Phi_{y;hx_j}}
\parens{\phi_{gx_i}\circ \Phi_{y;gx_i}}^{-1}
\in \End_{B_{k}}(\mathcal{E}^{k}).
$$
Now we will estimate its Lipschitz constant.
If $y,z\in g(U_{i}) \cap h(U_{j})$,
%%%%
\newcommand{\AAAA}{\Phi_{y;hx_j}}
\newcommand{\BBBB}{\Phi_{y;gx_i}^{-1}}
\newcommand{\CCCC}{\Phi_{z;hx_j}\Phi_{y;z}}
\newcommand{\DDDD}{\Phi_{y;z}^{-1}\Phi_{z;gx_i}^{-1}}
%%%%
\begin{eqnarray*}
&&
\psi_{g(U_{i}),h(U_{j})}(y) - \psi_{g(U_{i}),h(U_{j})}(z)
\\
&=&
\parens{\phi_{hx_j} \Phi_{y;hx_j}}\parens{\phi_{gx_i} \Phi_{y;gx_i}}^{-1}
-
\parens{\phi_{hx_j} \Phi_{z;hx_j}}\parens{\phi_{gx_i} \Phi_{z;gx_i}}^{-1}
\\
&=&
\phi_{hx_j}
\braces{
\parens{\AAAA} \parens{\BBBB}
-
\parens{\CCCC} \parens{\DDDD}
}
\phi_{gx_i}^{-1}
\\
&=&
\phi_{hx_j}
\braces{
\parens{\AAAA - \CCCC} \parens{\BBBB}
+
\parens{\CCCC} \parens{\BBBB - \DDDD}
}
\phi_{gx_i}^{-1}.
\end{eqnarray*}
Since $\phi$'s and $\Phi$'s are isometry,
\begin{eqnarray}
\norm{\psi_{g(U_{i}),h(U_{j})}(y) - \psi_{g(U_{i}),h(U_{j})}(z)}
&\leq&
\norm{\AAAA - \CCCC}
+
\norm{\BBBB - \DDDD}
\nonumber\\
&=&
\norm{\Phi_{y; hx_j} \Phi_{z;y}\Phi_{hx_j; z} - \id_{E^{k}_{hx_j}}}
+
\norm{\Phi_{z; gx_i} \Phi_{y;z}\Phi_{gx_i; y} - \id_{E^{k}_{gx_i}}}
\nonumber\\
&\leq&
\ep {\cdot} (\area (D_1) + \area (D_2)).
\label{BoundedLipConst}
\end{eqnarray}
Here $D_1 \subset h(U_{j})$ is a two dimensional disk spanning
the piece-wise geodesic loops connecting
$hx_j$, $y$, $z$, and $hx_j$
and $D_2 \subset g(U_{i})$ is a two dimensional disk spanning
the piece-wise geodesic loop connecting
$gx_i$, $y$, $z$, and $gx_i$.

We claim that there exists a constant $C$ depending
only on $X$ such that
\begin{eqnarray}
\label{AreaOfD1D2}
\area (D_1),\ \area(D_2)
\leq
C{\cdot} \mathrm{dist}(y,z).
\end{eqnarray}
if we choose suitable disks $D_1$ and $D_2$.

We verify this 
using the geodesic coordinate
$\exp_{hx_j}^{-1} \colon h(U_{j}) \to T_{hx_j}$
centered at $hx_j \mapsto 0$.
More precisely,
let $p$ denote the minimal geodesic from
$y=p(0)$ to $z=p(\mathrm{dist}(y,z))$ with unit speed.
Consider
$$
D_0:=\braces{(r \cos \theta, r \sin \theta)\in \R^{2} \;\big|\;
0\leq r \>,\; 0\leq \theta \leq \mathrm{dist}(y,z) }
\subset \R^{2}
$$
and $\map{F}{D_0}{h(U_j) \subset X}$ given by
\begin{eqnarray*}
F(r \cos \theta, r \sin \theta)
:=
\exp_{\:hx_j}\parens{r \exp_{hx_j}^{-1}(p(\theta))}.
\end{eqnarray*}
Set $D_1 := F(D_0)$.
$F$ is injective if $\exp_{hx_j}^{-1}(y)$ and $\pm\exp_{hx_j}^{-1}(z)$
are on different radial directions, in which case
$F$ is a homeomorphism onto its image, and hence
$F(D_0)$ is a two dimensional disk spanning the target loop.
The Lipschitz constant of $F$ is bounded by a constant
depending on the curvature on $h(U_{j})$,
so there exists a constant $C_{h,j}$ depending on the
Riemannian curvature on $h(U_j)$ satisfying 
$$
\area(D_1) \leq C_{h,j} {\cdot} \area(D_0)
\leq C_{h,j} {\cdot} \mathrm{dist}(y,z).
$$
But actually, the constant $C_{h,j}$ can be taken independent of 
$h(U_j)$ due to the bounded geometry of $X$ (Corollary 
\ref{CorSliceThm}).
In the case of 
$\exp_{hx_j}^{-1}(y)$ and $\pm\exp_{hx_j}^{-1}(z)$
are on the same radial direction, $D_1$
is completely degenerated and $\area(D_1) = 0$.
We can construct $D_2$ in the same manner
so the claim (\ref{AreaOfD1D2}) has been verified.

Therefore combining (\ref{BoundedLipConst})
and (\ref{AreaOfD1D2}), we conclude that
the Lipschitz constants of the transition functions
of these local trivialization
are less than
$2C\ep$,
which are independent of $E^{k}$ and $U_{i}$ and $g\in \Ga$,
in particular, the product of them
$$
\Psi_{g(U_{i}),h(U_{j})} :=
\braces{\psi^{k}_{g(U_{i}),h(U_{j})}}_{k\in \N}
\colon
g(U_{i}) \cap h(U_{j})
\to
\End_{\parens{\prod B_{k}}}\parens{\prod_{k} \mathcal{E}^{k}}
$$
is Lipschitz continuous.
So we are allowed to use $\Psi$ to define the
Hilbert $\prod_k B_{k}$-module bundle $V$ as required.
Precisely
$V$ and be constructed as follows;
$$
V := \bigsqcup_{g,i} \parens{ g(U_{i}) \times
\prod_{k} \mathcal{E}^{k}}
\Big/ \sim.
$$
Here
$(x,v) \in g(U_{i}) \times
\prod_{k} \mathcal{E}^{k}$ and 
$(y,w) \in h(U_{j}) \times
\prod_{k} \mathcal{E}^{k}$
are equivalent if and only if
$x=y \in g(U_{i}) \cap h(U_{j}) \neq \emptyset$ and
$\Psi_{g(U_{i}),h(U_{j})}(v)=w$.
By the construction of $V$,
if $\map{p_n}{\prod_k B_{k}}{B_{n}}$ denote the projection
onto the $n$-th component,
$V \hattensor_{p_n} B_{n}$ is isomorphic to the original
$n$-th component $E^{n}$.

In order to verify the continuity of the induced connection,
let $\{ \mathbf{e}^{i} \}$ be an orthonormal local frame on
$U_{i}$ for an arbitrarily fixed $E^{k}$
obtained by the parallel transport along the minimal
geodesic from the center $x_i \in U_{i}$,
namely,
$\mathbf{e}^{i}(y) = \Phi_{x_i ; y}\mathbf{e}^{i}(x_i)$.
It is sufficient to verify that
$
\norm{\nabla^{k}\mathbf{e}_{i}} < C
$.
Let $ v \in T_{y}X $ be a unit tangent vector and 
$p(t) := \exp_{y}(tv)$ be the geodesic of unit speed
with direction $v$.
\begin{eqnarray*}
\nabla^{k}_{v}\mathbf{e}_{i}(y)
&=&
\lim_{t\to 0}\frac{1}{t}\parens{
\Phi_{p(t);p(0)}\mathbf{e}^{i}(p(t)) - \mathbf{e}^{i}(p(0))
}
\\
&=&
\lim_{t\to 0}\frac{1}{t}\parens{
\Phi_{p(t);p(0)}\Phi_{x_i ; p(t)}
 - \Phi_{x_i ; p(0)}
}
\mathbf{e}^{i}(x_i),
\\
\norm{\nabla^{k}_{v}\mathbf{e}_{i}(y)}
&\leq&
\lim_{t\to 0}\frac{1}{|t|}\norm{
\Phi_{p(t);p(0)}\Phi_{x_i ; p(t)}
 - \Phi_{x_i ; p(0)}
}
\\
&\leq&
\lim_{t\to 0}\frac{1}{|t|}
\ep\cdot \area(D(t)),
\end{eqnarray*}
where $D(t)$ is a $2$-dimensional disk in $U_{i}$
spanning the piece-wise geodesic
connecting $x_{i}$, $p(0)=y$, $p(t)$ and $x_{i}$.
As above, we can find a constant $C>0$ and
disks $D(t)$ satisfying
$$
\area(D(t)) \leq C {\cdot} \mathrm{dist} (p(0), p(t))
= C |t|
$$
 for $|t| \ll 1$.
Hence, we obtain 
$$
\norm{\nabla^{k}_{v}\mathbf{e}_{i}(y)}
\leq
C\ep.
$$
 \end{proof}

%We can also obtain $V'=\dirsum{ E^{k}}$ 
%as a Hilbert $\dirsum{ B_{k}}$-module 
%$\Ga$ bundle in the same way.
\begin{dfn}
\label{ConstructW}
Set $Q=\parens{\prod B_{k}}
\big/
\parens{\dirsum{ B_{k}}}$
and let
$
\map{\pi}{\prod_k B_{k}}
{Q}
$ denote the quotient map and define
$$
W:= V\hattensor_{\pi}Q
%\quad \text{or} \quad
%W:= \parens{\prod E^{k}} \big/ \parens{\dirsum{ E^{k}}}
$$
as a Hilbert $Q$-module bundle.
\end{dfn}
The family of parallel transport
of $E^{k}$ induces the parallel transport
$\Phi^{W}$ of
$W$ which commutes with the action of $\Ga$.
\begin{prop}
\label{FlatnessOfW}
If the parallel transport of $E^{k}$ is $C_k$-close
to the identity with $C_k \searrow 0$,
then
$W$ constructed above is a flat bundle.
More precisely the parallel transport
$\Phi^{W}(p) \in \Hom(W_x, W_y)$ depends only on
the ends-fixing homotopy class of $p \in \path(X)[x,y]$.
\end{prop}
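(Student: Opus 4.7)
The plan is to verify flatness by showing $\Phi^W(p) = \id_{W_x}$ whenever $p$ is a null-homotopic loop at $x \in X$; the statement for two paths $p_0, p_1 \in \path(X)[x,y]$ that are homotopic rel endpoints then follows by applying the loop case to the concatenation $p_0 \cdot \bar p_1$ and invoking multiplicativity of parallel transport.

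First I would unpack the fibers of $W$. Since $V$ has typical fiber $\prod_k \mathcal{E}^k$ regarded as a Hilbert $\prod_k B_k$-module built diagonally from the $E^k$, the fiber of $W = V \hattensor_\pi Q$ at $x$ is $\bigl(\prod_k \mathcal{E}^k\bigr) \hattensor_\pi Q$, and an element of $\End_Q(W_x)$ is represented by a uniformly bounded sequence $(T_k)_k \in \prod_k \End_{B_k}(\mathcal{E}^k)$ modulo those sequences with $\norm{T_k}_{B_k} \to 0$. Because the parallel transport on $V$ was constructed diagonally from the $\Phi^k$, the induced transport on $W$ is
$$\Phi^W(p) = \pi\bigl((\Phi^k(p))_k\bigr) \in \mathrm{Iso}_Q(W_x, W_y).$$

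Next, for a null-homotopic loop $p$ at $x$, choose any smooth ends-fixing null-homotopy $h : [0,1]^2 \to X$; its image is a (possibly partially degenerate) two-dimensional disk $D \subset X$ spanning $p$, of some finite area $\area(D)$ that is determined by $h$ alone and in particular independent of $k$. By hypothesis each $\Phi^k$ is $C_k$-close to the identity, so
$$\norm{\Phi^k(p) - \id_{E^k_x}}_{B_k} < C_k \cdot \area(D).$$
Since $C_k \searrow 0$, the sequence $(\Phi^k(p) - \id_{E^k_x})_k$ lies in the ideal $\bigoplus_k \End_{B_k}(\mathcal{E}^k) \subset \prod_k \End_{B_k}(\mathcal{E}^k)$, so it is killed by $\pi$ and $\Phi^W(p) = \id_{W_x}$.

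Finally, given $p_0, p_1$ from $x$ to $y$ homotopic rel endpoints, the loop $p_0 \cdot \bar p_1$ is null-homotopic at $x$, so by the previous step and functoriality of $\Phi^W$,
$$\Phi^W(p_0) \circ \Phi^W(\bar p_1) = \Phi^W(p_0 \cdot \bar p_1) = \id_{W_x},$$
whence $\Phi^W(p_0) = \Phi^W(p_1)$. The only point that requires care is the first step, namely confirming that the induced transport on the Hilbert $Q$-module bundle $W$ really does amount to applying the diagonal transport on $V$ and then reducing modulo $\bigoplus_k \End_{B_k}(\mathcal{E}^k)$; once this identification is fixed, the decay $C_k \searrow 0$ combined with the finite area of any fixed spanning disk kills every contractible-loop defect in the limit, and there is no further obstacle.
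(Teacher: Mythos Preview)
Your proof is correct and follows essentially the same approach as the paper: reduce to contractible loops, fix a spanning disk of finite area, use $C_k$-closeness to bound $\norm{\Phi^{k}(p)-\id}$ by $C_k\cdot\area(D)$, and observe that this sequence vanishes in the quotient $Q$. The paper phrases the last step as $\norm{\Phi^{W}(p)-\id_{W_x}}=\limsup_{k}\norm{\Phi^{E^{k}}(p)-\id}\leq\epsilon$ for arbitrary $\epsilon$, while you phrase it as the difference sequence lying in the ideal $\bigoplus_{k}\End_{B_k}(\mathcal{E}^k)$ killed by $\pi$; these are the same argument, and your added remarks on the fiber description of $W$ and the explicit reduction from homotopic paths to contractible loops are helpful elaborations that the paper leaves implicit.
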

\begin{proof}
It is sufficient to prove that for any contractive loop
$p\in \path(X)[x,x]$,
it satisfies 
$\Phi^{W}(p) = \mathrm{id}_{W_x}$.
Fix a two dimensional disk $D\subset X$ spanning $p$.
For arbitrary $\ep >0$ there exists $n_0$ such that
every $k \geq n_0$ satisfies that
$\Phi^{E^{k}}$ is $\frac{\ep}{1+\area(D)}$-close to the identity.
Then,
\begin{eqnarray*}
\norm{\Phi^{W}(p) - \mathrm{id}_{W_x}}
=
\limsup_{k\to \infty} \norm{\Phi^{E^{k}}(p) - \mathrm{id}}
\leq
\sup_{k\geq n_0} \norm{\Phi^{E^{k}}(p) - \mathrm{id}}
\leq
\frac{\ep}{1+\area(D)}{\cdot}\area(D)
\leq
\ep.
\end{eqnarray*}
This implies $\Phi^{W}(p) = \mathrm{id}_{W_x}$.
 \end{proof}
%%%%%%
% End of citation of \cite[Ha12]
%%%%%%
Applying the proposition \ref{ConstructProductBundle}
to $E^{k}=L_{1/k}$ with all $B_{k}=\C$ and 
we obtain$V=\prod L_{1/k}$
%, $V'=\dirsum{ L_{1/k}}$
and construct
a $Q$-bundle over $X$ denoted by $W$ as definition 
\ref{ConstructW},
%$$
%W:= V\otimes_{\pi}Q
%\quad \text{or} \quad
%W:= \parens{\prod L_{1/k}} \big/ \parens{\dirsum{ L_{1/k}}}
%$$
here,
$Q = \parens{\prod \C} \big/ \parens{\dirsum{ \C}}$.
Since $L_{1/k}$ has the curvature $\frac{i}{k}\omega$,
whose norm converges to zero as $k \to \infty$,
we can apply the proposition \ref{FlatnessOfW} to
see that $W$ is flat.
As a summary,
\begin{itemize}
\item We have constructed the infinite product bundle
$\prod L_{1/k}$
as a Hilbert $(\prod \C)$-module bundle,
\item and  
$W=\parens{\prod L_{1/k}} \! \otimes_{\pi}Q$
as a Hilbert
$Q = \parens{\prod \C} \big/ \parens{\dirsum{ \C}}$-module bundle.
\item
The actions of $\Ga$ on $L_{1/n}$ induce actions on
$\prod L_{1/k}$ and $W$.
\item
The $\Ga$-invariant connections on $L_{1/n}$
induce $\Ga$-invariant connection on $\prod L_{1/k}$
and $W$. Furthermore $W$ is flat .
\end{itemize}
%%%%%%
% $W$ in the K\!K-theory
%%%%%%
\subsection{Index of the Product Bundle}
\label{SectionIndexOfProductBundles}
Following the definition \ref{BCmap},
we define the index maps with coefficients;
\begin{dfn}
For $C^*$-algebras $B = \prod \C$ or $Q$,
define the index map
$$
\map{\mu_{\Gaa}}{K\!K^{\Gaa}(C_0(X), B)}
{K_{0}(C^{*}(\Ga)\otimes B)}
%\cong 
%{K_{0}(C^*(\Ga)\otimes B)}
$$
as the composition of
\begin{itemize}
\item
$\map{j^{\Gaa}}{K\!K^{\Gaa}(C_0(X), B)}
{K\!K(C^*(\Ga;C_0(X)) ,\: C^{*}(\Ga;B))}$
with
\item
$\map{[c]\hattensor}{K\!K(C^*(\Ga;C_0(X)) ,\: C^{*}(\Ga;B))}
{K_{0}(C^{*}(\Ga;B))}$.
\end{itemize}
In other words, 
\begin{eqnarray*}
\mu_{\Gaa}(\mathchar`-) :=
[c]\hattensor_{C^*(\Gaa;C_0(X))} j^{\Gaa}(\mathchar`-)
\quad \in K_{0}(C^{*}(\Ga;B)).
%\cong K_{0}(C^*(\Ga) \otimes B)
\end{eqnarray*}
Note that $\Ga$ acts on $B$ trivially so 
$C^{*}_{\Max}(\Ga; B)$ and 
$C^{*}_{\red}(\Ga; B)$
will be naturally identified with
$C^{*}_{\Max}(\Ga)\!\otimes_{\Max}\! B$ and 
$C^{*}_{\red}(\Ga)\!\otimes_{\min}\! B$ respectively.
Moreover $B$ is commutative and hence nuclear,
the tensor product with $B$
is uniquely determined, so we will abbreviate $\otimes_{\Max} B$
and $\otimes_{\min} B$ as $\otimes B$.
\end{dfn}
\begin{dfn}
Define $C_0 \parens{\prod L_{1/k}}$ as a space consisting of
sections $\map {s} {X} {\prod L_{1/k}}$
vanishing at infinity.
$C_0 \parens{\prod L_{1/k}}$ is considered as a
$C_0\parens{X; \prod \C} \cong C_0(X) \otimes
\parens{ \prod \C} $-module
equipped with the scalar product
$$
\angles{ s, s'}_{C_0 \parens{\prod L_{1/k}}} (x) := \braces{
\angles{s_{k}(x) ,\ s'_{k}(x)}_{L_{1/k}}
 }_{k\in \N} \in C_0\parens{X; \prod \C},
$$
where $s_{k}$ denotes the $k$-th component of $s$.

Similarly, define $C_0 (W)$ as a space consisting of
sections $\map {s} {X} {W}$
vanishing at infinity.
$C_0 (W)$ is considered as a
$C_0\parens{X; Q} \cong C_0(X) \otimes
Q $-module
equipped with the scalar product
$$
\angles{ s, s'}_{C_0 (W)}(x) := \braces{
\angles{s_{k}(x) ,\ s'_{k}(x)}_{L_{1/k}}
 }_{k\in \N} \in C_0\parens{X; Q}.
$$
\end{dfn}
\begin{dfn}
$\prod L_{1/k}$ and $W$ define elements in $K\!K$-theory
\begin{eqnarray*}
\brackets{\prod L_{1/k}}
 = \parens{C_0 \parens{\prod L_{1/k}} ,\ 0}
&\in& K\!K^{\Gaa}\parens{C_0(X) ,\ C_0(X)\hattensor \prod \C}
\\
\text{and }\qquad
[W] = \big(C_0(W) ,\ 0\big)
&\in& K\!K^{\Gaa}\parens{C_0(X) ,\ C_0(X)\hattensor Q}
\end{eqnarray*}
The action of $C_0(X)$ on $C_0(\prod L_{1/k})$
and $C_0(W)$ are the
point-wise multiplications.
\end{dfn}
\begin{rem}
Actually,  
$
f\parens{\mathrm{id}_{C_0 \parens{\prod L_{1/k}} } - 0^{2} }
\in
\bb{L}\parens{ C_0 \parens{\prod L_{1/k}} }
$
is a $\Ga$-equivariant $\prod \C$-compact operator
for any $f\in C_0(X)$.
To see this, it is sufficient to check it when $f$ has a compact support.
Let ${\mathbf{1}} := \{ 1,1,\ldots \} \in \prod \C$ denote 
the identity element in $\prod \C$.
We may regard $\prod L_{1/k}$
as a trivial bundle $X \times (\prod \C)$ equipped with 
the connection $\nabla^{1/k}$ on each $k$-th component.

Let $s_1\in C_0 \parens{\prod L_{1/k}}$ be a section such that
$s_1(x) = \mathbf{1}$ for any $x\in \supp(f)$.
Then,
$
f {\cdot} \mathrm{id}_{C_0 \parens{\prod L_{1/k}} }
=
fs_1 \angles{ s_1, \cdot }_{C_0 \parens{\prod L_{1/k}} }
$
is of rank $1$ and hence a compact operator.
The similar verification works also in the case of $W$.
\end{rem}
\begin{prop}\label{MuW0}
Let $[A]$ be a $K$-homology element in $K\!K^{\Gaa}(C_0(X), \C)$
determined by a Dirac operator on $\bb{V}$,
and let $W$ be the flat $Q$-module bundle
constructed as above.

Then
$\mu_{\Gaa}\parens{[W]\hattensor [A]}=0
\in K_{0}(C^{*}(\Ga) \otimes Q)$
if $\mu_{G}([A]) = 0$.
\end{prop}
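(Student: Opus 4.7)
The strategy is to exploit the flatness of $W$ together with $H_1(X;\bb{Z}) = 0$ to reduce $[W]$ to a single character of $\Gaa$, and then compare the two indices.

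First observe that $W$ is a $Q$-line bundle: each $L_{1/k}$ is a line bundle, so $\prod L_{1/k}$ has rank one over $\prod \C$, and $Q = \parens{\prod \C}/\parens{\bigoplus \C}$ is commutative, hence $U(Q)$ is abelian. By Proposition \ref{FlatnessOfW} the induced $\Gaa$-invariant connection on $W$ is flat, so parallel transport yields a monodromy representation $\pi_1(X,x_0) \to U(W_{x_0}) = U(Q)$. Abelianness forces this to factor through $\pi_1(X,x_0)^{\mathrm{ab}} = H_1(X;\bb{Z}) = 0$, so the monodromy is trivial, and parallel transport from a fixed base point $x_0$ delivers a global Hermitian $Q$-linear trivialization $\Psi \colon W \xrightarrow{\cong} X \times W_{x_0}$.

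Next, because the connection on $W$ is $\Gaa$-invariant, parallel transport intertwines the group action ($g \circ \Phi^{W}_{x \to x_0} = \Phi^{W}_{gx \to gx_0} \circ g$), so a direct computation shows that under $\Psi$ the action of $g \in \Gaa$ takes the form $(x,v) \mapsto (gx,\chi(g)v)$ for a continuous group homomorphism $\chi \colon \Gaa \to U(Q)$, explicitly $\chi(g) = \Phi^{W}_{gx_0 \to x_0} \circ g|_{W_{x_0}}$. Note that on the central subgroup $U(1)^{\bb{N}} \subset \Gaa$, $\chi$ is the natural composition $U(1)^{\bb{N}} \hookrightarrow \prod \C \twoheadrightarrow Q$, which is generally nontrivial. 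The character $\chi$ in turn induces a $*$-homomorphism $\widetilde\chi \colon C^{*}(\Gaa) \to C^{*}(\Gaa) \otimes Q$ defined on $C_c(\Gaa)$ by $\widetilde\chi(f)(g) = f(g)\chi(g)$; multiplicativity follows from $\chi$ being a group homomorphism.

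The core technical step is the functorial identity
\[ \mu_{\Gaa}\parens{[W] \hattensor [A]} \;=\; \widetilde\chi_{*}\parens{\mu_{\Gaa}([A])} \;\in\; K_{0}(C^{*}(\Gaa) \otimes Q), \]
which I would establish by using the explicit Hilbert module picture of Subsection \ref{SubsectionExpressionOfMu}: under $\Psi$, the class $[W] \hattensor [A]$ is represented by the module $L^{2}(\bb{V}) \otimes Q$ equipped with the operator $A \otimes 1$ and with the $\Gaa$-action twisted by $\chi$, and the averaging procedure defining $\overline{A^{\Gaa}}$ on this module factors through $\widetilde\chi$. Once this identity is in hand, Remark \ref{RemMuGa} gives $\mu_{G}([A]) = 0 \Rightarrow \mu_{\Gaa}([A]) = 0$, and applying $\widetilde\chi_{*}$ yields $\mu_{\Gaa}([W] \hattensor [A]) = 0$, as required.

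The main obstacle I anticipate is verifying this functorial identity: one must track carefully how the $\chi$-twisted $\Gaa$-action on the trivialized $W$ interacts with the convolution structure of $C^{*}(\Gaa)$ and the cut-off idempotent $[c]$, and check that these combine precisely to realize $\widetilde\chi_{*}$ at the level of $K_0$. This is bookkeeping in principle, but delicate because $\Gaa$ is non-discrete and the twist only appears after the trivialization $\Psi$, so one must verify that the Hilbert module isomorphisms are compatible with the operators $\overline{A^{\Gaa}}$ constructed on both sides.
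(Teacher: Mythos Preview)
Your approach is correct and essentially the same as the paper's: trivialize $W$ via parallel transport (flatness plus $H_1(X;\bb{Z})=0$ plus abelian structure group), encode the residual $\Gaa$-action as a holonomy character on the fibre $W_{x_0}$, and factor the index through $\mu_{\Gaa}([A])$.

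The one organizational difference is worth noting because it dissolves precisely the obstacle you anticipate. The paper packages your character $\chi$ as a $K\!K$-element $[W]_{\mathrm{rpn}} := (W_{x_0},0) \in K\!K^{\Gaa}(\C,Q)$ with the holonomy action, and first proves the identity
\[
[W]\hattensor_{C_0(X)}[A] \;=\; [A]\hattensor_{\C}[W]_{\mathrm{rpn}}
\quad\text{in}\quad K\!K^{\Gaa}(C_0(X),Q)
\]
by an explicit module isomorphism $L^2(\bb{V})\otimes W_{x_0}\cong L^2(W\otimes\bb{V})$ built from parallel transport. Only then is $\mu_{\Gaa}$ applied, and the factorization
\[
\mu_{\Gaa}\bigl([A]\hattensor[W]_{\mathrm{rpn}}\bigr)
= [c]\hattensor j^{\Gaa}([A])\hattensor j^{\Gaa}([W]_{\mathrm{rpn}})
= \mu_{\Gaa}([A])\hattensor j^{\Gaa}([W]_{\mathrm{rpn}})
\]
follows immediately from the abstract multiplicativity of $j^{\Gaa}$ with respect to the Kasparov product. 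Your $\widetilde\chi_{*}$ is exactly $(-)\hattensor j^{\Gaa}([W]_{\mathrm{rpn}})$, so the content is identical, but by establishing the swap $[W]\hattensor[A]=[A]\hattensor[W]_{\mathrm{rpn}}$ \emph{before} descending to $C^{*}(\Gaa)$, the paper never has to track how the $\chi$-twist interacts with the cut-off idempotent or the convolution structure --- that bookkeeping is absorbed into the known functoriality of $j^{\Gaa}$.
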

In order to prove this,
we introduce an element $[W]_{\mathrm{rpn}}\in K\!K^{\Gaa}(Q,Q)$
using the holonomy representation of $\Ga$.
\begin{lem}
The parallel transport of $W$ depends only on
the ends of the paths.
\end{lem}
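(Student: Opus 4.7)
The plan is to reduce the statement to the triviality of the holonomy of $W$ around any loop, and then to exploit the explicit form of the connection on each $L_{1/k}$ together with the definition of the quotient $Q$. Concretely, given two paths $p_0, p_1 \in \path(X)[x,y]$ with the same endpoints, their concatenation $\ell := p_0 \cdot p_1^{-1}$ is a loop at $x$, and $\Phi^W(p_0) = \Phi^W(p_1)$ holds iff $\Phi^W(\ell) = \mathrm{id}_{W_x}$. So it suffices to prove that for every (piecewise smooth) loop $\ell$ based at some point $x\in X$, the induced holonomy on $W$ is the identity in $\End_Q(W_x) = Q$.

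For this, I would compute the holonomy of each $L_{1/k}$ around $\ell$. Since $L_{1/k}$ is topologically the trivial line bundle $X\times\C$ with connection $\nabla^{1/k} = \dif + (i/k)\eta$, a direct integration of the parallel transport equation $\dot s(t) + (i/k)\eta(\dot \ell(t))\,s(t) = 0$ yields
$$
\Phi^{L_{1/k}}(\ell) = \exp\!\brackets{-\tfrac{i}{k}\textstyle\oint_{\ell}\eta}\in U(1)\subset\C.
$$
By the construction of $W = \parens{\prod L_{1/k}}\hattensor_{\pi} Q$ in Definition \ref{ConstructW}, the induced parallel transport on $W$ around $\ell$ is the class in $Q$ of the sequence $\braces{\exp\brackets{-\tfrac{i}{k}\oint_{\ell}\eta}}_{k\in\N} \in \prod\C$.

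Now the sequence $\braces{\exp\brackets{-\tfrac{i}{k}\oint_{\ell}\eta}-1}_{k\in\N}$ satisfies $|\cdot| \to 0$ as $k\to\infty$, since $\oint_\ell \eta$ is a fixed real number and the exponential is continuous at $0$. Hence this difference sequence lies in the ideal $\dirsum{\C}\subset \prod\C$ of sequences vanishing at infinity, so in the quotient $Q=(\prod\C)/(\dirsum{\C})$ we have
$$
\Phi^W(\ell) = \brackets{\braces{\exp\brackets{-\tfrac{i}{k}\textstyle\oint_\ell \eta}}_k} = \brackets{\braces{1}_k} = 1\in Q.
$$
This gives $\Phi^W(p_0)=\Phi^W(p_1)$ as required. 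The only mild subtlety is compatibility: one must verify that the holonomy of $\prod L_{1/k}$, computed component-wise, really is the parallel transport induced on $W$ via the tensor product with $\pi$; this follows directly from the construction of the induced connection in Proposition \ref{ConstructProductBundle}, since transition functions and parallel transport on $W$ are obtained entrywise from those on the $L_{1/k}$ and then pushed through $\pi$. No deeper obstacle appears: unlike Proposition \ref{FlatnessOfW}, which required $C_k\to 0$ uniformly in the area of a spanning disk, here the length of $\ell$ is fixed and the pointwise decay $1/k \to 0$ of the connection form is enough.
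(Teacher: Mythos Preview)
Your proof is correct, but it takes a genuinely different route from the paper's.

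The paper argues as follows: by Proposition \ref{FlatnessOfW}, $W$ is flat, so holonomy gives a homomorphism $\pi_1(X)\to U(W_x)$; since the structure group of $\prod L_{1/k}$ is the abelian group $\prod U(1)$, this homomorphism factors through the abelianization $H_1(X;\Z)$, which vanishes by the standing hypothesis. Thus the holonomy is trivial.

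Your approach instead exploits the explicit connection $\nabla^{1/k}=\dif+(i/k)\eta$ on each factor to compute the holonomy of $L_{1/k}$ around $\ell$ as $\exp\!\brackets{-(i/k)\oint_{\ell}\eta}$, and then observes that this sequence differs from the constant sequence $1$ by an element of $\dirsum\C$, so its image in $Q$ is the identity. This is more elementary and more direct: it uses neither Proposition \ref{FlatnessOfW} nor the hypothesis $H_1(X;\Z)=0$, and in fact it simultaneously establishes flatness and triviality of the holonomy in one stroke. The paper's argument, by contrast, is more structural and would apply verbatim to any flat bundle with abelian structure group over a space with vanishing $H_1$, without knowing anything explicit about the connection. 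So your method buys simplicity and a sharper statement for this specific $W$, while the paper's method isolates the general mechanism (flatness plus abelian holonomy plus $H_1=0$) that makes the lemma work.
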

\begin{proof}
Let $x$ and $y$ be any points in $X$,
and consider any two paths $p_1$ and $p_2 \in \path(X)[x,y]$ from $x$ to $y$.
Combining them,
we have a loop $p=(p_1)^{-1} p_2 \in \path(X)[x,x]$
starting and ending at $x\in X$.
Now we want to verify the parallel transport 
$\Phi(p_1)^{-1} \Phi(p_2) = \Phi(p)$
along the loop $p$ is the identity map $\mathrm{id}_{W_x}$.

Since $W$ is flat, the parallel transport on $W$ depends only on
the homotopy type of paths, so it provides the homomorphism
$\map{\Phi} {\pi_1(X)} {U(W_x)} $,
which is called the holonomy representation of $\pi_1(X)$.
The image of this homomorphism is contained in 
the structure group of $W$.

Remark that the sturucture group of $\prod L_{1/k}$ is of the diagonal form
$$
\prod U(1) \subset U\parens{\prod \C},
$$
and hence abelian. The structure group of $W$,
 which is a quotient of this group,
is also abelian.

Noting that $H_1(X;\Z)$ is the abelianization of $\pi_1(X)$, 
the above holonomy representation should factor through
$$\map{\Phi} {\pi_1(X)} {H_1(X;\Z)} \to U(W_x), $$
which must be the constant map to $\mathrm{id}_{W_x}$
by the assumption of $H_1(X;\Z)=0$.
\end{proof}
\begin{dfn}
From now on let $\Phi_{x;y}$ denote the parallel
transport of $W$ along an arbitrary path
from $x\in X$ to $y\in X$.
Let us fix a base point $x_0 \in X$
and let $W_{x_0}$ be the fiber on $x_0$.
$W_{x_0}$ is regarded as a Hilbert $Q$-module.
Define $[W]_{\mathrm{rpn}}$ as
%Since $W$ is flat,
%$W$ comes from the representation ring
%$K\!K^{\Gaa}(Q,Q)$ denoted by $[W]_{\mathrm{rpn}}$
%using holonomy.
%It is defined by
$$
[W]_{\mathrm{rpn}} := (W_{x_0}, 0) \in K\!K^{\Gaa}(\C,Q).
$$
The action of $\Ga$ on $W_{x_0}$ is obtained by
the holonomy representation of $\Ga$,
which is denoted by $\map{\rho}{\Ga}{\End_{Q}(W_{x_0})}$
and defined by the formula;
$$
\rho [g](w) = (\Phi_{x_0;gx_0})^{-1} g(w) 
\qqfor g\in \Ga, \: w \in W_{x_0}.
$$
\end{dfn}
\begin{lem}
%Set a homomorphism
%$\map{\beta}{\C}{Q}$ by $\beta(z) := \{ z,z,\cdots \}$.
$$
[W]\hattensor_{C_0(X)} [A] = [A]
\hattensor_{\C} [W]_{\mathrm{rpn}}
\in K\!K^{\Gaa}(C_0(X), Q).
$$
In particular we obtain
\begin{eqnarray*}
\mu_{\Gaa}\parens{[W]\hattensor_{C_0(X)} [A]}
=
\mu_{\Gaa}\parens{ [A]
\hattensor_{\C} [W]_{\mathrm{rpn}} }
\in K_{0}(C^{*}(\Ga) \otimes Q).
\end{eqnarray*}
\end{lem}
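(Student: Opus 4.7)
The plan is to exploit the flatness of $W$ together with the hypothesis $H_{1}(X;\Z)=0$, which was used in the preceding lemma to show that the parallel transport $\Phi_{x;y}$ in $W$ depends only on the endpoints. This path-independence yields a canonical trivialization of $W$ by parallel transport from the base point $x_0$; although this trivialization is not $\Ga$-equivariant for the naive action on $X\times W_{x_0}$, it intertwines the given action on $W$ with the diagonal action twisted by the holonomy representation $\rho$ on the fiber. Once $[W]$ is rewritten in this trivialized form, the two Kasparov products become visibly the same triple, and the equality of classes follows.

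Explicitly, I would introduce the $C_{0}(X)\otimes Q$-linear unitary
$$
\map{\Psi}{C_{0}(X)\otimes W_{x_0}}{C_{0}(W)},\qquad
\Psi(f\otimes w)(x):=f(x)\cdot \Phi_{x_0;x}(w),
$$
which is well-defined and continuous by the preceding lemma and the Lipschitz continuity of the transition data of $W$ from Proposition \ref{ConstructProductBundle}. To identify the $\Ga$-actions I would compute, for $g\in\Ga$ and $w\in W_{x_0}$,
\begin{eqnarray*}
g[\Psi(f\otimes w)](x)
&=&g\bigl(\Psi(f\otimes w)(g^{-1}x)\bigr)
=f(g^{-1}x)\cdot g\,\Phi_{x_0;g^{-1}x}(w)
\\
&=&f(g^{-1}x)\cdot \Phi_{gx_0;x}(gw)
=f(g^{-1}x)\cdot \Phi_{x_0;x}\bigl(\Phi_{gx_0;x_0}(gw)\bigr)
\\
&=&(g[f])(x)\cdot \Phi_{x_0;x}\bigl(\rho(g)w\bigr)
=\Psi\bigl(g[f]\otimes \rho(g)w\bigr)(x),
\end{eqnarray*}
using successively the $\Ga$-equivariance of parallel transport (from $\Ga$-invariance of the connection and isometry of the action), path-independence of $\Phi$ together with the composition rule $\Phi_{gx_0;x}=\Phi_{x_0;x}\circ \Phi_{gx_0;x_0}$, and the defining formula $\rho(g)=\Phi_{gx_0;x_0}\circ g$. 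Hence $\Psi$ exhibits $[W]$ as the class of $\bigl(C_{0}(X)\otimes W_{x_0},\,0\bigr)$ with the diagonal action, consisting of the translation action on $C_{0}(X)$ and the holonomy representation $\rho$ on $W_{x_0}$.

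With this identification, the two Kasparov products reduce to the same representative. Since $[W]$ has zero operator, the Kasparov product rule simplifies to the external tensor product, and $[W]\hattensor_{C_{0}(X)}[A]$ is represented by the balanced tensor product $C_{0}(W)\otimes_{C_{0}(X)} L^{2}(\bb{V})$ equipped with the operator $1\otimes A$; through $\Psi$ this becomes the Hilbert $Q$-module $L^{2}(\bb{V})\otimes_{\C}W_{x_0}$ with operator $A\otimes 1$ and diagonal $\Ga$-action built from the usual action on $L^{2}(\bb{V})$ and from $\rho$ on $W_{x_0}$. On the other side, $[W]_{\mathrm{rpn}}=(W_{x_0},0)$ also has zero operator with $\Ga$-action $\rho$, so the Kasparov product $[A]\hattensor_{\C}[W]_{\mathrm{rpn}}$ is represented by exactly the same triple. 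The ``in particular'' statement about $\mu_{\Gaa}$ follows immediately from the functoriality of $j^{\Gaa}$ and associativity of Kasparov products.

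The only genuine obstacle is pinning down the continuity and module-theoretic compatibilities of $\Psi$: the continuity of $x\mapsto \Phi_{x_0;x}$ as a map into $\mathrm{Iso}_{Q}(W_{x_0},W_x)$, the preservation of the $C_{0}(X)\otimes Q$-valued inner product, and the composition identity $\Phi_{gx_0;x}=\Phi_{x_0;x}\circ \Phi_{gx_0;x_0}$ used above. All three follow from the Lipschitz transition data supplied by Proposition \ref{ConstructProductBundle} together with the already-established path-independence of parallel transport on $W$.
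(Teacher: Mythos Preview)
Your approach is essentially the same as the paper's: both use the global trivialisation of $W$ by parallel transport from $x_0$ (your $\Psi$, the paper's $\varphi$) and verify the same $\Ga$-equivariance computation with the holonomy representation $\rho$. The only point to tighten is your sentence that ``$[W]\hattensor_{C_{0}(X)}[A]$ is represented by $C_{0}(W)\otimes_{C_{0}(X)} L^{2}(\bb{V})$ equipped with the operator $1\otimes A$'': before trivialising, $1\otimes A$ is not well-defined on the balanced tensor product (since $A$ does not commute with $C_0(X)$), and the correct product operator is the twisted Dirac operator $A_W$; the paper makes this explicit and then uses $\nabla^{W}\overline{w}=0$ to show $A_W$ corresponds to $A\otimes 1$ under $\varphi$, which is exactly the flatness input you are invoking implicitly.
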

\begin{proof}
Recall that $[A]\in K\!K^{G}(C_0(X), \C)$ is given by
$\left( L^2(\mathbb{V} ),\ A \right) $ and then,
$$
[W]\hattensor_{C_0(X)} [A] = \parens{
C_0(W) {\otimes}_{C_0 (X)} L^2 (\mathbb{V})
,\ A_{W} 
},
$$
where $A_{W}$ is the Dirac type operator $A$ twisted by $W$
acting on $L^{2}(W {\otimes} \bb{V})
\simeq C_0(W) {\otimes}_{C_0(X)}L^{2}(\bb{V})$.
The action of $C_0 (X)$ on $C_0(W)$ and
$L^2 (\mathbb{V})$ are 
the point-wise multiplications.

$L^{2}(W {\otimes} \bb{V})$ is regarded as a Hilbert $Q$-module
in the following way.
First, each fiber $W_{x} \otimes \bb{V}_{x}$ on $x\in X$
is of a structure of Hilbert $Q$-module
with the right $Q$ action given by
$
(w{\otimes}v){\cdot} q = (w{\cdot}q)\otimes v
$
 for $w\in W_{x}$, $v\in \bb{V}_{x}$ and $q\in Q$,
and the scalar product given by
$
\angles{w_1{\otimes}v_1 ,\ w_2{\otimes}v_2}_{W_{x} \otimes \bb{V}_{x}}
= \angles{w_1, w_2}_{W_{x}} \angles{v_1, v_2}_{\bb{V}_{x}}
 \in Q $
for $w_1, w_2 \in W_{x}$ and $v_1, v_2 \in \bb{V}_{x}$.

Then, $ L^2 (W{\otimes}\mathbb{V})$
is of a structure of Hilbert $Q$-module
with the right $Q$-action by the point-wise actions,
and the scalar product given by
$
\angles{u_1, u_2}_{L^2 (W{\otimes}\mathbb{V})}
=\int_{X} \angles{u_1(x), u_2(x)}_{W_{x} \otimes \bb{V}_{x}} 
\:\dif \mathrm{vol}(x)
 \in Q$
for $u_1, u_2 \in L^2 (W{\otimes}\mathbb{V})$.

On the other hand, %since $[W]_{\mathrm{rpn}} = \left( W_{x_0}, 0 \right)$,
$$
%\beta_* [A] \hattensor [W]_{\mathrm{rpn}}
%= 
%\left( L^2 (\mathbb{V}) \hattensor_{\C}Q \hattensor_{Q} W_{x_0},
%A\hattensor 1 \otimes 1 \right)
[A] \hattensor_{\C}[W]_{\mathrm{rpn}}
=
\left( L^2 (\mathbb{V}) {\otimes}_{\C}
W_{x_0},\ A {\otimes} 1 \right).
$$
The action of $C_0 (X)$ is the point-wise multiplications.
Note that the action of $\Ga$
on $W_{x_0}$ is given by the holonomy representation $\rho$.
It is sufficient to give an isomorphism
$$
\map{\varphi}
{L^2 (\mathbb{V}) {\otimes}_{\C} W_{x_0}}
{C_0(W) {\otimes}_{C_0 (X)} L^2 (\mathbb{V})}
$$
which is compatible 
with the action of $\Ga$ and the operators
$A_{W}$ and $A{\otimes}1$. 
Set 
$\overline{w} \colon x \mapsto \Phi_{x_0 ; x}w \in W_x$
and define 
$\varphi$ on a dense subspace $C_c(\bb{V}) {\odot} W_{x_0}$ as
$$
\varphi (s\otimes w) := \overline{w} {\cdot} \chi \otimes s
\qquad \text{
for $s\in C_c (\mathbb{V})$ and $w \in W_{x_0}$},
$$
%Although $\overline{w}$
%does not belong to $C_0(W)$, however
%$\overline{w} \otimes s$ makes sense as a tensor-element in
%$C_0(W) \hattensor_{C_0 (X)} C_c(\mathbb{V})$.
%More precisely 
%$\overline{w} \otimes s$ is identified with
%$\overline{w} \otimes \chi\cdot s
%=
%\overline{w}\cdot \chi \otimes s$
where $\chi \in C_0(X)$ is an
arbitrary compactly supported function on $X$
with values in $[0,1]$
satisfying that $\chi(x)=1$ for all $x\in \supp(s)$.

$\varphi$ is independent of the choice of $\chi$ and hence
well-defined. In fact, let $\chi' \in C_c(X)$ be another such function,
and let $\rho \in C_c(X)$ be a compactly supported function on $X$
with values in $[0,1]$
satisfying that $\rho(x)=1$ for all
$x\in \supp(\chi) \cup \supp(\chi')$. Then
in $ C_0(W) {\otimes}_{C_0 (X)} C_c(\mathbb{V})$,
$$
\overline{w} {\cdot} \chi \otimes s
 -  \overline{w} {\cdot} \chi' \otimes s
=
\overline{w} {\cdot} \parens{\chi-\chi'} \otimes s
=
\overline{w} {\cdot} \rho {\cdot} \parens{\chi-\chi'} \otimes s
=
\overline{w} {\cdot} \rho \otimes \parens{\chi-\chi'} s
=
0.
$$
In other words, 
$\varphi (s \otimes w) = \overline{w} {\cdot} \chi \otimes s$
corresponds to the section
of point-wise tensor products;
$x \mapsto \overline{w}(x) \otimes s(x)
\in L^{2}(W \otimes \bb{V})$
under the identification
$C_0(W) {\otimes}_{C_0 (X)} L^2 (\mathbb{V})
 \cong L^{2}(W \otimes \bb{V})$.
%%%%%%%%%%%%

Firstly, compatibility with the operators is verified as follows;
$$
A_{W} {\circ} \varphi (s\otimes w) 
=
A_{W} (\overline{w} \otimes s) 
=
\overline{w} \otimes A(s)
=
\varphi {\circ} (A{\otimes}1) (s\otimes w)
$$
for $s\in C_c (\mathbb{V})$ and $w \in W_{x_0}$.
This is because $\nabla^{W} \overline{w} = 0$ by its construction.
%%%%%%%%%%%%

Compatibility with the action of $\Ga$ is verified as follows;
\begin{eqnarray*}
\varphi (g (s\otimes w))(x)
= \Phi_{x_0 ; x}(\rho[g](w)) \otimes g(s(g^{-1}x))
 &=& \Phi_{x_0 ; x} (\Phi_{x_0;gx_0})^{-1} g(w) \otimes g(s(g^{-1}x))
 \\
 &=& \Phi_{gx_0 ; x}(g(b)) \otimes g(s(g^{-1}x)),
\\
g (\varphi (s\otimes w))(x)
 = g((\Phi_{x_0;g^{-1}x})(w) \otimes s(g^{-1}x))
 &=& \Phi_{g x_0; x}(g(w)) \otimes g(s(g^{-1}x)).
\end{eqnarray*}

Let us check that $\varphi$ induces an isomorphism.
For $s_1\otimes w_1,\ s_2\otimes w_2 \in 
C_c (\mathbb{V}) {\odot}_{\C} W_{x_0}
$,
\begin{eqnarray*}
&&
\angles{
\varphi(s_1\otimes w_1),\;
\varphi(s_2\otimes w_2)
}_{C_0(W) {\otimes}_{C_0 (X)} L^2 (\mathbb{V})}
\\
&=&
\Big\langle
s_1 ,\
\angles{\overline{w_1} {\cdot} \chi,
 \overline{w_2} {\cdot} \chi}_{C_0(W)} s_2
\Big\rangle_{L^2 (\mathbb{V})}
\\
&=&
\int_{X} \Big\langle s_1(x),\ 
\angles{(\Phi_{x_0 ; x}w_1)\chi(x),
(\Phi_{x_0 ; x}w_2)\chi(x) }_{W_{x}} s_2(x)\Big\rangle_{\bb{V}_{x}}
\:\dif \mathrm{vol}(x)
\\
&=&
\int_{X} \angles{w_1,w_2}_{W_{0}} \chi(x)^2 \angles{s_1(x),
s_2(x)}_{\bb{V}_{x}}
\:\dif \mathrm{vol}(x)
\\
&=&
\angles{w_1,w_2}_{W_{0}} \angles{s_1,s_2}_{L^{2}(\bb{V})}
\\
&=&
\angles{
s_1\otimes w_1,\ 
s_2\otimes w_2
}_{L^{2}(\bb{V}){\otimes} W_{x_0}},
\end{eqnarray*}
where $\chi \in C_0(X)$ is a compactly supported function on $X$
satisfying that $\chi(x)=1$ for all $x\in \supp(s_1) \cup \supp(s_2)$.
This implies that $\varphi$ is continuous and injective.

Moreover, choose arbitrary $F \in C_c(W)$ and
$s \in C_c(\bb{V})$.
Let $\{w_1, w_2, \ldots \}$ be a basis over $\C$ for $W_{x_0}$.
Since for each $x\in X$,
$\braces{ \overline{w_1}(x),  \overline{w_2}(x), \ldots}$
provides a basis for $W_{x}$,
there exist $f_1, f_2, \ldots \in C_c(X)$
such that
$$F(x) = \sum_{j\in \N}  \overline{w_j}(x) f_j(x) $$
for any $x\in X$.
Let $\chi \in C_c(X)$ be a function satisfying $\chi(x)=1$
for any $x \in \supp(s) \cup \supp(F)$.
Then
\begin{eqnarray*}
\varphi\parens{
\sum_{j\in \N} f_j s \otimes w_j
}
=
\sum_{j\in \N} \parens{ \overline{w_j} {\cdot} \chi \otimes f_j s }
=
\sum_{j\in \N} \parens{ \overline{w_j} {\cdot} \chi f_j \otimes s }
=
\parens{ \sum_{j\in \N} \overline{w_j} f_j } {\cdot} \chi \otimes s
=
F \otimes s.
\end{eqnarray*}
This implies that the image of $\varphi$ is dense in
$C_0(W) {\otimes} L^{2}(\bb{V})$.
Therefore $\varphi$ induces an isomorphism.
 \end{proof}
\begin{proof}
[Proof of the Proposition \ref{MuW0}]
Due to the previous lemma it follows that
\begin{eqnarray*}
\mu_{\Gaa}\parens{[W]\hattensor [A]}
&=&
\mu_{\Gaa} ([A] \hattensor [W]_{\mathrm{rpn}})
\\
&=&
[c] \hattensor j^{\Gaa}\parens{
[A] \hattensor [W]_{\mathrm{rpn}}
}
\\
&=&
[c] \hattensor \left(j^{\Gaa}[A]\right) \hattensor
\parens{j^{\Gaa} [W]_{\mathrm{rpn}} }
\\
&=&
(\mu_{\Gaa}[A]) \hattensor \parens{j^{\Gaa} [W]_{\mathrm{rpn}} }.
\end{eqnarray*}
As in Remark \ref{RemMuGa},
$\mu_{G}[A] =0$ if and only if $\mu_{\Gaa}[A] =0$,
thereforet the assumption $\mu_{G} [A] =0$ implies
$\mu_{\Gaa}\parens{[W]\hattensor [A]}=0$.
 \end{proof}

%%%%
% Above : $Q$
%%%%
% Below : $\prod \C$
%%%%

Next, we consider $\prod L_{1/k}$,
%and $\dirsum{ L_{1/k}}$
which is a $\prod \C$
% and $\dirsum{ \C}$
-module bundle of rank $1$.
Consider the index maps in the same way as above,
\begin{eqnarray*}
\mu_{\Gaa} \parens{ \brackets{\prod L_{1/k}} \hattensor [A]}
\in K_{0} \parens{C^*(\Ga) {\otimes} \parens{\prod \C}}.
\end{eqnarray*}
%and 
%\begin{eqnarray*}
%\mu_{\Gaa} \parens{ \brackets{ 
%\dirsum{ L_{1/k}}} \hattensor [A] }
%\in K_{0} (C^*(\Ga))
%\hattensor K_{0} \left(\dirsum{ \C} \right)
%\\
%&\simeq& \bigoplus K_{0} (C^*(\Ga))
%\end{eqnarray*}
%%%%%%%%
%%%%%%%%
\begin{prop}
\label{PiMu}
\text{}
\begin{enumerate}
\item
Let $\map{p_n}{\prod \C}{\C}$ denote the projection
onto the $n$-th component and consider
$$
\map{(1 \otimes p_n)_{*}}
{K_{0} \parens{C^*(\Ga) {\otimes} \parens{\prod \C}}}
{K_{0} \parens{C^*(\Ga)}}.
$$
Then
$$
(1 \otimes p_n)_{*}
\mu_{\Gaa} \parens{ \brackets{\prod L_{1/k}} \hattensor [A]}
=
\mu_{\Gaa} \parens{ \brackets{L_{1/n}} \hattensor [A]}.
$$
\item
Let $\map{\pi}{\prod \C}{Q}$ denote the quotient map
and consider
$$
\map{(1 \otimes \pi)_{*}}
{K_{0} \parens{C^*(\Ga) {\otimes} \parens{\prod \C}}}
{K_{0} \parens{C^*(\Ga) {\otimes} Q}}.
$$
Then
$$
(1 \otimes \pi)_{*}
\mu_{\Gaa} \parens{ \brackets{\prod L_{1/k}} \hattensor [A]}
=
\mu_{\Gaa} ([Q] \hattensor [A]).
$$
\end{enumerate}
\end{prop}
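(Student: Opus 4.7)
The plan is to deduce both parts from naturality of the ingredients of the index map $\mu_{\Gaa}$, combined with two identities at the bundle level. The key identities will be
\[ (1_{C_0(X)} \otimes p_n)_* \brackets{\prod L_{1/k}} = \brackets{L_{1/n}} \in K\!K^{\Gaa}(C_0(X), C_0(X)), \]
\[ (1_{C_0(X)} \otimes \pi)_* \brackets{\prod L_{1/k}} = [W] \in K\!K^{\Gaa}(C_0(X), C_0(X) \otimes Q), \]
both of which hold essentially by construction: the first holds because Proposition \ref{ConstructProductBundle} produces $\prod L_{1/k}$ so that its $n$-th component is exactly $L_{1/n}$, i.e., the internal tensor product $C_0(\prod L_{1/k}) \hattensor_{1 \otimes p_n} C_0(X)$ is canonically isomorphic to $C_0(L_{1/n})$ as a $\Gaa$-equivariant Hilbert $C_0(X)$-module; the second is Definition \ref{ConstructW} essentially verbatim.

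Given these, part (1) will follow from the computation
\begin{align*}
(1 \otimes p_n)_* \mu_{\Gaa}\parens{\brackets{\prod L_{1/k}} \hattensor [A]}
&= (1 \otimes p_n)_*\parens{[c] \hattensor j^{\Gaa}\parens{\brackets{\prod L_{1/k}} \hattensor [A]}} \\
&= [c] \hattensor (1 \otimes p_n)_* j^{\Gaa}\parens{\brackets{\prod L_{1/k}} \hattensor [A]} \\
&= [c] \hattensor j^{\Gaa}\parens{(p_n)_*\parens{\brackets{\prod L_{1/k}} \hattensor [A]}} \\
&= [c] \hattensor j^{\Gaa}\parens{\parens{(1_{C_0(X)} \otimes p_n)_* \brackets{\prod L_{1/k}}} \hattensor [A]} \\
&= [c] \hattensor j^{\Gaa}\parens{\brackets{L_{1/n}} \hattensor [A]} \\
&= \mu_{\Gaa}\parens{\brackets{L_{1/n}} \hattensor [A]},
\end{align*}
in which the outer equalities are the definition of $\mu_{\Gaa}$, the second and fourth are the standard naturality of the Kasparov product with respect to pushforward along a $*$-homomorphism of coefficients, the third is the naturality of $j^{\Gaa}$ in the coefficient algebra, and the fifth is the bundle-level identity from the first paragraph. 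Part (2) will be proved by exactly the same computation with $p_n$ replaced by $\pi$ and $[L_{1/n}]$ replaced by $[W]$.

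The only step that requires explicit verification is the third equality, namely that for any $\Gaa$-equivariant $*$-homomorphism $\varphi \colon B \to B'$ of trivially acted-on algebras and any $y \in K\!K^{\Gaa}(A, B)$, one has $\varphi_* j^{\Gaa}(y) = j^{\Gaa}(\varphi_* y)$ in $K\!K(C^*(\Gaa; A), C^*(\Gaa; B'))$. This is immediate from the explicit formulas in Definition \ref{DfnjG1}: for $y = (\mathcal{E}, \phi, T)$, both sides are represented by the convolution module $C^*(\Gaa; \mathcal{E}) \hattensor_{\varphi} C^*(\Gaa; B')$ equipped with the induced operator $\widehat{T}$ and with the convolution left action of $C^*(\Gaa; A)$. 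No analytic input is needed; the argument is formal and this functoriality check, together with the purely combinatorial bundle-level identities above, is the only non-trivial content of the proposition.
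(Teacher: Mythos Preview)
Your proposal is correct and follows essentially the same route as the paper: both start from the bundle-level identities $(p_n)_*\brackets{\prod L_{1/k}}=\brackets{L_{1/n}}$ and $\pi_*\brackets{\prod L_{1/k}}=[W]$, then unwind the definition $\mu_{\Gaa}(\mathchar`-)=[c]\hattensor j^{\Gaa}(\mathchar`-)$ using functoriality. The only cosmetic difference is that the paper expresses your third equality via the multiplicativity $j^{\Gaa}(y\hattensor\mathbf{p_n})=j^{\Gaa}(y)\hattensor j^{\Gaa}(\mathbf{p_n})$ with $\mathbf{p_n}=(\C,p_n,0)\in K\!K(\prod\C,\C)$, and then identifies $j^{\Gaa}(\mathbf{p_n})$ with the class of $1\otimes p_n$; this is exactly the naturality statement you isolate and verify from Definition~\ref{DfnjG1}.
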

\begin{proof}
%Let $[L_{1/n}]_{\prod}$ denote
%the K\!K-element defined by the line bundle
%$L_{1/n}$ regarded as
%an element of $K\!K^{\Gaa} \parens{ C_0(X), C_0(X) \hattensor \prod \C }$
%by including into the $n$-th component.
%Then
%$(p_n)_* \mu_{\Gaa} 
%\parens{ \brackets{L_{1/n}}_{\prod} \hattensor [A]}
%\in K\!K(\C , C^*(\Ga))$
%is the same element as $\mu_{\Gaa}
%\parens{ \brackets{L_{1/n}} \hattensor [A]}$
%starting from 
%$\brackets{L_{1/n}} \in K\!K^{\Gaa} ( C_0(X), C_0(X))$
As for the first part, 
$\brackets{L_{1/n}} = (p_n)_* \brackets{\prod L_{1/k}}
\in K\!K^{\Gaa}(C_0(X), C_0(X)\hattensor \C)$ by the construction of
$\prod L_{1/k}$.
Then it follows that
\begin{eqnarray*}
\mu_{\Gaa} \parens{ \brackets{L_{1/n}} \hattensor [A]}
&=&
\mu_{\Gaa} \parens{ (p_n)_* \brackets{\prod L_{1/k}} \hattensor [A]}
\\
&=&
[c]\hattensor j^{\Gaa}
\parens{\brackets{\prod L_{1/k}} \hattensor [A] \hattensor{\mathbf{p_n}}}
\\
&=&
[c]\hattensor j^{\Gaa} \parens{\brackets{\prod L_{1/k}} \hattensor [A]}
\hattensor j^{\Gaa}({\mathbf{p_n}})
\\
&=&
\mu_{\Gaa} \parens{\brackets{\prod L_{1/k}} \hattensor [A]}
\hattensor j^{\Gaa}({\mathbf{p_n}})
\\
&=&
(1 \otimes p_n)_* \mu_{\Gaa} 
\parens{ \brackets{\prod L_{1/k}} \hattensor [A]},
\end{eqnarray*}
where $\mathbf{p_n} = (\C,\ p_n,\ 0) \in K\!K\parens{\prod \C, \C}$. 
Then note that  
$
j^{\Gaa}(\mathbf{p_n})
=
(C^{*}(\Ga),\ 1 \otimes p_n,\ 0)
$
is equal to the element in $K\!K$-group
$
K\!K\parens{C^{*}(\Ga) {\otimes} \prod \C,\ C^{*}(\Ga)}
$
which is determined by the map
$\map{1 \otimes p_n} 
{C^{*}(\Ga) {\otimes} \prod \C} {C^{*}(\Ga)}$.

Since
$\pi_{*}\brackets{\prod L_{1/k}}
= [W] \in K\!K^{\Gaa}(C_0(X),\ C_0(X) {\otimes} Q)$
by the construction of $W$,
the second part can be proved in the similar way.
 \end{proof}

%%%%
\section{Proof of the Main Theorem}
Let us restate the settings and the claim.

Let $X$ be a complete Riemannian manifold with $H_1(X;\Z)$
and let $G$ be a second countable locally compact Hausdorf
unimodular group
which is acting on $X$ properly and isometrically.
Assume $X$ is $G$-compact.
Let $\map{\mu_{G}}{K\!K^{G}(C_0(X), \C)}{K_0(C^{*}(G))}$
be the index map,
and $E = L^{1} \oplus \ldots \oplus L^{N}$ be a finite direct sum of
smooth Hermitian $G$-line bundles
each of which has vanishing first Chern class; $c_1(L^{k}) = 0
\in H^{2}(X;\R)$.
Let $A$ be a properly supported $G$-invariant elliptic operator
of order $0$.
\begin{thmalph}
If $[A] \in K\!K^{G}(C_0(X), \C)$ satisfies that
$$
\angles{[E], [A]}_{G} := \tr_{G} \mu_{G}([E] \hattensor [A])
\neq 0 \in \R,
$$
then $\mu_{G}([A]) \neq 0 \in K_{0}(C^{*}(G))$.
\end{thmalph}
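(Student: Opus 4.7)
The plan is to argue by contradiction, assembling the machinery of Sections 5 and 6. Since $[E] = \sum_k [L^k]$ and the index map is additive, the hypothesis $\angles{[E],[A]}_G \neq 0$ forces $\angles{[L],[A]}_G \neq 0$ for some $L = L^k$, so I may assume $E = L$ is a single smooth Hermitian $G$-line bundle with vanishing first Chern class. Suppose toward a contradiction that $\mu_G([A]) = 0$.

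For this $L$, I invoke all of the previous constructions: the $U(1)$-cocycle $\alpha$, the central extensions $G_{\alpha^t}$, the $G_{\alpha^t}$-line bundles $L_t$ of curvature $it\omega$, the product group $\Gaa$, the infinite product bundle $\prod L_{1/k}$ as a $\prod \C$-module bundle, and the flat quotient $W$ as a $Q$-module bundle with $Q = (\prod \C)/(\bigoplus \C)$. The argument is driven by the polynomial
\[
P(t) := \int_{TX} c(x) \, \mathrm{Td}(T_{\C}X) \wedge \mathrm{ch}(\sigma_A) \wedge \exp\!\parens{-\tfrac{t}{2\pi}\omega},
\]
which is genuinely polynomial in $t$ by a degree count on forms of top degree. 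By Remark \ref{RemMuGa} and the $L^{2}$-index formula (Theorem \ref{HWindex}), one has $P(1/n) = \tr_{\Gaa}\mu_{\Gaa}([L_{1/n}] \hattensor [A])$ for every $n \in \N$, while $P(1) = \angles{[L],[A]}_G \neq 0$ by hypothesis.

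The contradiction will be produced by forcing $P$ to vanish at infinitely many of the points $1/n$. Set $y := \mu_{\Gaa}\parens{\brackets{\prod L_{1/k}} \hattensor [A]} \in K_0(C^*(\Gaa) \otimes \prod \C)$. Under the assumption $\mu_G([A]) = 0$, Proposition \ref{MuW0} gives $\mu_{\Gaa}([W] \hattensor [A]) = 0$, and Proposition \ref{PiMu}(2) then yields $(1 \otimes \pi)_*(y) = 0$. Now apply the six-term exact sequence associated with $0 \to \bigoplus \C \to \prod \C \to Q \to 0$ tensored by $C^*(\Gaa)$ (exactness surviving by nuclearity of the commutative factor): the element $y$ comes from $K_0(C^*(\Gaa) \otimes \bigoplus \C)$. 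Since $\bigoplus \C = \varinjlim \bigoplus_{k \leq n} \C$ and $K_0$ commutes with such inductive limits, $K_0(C^*(\Gaa) \otimes \bigoplus \C) \cong \bigoplus_n K_0(C^*(\Gaa))$, so $y$ is ``finitely supported'': there exists $N$ with $(1 \otimes p_n)_*(y) = 0$ for every $n > N$. By Proposition \ref{PiMu}(1), this translates into $\mu_{\Gaa}([L_{1/n}] \hattensor [A]) = 0$ and hence $P(1/n) = 0$ for every $n > N$. A polynomial vanishing at infinitely many points is identically zero, so $P(1) = 0$, the desired contradiction.

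The main delicate step is the six-term exact sequence argument: one needs exactness to survive after tensoring with $C^*(\Gaa)$ (which uses nuclearity of $\bigoplus \C$ and $\prod \C$) and the identification $K_0(C^*(\Gaa) \otimes \bigoplus \C) \cong \bigoplus_n K_0(C^*(\Gaa))$ to be compatible with the individual projections $p_n$, so that ``finite support'' in the $K$-group really does translate to vanishing under all but finitely many $(1 \otimes p_n)_*$. Beyond this bookkeeping, the proof reduces to assembling the propositions already established together with the elementary observation that a polynomial with infinitely many roots vanishes.
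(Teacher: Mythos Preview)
Your argument is correct and follows essentially the same route as the paper's own proof: contradiction via the six-term sequence for $0 \to \bigoplus \C \to \prod \C \to Q \to 0$ tensored with $C^{*}(\Ga)$, finite support of the lifted class, and vanishing of the polynomial $P(t)$ at infinitely many $1/n$. One small omission: the paper first reduces to the case where $A$ is a Dirac-type operator (subsection~\ref{SubsectionReductionOnDirac}) before invoking Proposition~\ref{MuW0}, since that proposition is stated only for Dirac operators; you should include this reduction explicitly before citing it.
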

\begin{coralph}
Let $X$ and $G$ be as above and additionally we assume that 
$X$ is a spin manifold and the scalar curvature of $X$ is positive.
Then
for any Hermitian $G$-vectorbundle $E$ satisfying the conditions above,
the following higher $G$-$\widehat{\mathcal{A}}$-genus vanishes.
$$
\widehat{\mathcal{A}}_{G}(X;E)
:=
\int_{X} c(x) \widehat{\mathcal{A}}(TX)\wedge \mathrm{ch}(E)
=0,
$$
where $c\in C_c(X)$ denotes an arbitrary cut-off function.
\end{coralph}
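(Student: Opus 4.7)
The plan is to apply the contrapositive of Theorem~\ref{MainThm} to the spin Dirac operator $D$ on $X$, so the corollary reduces to showing that (i) the Kronecker pairing $\angles{[E],[D]}_G$ equals the higher $G$-$\widehat{\mathcal{A}}$-genus $\widehat{\mathcal{A}}_G(X;E)$, and (ii) positive scalar curvature forces $\mu_G([D])=0\in K_0(C^*(G))$. For the formal setup, let $\bb{S}=\bb{S}^0\widehat{\oplus}\bb{S}^1$ be the spinor bundle (it is a Hermitian $G$-vector bundle since the action is isometric) and replace $D$ by a properly supported order-zero representative such as the bounded transform $D(1+D^2)^{-1/2}$ produced via the pseudo-differential reduction of Section~\ref{SubsectionReductionOnDirac}; this does not change the class $[D]\in K\!K^G(C_0(X),\C)$.

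For step (i), choose a $G$-invariant Hermitian connection on $E$ and form the twisted Dirac operator $D_E$ on $\bb{S}\otimes E$. Applying Theorem~\ref{HWindex} to $A=D_E$ and performing the standard Atiyah-Singer fiber-integration along $\map{\pi}{TX}{X}$ (which converts $\mathrm{Td}(T_\C X)\wedge\mathrm{ch}(\sigma_{D_E})$ into $\widehat{\mathcal{A}}(TX)\wedge\mathrm{ch}(E)$), I obtain
$$\angles{[E],[D]}_G = \tr_G\mu_G([E]\hattensor[D]) = \tr_G\ind_G(D_E) = \int_X c(x)\,\widehat{\mathcal{A}}(TX)\wedge\mathrm{ch}(E) = \widehat{\mathcal{A}}_G(X;E),$$
which is precisely Definition~\ref{HigherAgenus}. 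Lemma~\ref{GInvariantChernWeil} ensures independence from the choice of $G$-invariant connection.

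For step (ii), use the Lichnerowicz formula $D^2=\nabla^*\nabla+\mathrm{scal}/4$. Because $\mathrm{scal}$ is $G$-invariant and $X/G$ is compact, the hypothesis $\mathrm{scal}>0$ upgrades to $\mathrm{scal}\geq s_0>0$ uniformly on $X$, giving the uniform spectral gap $D^2\geq s_0/4$. In the Mishchenko-Fomenko representative $\mu_G([D])=(\E^G,\overline{A^G})$ described in Section~\ref{SubsectionExpressionOfMu}, with $A=D(1+D^2)^{-1/2}$, functional calculus applied to the uniformly invertible operator $D$ makes $\overline{A^G}$ into a self-adjoint operator whose square differs from the identity only by a uniformly small (even vanishing, in the sign-function model $F=D/|D|$) compact correction; the resulting Kasparov $C^*(G)$-module is degenerate, hence represents $0\in K_0(C^*(G))$. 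This is the locally compact/proper action version of Rosenberg's argument.

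Combining (i) and (ii) with the contrapositive of Theorem~\ref{MainThm} immediately gives $\widehat{\mathcal{A}}_G(X;E)=\angles{[E],[D]}_G=0$. The main obstacle is step (ii): one must be careful that the Mishchenko-Fomenko assembly of an invertible $D$ really produces a degenerate (or norm-homotopic to degenerate) Kasparov $C^*(G)$-module in the \emph{proper} action setting, rather than merely a Fredholm element with zero kernel-cokernel difference; in practice this uses that $\overline{A^G}$ inherits the spectral gap of $A$ via the averaging construction, so that $(1-\overline{A^G}^2)\overline{A^G}^{-1}$ and the commutators with the $C_0(X)$-action both lie in the ideal of $C^*(G)$-compact operators on $\E^G$, allowing the straight-line homotopy to an invertible self-adjoint involution.
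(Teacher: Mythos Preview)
Your proposal is correct and follows the same overall strategy as the paper: apply Lichnerowicz to the spin Dirac operator, deduce $\mu_G([D])=0$ from positive scalar curvature, and then invoke the contrapositive of Theorem~\ref{MainThm}. Step~(i) is exactly what the paper does (this is the content of Definition~\ref{HigherAgenus} and the formula following Theorem~\ref{HWindex}).

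The only noteworthy difference is in your step~(ii). You work hard to show that the Mishchenko--Fomenko representative $(\E^G,\overline{A^G})$ is degenerate, and you flag this as the main obstacle. The paper takes a shorter path that dissolves your concern entirely: since $X/G$ is compact the scalar curvature is \emph{uniformly} positive (Corollary~\ref{CorSliceThm}), so Lichnerowicz gives $D^2\geq s_0/4>0$ and hence $A=D(1+D^2)^{-1/2}$ is invertible on $L^2(\bb{S})$. Then $\Ker(A_0)=\Ker(A_0^*)=\{0\}$ as vector spaces, and therefore as Hilbert $C^*(G)$-modules, so the \emph{analytical} index $\ind_{C^*(G)}(A)=[\Ker A_0]-[\Ker A_0^*]$ vanishes by definition. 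The identification $\ind_{C^*(G)}(A)=\mu_G([A])$ (Proposition after Definition~\ref{BCmap}, citing \cite[Theorem 5.6]{Ka}) then gives $\mu_G([A])=0$ without any need to analyze degeneracy of the assembled module or track spectral gaps through the averaging construction. Your route works, but the analytical-index shortcut is what makes the paper's proof a three-line affair.
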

%%%%%%%%%%%%
\newcommand{\ZZZZ}
{\mu_{\Gaa} \parens{\brackets{\prod L_{1/k}}\hattensor [A]}}
\newcommand{\WWWW}
{\mu_{\Gaa} \parens{[W]\hattensor[A]}}
\newcommand{\XXXX}[1]
{\mu_{\Gaa} \parens{ \brackets{L_{1/{#1}}} \hattensor [A]}}
%%%%%%%%%%%%
\begin{proof}
[Proof of the Theorem \ref{MainThm}]
As subsection \ref{SubsectionReductionOnDirac},
we may assume that $A$ is a Dirac type operator.
In this case,
$$
\angles{[E], [A]}_{G} = 
\int_{TX} c(x)  \mathrm{Td}(T_{\C}X)
\wedge \mathrm{ch}(\sigma_A) \wedge \mathrm{ch}(E).
$$
Since $\mathrm{ch}(L^1 \oplus \ldots \oplus L^{N})
= \mathrm{ch}(L^1) \oplus \ldots \oplus \mathrm{ch} (L^{N})$,
it is sufficient to prove when $E$ is 
a single Hermitian $G$-line bundle $L$.
Fix a $G$-invariant connection and let $i\omega$
denote its curvature.
Assume the converse; $\mu_{G}[A] = 0$.
Then it implies $\mu_{\Gaa}[A] = 0$ by Proposition \ref{MuW0}.
%%%%%%%%%%%%
Firstly we have an exact sequence;
$$
0 \rightarrow
\dirsum{ \C}
\xrightarrow{\;\;\; \iota\;\;\;}
\prod \C
\xrightarrow{\;\;\; \pi \;\;\;}
Q
\rightarrow 0,
$$
where $\iota$ and $\pi$ are natural inclusion and projection.
Since all of $\dirsum{ \C}$, $\prod \C$ and $Q$
are commutative and hence nuclear,
the tensor product with them are uniquely determined and
we also have the following exact sequence
according to \cite[Theorem T.6.26]{We93};
$$
0 \rightarrow
C^{*}(\Ga)\otimes \parens{\dirsum{ \C}}
\xrightarrow{\;\;\; 1 \otimes \iota\;\;\;}
C^{*}(\Ga)\otimes \parens{\prod \C}
\xrightarrow{\;\;\;1 \otimes \pi \;\;\;}
C^{*}(\Ga)\otimes Q
\rightarrow 0.
$$
Now we have the exact sequence of $K$-groups
$$
K_{0}\parens{C^{*}(\Ga)\otimes \parens{\dirsum{ \C}}}
\xrightarrow{\;\;(1 \otimes \iota)_{*}\;\;}
K_{0}\parens{C^{*}(\Ga)\otimes \parens{\prod \C}}
\xrightarrow{\;\;(1 \otimes \pi)_{*}\;\;}
K_{0}\parens{C^{*}(\Ga)\otimes Q}.
$$
%%%%%%%%%%%%
%%AAA Exact sequence
%%%%%%%%%%%%
Let us start with $\ZZZZ
\in K_{0}\parens{C^{*}(\Ga)\otimes \parens{\prod \C}}$.
\\
Due to
proposition \ref{PiMu},
$$(1 \otimes \pi)_{*}\ZZZZ = \WWWW = 0.$$
It follows that
there exists $\zeta \in
K_{0}\parens{C^{*}(\Ga)\otimes \parens{\dirsum{ \C}}}$
such that $$(1 \otimes \iota)_{*}(\zeta) = \ZZZZ.$$

Note that $C^{*}(\Ga) \otimes (\dirsum{\C})$
is naturally isomorphic to 
$\dirsum{C^{*}(\Ga)}
= {\displaystyle{\varinjlim_{k}
}} \bigoplus^{k} C^{*}(\Ga)$, which consists of all
$C^{*}(\Ga)$-valued sequences vanishing at infinity, by the map
\begin{eqnarray*}
\begin{array}{cccc}
%\braces{1 \otimes p_{k}}_{k\in\N} \; \colon \!\!\!\!\!
&C^{*}(\Ga) \otimes (\dirsum{\C})
& \to
& \dirsum{C^{*}(\Ga)}
\\
&z \otimes \parens{ \{a_k\}_{k\in\N} }
& \mapsto
& \braces{a_{k} z}_{k\in \N}
\end{array}
\qqand
\begin{array}{cccc}
%\braces{1 \otimes p_{k}}_{k\in\N} \; \colon \!\!\!\!\!
& \dirsum{C^{*}(\Ga)}
& \to
& C^{*}(\Ga) \otimes (\dirsum{\C})
\\
& \braces{z_{k}}_{k\in \N}
& \mapsto
& \underset{n \in \N}{\sum}
z_{n} \otimes \parens{ \{ \delta_{k,n} \}_{k \in \N} }.
\end{array}
\end{eqnarray*}
In addition to this, since there is a natural isomorphism
$K_{0}\parens{\dirsum{C^{*}(\Ga)} } \simeq
\bigoplus K_{0}\parens{C^{*}(\Ga)}$
\cite[4.1.15 Proposition, 4.2.3 Remark]{HiRo00},
with the last $\bigoplus$ meaning the algebraic direct sum,
we can consider the following diagram;
$$
\begin{CD}
K_{0}\parens{C^{*}(\Ga)\otimes \parens{\dirsum{ \C}}}
@>{\iota_{*}}>>
K_{0}\parens{C^{*}(\Ga)\otimes \parens{\prod \C}}
@>{(1 \otimes \pi)_{*}}>> 
K_{0}\parens{C^{*}(\Ga)\otimes Q}
\\
@V{\{(1 \otimes p_k)_{*} \} }V{\cong}V
@V{\{(1 \otimes p_k)_{*} \} }VV
@. 
\\
\bigoplus K_{0}(\Ga)
@>>{\text{inclusion}}> 
\prod K_{0}(\Ga).
\end{CD}
$$
Since $p_k = \iota p_k$, this diagram commutes.
Note that both $\bigoplus$ and $\prod$ in the bottom row are
in the algebraic sense.
Again due to the proposition \ref{PiMu},
\begin{eqnarray*}
\braces{\XXXX{k}}_{k \in \N}
&=&
\{(1 \otimes p_k)_{*} \}\parens{\ZZZZ}
\\
&=&
\{(1 \otimes p_k)_{*} \}\parens{(1 \otimes \iota)_{*}(\zeta)}
\\
&=&
\{(1 \otimes p_k)_{*} \}(\zeta)
\\
&\in& \bigoplus K_{0}(\Ga).
\end{eqnarray*}
This implies that all of
$\XXXX{n} \in K_{0}(\Ga)$ are equal to zero
except for finitely many $n \in \N$.

Then apply the trace $\mathrm{tr}_{\Gaa}$ to each $n$-th
component to obtain
$$
\tr_{\Gaa}(p_n)_{*} \parens{\XXXX{n}}
=
\int_{TX} c(x)  \mathrm{Td}(T_{\C}X)
\wedge \mathrm{ch}\wedge(\sigma_{A})
\exp\parens{\frac{-1}{2\pi n} \omega}.
$$
Scince all but finitely many these values are zero,
$$
\int_{TX} c(x)  \mathrm{Td}(T_{\C}X)
\wedge \mathrm{ch}(\sigma_A) \wedge \exp
\parens{\frac{-t}{2\pi}\omega} =0
$$
as a polynomial in $t$ in particular
$$
\int_{TX} c(x) \mathrm{Td}(T_{\C}X)
\wedge \mathrm{ch}(\sigma_A) \wedge \exp
\parens{\frac{-1}{2\pi}\omega}
=
\int_{TX} c(x) \mathrm{Td}(T_{\C}X)
\wedge \mathrm{ch}(\sigma_A) \wedge
\parens{\mathrm{ch}(L)}
=
0.
$$
This contradicts to the assumption.
\end{proof}
\begin{proof}[Proof of the Corollary \ref{MainCor}]
Let the scalar curvature on $X$ be denoted by $\mathrm{Sc}$,
which is assumed to be positive everywhere.
Note that by the bounded geometry of $X$
(corollary \ref{CorSliceThm}),
it is uniformly positive, namely,
$\displaystyle{\inf_{x\in X}} \mathrm{Sc}(x) > 0$.
Let $\mathcal{D}$ be the canonical Dirac operator 
on the spinor bundle $\bb{S}$ over $X$.
Then we have
$$
{\mathcal{D}}^{*} {\mathcal{D}}
=
{\nabla}^{*} {\nabla}
 + \frac{1}{4} {\mathrm{Sc}}
$$
and
since $\displaystyle{\inf_{x \in X}}
{\mathrm{Sc}}(x) > 0$, the operator 
$\displaystyle{
A:= 
%\parens{1+ \widetilde{\mathcal{D}}\widetilde{\mathcal{D}}^{*}}^{-1/4}
%\widetilde{\mathcal{D}}
%\parens{1+ \widetilde{\mathcal{D}}^{*}\widetilde{\mathcal{D}}}^{-1/4}
\frac{  \mathcal{D}  }
{\sqrt{1+ \mathcal{D} ^2}}
}$
is invertible on $L^{2}(\bb{S})$.
In particular $\mu_{G}([A]) = 0$.
Then due to the theorem \ref{MainThm}, we to obtain
$$
0=
\angles{[E],[A]}_{G}
=
\int_{X} c(x) \widehat{\mathcal{A}}(TX)
\wedge \mathrm{ch}(E)
=
\widehat{\mathcal{A}}_{G}(X;E).
$$
\end{proof}
\section*{Acknowledgments}
The research for my doctoral thesis was supported by 
JSPS KAKENHI Grant Number 13J01329,
and supported by the Kyoto Top Global University Project (KTGU),
by which Professor Gennadi Kasparov 
from Vanderbilt University was
appointed as a Distinguished Visiting Professor in the year 2015.
I would especially like to thank the support from the KTGU Project
that made me possible to visit Vanderbilt University,
and I heartily thank Professor Gennadi Kasparov
for proposing this topic and for having discussions with me.


\begin{thebibliography}{ABCD}
%% Atiyah's L^2 index theorem
%% Atiyah's L^2 index theorem
\bibitem[At]{At76}
{M. F. Atiyah},
{\em Elliptic operators, discrete groups and von Neumann algebras},
{Ast\'{e}risque}
\textbf{32-3}(1976), 43-72.

%% Slice Theorem
\bibitem[Ab]{Ab78}
{H. Abels},
{\em A universal proper G-space},
{Math. Z.}
\textbf{159}(1978), 143-158.

%% Slice Theorem
\bibitem[An]{An05}
{S. A. Antonyan},
{\em Orbit spaces and unions of equivariant absolute extensors},
{Topology Appl.}
\textbf{146-147}(2005), 289-315.

%% 
\bibitem[BCH]{Ba-Co-Hi76}
{P. Baum, A. Connes and N. Higson},
{\em Classifying space for proper actions and
$K$-theory of group $C^{*}$-algebras},
{Contemp. Math.}
\textbf{167}(1994),
241-291.

%% 
\bibitem[BHS]{Ba-Hi-Sc07}
{P. Baum, N. Higson, and T. Schick},
{\em On the equivalence of geometric and analytic K-homology},
{Pure Appl.
Math. Quart.}
\textbf{3}(2007), 1-24.

%%
\bibitem[Bl]{Bl86}
{B. Blackadar}, 
{\em $K$-Theory for Operator Algebras},
{Springer-Verlag},
(1986).

%%
\bibitem[CM]{Co-Mo82}
{A. Connes and H. Moscovici},
{\em The $L^{2}$-Index theorem for homogeneous spaces
of Lie groups},
{Annals of Math.}
\textbf{115-2}(1982),
291-330.

%%
\bibitem[CS]{Co-Sk84}
{A. Connes and G. Skandalis},
{\em The longitudinal index theorem for foliations},
{Publ. Res. Inst. Math. Sci. Kyoto Univ.}
\textbf{20}(1984),
1139-1183.

%%Alexander Spanier Cohomology
%\bibitem[CM90]{Co-Mo-90}
%{A. Connes and H. Moscovici},
%{\em Cyclic cohomology, the Novikov conjecture,
%and hyperbolic groups},
%{Topology}
%\textbf{29-3}(1990), 345-388.

%% construct a flat bundle from almost flat bundles
%% Section2 Assembling Almost flat bundles
\bibitem[HS06]{Ha-Sc06}
{B. Hanke and T. Schick},
{\em Enlargeability and index theory},
{J. Differential Geom.}
\textbf{74}(2006), 293-320.

\bibitem[HS08]{Ha-Sc08}
{B. Hanke and T. Schick},
{\em The strong Novikov conjecture for low degree cohomology},
{Geom. Dedicata.}
\textbf{135}(2008), 119-127.

\bibitem[Ha]{Ha12}
{B. Hanke},
{\em Positive scalar curvature, K-area and essentialness},
{Global differential geometry,
Springer Proceedings in Mathematics}
\textbf{17}(2012), 275-302.

\bibitem[HR]{HiRo00}
{N. Higson and J. Roe}, 
{\em Analytic $K$-Homology},
{Oxford University Press}
(2000).

\bibitem[H\"{o}71]{Ho71}
{L. H\"{o}rmander},
{\em Fourier integral operators. I.},
{Acta Mathematica},
\textbf{127.1}(1971), 79-183.

\bibitem[H\"{o}85]{Ho85}
{L. H\"{o}rmander},
{\em The analysis of linear partial differential operators III:
Pseudo-differential operators.},
{Springer-Verlag},
(1985).

%% Stabilization theorem
\bibitem[K80]{Ka80}
{G. Kasparov},
{\em Hilbert $C^{*}$-modules:
theorems of Stinespring and Voiculescu.},
{J. Operator theory}
\textbf{4.1}(1980), 133-150.

%% dfn of $j^{G}$
\bibitem[K88]{Ka88}
{G. Kasparov},
{\em Equivariant $K\!K$-theory and the Novikov conjecture},
{Invent. Math.}
\textbf{91}(1988), 147-201.

%% $\mu_{G} () = [c]\hattensor j^{G}()$
\bibitem[K]{Ka}
{G. Kasparov},
{\em $K$-theoretic index theorems
for elliptic and transversal elliptic operators},
{J. $K$-theory},
\textbf{} to appear.

%% Remark on the theorem
\bibitem[Ma99]{Ma99}
{V. Mathai},
{\em $K$-theory of twisted group
$C^{*}$-algebras and positive scalar curvature},
{Contemp. Math., Amer. Math. Soc. Providence},
\textbf{231}(1999), 203-225.

%% Use multiplier
\bibitem[Ma03]{Ma03}
{V. Mathai},
{\em The Novikov conjecture for low degree cohomology classes},
{Geom. Dedicata.}
\textbf{99}(2003), 1-15.

\bibitem[Sk]{Sk84}
{G. Skandalis},
{\em Some remarks on Kasparov theory},
{J. Functional Analysis}
\textbf{56-3}(1984), 337-347.

\bibitem[Wa]{Wa14}
{H. Wang},
{\em $L^2$-index formula for proper cocompact group actions},
{J. Noncommut. Geom.}
\textbf{8}(2014), 393-432.

%%Tensor products
\bibitem[We]{We93}
{N. E. Wegge-Olsen},
{\em $K$-Theory and $C^{*}$-Algebras},
{Oxford University Press}
(1993).
\end{thebibliography}
\end{document}